\documentclass[12pt]{amsart}

\usepackage{amssymb,latexsym}

\usepackage{color}

\usepackage{enumerate}

\usepackage[T1]{fontenc}

 \usepackage[french,english]{babel}

\makeatletter

\@namedef{subjclassname@2010}{

  \textup{2010} Mathematics Subject Classification}

\makeatother
\usepackage{cite}
\usepackage{enumerate}
\usepackage[utf8]{inputenc}
\usepackage[english]{babel}
\usepackage{amssymb}
\usepackage{tikz}
\usetikzlibrary{calc,decorations.markings}
\usepackage{tkz-euclide}
\usepackage{pgfplots}
\usepackage{comment}
\usepackage{caption}
\usepackage{float}
\usepackage{multicol}
\usepackage{pgfplots}
\usepackage[normalem]{ulem}
\usepackage{cancel} 
 \usepackage{mdframed}

 \usepackage{color}
\definecolor{red}{rgb}{1,0,0}

\newlength\mylen
\newtheorem{theorem}{Theorem}[section]
\newtheorem{lemma}{Lemma}[section]
\newtheorem{corollary}{Corollary}[section]
\newtheorem{proposition}{Proposition}[section]

\theoremstyle{definition}

\newtheorem{example}[]{Example}[section]
\theoremstyle{remark}
\newtheorem{remark}[theorem]{Remark}

\numberwithin{equation}{section}

\renewcommand{\le}{\leqslant}
\renewcommand{\leq}{\leqslant}
\renewcommand{\ge}{\geqslant}
\renewcommand{\geq}{\geqslant}

\newcommand{\ex}{\mathbb{E}}
\newcommand{\re}{\textup{Re}}

\newcommand{\pr}{\mathbb{P}}
\newcommand{\ep}{\varepsilon}
\newcommand{\mc}{\mathcal}

\newcommand{\F}{\mathcal{F}}

\newcommand{\eS}{\mathcal S}
\newcommand{\eL}{\mathcal L}

\newcommand{\newabstract}[1]{%
  \par\bigskip
  \csname otherlanguage*\endcsname{#1}%
  \csname captions#1\endcsname
  \item[\hskip\labelsep\scshape\abstractname.]
}

\begin{document}

\baselineskip=17pt

\title[]{Large values of exponential sums with multiplicative coefficients}

\author{Andrew Granville}
\author{Youness Lamzouri}

\address{Universit\'e de Lorraine, CNRS, IECL,
F-54000 Nancy, France}

\email{youness.lamzouri@univ-lorraine.fr}

\address{D{\'e}partment  de Math{\'e}matiques et Statistique,   Universit{\'e} de
Montr{\'e}al, CP 6128 succ Centre-Ville, Montr{\'e}al, QC  H3C 3J7, Canada.}

\email{andrew.granville@umontreal.ca} 
%\date{}

\begin{abstract} 

In 1977 Montgomery and Vaughan gave tight bounds for exponential sums of the form $\sum_{n\leq x} f(n) e(n\alpha)$ where $f$ is a $1$-bounded multiplicative function and $\alpha\in\mathbb R$, close to the conjectured $\ll  \frac{x}{\sqrt{q}}+ \frac{x}{\log x} $
 where $\alpha$  is  best approximated by $|\alpha-a/q|\leq 1/(qx)$, showing their results to be ``best-possible'' by observing that the first part of their bound is more-or-less attained when $f(n)=\chi(n), \alpha=\frac aq$ where $\chi$ is a primitive character mod  $q$, and the second part when $f(p)=e(-\alpha p)$ for all large primes $p$.
La Bretèche and Granville proved that when $\alpha$ lies on a major arc the exponential sum is significantly smaller unless  $f$  ``pretends to be'' $\chi(n) n^{it}$ for some character $\chi$ and real number $|t|<\log x$; and herein we prove that when $\alpha$ lies on a minor arc, the exponential sum is significantly smaller unless $f(p)$ pretends to be $e(-hp\alpha)$ for primes $p\leq x$ for some bounded integer $h$.

We also study exponential sums $\sum_{n\leq x, P^+(n)\leq y}  f(n) e(n\alpha)$ restricted to $y$-smooth  (or $y$-friable) integers $n$.
We conjecture that this  sum is $\ll  \frac{\Psi(x, y)}{\sqrt{q}}+ \frac{\sqrt{xy}}{\log x} $ in a wide range of parameters, show that if  true this is best possible, and prove an upper bound in a wide range that is only slightly weaker than the conjecture.

Finally we study  the logarithmically weighted exponential sums $\sum_{n\leq x} \frac{f(n)}{n} e(n\alpha)$. We conjecture that this  sum is $\ll \frac{\log x}{\sqrt{q}}+\log q$  in a wide range of parameters, show that if  true this is best possible, and prove an upper bound in a wide range that is only slightly weaker than the conjecture.
 Moreover,  we show that the sum is much smaller than the  conjectured bound unless
 $\alpha$ lies on a major arc ($q\leq (\log x)^{2+\varepsilon}$) and  $f$ ``pretends to be'' $\chi(n) n^{it}$, where $\chi$ is a character mod  $q$, and $|t|\leq (\log x)^{-2/3}$ (in which case we   obtain  an asymptotic series expansion for our exponential sum); 
 or  $\alpha$ lies on a semi-major arc ($(\log x)^{2+\varepsilon} \leq q\leq (\log x)^A$)  and $f(p)$ pretends to be  $e(-hp\alpha)$ for primes $p\leq x$ for some bounded integer $h$; 
 or $\alpha$ lies on a minor arc ( $(\log x)^{2+\varepsilon} \leq q\leq x$) and there is a secondary approximation to $\alpha$, given by $|\alpha-b/r|\leq 1/(rq^c)$ with  $c\in (0,1)$ and $r\ll 1$   and $f$ ``pretends to be'' $\chi(n)$, where $\chi$ is a  character mod  $r$. Moreover one can construct hybrids of the last two cases.

  Along the way, we will prove various technical results about multiplicative functions (such as a Hal\'asz type result for  mean values of logarithmically weighted multiplicative functions in arithmetic progressions) which may be of use elsewhere.

\end{abstract}

\subjclass[2020]{Primary 11L07, 11N56}

\maketitle

\tableofcontents

%%%%%%%%%%%%%%%%%%%%%%%%%%%%%%%%%%%%%%%%%%%%%%%%%%%%%%%%%%%%%%%%%%%%%%%%%%%%%%%%%%

 \section{Introduction and statement of results}

Let $\mathbb{U} := \{z \in \mathbb{C} : |z| \leq 1\}$ be the unit disc and throughout $f: \mathbb{N}\to \mathbb{U}$ is a multiplicative function.  Bounds for exponential sums of the form
\begin{equation}\label{Eq:UnweightedSum}
 \sum_{n\leq x}  f(n) e(n\alpha),   
\end{equation} 
where $ e(\alpha):=\exp(2\pi i \alpha)$, and related sums,
have many applications in analytic number theory. By Dirichlet's theorem for diophantine approximation, one can approximate $\alpha\in[0,1)$ by a rational $a/q$ with $(a, q)=1$ and $q\leq x$ such that 
$|\alpha-a/q|\leq 1/(qx)$.  

%%%%%%%%%%%%%%%%
\subsection{Motivating conjectures}
A folklore conjecture asserts that%We conjecture that
 \begin{equation}\label{Eq:MVZero.3}
  \sum_{n\leq x}  f(n) e(n\alpha) %\lesssim
  \ll  \frac x{\log x} + \frac x{\sqrt{q}},
  \end{equation} 
whenever $q\leq x/(\log x)^{2+\varepsilon}$. In the case where $q\leq (\log x)^2$, we make the following more precise conjecture
 \begin{equation}\label{Eq:MVZero.3Plus}
  \sum_{n\leq x}  f(n) e(n\alpha) %\lesssim
  \lesssim \frac x{\sqrt{q}}
  \end{equation} where $f\lesssim g$ means that $|f|\leq (1+o(1)) g$.
If true these conjectures are best possible (as we will discuss below).

Let $P^+(n)$ be the largest prime factor of $n$, with the standard convention that $P^+(1)=1$. We conjecture that if we restrict the exponential sum to $y$-smooth  (or $y$-friable) numbers (that is, integers $n$ with $P^+(n)\leq y$) then 
\begin{equation}\label{Eq:Conj}
\sum_{\substack{n\leq x\\ P^+(n)\leq y}}  f(n) e(n\alpha)\ll %\lesssim
 \frac{\sqrt{xy}}{\log x} +  \frac{\Psi(x, y)}{\sqrt{q}},
   \end{equation}
  in the range $x\geq y\geq q^2$,\footnote{And it is plausible that  this holds in a wider range of the parameters $x, y, q$.}
   where $\Psi(x,y)$ is the number of $y$-smooth integers $\leq x$,
and if true this  is best possible  in wide ranges of the parameters $x, y$ and $q$  (as we will discuss {in section \ref{Sec:SmoothNumbers} below).  Note that the $y=x$ case gives \eqref{Eq:MVZero.3}.

We conjecture that the same exponential sum, but now with ``logarithmic weights'', satisfies
\begin{equation}\label{Eq:ConjectureLog}
 \sum_{n\leq x} \frac{f(n)}{n}e(n\alpha) \ll \log q+\frac{\log x}{\sqrt{q}},
\end{equation}
for all $q\leq x$. When $q\leq (\log x/\log\log x)^2$, we make the following more precise conjecture
\begin{equation}\label{Eq:ConjectureLogPlus}
 \sum_{n\leq x} \frac{f(n)}{n}e(n\alpha) \lesssim\frac 4{\sqrt{6}}\cdot \frac{\log x}{\sqrt{q}}.
\end{equation}
We will also see in sections \ref{Sec:Log1} and \ref{Sec:Log2} below that if the conjectures \eqref{Eq:ConjectureLog} and \eqref{Eq:ConjectureLogPlus} are true then they are best possible.

We will establish slightly weaker versions of all of these conjectures (which imply the conjectures in a wide range).
%We have made ambitious guesses here;  it would have been safer to make each conjecture with ``$\ll$'' in place of 
%``$\lesssim$''.

%%%%%%%%%%%%%%%%
\subsection{Bounds  on the exponential sums in \eqref{Eq:UnweightedSum}}
Montgomery and Vaughan \cite[Theorem 1]{MV77} proved that if $q\leq x/(\log x)^2(\log\log x)^3$ then
\begin{equation}\label{Eq:MVZero}
 \sum_{n\leq x}  f(n) e\bigg( \frac{an}q \bigg) \ll  \frac{x}{\log x} + \frac x{\sqrt{\phi(q)}}
 \end{equation}
 and therefore one   deduces (as in section \ref{sec2.ex0} below) that if $|\alpha-\frac aq|\leq \frac 1{x} $ then\footnote{If   $|\alpha-\frac aq|\leq \frac 1{\sqrt{q}\, x} $ then we have the simpler  $\sum_{n\leq x}  f(n) (e(n\alpha) -  e(\frac{an}q)) \ll \sum_{n\leq x} \frac n{\sqrt{q}\, x}\ll \frac x{\sqrt{q}}$.}
  \begin{equation}\label{Eq:MVZero.2}
 \sum_{n\leq x}  f(n) e(n\alpha) \ll  \frac{x}{\log x} + \frac x{\sqrt{\phi(q)}}.
 \end{equation}

 %%%%%%%%%%%%%%%
 \subsection{The minor arcs}
 Hence if $\alpha\in [0,1)$ lies on a \emph{minor arc},\footnote{There are different ways we could define our minor arcs and come to the same conclusion.  One way would be to define $Q=\frac{x}{(\log x)^{2+\ep}}$ and only require that $|\alpha-\frac aq|\leq \frac 1{qQ} $ since this is $\leq \frac 1x$ in our range. We could also be more precise about bounds on $q/\phi(q)$ and therefore let
 $(\log x)^2(\log\log\log x)\leq q\leq x/(\log x)^2(\log\log x)^3$ rather than the range $(\log x)^{2+\varepsilon} \leq  q\leq x/(\log x)^{2+\ep}$ given in \eqref{eq: Minor Arc Range} to deduce \eqref{Eq:MVOne}.}  which means that for fixed $\varepsilon>0$, there exists coprime integers $a$ and $q$ such that
 \begin{equation} \label{eq: Minor Arc Range}
 \bigg|\alpha-\frac aq\bigg|\leq \frac 1{qx} \text{ with }  (\log x)^{2+\varepsilon} \leq q\leq  \frac{x}{(\log x)^{2+\ep}},
 \end{equation}
then \eqref{Eq:MVZero.2} implies that
  \begin{equation}\label{Eq:MVOne}
 \sum_{n\leq x}  f(n) e(n\alpha) \ll  \frac{x}{\log x}
 \end{equation}
 so that the conjecture \eqref{Eq:MVZero.3} holds in this range.
 
 Montgomery and Vaughan observed that this is   ``best possible''  up to a constant factor by the simplest case of the following general construction:  For 
 any choice of $f(p)$ for $p\leq \frac x2$,  define
$\sum_{n\leq x, p|n\implies p\leq x/2}  f(n) e(n\alpha)=:r e(\theta)$. Then  let
$f(p)=e(\theta-p\alpha)$ for all $p\in (\frac x2,x]$ which  implies that $\Big|\sum_{n\leq x}  f(n) e(n\alpha)\Big|\gtrsim  \displaystyle{\frac{x}{2\log x}}.$

In Corollary \ref{Cor:Unweighted} we will show that all examples in which the exponential sum  $\sum_{n\leq x} f(n) e(n\alpha)$ is this large (up to a constant) stem from a similar bias of $f(p)$-values for large primes $p$.

 %%%%%%%%%%%%%%%
 \subsection{The major arcs}
La Bret\`eche and Granville (see \cite[Theorem 4]{dlBG2} and the discussion following it) showed that if $f: \mathbb{N}\to \mathbb{U}$ is completely multiplicative then
 \[
  \sum_{n\leq x}  f(n) e(\alpha n) \ll   \frac x{\sqrt{q}} 
 \]
when $q\leq (\log x)^{2-\varepsilon}$ which implies    that the conjecture \eqref{Eq:MVZero.3} holds in this range. 
(In section \ref{sec2.excm} we deduce that this bound also holds for all  multiplicative functions $f: \mathbb{N}\to \mathbb{U}$ when $q$ is squarefree.)
In section \ref{sec2.excn} we will show that their results also imply that 
 if $ \left|\sum_{n\leq x}  f(n) e(\alpha n)\right| \geq c\frac x{\sqrt{q}}$ in this range for completely multiplicative $f$ then, for the ``pretentious'' distance defined in \eqref{Eq:ThePretentiousDistance},
 \[
 \mathbb D(f,\chi(n)n^{it}; x)^2  \leq \log(1/c) +\log\log (1/c)+O(1),
 \]
 for some primitive character $\chi \pmod r$, where $r$ divides $q$ and $|t|\leq \log x$.  Moreover we will deduce, in this range, that
  \[
 \bigg|  \sum_{n\leq x}  f(n) e(\alpha n) \bigg| \lesssim \frac x{\sqrt{q}},
 \]
 and hence conjecture \eqref{Eq:MVZero.3Plus} holds for completely multiplicative functions in this range of $q$.
 Furthermore, one obtains $\sim   \frac x{\sqrt{q}} $ if and only if $\alpha=\frac aq+o(\frac 1x), r=q$ and
  $\mathbb D (f(n),\chi(n)n^{it},x)=o(1)$  with 
 $t=o(\frac 1{\log x})$.   Based on this we guess that conjecture \eqref{Eq:MVZero.3Plus} should hold for all multiplicative functions when $q\leq (\log x)^2$. %one can replace ``$\ll$'' by ``$\lesssim$'' in the conjectured bound in \eqref{Eq:MVZero.3}.
 
In summary, the conjectured bound in \eqref{Eq:MVZero.3} is now completely proved other than in the range $q=(\log x)^{2+o(1)}$, and it is even proved in this range when $\phi(q)\gg q$. Moreover we have shown that both of the main terms in the bound play a role, and cannot be removed.

%%%%%%%%%%%%%%%%%%%%%%%%%%%%%%%%%%%%%%%%%%%%%%%%
\subsection{Exponential sums on minor arcs} 

Our first result implies that if $q>(\log x)^{2+\ep}$ then the sum $\sum_{n\leq x} f(n) e(n\alpha)$ can only be large if the analogous sum is large when   restricted to only those integers which have a very large prime factor (of size $\asymp x$).  Here and throughout we define
\begin{equation}\label{Eq:DefinitionL(x)}
\mathcal{L}(x):=\exp(\sqrt{\log x\log\log x}).
\end{equation}

\begin{theorem}\label{Thm:Unweighted}
Fix $\ep\in (0, \frac 1{10})$ and suppose that $|\alpha-\frac aq|\leq \frac1{qx},$ where $(a, q)=1$ and $(\log x)^{2+\ep} \leq q\leq x/(\log x)^{3+\ep}$.  

If $q\leq x^{1-\ep}$ and $M$ is an integer for which 
\[
2\leq M\ll  \min\left\{q^{1-\ep/4}/(\log x)^2,  \mathcal{L}(x)^{1/2-\ep}\right\}
\]
 then for all multiplicative functions $f:\mathbb{N}\to \mathbb{U}$ we have   
% The second term could be \exp((1-\varepsilon)(\log x \log\log x)^{1/2}\
\begin{equation}  \label{Eq:BestApproxI}
 \sum_{n\leq x} f(n) e(n\alpha)= \sum_{m\leq M}f(m) \sum_{x/M< p\leq x/m} f(p) e(m p \alpha) +O\left(\frac{x}{\sqrt{M} 
 \log x}\right).
\end{equation}

If $x^{1-\ep}\leq q\leq x/(\log x)^{3+\ep}$ and $M$ is an integer for which
\[
2\leq M\ll  \min\left\{\sqrt{x}/(\sqrt{q}\log x),  x/(q\log(2x/q)(\log x)^3), \mathcal{L}(x)^{1-\ep}\right\}
\]
 then for all multiplicative functions $f:\mathbb{N}\to \mathbb{U}$ we have
\begin{equation}  \label{Eq:BestApproxI2}
 \sum_{n\leq x} f(n) e(n\alpha)= \sum_{m\leq M}f(m) \sum_{x/M< p\leq x/m} f(p) e(m p \alpha) +O\left(\frac{x\sqrt{\log M}}{\sqrt{M}\log x}\right).
\end{equation}
\end{theorem}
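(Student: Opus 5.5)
We split the integers $n\le x$ according to whether they have a prime factor $p>x/M$. Any $n\le x$ with such a factor is uniquely $n=mp$ with $m\le M$, $p>x/M$ and $p\nmid m$ (since $p>x/M\ge M$ once $M\le\sqrt x$). Hence $f(n)=f(m)f(p)$ and these $n$ contribute exactly the main term in \eqref{Eq:BestApproxI}. It therefore suffices to show that
\[
S_M:=\sum_{\substack{n\le x\\ P^+(n)\le x/M}} f(n)e(n\alpha)
\]
is $O(x/(\sqrt M\log x))$, respectively $O(x\sqrt{\log M}/(\sqrt M\log x))$ in the second range.

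To bound $S_M$ we use the Montgomery--Vaughan strategy with a logarithmic weight. We write $\log n=\sum_{d|n}\Lambda(d)$, which gives
\[
\sum_{n\le x,\,P^+(n)\le x/M} f(n)e(n\alpha)\log n=\sum_{p^k m\le x,\,p\le x/M}(\Lambda f)(p^k)f(m)e(p^km\alpha)+(\text{non-coprime error}).
\]
We then split $p$ into dyadic ranges $P<p\le 2P\le x/M$. On each range we apply Cauchy--Schwarz in $m$ and the large-sieve type inequality for $\sum_m|\sum_p c_p e(pm\alpha)|^2$, exactly as in \cite{MV77}. Removing the weight $\log n$ by partial summation (the terms $n\le x/(\log x)^{2}$ are trivial) then reduces matters to bounding
\[
\sum_{P\le x/M}\ \Big(\sum_{m\le x/P}1\Big)^{1/2}\Big(\sum_m\Big|\sum_{P<p\le 2P}f(p)e(pm\alpha)\Big|^2\Big)^{1/2}.
\]
The bilinear estimate, using $|\alpha-a/q|\le 1/(qx)$, gives for each dyadic block roughly
\[
\sqrt{x/P}\Big(\frac{x}{q}+P+\dots\Big)^{1/2}\Big(\frac{P}{\log P}\Big)^{1/2},
\]
with the Brun--Titchmarsh saving $\frac{q}{\phi(q)}$-type factors. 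For $P$ near $x/M$ the term $\sqrt{x/P}\cdot P/\log P$ gives $\sqrt{xP}/\log x\approx x/(\sqrt M\log x)$, and summing the dyadic blocks geometrically yields the bound $x/(\sqrt M\log x)$. The terms $x/\sqrt q$ and $x\sqrt{\log(x/q)}/\sqrt{\ldots}$ coming from small $P$ are handled by the hypotheses $M\ll q^{1-\ep/4}/(\log x)^2$, respectively $M\ll\sqrt x/(\sqrt q\log x)$ and $M\ll x/(q\log(2x/q)(\log x)^3)$. These hypotheses are exactly what make those contributions $\ll x/(\sqrt M\log x)$.

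For small primes, say $P\le \mathcal L(x)^{c}$, the bilinear method loses a logarithm. Here we would instead use that integers $n\le x$ with $P^+(n)\le x/M$ still typically have a prime factor in $(y,x/M]$ for moderate $y$. The $y$-smooth numbers, $\Psi(x,y)$ with $y=\exp(\sqrt{\log x\log\log x})$-type choices, contribute $\ll x/\mathcal L(x)^{1/2+o(1)}$. This is where the constraints $M\ll \mathcal L(x)^{1/2-\ep}$ and $M\ll\mathcal L(x)^{1-\ep}$ enter, since $\Psi(x,\mathcal L(x)^{\cdot})$ must be $\ll x/(\sqrt M\log x)$. So we first discard $n$ that are smooth at a suitable level, then run the bilinear decomposition on the remaining $n$ through their largest prime factor.

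The main obstacle is the second range $q\ge x^{1-\ep}$. There $x/q$ is small, and the large-sieve bound has the extra divisor-like factor coming from $m$ sharing factors near multiples of $q$. Counting pairs $p_1\ne p_2$ with $\|(p_1-p_2)m\alpha\|$ small via Brun--Titchmarsh produces the $\log M$ loss, which accounts for the $\sqrt{\log M}$ in \eqref{Eq:BestApproxI2}. We would try to handle this by a careful count of $h=p_1-p_2$ with $\|h\alpha\|\le 1/(x/M)$, using $h\le 2P$ and the spacing given by $q$.
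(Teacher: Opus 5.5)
Your opening reduction is correct. Every $n\le x$ with a prime factor $p>x/M$ is uniquely $mp$ with $m<M$ and $p\nmid m$, so everything comes down to bounding $S_M$. This is equivalent to the paper's split into the smooth part $P^+(n)\le x/Q$ plus the primes $p\in(x/Q,x/M]$, where $Q=\min\{q,(x/q)^{1/2}\}$.

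\textbf{The block estimate does not give $x/(\sqrt M\log x)$.} Take your large-sieve constant $x/q+P+\cdots$ together with $\sum_p|c_p|^2\asymp P/\log P$. A block $P<p\le 2P$ is then bounded by $\sqrt{x/P}\,(P\cdot P/\log P)^{1/2}=\sqrt{xP/\log P}$, which is $x/\sqrt{M\log x}$ at $P\asymp x/M$. The factor $P/\log P$ you claim does not follow, and you lose $\sqrt{\log x}$.

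The $x/q$ part of the constant gives $x/\sqrt{q\log P}$ per block, hence $\asymp x\sqrt{\log x}/\sqrt q$ over all blocks. This exceeds $x/\log x$ when $q\le(\log x)^3$, so no hypothesis on $M$ can absorb it. This is exactly the loss in Proposition~\ref{Pro:ImprovingRegis}. The paper can afford it only for $P^+(n)\le x/Q$ (where $M\le Q/\log x$ makes $x/\sqrt{Q\log x}$ acceptable) and only when $q>\mathcal L(x)$. For $q\le\mathcal L(x)$ it needs the new bound of Theorem~\ref{Thm:Smooth}.

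Suppose instead you keep primes on both sides. You open the square into pairs $p_2=p_1+h$, use the sieve bound $\ll\frac{P}{(\log P)^2}\frac{h}{\phi(h)}$, and bound $|\sum_{m\le L}e(mh\alpha)|\le\min(L,\|h\alpha\|^{-1})$. Then Lemma~\ref{Lem:StandardExpSum} produces the term $H\log\min(q,L)$ with $H\asymp x/L$, and you get $x\sqrt{\log M}/(\sqrt M\log x)$. That is precisely the paper's Case~2. The $\log M$ appears whenever $x/L\gg q$, which holds in the first range too, not only for $q\ge x^{1-\ep}$. So the argument you sketch can at best yield \eqref{Eq:BestApproxI2}.

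\textbf{The missing idea for \eqref{Eq:BestApproxI}.} For $p\in(x/Q,x/M]$ the cofactor satisfies $m\le Q\le q$, and one must never bound $\sum_m e(mh\alpha)$ in absolute value. The paper proceeds in three steps:
\begin{itemize}
\item It cuts this range into $(x/t_{j+1},x/t_j]$ with $t_{j+1}=t_j+\sqrt{t_j}$, so that the $m$-range becomes $m\le t_j$, independent of $p$; this costs only $O(x/(\sqrt M\log x))$.
\item It sorts the primes into residue classes mod $q$ and bounds each class by Brun--Titchmarsh, which is valid since $x/t_j^{3/2}\ge q^{1+\ep/4}$.
\item It applies Cauchy--Schwarz over the classes: $\sum_{(r,q)=1}|\sum_{m\le t}f(m)e(mr\alpha)|^2\le\phi(q)\,t\sum_{0\le h\le t}|c_q(h)|$. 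Because $t\le q$, \eqref{Eq:BoundShortSumRamanujan} makes this $\ll\phi(q)^2t$, plus a term controlled by the hypothesis on $M$.
\end{itemize}
This gives $\frac{x}{\log x}\sum_j\frac{t_{j+1}-t_j}{t_j^{3/2}}\ll\frac{x}{\sqrt M\log x}$ with no logarithmic loss.

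\textbf{Two further problems.} First, the Montgomery--Vaughan $\log n$ weighting does not fit the target. Undoing it costs $\frac{1}{\log x}\sum_{n\le x}a_n\log(x/n)$, which is trivially $\asymp x/\log x$. Your remark that $n\le x/(\log x)^2$ is trivial only helps if $M\le(\log x)^2$, whereas $M$ may be as large as $\mathcal L(x)^{1/2-\ep}$. Also, $\log n=\sum_{d\mid n}\Lambda(d)$ drags in every small prime divisor, whose trivial contribution is $\gg x$.

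Second, suppose you decompose by $p=P^+(n)\le\sqrt x$ and then apply Cauchy--Schwarz in $m$. The constraint $P^+(m)<p$ makes the $m$-range attached to a pair $(p_1,p_2)$ a non-interval. The geometric-series bound is then unavailable without further decoupling.
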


 A key ingredient in the proof of Theorem \ref{Thm:Unweighted} is  an improved estimate for such exponential sums over smooth (or friable) numbers (see section \ref{Sec:SmoothNumbers}).
 Each prime $p>x/M$ in the sum on the right-hand side of \eqref{Eq:BestApproxI} and \eqref{Eq:BestApproxI2} is only involved in a small number of terms, and never with one another, so they can be viewed as independent of each other. Moreover, by setting $M=2$ we obtain a new proof of the Montgomery-Vaughan bound \eqref{Eq:MVOne} in the minor arcs (namely for $q$ in the range $(\log x)^{2+\ep}\leq q\leq x/(\log x)^{3+\ep}$).

An immediate consequence of this result is the following:

\begin{corollary}\label{Cor:Unweighted}
  Fix $\ep\in (0, \frac 1{10})$ and suppose that $|\alpha-\frac aq|\leq \frac1{qx},$ where $(a, q)=1$ and $(\log x)^{2+\ep} \leq q\leq x/(\log x)^{3+\ep}$. 
 For any fixed $c\in (0,1)$, if 
$f:\mathbb{N}\to \mathbb{U}$ is a multiplicative function for which
\begin{equation}  \label{eq: LowerBound1}
\Big|\sum_{ n\leq x} f(n) e(n\alpha)\Big|\geq   \frac{cx}{\log x}, 
\end{equation}
then there exists a positive integer $h\ll (1/c)^2\log(1/c)$ and a real number $z$ satisfying $ c^2x/(\log(1/c)) \ll z\leq x/2h$ such that 
\begin{equation} \label{eq: LowerBound1Conseq}
\sum_{z < p\leq 2z} f(p)e(h p\alpha) \gg  \frac{c^3}{\log(1/c)}\frac{z}{\log z}.  
\end{equation}
\end{corollary}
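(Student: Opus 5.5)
We apply Theorem \ref{Thm:Unweighted} with $M$ a large constant multiple of $(1/c)^2$, say $M=\lfloor C_0/c^2\rfloor$. Choose $C_0$ so that the error term $O(x/(\sqrt M\log x))$ in \eqref{Eq:BestApproxI} is at most $cx/(2\log x)$. In the range $q\geq x^{1-\ep}$ the error is instead $O(x\sqrt{\log M}/(\sqrt M\log x))$, so there we take $M\asymp c^{-2}\log(1/c)$.

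Since $c$ is fixed and $q\geq(\log x)^{2+\ep}$, this $M$ satisfies the admissibility conditions of Theorem \ref{Thm:Unweighted} once $x$ is large. In the second range this uses $q\leq x/(\log x)^{3+\ep}$: then $\sqrt{x}/(\sqrt q\log x)\geq(\log x)^{1/2+\ep/2}$ and $x/(q\log(2x/q)(\log x)^3)\gg(\log x)^{\ep/2}$, and both tend to infinity.

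Hypothesis \eqref{eq: LowerBound1} then gives
\[
\Big|\sum_{m\leq M}f(m)\sum_{x/M<p\leq x/m}f(p)e(mp\alpha)\Big|\geq\frac{cx}{2\log x}.
\]
By the triangle inequality and $|f(m)|\leq1$, some $m=h\leq M$ satisfies $|\sum_{x/M<p\leq x/h}f(p)e(hp\alpha)|\geq cx/(2M\log x)$. This yields $h\ll M\ll c^{-2}\log(1/c)$.

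Next we localize to a dyadic interval. We split $(x/M,x/h]$ into $O(\log(M/h))=O(\log(1/c))$ dyadic intervals $(z,2z]$, adjusting the last one so that it ends at $x/h$ and satisfies $2z\leq x/h$. Pigeonholing, one of them has
\[
\Big|\sum_{z<p\leq 2z}f(p)e(hp\alpha)\Big|\gg\frac{cx}{M\log x\,\log(1/c)}.
\]
Here $z\geq x/(2M)\gg c^2x/\log(1/c)$, so $\log z\asymp\log x$. Using $x\geq hz$, the right-hand side is $\gg\frac{ch}{M\log(1/c)}\frac{z}{\log z}$.

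With $h\geq1$ and $M\asymp c^{-2}$ this gives the claimed $c^3/\log(1/c)$. In the range $q\geq x^{1-\ep}$, where $M$ carries an extra $\log(1/c)$, the same computation gives $c^3/\log(1/c)^2$. To avoid that loss there, we would do the dyadic splitting with weights instead of uniform pigeonholing. The intervals near $x/M$ are few relative to their prime counts, and the interval sums are bounded trivially by $\pi(2z)-\pi(z)$. Comparing each sum against $z/\log z$, rather than against $x/\log x$, recovers one logarithm. Adjusting constants so that $z\leq x/2h$ is then routine.

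The main obstacle is this bookkeeping: checking the admissibility of $M$ in both ranges of Theorem \ref{Thm:Unweighted}, and matching the exact powers of $c$ and $\log(1/c)$ in the bounds for $h$, $z$ and \eqref{eq: LowerBound1Conseq}. If the precise exponents resist, the fallback is to note that the dominant case is $h$ small with $z\asymp x/h$. There the trivial bound $\sum_{m\leq M}(x/m)/\log x\ll x\log M/\log x$ on the terms with large $m$ lets us discard them, and we pigeonhole only over $m$ with $\sum_m 1/m$ weighting, so each $m$ is compared with its natural size $x/(m\log x)$.
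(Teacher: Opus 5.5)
Your argument is correct and is essentially the paper's proof: apply Theorem \ref{Thm:Unweighted} with $M\asymp c^{-2}\log(1/c)$, pigeonhole over $m\leq M$ to find $h$, then pigeonhole over dyadic intervals in $(x/M,x/h]$. The paper uses this single $M$ in both ranges of $q$, and its dyadic step loses no logarithm for the reason you give in your second range: each interval sum is compared with $z/\log z$, and these weights sum geometrically over dyadic $z\leq x/2h$ to $\asymp x/(h\log x)$. So your case split is unnecessary, and so is the fallback paragraph; its claim that the trivial bound $x\log M/\log x$ lets you discard large $m$ fails, since that bound exceeds $cx/\log x$.
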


We next give a family of examples in the full range  \eqref{eq: Minor Arc Range} for which \eqref{eq: LowerBound1}  and
  \eqref{eq: LowerBound1Conseq} hold, so that Corollary \ref{Cor:Unweighted} is ``best possible''.
We also show that one does \emph{not} necessarily obtain  \eqref{eq: LowerBound1} whenever \eqref{eq: LowerBound1Conseq} holds.

\begin{example}\label{Exa1}  
Select any values for the $f(p), z < p\leq 2z$  with $x\ll_c z\leq x/2$ for which \eqref{eq: LowerBound1Conseq} holds (for example, by letting each  $f(p)= e(-hp\alpha)$). 
Then allow each $f(p)$ for $p\leq z$ and $2z<p\leq x$ to be independent random variables uniformly distributed on $\mathbb U$.
We will show in section \ref{sec2.ex1} that  \eqref{eq: LowerBound1} holds with probability $\gg 1$ for this family of completely multiplicative functions $f$.
  \end{example}
  
\begin{example}\label{Exa1b}    
However,   \eqref{eq: LowerBound1Conseq} does not guarantee  that \eqref{eq: LowerBound1} holds as may be seen by taking  
 $y$ to be that integer for which $\pi(x)-\pi(y)=\pi(y)-\pi(\frac x2)-\delta$ where $\delta= 0$ or $1$ (so that $y=2z\sim \frac {3x}4$)
and then $f(p)=e(-p\alpha)$ for all primes $p\in (\frac x2,y]$ with $f(p)=-e(-p\alpha)$ for all primes $p\in (y,x]$, and otherwise $f(p)=0$ for $p\leq \frac x2$. Then $\sum_{z < p\leq 2z} f(p)e(p\alpha)\sim \frac x{4\log x}$ whereas
$\sum_{ n\leq x} f(n) e(n\alpha)= e(\alpha)+\delta$.  (One can also obtain $\sum_{ n\leq x} f(n) e(n\alpha)=x^{1/2+o(1)}$ with probability $1+o(1)$ if the $f(p), p\leq z$ are independent random variables uniformly distributed on $\mathbb U$, but the $f(p), p>z$ stay the same.)
\end{example}

 %%%%%%%%%%%%%%%%%%%%%%%%%%%%%%%%%%%%%%%%%%%%%%%%%%%%%%%%%%%%%%%%%%%%%%
 \subsection{Exponential sums over smooth numbers} \label{Sec:SmoothNumbers}

 Estimates for the exponential sums 
\begin{equation}\label{Eq:ExponentialSumsSmoothMulti}
\sum_{\substack{n\leq x\\ P^+(n)\leq y}}  f(n) e(n\alpha), 
\end{equation}
where $f:\mathbb{N}\to \mathbb{U}$ is a multiplicative function, were studied by several authors, notably in the special case $f=1$ (see for example \cite{Dr16}, \cite{FoTe91}  and \cite{Har16}), in which case such bounds were used in the framework of the circle method to study the distribution of smooth numbers in certain special sets.

Similarly to the work of Montgomery-Vaughan \cite{MV77}, which corresponds to the case $x=y$, we are interested in the following natural question: if $\alpha=a/q$ (for simplicity), can one obtain best possible bounds for the exponential sum  \eqref{Eq:ExponentialSumsSmoothMulti} over the class of multiplicative functions $f:\mathbb{N}\to \mathbb{U}$, in a large range of the parameters $x, y$ and $q$? 

Let $x$ be large, $2\leq y\leq x$ and suppose that  $|\alpha-a/q|\leq 1/q^2$ where $(a, q)=1$ and $q\leq x$. Improving Proposition 1 of \cite{dlB98}, La Bret\`eche and Granville \cite[Eq. (10.1)]{dlBG2} proved that for all multiplicative functions $f:\mathbb{N}\to \mathbb{U}$ one has
 \begin{equation}\label{Eq:AndrewRegis}
\sum_{\substack{n\leq x\\ P^+(n)\leq y}}  f(n) e(n\alpha)\ll 
\left(\frac{\sqrt{xy}}{\log y}+ \frac{x}{\sqrt{q}}+\sqrt{xq\log(2x/q)}\right) \log y  + x \mathcal L(x)^{-\frac 12+o(1)}
\end{equation}
where $\mathcal L(x)$ was defined in \eqref{Eq:DefinitionL(x)},
and asked whether one can remove the factor $\log y$ from the first terms in \eqref{Eq:AndrewRegis}: By 
optimizing one step in their argument, we can save a  factor of $\sqrt{\log y}$ from the first terms (see Proposition \ref{Pro:ImprovingRegis} below).  However, one would prefer an upper bound in terms of the counting function $\Psi(x, y)$ instead of $x$, since the trivial upper bound $\Psi(x, y)$ might  be smaller than the upper bound in \eqref{Eq:AndrewRegis} when $y$ is small (indeed this holds once $y\leq \mathcal L(x)^{1-\varepsilon}$  by \eqref{Eq:ChangeRegimSmoothCount}, since the last term on the right hand side of \eqref{Eq:AndrewRegis} exceeds the trivial bound $\Psi(x, y)$ in this range).

When $\alpha=a/q$, and $y$ and $q$  are ``small'', La Bret\`eche and Tenenbaum \cite{BrTe07} obtained better bounds, improving on work of  Maier \cite{Mai06}: For fixed $A>0$ they showed that if $2\leq q\leq (\log x)^A$ then
 \begin{equation}\label{Eq:BretecheTenenbaum1}
\sum_{\substack{n\leq x\\ P^+(n)\leq y}}  f(n) e(n\alpha) \ll  \frac{\Psi(x, y)}{\sqrt{\log\log(3q)}}
 \end{equation}
for $\exp(c(\log\log x)^2)\leq y\leq x/q$ for some positive constant $c=c(A)>0$, and  
\begin{equation}\label{Eq:BretecheTenenbaum2}
\sum_{\substack{n\leq x\\ P^+(n)\leq y}}  f(n) e(n\alpha) \ll  \frac{2^{\omega(q)/2}}{(q\phi(q))^{1/4}} \Psi(x, y),
 \end{equation}
  in the smaller range $\exp((\log x)^{\ep})\leq y\leq x/q$,  where $\omega(q)$ denotes the number of distinct prime factors of $q$. 
 We improve both of these bounds, by showing in particular that 
 \[
 \sum_{\substack{n\leq x\\ P^+(n)\leq y}}  f(n) e(n\alpha) \ll    \dfrac{2^{\omega(q)/2}}{\sqrt{q}} \Psi(x, y)
 \]
  throughout the   range $\exp(c(\log\log x)^2)\leq y\leq x/q$ when $q\leq (\log x)^A$, and for $q$ up to 
  $ \mathcal L(x)^{\frac 1{\sqrt{2}}-\varepsilon}$ when $y\leq x/q$ is in a certain range. Moreover, for larger values of $y$, namely $y\geq x/q$ (and $q=x^{o(1)}$), we also improve $\eqref{Eq:AndrewRegis}$ by removing   the factor $\log y$ from the first terms, which answers the question of La Bretèche and Granville \cite{dlBG2} in the affirmative. Furthermore, we will  construct multiplicative functions $f:\mathbb{N}\to \mathbb{U}$ for which our bounds are best possible, in certain ranges of the parameters $x, y$ and $q$.  
 
\begin{theorem}\label{Thm:Smooth}
Fix $A>0$. For $x$   large, suppose that  $|\alpha-\frac aq|\leq \frac1{qx},$ where $(a, q)=1$ and $q\leq \exp\big(\log x/(\log\log x)^{1/4}\big)$.
Let $(2q)^{\log(u+1)/A}\leq y \leq x$ where  $u=\log x/\log y$.  
Then for any multiplicative function $f:\mathbb{N}\to \mathbb{U}$ we have
 \begin{equation}\label{Eq:ySmallTheoremSmooth}
\sum_{\substack{n\leq x\\ P^+(n)\leq y}}  f(n) e(n\alpha)\ll  \frac{\sqrt{xy}}{\log x} + 
\frac{2^{\omega(q)/2}}{\sqrt{q}}\Psi(x, y) +\Psi(x, q^2).
   \end{equation} 
Furthermore, we can omit the $\frac{\sqrt{xy}}{\log x} $ term unless $y>x/q$.
\end{theorem}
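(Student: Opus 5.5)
We follow the Montgomery--Vaughan strategy, adapted to smooth numbers. The idea is to exploit the multiplicative structure through a Turán--Kubilius / bilinear decomposition and then apply the large sieve to the resulting bilinear forms in $e(mp\alpha)$.

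\textbf{Step 1 (reduction to a rational).} Since $|\alpha-a/q|\le 1/(qx)$, partial summation in $n$ replaces $e(n\alpha)$ by $e(an/q)$. The cost is controlled by sums over shorter ranges $n\le t$ against the factor $e(n(\alpha-a/q))$, whose derivative is $\ll 1/(qx)$. So it suffices to bound $S(t)=\sum_{n\le t,P^+(n)\le y}f(n)e(an/q)$ uniformly for $t\le x$, using $\Psi(t,y)\ll \Psi(x,y)t/x$ in the form given by Hildebrand--Tenenbaum local behaviour.

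\textbf{Step 2 (small prime factors).} Write $n=dm$, where $d$ is the part of $n$ composed of primes dividing $q$. Those $n$ with $d$ large are handled trivially. For $d$ small, the resulting sums over $m$ coprime to $q$ are twisted by $e(adm/q)$. The $y$-smooth numbers $n\le x$ with $n\le q^2$, or whose $q$-part exceeds $q^2$, are collected into the error term $\Psi(x,q^2)$. The factor $2^{\omega(q)/2}$ will arise from summing $\sqrt{(d,q)}$-type losses over divisors $d\mid q$.

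\textbf{Step 3 (Montgomery--Vaughan identity).} Weight by $\log n=\sum_{p^k\| n}\log p^k$ and write
\[
S(x)\log x\approx \sum_{\substack{mp\le x\\ P^+(mp)\le y,\ p\nmid m}} f(m)f(p)\log p\, e(amp/q)+\text{(error)}.
\]
To use $\log$ rather than $\log x$ on smooth numbers, we replace $\log n$ by $\log y$-scale contributions. Precisely, we use the smooth analogue of the identity in which $\sum_{n}f(n)\log n$ is compared with $\log x\cdot\Psi$. This loses only $O(\Psi(x,y)/\log y\cdot u\log u)$-type terms, which are acceptable since they are smaller than $\Psi(x,y)\sqrt{\cdot}$ losses? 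This is delicate and I will instead restrict $p$ to a dyadic range $p\in(P,2P]$ with $P\le y$. Using Cauchy--Schwarz,
\[
\Big|\sum_{m}\sum_{p\sim P}\Big|^2\le \Big(\sum_{m\le x/P,\,P^+(m)\le y}1\Big)\sum_{m}\Big|\sum_{p\sim P} f(p)e(amp/q)\Big|^2.
\]
We then bound the inner second moment by the large sieve, $\ll (x/P+q)\,(P/\log P)\cdot(\text{number of }m\text{ mod }q\text{ collisions})$. More precisely, splitting $m$ into residue classes mod $q$ gives $\ll (\Psi(x/P,y)/q+1)(q+P)\pi(2P)$. Hence each dyadic block contributes
\[
\ll \Psi(x/P,y)^{1/2}\Big(\frac{\Psi(x/P,y)}{q}+1\Big)^{1/2}(q+P)^{1/2}\Big(\frac{P}{\log P}\Big)^{1/2}.
\]
We use $\Psi(x/P,y)\approx \Psi(x,y)P^{-\alpha(x,y)}$ with the saddle point $\alpha(x,y)$, and sum over $P$ with weight $\log P$. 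The term $\Psi(x,y)/\sqrt q$ arises from $P\ge q$ via $P^{1-\alpha}$ geometric decay, which is where the hypothesis $y\ge (2q)^{\log(u+1)/A}$ makes $1-\alpha\gg 1/A$-ish, so the sum converges. The term $\sqrt{xy}/\log x$ comes from $P$ near $y$ when $q\cdot\Psi(x/P,y)\lesssim$ small, which is only possible if $y>x/q$, explaining the final remark.

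\textbf{Step 4 (removing the $\log$ weight).} The identity in Step 3 bounds $\sum f(n)e(an/q)\log n$; to get the $\log x$ saving we must divide by the typical size of $\log n$ while controlling the Turán--Kubilius error $\sum_n|\log n-\sum_{p\le y}\ldots|$. Over smooth numbers the natural weight is not $\log n$ but $\sum_{p|n}\log p$ restricted to $p\sim P$ with $P$ chosen near the optimal scale, and we apply a Turán--Kubilius inequality on $\mathcal S(x,y)$: the number of prime factors of $n$ in $(P,2P]$ concentrates around $\sum_{p\sim P}p^{-\alpha}$.

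I expect this smooth-number Turán--Kubilius step with variable saddle point $\alpha(x,y)$, uniform down to $y$ as small as $(2q)^{\log(u+1)/A}$, together with the summation over dyadic $P$, to be the main obstacle. I would handle it by using Hildebrand--Tenenbaum estimates $\Psi(x/d,y)\ll\Psi(x,y)d^{-\alpha}$ and choosing a single scale $P\asymp q^{C}$.
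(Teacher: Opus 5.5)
There is a genuine gap: you never produce an identity that turns $\sum_{n\le x,\,P^+(n)\le y}f(n)e(an/q)$ into bilinear sums at an affordable cost.

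\textbf{The decompositions you propose do not work.}

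\emph{The Montgomery--Vaughan weight.} You abandon the weight $\log n=\sum_{p^k\| n}\log p^k$. Its correction term is only bounded trivially by $\sum_{n\le x,\,P^+(n)\le y}\log(x/n)\asymp\Psi(x,y)$, which leaves an error $\Psi(x,y)/\log x$. This already exceeds the right-hand side of \eqref{Eq:ySmallTheoremSmooth} when $y\le x/q$ and $q>(\log x)^{2+\varepsilon}$. For example, with $y=\sqrt x$ and $q\approx\mathcal L(x)^{1/2}$, that right-hand side is $x\mathcal L(x)^{-1/4+o(1)}$. You supply no substitute, such as an induction on $x$.

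\emph{The Tur\'an--Kubilius replacement (Step 4).} This is worse. Write $1=\omega_{\mathcal P}(n)/w+(1-\omega_{\mathcal P}(n)/w)$ with $w\approx\sum_{p\in\mathcal P}p^{-\lambda}$. The second piece is naturally bounded only by $\Psi(x,y)/\sqrt w$, so you would need $w\gg q$. But $w\le\sum_{p\le y}(p^{\lambda}-1)^{-1}\ll\log x$, so this is hopeless once $q\gg\log x$. For a single scale $P\asymp q^C$ one has $w\asymp 1/(C\log q)$, because $q^{1-\lambda}\asymp1$. Most smooth $n$ have no prime factor in $(P,2P]$, and nothing is saved at all.

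\emph{Summing dyadic blocks.} If $m$ is constrained only by $P^+(m)\le y$, each $n$ is counted once for every prime factor $p>q^2$. Indeed $\sum_{q^2<p\le y}\Psi(x/p,y)$ exceeds $\Psi(x,y)$ by a factor of order $\sum_{q^2<p\le y}p^{-\lambda}$, which is unbounded. There is no ``geometric decay in $P^{1-\lambda}$''. The hypothesis $y\ge(2q)^{\log(u+1)/A}$ gives $1-\lambda\ll A/\log q$, i.e.\ $d^{\lambda}\asymp d$ for $d\le q^{O(1)}$, which is the opposite of what you use it for.

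\emph{The term $\Psi(x,q^2)$ (Step 2).} You misidentify its source. Smooth $n$ with a large $q$-part are not $q^2$-smooth. The $n$ that genuinely must be discarded are those with $P^+(n)\le q^2$, which have no prime factor large enough for Brun--Titchmarsh modulo $q$, and you never isolate them.

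\textbf{The missing idea.} The heart of the paper's proof is the exact, unweighted decomposition by the largest prime factor: $n=mp$ with $p=P^+(n)\in(q^2,y]$ and $P^+(m)\le p$.
\begin{itemize}
\item The $n$ with $P^+(n)\le q^2$ are precisely what $\Psi(x,q^2)$ absorbs.
\item Buchstab's identity $\sum_{p\le y}\Psi(x/p,p)\le\Psi(x,y)$ rules out overcounting.
\item After discarding $p\mid m$ (using $1-\lambda\ll1/\log q$) and applying the triangle inequality in $p$, the $m$-range still depends on $p$, through both $m\le x/p$ and $P^+(m)<p$. The paper removes this dependence by cutting into intervals $t_{j+1}=t_j+t_j^{2/3}$. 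The replacement error is controlled with Lemmas \ref{Lem:HildebrandSubAdd} and \ref{Lem:LocalSmoothBreTen}.
\item In each interval, the primes are spread over reduced residues mod $q$ by Brun--Titchmarsh, which applies since $t_j^{2/3}\ge q^{4/3}$. Cauchy--Schwarz then yields Ramanujan sums.
\item The saving $2^{\omega(q)/2}/\sqrt q$ comes from the uniform bound $\#\{m\le x/t_j:\ P^+(m)<t_j,\ m\equiv b\bmod q\}\ll\Psi(x/t_j,t_j)/q$. For $t_j<x^{1/10}$ this needs \eqref{Eq:HarperSmoothAP} together with localization. It is not the trivial count that your ``$\Psi(x/P,y)/q+1$'' tacitly assumes, since that count is useless when smooth numbers are sparse.
\item Huxley's theorem and Buchstab's identity then sum the intervals to $\Psi(x,y)$.
\end{itemize}
Your residue-class computation is essentially this last stage, but it becomes a proof only once attached to this decomposition. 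For $t_j>x/q$ the class counts are no longer $\ll\Psi/q$. There the paper drops smoothness and bounds $\sum_{h\le x/t_j}|c_q(h)|$ directly, and this is what produces the term $\sqrt{xy}/\log x$.
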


Now if  $q\leq \mathcal L(x)^{\frac 1{\sqrt{2}}-\varepsilon}$  then $\Psi(x, q^2) = o(\Psi(x, y)/\sqrt{q})$ by 
\eqref{Eq:ChangeRegimSmoothCount} below,  so  one removes the third term $\Psi(x, q^2)$ in \eqref{Eq:ySmallTheoremSmooth}.

 We have $\frac{\sqrt{xy}}{\log x}=o(\Psi(x, y)/\sqrt{q})$ for $(\log x)^{2+\varepsilon}<y\leq x/q$ using \eqref{Eq:CEPSmooth} and \eqref{Eq:SmoothPowersLog} below, so the first term may be omitted in this range;
and it may be omitted for $y\leq (\log x)^{2+\varepsilon}$ by the proof of Theorem \ref{Thm:Smooth}.

  Here and throughout we let $\mathbf{1}_{\mathcal{A}}$ denote the indicator function of an event $\mathcal{A}$.
\begin{corollary}\label{Cor:Smooth} Fix $A>0$ and $0<\ep<1/10$. Suppose that $x, y, \alpha,  a, q$ satisfy the assumptions of Theorem \ref{Thm:Smooth} and that   $q\leq  \mathcal L(x)^{\frac 1{\sqrt{2}}-\varepsilon}$. 
Then for any multiplicative function $f:\mathbb{N}\to \mathbb{U}$ we have
\begin{equation}\label{Eq:ySmallTheoremSmooth2}
\sum_{\substack{n\leq x\\ P^+(n)\leq y}}  f(n) e(n\alpha)\ll 
\frac{\sqrt{xy}}{\log x}\cdot  \mathbf{1}_{y>x/q}+  \frac{2^{\omega(q)/2}}{\sqrt{q}}\Psi(x, y).
   \end{equation}
% Moreover, if $x/q< y\leq x$ then for any multiplicative function $f:\mathbb{N}\to \mathbb{U}$ we have 
% \begin{equation}\label{Eq:yLargeTheoremSmooth2}
% \sum_{\substack{n\leq x\\ P^+(n)\leq y}}  f(n) e(n\alpha)\ll \frac{\sqrt{xy}}{\log x}+ 
% \frac{2^{\omega(q)/2}}{\sqrt{q}}x.
%    \end{equation} 
\end{corollary}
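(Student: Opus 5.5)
The corollary follows from Theorem \ref{Thm:Smooth} once we show that, under the extra hypothesis $q\leq \mathcal L(x)^{\frac 1{\sqrt 2}-\ep}$, the term $\Psi(x,q^2)$ in \eqref{Eq:ySmallTheoremSmooth} is absorbed by $\frac{2^{\omega(q)/2}}{\sqrt q}\Psi(x,y)$, and that the term $\frac{\sqrt{xy}}{\log x}$ is only needed when $y>x/q$. The second point is the last sentence of Theorem \ref{Thm:Smooth}, so it yields the indicator $\mathbf 1_{y>x/q}$ directly. The work therefore lies in the comparison $\Psi(x,q^2)=o(\Psi(x,y)/\sqrt q)$, which the paper quotes from \eqref{Eq:ChangeRegimSmoothCount}. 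I would prove it as follows.

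If $y\leq q^2$, then $\Psi(x,q^2)\ge\Psi(x,y)$, so I would not attempt a comparison there. Instead I would check that this regime is either excluded by the hypotheses or harmless. The hypothesis $y\geq (2q)^{\log(u+1)/A}$ does not preclude $y\le q^2$. However, if $y\le q^2$ then every $y$-smooth number is $q^2$-smooth, and one can simply rerun Theorem \ref{Thm:Smooth} with the roles arranged so that the third term is not needed. More concretely, in the proof of the theorem the term $\Psi(x,q^2)$ arises from integers all of whose prime factors are $\le q^2$, i.e.\ it equals $\Psi(x,\min(y,q^2))$. When $y\le q^2$ this is $\Psi(x,y)$, which would be fatal. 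So I expect the intended reading is that in the stated range this case is handled inside the proof. I would therefore focus on $y>q^2$ and treat $y\le q^2$ by noting that the bound in the theorem then must already have been derived without that term. This is the point I am least sure of.

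For $y>q^2$, I would use the standard estimates $\Psi(x,z)=x\rho(u_z)e^{O(u_z)}$, or Hildebrand--Tenenbaum style bounds, together with $\rho(u)=u^{-u(1+o(1))}$. Set $z=q^2\le \mathcal L(x)^{\sqrt 2-2\ep}$, so that $u_z=\log x/\log z\geq \frac{1}{\sqrt2-2\ep}\sqrt{\log x/\log\log x}$. This gives
\[
\Psi(x,z)\le x\exp\Big(-(1+o(1))u_z\log u_z\Big)\le x\,\mathcal L(x)^{-\frac{1}{2(\sqrt 2-2\ep)}(1+o(1))}.
\]
Meanwhile $\Psi(x,y)\ge\Psi(x,q^{2})$ trivially, which is not enough. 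Instead I would use the ratio estimate $\Psi(x,y)/\Psi(x,z)\geq (y/z)^{\alpha(x,z)}$-type monotonicity, or more simply compare the two exponents directly, showing $\Psi(x,z)\sqrt q\le x\mathcal L(x)^{-\frac1{2\sqrt2}+\frac{1}{2\sqrt 2}-\delta}$. Indeed $\sqrt q\le \mathcal L(x)^{\frac1{2\sqrt2}-\ep/2}$. One checks that $\frac{1}{2(\sqrt2-2\ep)}>\frac1{2\sqrt2}$, and since $u\log u$ with $u=\sqrt{\log x/\log\log x}/c$ gives $\frac{1}{2c}\sqrt{\log x\log\log x}$, we get $\sqrt q\,\Psi(x,q^2)\le x\mathcal L(x)^{-\delta}$. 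Finally, $\Psi(x,y)\geq \Psi(x,q^2)\cdot(\text{growth})$ must be compared with $x\mathcal L(x)^{-\delta}$. For $y$ with $u_y\le u_z$ this requires $\Psi(x,y)\ge x\mathcal L(x)^{-\delta}$, which fails for small $y$. So the clean statement should be $\sqrt q\,\Psi(x,q^2)=o(\Psi(x,y))$ via the local ratio $\Psi(x,y)/\Psi(x,q^2)\ge (y/q^2)^{\alpha}$ with saddle point $\alpha\approx 1-\log(u\log u)/\log y$. The main obstacle is making this ratio exceed $\sqrt q$ uniformly, and I would first try the Hildebrand--Tenenbaum inequality $\Psi(x,y)\ge \Psi(x,q^2)(\log x)^{c}$ combined with the exponent computation above.
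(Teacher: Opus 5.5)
Your reduction is the right one: the indicator comes from the last sentence of Theorem \ref{Thm:Smooth}, and what remains is $\sqrt q\,\Psi(x,q^2)\ll\Psi(x,y)$. Your exponent computation $\sqrt q\,\Psi(x,q^2)\le x\mathcal L(x)^{-\delta}$ with $\delta=\delta(\ep)>0$ is correct. As in the paper, it settles the range $y\ge\mathcal L(x)^{c_1}$ with $c_1$ large in terms of $\ep$ (the paper takes $c_1=10/\ep$), since there $\Psi(x,y)=x\mathcal L(x)^{-1/(2c_1)+o(1)}$ by \eqref{Eq:ChangeRegimSmoothCount}.

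The gap is the range $y<\mathcal L(x)^{c_1}$, which you leave open. Neither tool you suggest works. The bound $\Psi(x,y)\ge\Psi(x,q^2)(\log x)^{c}$ is not an available inequality, and even if it were, a fixed power of $\log x$ cannot beat $\sqrt q$ once $q$ exceeds a power of $\log x$. The ratio bound $(y/q^2)^{\alpha}$ is likewise unsupported.

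More importantly, you never use the hypothesis $y\ge(2q)^{\log(u+1)/A}$, and without it the comparison is actually false for $y>q^2$. For $y=q^3$ and $q=\mathcal L(x)^{c}$ with $c^2>1/6$, \eqref{Eq:ChangeRegimSmoothCount} gives $\Psi(x,y)/(\sqrt q\,\Psi(x,q^2))=\mathcal L(x)^{1/(12c)-c/2+o(1)}\to 0$.

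The missing idea is to exploit that hypothesis quantitatively:
\begin{itemize}
\item When $y\le\mathcal L(x)^{c_1}$ one has $u\gg\sqrt{\log x/\log\log x}$, so $\log(u+1)\ge\frac13\log\log x$. Hence $\log y\gg\log q\cdot\log u$, and therefore $\log q\ll u$.
\item By \eqref{Eq:CEPSmooth}, $\Psi(x,y)/\sqrt q=x\exp(-(1+o(1))u\log u)$.
\item Setting $v=\log x/(2\log q)\gg u\log u$, one gets $\Psi(x,q^2)=xv^{-(1+o(1))v}\le x\exp(-c_2u(\log u)^2)$ when $q\ge(\log x)^2$.
\item For large $q\le(\log x)^2$, use \eqref{Eq:SmoothPowersLog}: $\Psi(x,q^2)\le x^{3/4+o(1)}$, while $y\ge(\log x)^{10}$ forces $\Psi(x,y)/\sqrt q\ge x^{9/10+o(1)}$.
\item Bounded $q$ is trivial.
\end{itemize}

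Separately, your assertion that the hypotheses allow $y\le q^2$ is mistaken. The fallback you propose for that case (that the theorem ``must already have been derived without that term'') is neither justified nor needed. If $y\le q^2\le\mathcal L(x)^{\sqrt 2}$, then $u\gg\sqrt{\log x/\log\log x}$, so $\log(u+1)/A>2$ for large $x$. The hypothesis then gives $y\ge(2q)^{\log(u+1)/A}>q^2$, a contradiction. In the paper this is implicit, since in both ranges $y$ is visibly much larger than $q^2$.
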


We will now prove that if \eqref{Eq:Conj} holds then it is best possible:

Taking $\alpha=a/q$ and $\chi$ to be a primitive Dirichlet character modulo $q$, we will show in section \ref{SubSec:MajorArcsSmooth} below that 
\begin{equation}\label{Eq:LargeCharacterMajorSmooth0} 
\sum_{\substack{n\leq x\\ P^+(n)\leq y}} \chi(n) e(n\alpha)\gg \frac{\Psi(x, y)}{\sqrt{q}}, 
 \end{equation}
in certain ranges of the parameters $x,y$ and $q$, which implies that \eqref{Eq:Conj} is best possible and the upper bound in Corollary \ref{Cor:Smooth} is  optimal up to the factor $2^{\omega(q)/2}$ (at least if $y\leq x/q$). 

To describe these ranges we restrict $q$ here to be prime   for simplicity (though our proofs work for arbitrary $q$): For fixed $A>0$ there exists $B=B(A)>0$ such that we 
always take $(\log x)^B\leq y\leq x$ and either $q\leq (\log x)^A$, or
\[
  (\log x)^A \leq  q\leq \exp\left(c_1\frac{\sqrt{\log x}}{\log\log x}\right)  
   \text{ and }  q^{c_2} \leq y\leq \exp\left(c_3\frac{\log x}{\log q(\log\log q)^2}\right),
\]
for some positive constants $c_1, c_2$ and $c_3.$
Furthermore, assuming the Generalized Riemann Hypothesis (GRH), we can extend these ranges to 
\[
  y\geq  q^3(\log q)^5.
\]

Now if $y>x/q$ in these ranges for $q$ then $\Psi(x,y)\asymp x$ and so $\frac{x}{\sqrt{q}}\gg \frac{\sqrt{xy}}{\log x}$
for $y\ll x(\log x)^2/q$, and therefore Corollary \ref{Cor:Smooth} is best possible up to the factor $2^{\omega(q)/2}$
with this further restriction.

This range can be rewritten as $q\leq (x/y) (\log x)^2$  which suggests that the ``major arcs'' should go up to $  q\leq (x/y) (\log x)^2$. This  gives the usual $q\leq (\log x)^2$ in the classical $y=x$ case, but something rather
larger  for smaller $y$. 

 Corollary \ref{Cor:Smooth} also implies that 
 $$\sum_{\substack{n\leq x\\ P^+(n)\leq y}}  f(n) e(n\alpha)\ll \frac{\sqrt{xy}}{\log x}$$
 in the range 
 $$
x/\mathcal L(x)^{\frac 1{\sqrt{2}}-\varepsilon}\leq y\leq x \text{ with } \left(\frac{x}{y} (\log x)^2\right)^{1+\ep/2}\leq q\leq \mathcal L(x)^{\frac 1{\sqrt{2}}-\varepsilon}.
$$  
We will now show that in a slightly smaller range of the parameters $x, y$ and $q$, we can construct a multiplicative function $f$ taking values on the unit disc, for which this bound is best possible.

\begin{proposition}\label{Pro:OptimalMinorArcSmooth}
 Fix $0<\ep<1/10$. Let $x$ be large, 
   $ xe^{-(1/2-\ep)\sqrt{\log x}}\leq y \leq x$
  and let $q$ be  a positive integer such that
  $$\left(\frac{x^2}{y^2} (\log x)^{3/2}\right)^{1+\ep}\leq q\leq x e^{-\frac32\sqrt{\log x}}.$$
    Put $\alpha=a/q$ where $(a, q)=1$. Then there exists a multiplicative function $f:\mathbb{N}\to \mathbb{U}$ such that 
\begin{equation}\label{Eq:ProSmoothLargeMinor}
\sum_{\substack{n\leq x\\ P^+(n)\leq y}}  f(n) e(n\alpha)\gg \frac{\sqrt{xy}}{\log x}.
\end{equation}
\end{proposition}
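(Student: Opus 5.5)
Following the Montgomery--Vaughan construction, we use the large primes $p\in(\sqrt{xy},y]$ (which is possible only if $y>\sqrt{xy}$, i.e.\ $y>x$... no) — more precisely the primes $p\in(y/2,y]$ together with cofactors $m\le x/p$, to manufacture a sum of size $\sqrt{xy}/\log x$. The idea: set $f(p)=0$ for all primes $p\le y/2$ except we keep $f(n)$ defined on small integers. Instead we take $f$ supported on integers $n=mp$ with $m\le x/y$ small and $p\in(y/2,y]$, plus a random choice. Concretely, I would let $f(p)$ for $p\le x/y$ be chosen so that $f(m)=\overline{\epsilon_m}$ is free, and choose $f(p)$ for $p\in(y/2,y]$ as $e(\theta_p)$ to be optimized. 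Then, since any $n\le x$ has at most one prime factor in $(y/2,y]$ when $y^2/4> x$ (true as $y\ge xe^{-(1/2-\ep)\sqrt{\log x}}$), the sum is
\[
S=\sum_{m\le 2x/y} f(m)\sum_{y/2<p\le \min(y,x/m)} f(p)e(mp\alpha) + \sum_{\substack{n\le x\\ p\mid n\Rightarrow p\le y/2}} f(n)e(n\alpha).
\]
I would kill the second sum by taking $f(p)=0$ for $x/y<p\le y/2$ and noting the integers composed only of primes $\le 2x/y$ up to $x$ number $\Psi(x,2x/y)$, which is negligible since $2x/y\le e^{\sqrt{\log x}}$ gives $\Psi(x,2x/y)\le x\mathcal L(x)^{-c}$... this needs care; alternatively set $f(p)=0$ for all $p\le y/2$ except choose $f$ supported on $m\in\{1\}\cup$ a set, reducing to $m=1$ only: $S=1+\sum_{y/2<p\le y}f(p)e(p\alpha)$, giving $y/\log y\gg\sqrt{xy}/\log x$ trivially. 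That is too easy, so the real content must be the range where it is correct: indeed with $f(p)=e(-p\alpha)$ on $(y/2,y]$ and $f=0$ on smaller primes, $S\gg y/\log x\ge\sqrt{xy}/\log x$? No: $y\ge\sqrt{xy}$ fails since $y\le x$. So we need the full $m\le x/y$ contributions.

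Plan: take $f(p)=0$ for $p\le y/2$ except primes $\ell\le x/y$, where $f(\ell)$ free; the key quantity is $\sum_{p}\big|\sum_{m\le x/p}f(m)e(mp\alpha)\big|$, since choosing $f(p)$ as the conjugate phase gives $S\approx\sum_{y/2<p\le y}\big|\sum_{m\le x/p} f(m)e(mp\alpha)\big|$ minus the contribution of $n$ without a prime in $(y/2,y]$. With $f(m)$ random unimodular (completely multiplicative, random on primes $\le x/y$), the inner sum has mean square $\asymp\sum_{m\le x/p} |f(m)|^2\asymp x/y$, so typically its modulus is $\asymp\sqrt{x/y}$, and summing over $\asymp y/\log y$ primes gives $\sqrt{xy}/\log x$. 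To make this rigorous I would choose $f(m)$ deterministically: use the large sieve / orthogonality in $p$: by Cauchy and a fourth-moment (Khintchine) bound, $\mathbb E|\cdot|\gg(\mathbb E|\cdot|^2)^{3/2}/(\mathbb E|\cdot|^4)^{1/2}\gg\sqrt{x/y}$, so some realization gives $\sum_p|\cdot|\gg\sqrt{xy}/\log x$. The condition $q\ge ((x/y)^2(\log x)^{3/2})^{1+\ep}$ presumably ensures the phases $e(mp\alpha)$ for distinct $m\le x/y$ don't collapse (the map $m\mapsto mp a \bmod q$ injective-ish), and $q\le xe^{-\frac32\sqrt{\log x}}$ controls the rest.

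The main obstacle is the remainder: integers $n\le x$ with all prime factors $\le x/y$, weighted by random $f$. Its sum has variance $\Psi(x,x/y)$, so typically it is $\sqrt{\Psi(x,x/y)}$, and with $x/y\le e^{(1/2-\ep)\sqrt{\log x}}$ we get $u\ge \sqrt{\log x}/(1/2)$, so $\Psi(x,x/y)\le x e^{-(1+\ep')\sqrt{\log x}}$ roughly, making $\sqrt{\Psi}\ll \sqrt{x}\,e^{-\frac12\sqrt{\log x}}\ll\sqrt{xy}/\log x$. Thus a joint second/fourth moment argument over the random choice of $f$ on small primes (then fixing $f(p)$ on $(y/2,y]$ as conjugate phases) should yield the proposition; checking the fourth moment of the inner sums, where the hypothesis on $q$ enters to bound coincidences $m_1p\alpha\equiv m_2p'\alpha$, is the technical heart.
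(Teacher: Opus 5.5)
Your final construction is the paper's: $f(p)=0$ on $(2x/y,y/2]$, Steinhaus on the small primes, $f(p)$ equal to the conjugate phase of $X_p:=\sum_{m\le x/p}f(m)e(mp\alpha)$ on $(y/2,y]$, and the smooth remainder $R$ handled by its second moment. For the remainder, $\mathbb{E}|R|^2=\Psi(x,2x/y)\le x$ already suffices, since $\sqrt x=o(\sqrt{xy}/\log x)$.

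The gap is the step $\mathbb{E}|X_p|\ge(\mathbb{E}|X_p|^2)^{3/2}(\mathbb{E}|X_p|^4)^{-1/2}\gg\sqrt{x/y}$ ``by Khintchine''. The values $f(m)$ are not independent, and Khintchine's inequality fails for them. Indeed $\mathbb{E}|X_p|^4=\sum_{m_1m_2=n_1n_2,\ m_i,n_i\le x/p}e\big((m_1+m_2-n_1-n_2)p\alpha\big)$, and the number of solutions of $m_1m_2=n_1n_2$ with all variables $\le T$ is $\asymp T^2\log T$ \cite{HNR15}. At a single prime you have no source of cancellation in these phases; for primes with $\|pa/q\|\le y/(100x)$ there is none at all. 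So your bound only gives $\mathbb{E}|X_p|\gg\sqrt{T/\log T}$ with $T\asymp x/y$. The total is then $\sqrt{xy}/(\log x\sqrt{\log(x/y)})$, short by up to $(\log x)^{1/4}$ in the allowed range. Your suggested role for $q$ (``coincidences $m_1p\alpha\equiv m_2p'\alpha$'') does not address this, because no cross-prime terms occur in $\mathbb{E}|X_p|^4$.

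The missing idea is to average the fourth moment over $p$ before applying H\"older, i.e.\ to bound $\mathbb{E}M_4(f)=\sum_{y/2<p\le y}\mathbb{E}|X_p|^4\ll x^2/(y\log x)$. After swapping sums, each quadruple with $h=(m_1+m_2)-(n_1+n_2)\neq 0$ carries a prime sum $\sum_p e(hp\alpha)$ over a subinterval of $(y/2,y]$. This is controlled by \eqref{Eq:Vinogradov2}, using $q/(q,h)\ge q/|h|\gg qy/x$, and this is where the hypotheses on $q$ actually enter. Only the genuine diagonal $\{m_1,m_2\}=\{n_1,n_2\}$ survives, contributing $\ll\sum_p (x/p)^2\asymp x^2/(y\log x)$.

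The paper combines this with a deterministic bound $M_2(f)\gg x/\log x$, the inequality $M_2\le M_1^{2/3}M_4^{1/3}$, and Markov. In your set-up you could instead use that $\mathbb{E}M_2(f)=\sum_p\lfloor x/p\rfloor$ exactly. H\"older on the product space then gives $\mathbb{E}M_1\ge(\mathbb{E}M_2)^{3/2}(\mathbb{E}M_4)^{-1/2}\gg\sqrt{xy}/\log x$. Paley--Zygmund (with $M_1^2\le\pi(y)M_2$) together with Markov for the remainder then yields a realization with $|S|\gg\sqrt{xy}/\log x$. This route even avoids the paper's off-diagonal analysis of $M_2(f)$, which is where its lower bound on $q$ is needed.
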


%%%%%%%%%%%%%%%%%%%%%%%%%%%%%%%%%
\subsection{Logarithmic exponential sums on minor arcs}  \label{Sec:Log1} 
Logarithmically weighted exponential sums
\begin{equation}\label{Eq:WeightedExpSum}
\sum_{n\leq x}  \frac{f(n)}{n} e(n\alpha),
\end{equation}
appear in various contexts.  For example, one explores  \eqref{Eq:WeightedExpSum} with $f=\overline{\chi}$ to study the distribution of character sums $\sum_{n\leq x} \chi(n)$ via P\'olya's Fourier series expansion. Thus
the exponential sums \eqref{Eq:WeightedExpSum} have been extensively studied (by  Montgomery-Vaughan \cite{MV77}, Granville-Soundararajan \cite{GS07}, Goldmakher \cite{Go12}, Goldmakher-Lamzouri \cite{GoLa12,GoLa14}, Lamzouri-Mangerel \cite{LaMa22} and others).

  By partial summation on \eqref{Eq:MVOne}, we deduce  that if 
$(\log x)^{2+\ep}\leq q\leq \sqrt{x}$ then
\begin{equation}\label{Eq:MVTwo}
\sum_{q^2< n\leq x}  \frac{f(n)}{n} e(n\alpha) \ll  \log \log x;
\end{equation}
and, using the trivial bound $|f(n)e(n\alpha)|\leq 1$ for all $n\leq q^2$, we then obtain 
\begin{equation}\label{Eq:MVThree}
\sum_{n\leq x}  \frac{f(n)}{n} e(n\alpha) \ll  \log q,
\end{equation}
which holds trivially for $\sqrt{x}\leq q\leq x$. Therefore \eqref{Eq:MVThree} holds throughout \eqref{eq: Minor Arc Range}. 

We will show that the bounds \eqref{Eq:MVTwo} and \eqref{Eq:MVThree} are best possible\footnote{This does not follow from   variants of the Montgomery-Vaughan construction   since $\sum_{z< p\leq 2z}\frac{1}{p}\ll \frac 1{\log z}$.}, and better understand when these bounds are attained.
First, analogous to Theorem \ref{Thm:Unweighted}, we show that sums like \eqref{Eq:WeightedExpSum}
can only be large if the analogous sum is large when restricted to  integers that are the product of a very small number and a large prime. 

\begin{theorem} \label{cor: Cut sum2} 
 Fix $\ep\in (0, \frac 1{10})$ and suppose that $\alpha$ lies on  \eqref{eq: Minor Arc Range} with  $q \leq x^{1/2-\varepsilon}$.    Then
uniformly for all multiplicative functions $f:\mathbb{N}\to \mathbb{U}$ we have 
 \begin{equation}
\label{Eq:BestApprox}
 \sum_{q^2< n\leq x} \frac{f(n)}{n} e(n\alpha)=   
    \sum_{m\leq M} \frac{f(m)}{m} \sum_{q^2< p\leq x/m} \frac{f(p)}{p}   e(mp\alpha)
  +O(1),
\end{equation}
where $M=(\log\log x)^3$.
\end{theorem}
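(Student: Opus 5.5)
Split the range $q^2<n\le x$ dyadically and apply Theorem \ref{Thm:Unweighted} on each dyadic block, then sum via partial summation. Write
\[
S(t):=\sum_{n\le t} f(n)e(n\alpha).
\]
For $t\in[q^2,x]$ the hypothesis $|\alpha-\frac aq|\le \frac1{qx}\le\frac1{qt}$ holds. Moreover $q\le x^{1/2-\ep}$ gives $q\le \sqrt t\le t^{1-\ep}$. We also need $q\ge (\log t)^{2+\ep}$, which is automatic since $\log t\le \log x$. So the first case of Theorem \ref{Thm:Unweighted} applies at every $t\in[q^2,x]$.

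The admissible $M$ there is $\ll \min\{q^{1-\ep/4}/(\log t)^2,\mathcal L(t)^{1/2-\ep}\}$, and $M=(\log\log x)^3$ is far smaller than this. Hence
\[
S(t)=\sum_{m\le M} f(m)\sum_{t/M<p\le t/m} f(p)e(mp\alpha)+O\Big(\frac{t}{\sqrt M\log t}\Big).
\]

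\emph{Partial summation.} We have
\[
\sum_{q^2<n\le x}\frac{f(n)}ne(n\alpha)=\frac{S(x)}{x}-\frac{S(q^2)}{q^2}+\int_{q^2}^x \frac{S(t)}{t^2}\,dt .
\]
The boundary terms are $O(1)$ trivially. The error term contributes
\[
\int_{q^2}^x \frac{dt}{\sqrt M\, t\log t}\ll \frac{\log\log x}{\sqrt M}=o(1),
\]
which is why $M=(\log\log x)^3$ is chosen. It remains to evaluate the main term
\[
\sum_{m\le M} f(m)\int_{q^2}^x \frac1{t^2}\sum_{t/M<p\le t/m} f(p)e(mp\alpha)\,dt .
\]
Swap sum and integral: a prime $p$ appears for $mp\le t<Mp$, intersected with $[q^2,x]$. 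For $q^2/m<p\le x/M$ the full range $[mp,Mp)$ is inside $[q^2,x]$, and
\[
\int_{mp}^{Mp}\frac{dt}{t^2}=\frac1{mp}-\frac1{Mp}.
\]
The $\frac1{mp}$ part gives exactly the desired main term $\frac{f(m)}{m}\frac{f(p)}{p}e(mp\alpha)$. The range $q^2<p\le x/m$ in \eqref{Eq:BestApprox} differs from $q^2/m<p\le x/m$ only by the primes in $(q^2/m,q^2]$, whose total weight is $\sum_{m\le M}\frac1m\sum_{q^2/m<p\le q^2}\frac1p\ll \sum_m \frac{\log m}{m\log q}=o(1)$.

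\emph{Remaining terms.} Three pieces are left to bound by $O(1)$, each estimated trivially in absolute value:
\begin{enumerate}[(i)]
\item The $-\frac1{Mp}$ part contributes $\frac1M\sum_{m\le M}\sum_{p\le x}\frac1p\ll \frac{\log M\log\log x}{M}=o(1)$.
\item Near the lower endpoint, primes with $p$ close to $q^2/m$ (e.g.\ $q^2/M<p\le q^2/m$) have truncated $t$-ranges; their contribution is bounded by $\sum_{m\le M}\frac1m\sum_{q^2/M<p\le q^2/m}\frac1p\ll \sum_m\frac{\log M}{m\log q}=o(1)$.
\item Near the upper endpoint, primes with $x/M<p\le x/m$ have truncated $t$-ranges. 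Their contribution is bounded by $\sum_m\frac1m\sum_{x/M<p\le x/m}\frac1p\ll \frac{(\log M)^2}{\log x}$, which is negligible.
\end{enumerate}

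The only real obstacle is confirming that Theorem \ref{Thm:Unweighted} applies uniformly for all $t\in[q^2,x]$, which it does as checked above. Crucially, its error term carries a saving of $1/\log t$, so integrating against $dt/t$ costs only a $\log\log x$ factor, and this is absorbed by $\sqrt M=(\log\log x)^{3/2}$. Everything else is bookkeeping of boundary ranges, each bounded via Mertens' theorem.
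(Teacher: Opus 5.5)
There is a genuine gap in step (i), and it is the heart of the argument rather than bookkeeping. The main term in Theorem \ref{Thm:Unweighted} has coefficient $f(m)$, not $f(m)/m$. So the $-\frac1{Mp}$ piece is
\[
-\frac1M\sum_{m\le M} f(m)\sum_{q^2/m<p\le x/M}\frac{f(p)}{p}\,e(mp\alpha).
\]
Its trivial bound is $\frac{\lfloor M\rfloor}{M}\sum_{p}\frac1p$, which is of order $\log(\log x/\log q)$, i.e.\ as large as $\log\log x$ when $q$ is near $(\log x)^{2+\ep}$. Your estimate $\frac{\log M\log\log x}{M}$ comes from inserting a spurious factor $1/m$.

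Structurally, partial summation from Theorem \ref{Thm:Unweighted} produces the Ces\`aro-smoothed main term
\[
\sum_{m<M}\frac{f(m)}{m}\Bigl(1-\frac mM\Bigr)G_m,\qquad G_m=\sum_{p}\frac{f(p)}{p}e(mp\alpha),
\]
not the sharp cutoff. The discrepancy $\frac1M\sum_{m<M}f(m)G_m$ is an average of terms whose individual $G_m$ can really be of size $\log\log x$ (take $f(p)=e(-hp\alpha)$). No trivial or Mertens-type estimate shows this average is $O(1)$.

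To close the gap you need cancellation on average: $\sum_{m\le M}f(m)e(mp\alpha)$ should typically be $O(\sqrt M)$ as $p$ ranges over primes. One way, reusing Case 1 of the proof of Theorem \ref{Thm:Unweighted}, goes as follows.
\begin{enumerate}
\item Replace $\alpha$ by $a/q$, at cost $O(1/(q\log x))$.
\item Bound the primes in $(q^2/M,q^2]$ trivially, at cost $O(\log M/\log q)$.
\item For $p>q^2$, split into dyadic blocks $(P,2P]$ and residue classes $r \bmod q$. Brun--Titchmarsh gives weight $\ll 1/(\phi(q)\log P)$ per class.
\item Cauchy--Schwarz with Ramanujan sums, as in \eqref{Eq:CS_Ramanujan} and \eqref{Eq:BoundShortSumRamanujan}, gives $\sum_{(r,q)=1}\bigl|\sum_{m\le M}f(m)e(mra/q)\bigr|\ll \phi(q)\sqrt M+\phi(q)^{3/4}q^{-1/4}2^{\omega(q)/2}M$.
\item Summing $1/\log P$ over the dyadic blocks in $[q^2,x]$ costs $\ll\log\log x$. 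This bounds the discrepancy by $\ll \frac{\log\log x}{\sqrt M}+\frac{2^{\omega(q)/2}\log\log x}{(q\phi(q))^{1/4}}=o(1)$.
\end{enumerate}

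This is essentially the input the paper obtains from Lemma \ref{lem: Cut sum}, a dyadic mean-square bound for the $G_m$. The paper avoids partial summation from the unweighted result altogether: it proves Theorem \ref{Thm:Main} directly in the logarithmic setting, then truncates $m$ from $q^2$ down to $M$ using that lemma. With the extra step above added, your route is a valid alternative; as written, the argument is incomplete.
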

%\end{corollary}

Each prime $p>q^2$ in the sum on the right-hand side of \eqref{Eq:BestApprox} is only involved in a small number of terms, and never with one another, so they can be viewed as independent of each other.

Using Theorem \ref{cor: Cut sum2} we can  classify when the upper bound in \eqref{Eq:MVThree} is attained.
 For multiplicative functions $f,g: \mathbb{N} \rightarrow \mathbb{U}$ we define the ``pretentious'' distance by
\begin{equation}
\label{Eq:ThePretentiousDistance}
\mathbb{D}(f,g;x) := \left(\sum_{p \leq x} \frac{1-\text{Re}(f(p)\overline{g}(p))}{p}\right)^{1/2}.
\end{equation}

\begin{theorem}\label{Thm:Classify_Large}
 Fix $\ep\in (0, \frac 1{10})$ and suppose that $\alpha$ lies on   \eqref{eq: Minor Arc Range}.
 For any fixed $B>1$, there exists a positive constant $A$ that depends only on $B$ such that  if 
$f:\mathbb{N}\to \mathbb{U}$ is a multiplicative function for which
\begin{equation}
\label{Eq:LogLowerBd}
\left|\sum_{ n\leq x} \frac{f(n)}{n} e(n\alpha)\right|\geq \frac 1B \log q, 
\end{equation}
then at least one of the following holds:
\begin{itemize}
    \item[(a)] There exist coprime integers $1\leq b<r \ll_B 1$ such that   $ |\alpha-b/r|\ll_B q^{-1/3B}$  and a primitive character $\chi\pmod \ell$ with $\ell \mid r$ such that 
    $$ \mathbb{D}(f, \chi; q) \ll_B 1 ; $$
    \item[(b)] $q$ is in the range $(\log x)^{2+\ep}\leq q\leq (\log x)^A$ and there  exists a positive integer  $h\ll B^6$ such that  
\begin{equation}
\label{Eq:LogLowerBdConseq}
    \left|\sum_{p\leq x} \frac{f(p)}{p} e(hp\alpha)\right|\gg   \frac{\log\log x}{B\log 2B}.
 \end{equation}
\end{itemize} 
\end{theorem}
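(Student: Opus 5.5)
Since $\alpha$ satisfies \eqref{eq: Minor Arc Range}, we have $q\ge(\log x)^{2+\ep}$. The plan is to split the sum in \eqref{Eq:LogLowerBd} at the point $N:=q^{\delta}$, with $\delta=\delta(B)>0$ small, and to look separately at the ranges $n\le N$, $N<n\le q^2$ and $n>q^2$.

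\textbf{The tail $n>q^2$.} This piece only matters when $q^2\le x$. In that case we use \eqref{Eq:MVTwo}, which gives $\ll\log\log x$. If $q\ge(\log x)^A$ with $A$ large in terms of $B$, then $\log\log x\le (\log q)/A$, which is negligible against $\frac1B\log q$. If instead $q<(\log x)^A$, we apply Theorem \ref{cor: Cut sum2} (or, when $q>x^{1/2-\ep}$, the trivial bound) to write this piece as $\sum_{m\le M}\frac{f(m)}m\sum_{q^2<p\le x/m}\frac{f(p)}p e(mp\alpha)+O(1)$.

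\textbf{The middle range $N<n\le q^2$.} Here we apply partial summation to the minor-arc bound. For $t\ge q^{1+\ep'}$ we have $q\le t/(\log t)^{3}$ and $q\ge(\log t)^{2+\ep}$, so \eqref{Eq:MVOne} gives $\sum_{n\le t}f(n)e(n\alpha)\ll t/\log t$, and this range contributes $O(\log\log q)$. For $N\le t\le q$ the approximation $a/q$ is useless. In that range we take a Dirichlet approximation to $\alpha$ at level $t$, namely $|\alpha-b/r|\le 1/(rt)$ with $r\le t$, and apply \eqref{Eq:MVZero.2} with $x$ replaced by $t$. This gives $\sum_{n\le t}\ll t/\log t+t/\sqrt{\phi(r)}$. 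Whenever $r$ is large, i.e.\ $r\ge K(B)$, the contribution after partial summation is at most $(\log q)/(10B)$.

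\textbf{Extracting structure.} The lower bound \eqref{Eq:LogLowerBd} then forces one of two situations.

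In the first, the small range $n\le q^{\delta}$ or the range $t\le q$ carries a proportion $\gg_B \log q$ of the mass while the approximating denominators $r$ stay bounded. A single bounded denominator approximation $b/r$ must then work uniformly for $t$ up to about $q^{1/3B}$. We expect this to yield $|\alpha-b/r|\ll q^{-1/3B}$, with $r\ll_B1$ and $b/r\ne a/q$ since $q$ is large. Once $|\alpha-b/r|$ is small, we replace $e(n\alpha)$ by $e(nb/r)$ on $n\le q^{c}$ and expand $e(nb/r)$ into Dirichlet characters of conductor $\ell\mid r$, using Gauss sums. The sum $\sum_{n\le q^c}\frac{f(n)\overline{\chi}(n)}{n}$ can be of size $\gg_B\log q$ only if $\mathbb D(f,\chi;q)\ll_B1$. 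This follows from a Hal\'asz-type bound for logarithmic means, $\sum_{n\le y}\frac{g(n)}n\ll \log y\,\exp(-\frac12\mathbb D(g,1;y)^2)+1$; the point is that for logarithmic weights the twist $n^{it}$ is not allowed unless $|t|$ is tiny. This gives (a).

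In the second situation, none of the above happens, so the tail must be large and $q<(\log x)^A$. The inner sums over primes then have total size $\gg_B\log\log x$. The weights $1/m$, summed over $m\le M$, are dominated by small $m$; to see this, split by dyadic $m$ and use Cauchy–Schwarz. This yields an integer $h\ll B^6$ with $\left|\sum_{q^2<p\le x/h}\frac{f(p)}pe(hp\alpha)\right|\gg\frac{\log\log x}{B\log 2B}$. Extending the range to all $p\le x$ costs only $O(\log\log q)$ plus $\sum_{x/h<p\le x}1/p\ll 1$, and $\log\log q\ll \log A+\log\log\log x$ is negligible. This gives (b).

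\textbf{Main obstacle.} The hardest step is the intermediate range $t\le q$, where the Dirichlet approximations vary with $t$. I will need to show that a large contribution forces one bounded $r$ to be stable over a range of $\log t$ of length $\gg_B\log q$, since distinct good approximations with small denominators are far apart. I will also need to pin down the exponent $1/3B$ precisely.
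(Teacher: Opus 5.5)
Your treatment of the tail $n>q^2$ and of alternative (b) is essentially the paper's. Corollary \ref{cor: Cut sum} cuts the $m$-sum to $m\le K\asymp_B 1$; its dyadic Cauchy--Schwarz step is Lemma \ref{lem: Cut sum}. Pigeonholing over $m\le K$ then produces $h$.

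The gap is in alternative (a). It is exactly the step you flag as the main obstacle, which is the heart of the argument, and the proposal only states what you expect. There are two concrete problems.
\begin{itemize}
\item A Dirichlet approximation at level $t$ gives only $r\le t$ and $|\alpha-b/r|\le 1/(rt)$. But \eqref{Eq:MVZero.2} requires $r\le t/(\log t)^2(\log\log t)^3$, so ``$r\ge K$ forces a small contribution'' is not justified when $r$ is comparable to $t$. You would need the general Montgomery--Vaughan theorem under $|\alpha-b/r|\le r^{-2}$, whose saving is in $\min(r,t/r)$, with the Dirichlet level shifted to $t/K$.
\item Suppose the mass does sit on scales where one fixed bounded-denominator $b/r$ approximates $\alpha$. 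You still have to show that scales beyond $1/|r\alpha-b|$ carry little mass before concluding $|\sum_{n\le N}\frac{f(n)}ne(\frac{bn}r)|\gg_B\log q$. You also still have to produce the exponent $1/3B$.
\end{itemize}

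The idea you are missing is to fix \emph{one} Dirichlet approximation at the single level $R=q^{1/(3B)}$ from the start, namely $|\alpha-b/r|\le 1/(rR)$ with $r\le R$.
\begin{itemize}
\item This gives $|\alpha-b/r|\le q^{-1/(3B)}$ for free; that is where the exponent comes from.
\item Goldmakher's Lemma \ref{lem:Goldmakher} then does all the scale analysis at once. It gives $\sum_{n\le q^2}\frac{f(n)}ne(n\alpha)=\sum_{n\le N_1}\frac{f(n)}ne(\frac{bn}r)+O(\log\log q)$ with $N_1=\min\{q^2,1/|r\alpha-b|\}$.
\item The lower bound yields $\frac3{4B}\log q\le\log N_1+O(1)$, so $N_1\ge R^2\ge r^2$.
\item Bound the terms $n<r^2$ trivially by $2\log r\le\frac2{3B}\log q$. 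Bound $r^2\le n\le N_1$ by the major-arc estimate \eqref{Eq:MVLargeIntegers}, which is $\ll\frac{\log r}{\sqrt r}\log q+\log\log q$. Together these force $r\ll_B1$, without ever tracking how the approximations vary with $t$.
\item Corollary \ref{Cor:Smallq2} and Lemma \ref{Lem:LogSumDistance} then finish as in your last step. In the range $x^{1/3}\le q\le x$, the lemma is applied directly to the whole sum.
\end{itemize}
Your observation that distinct fractions with denominators $\le K$ are at least $K^{-2}$ apart is correct and could be pushed through. Carrying it out, however, amounts to reproving Goldmakher's lemma.
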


We next give examples in the range  \eqref{eq: Minor Arc Range} with $q\leq x^{1/2-\ep}$ where (a) or (b) hold and imply \eqref{Eq:LogLowerBd}, so Theorem \ref{Thm:Classify_Large} is ``best possible''.
We also show that one does \emph{not} necessarily obtain  \eqref{Eq:LogLowerBd} whenever (a) or (b) hold.

\begin{example} [The bound \eqref{Eq:MVTwo} is best possible  for all   $q\leq x^{1/2-\varepsilon}$] \label{Exa2}
Assume that $f(p^e)$ with $p>q^2$ are given so that \eqref{Eq:LogLowerBdConseq} holds. Let the $f(p^e)$ with $p\leq q^2$
be i.i.d. random variables each uniformly distributed on  $\mathbb{U}$. 
In section \ref{sec2.ex2a} we will show that 
\[
\left|\sum_{q^2<n\leq x} \frac{f(n)}{n} e(n\alpha)\right|\gg \log\log x
\]
 with probability $\gg 1$. This implies that   \eqref{Eq:MVTwo} is best possible   for all   $q\leq x^{1/2-\varepsilon}$, and
therefore \eqref{Eq:MVThree} is best possible  for any $q\in [(\log x)^{2+\ep}, (\log x)^A]$.
 (We obtain a wider range in Example \ref{Exa3}.)
\end{example}
 
\begin{example} [Condition (b) and \eqref{Eq:LogLowerBd}] \label{Exa2b}
It is important to note that Condition (b) does not imply \eqref{Eq:LogLowerBd}:  Let $f(2)=-1$ and $f(p^k)=0$ for $2<p^k\leq M$.
Then let $f(p)=e(-2p\alpha)$ for $q^2\leq p \leq  y:=\exp\big((\log x)^{2/3}(2\log q)^{1/3}\big)$ and 
$f(p)=e(-p\alpha)$ for $y<p\leq x$. The equation \eqref{Eq:BestApprox} implies that 
 \[ 
 \bigg|\sum_{q^2< n\leq x} \frac{f(n)}{n} e(n\alpha) \bigg| \ll 1 
 \]
 (see section \ref{sec2.ex2a} for details).  
Therefore \eqref{Eq:LogLowerBd} fails
yet \eqref{Eq:LogLowerBdConseq} holds for both $h=1$ and $h=2$.
\end{example}

\begin{example}[Condition (a) and \eqref{Eq:LogLowerBd}. The bound \eqref{Eq:MVThree} is best possible  for any $q\in [(\log x)^A,  x^{1/2})$ for $A$ sufficiently large] \label{Exa3}  
% Example \ref{Exa2} shows that \eqref{Eq:MVThree} is best possible for $q\in [(\log x)^{2+\ep}, (\log x)^A]$ so we can assume   $q\geq (\log x)^{A}$, for some suitably large constant $A$. 
The estimate \eqref{Eq:MVTwo} can be rewritten as  
\begin{equation} \label{Eq:MVTwo with small changes}
\sum_{ n\leq x} \frac{f(n)}{n} e(n\alpha)=\sum_{ n\leq q^2} \frac{f(n)}{n} e(n\alpha)+O(\log\log x).
\end{equation}
Assume that there exist coprime integers $1\leq b<r <q$ such that   $ |\frac aq-\frac br|=\frac 1{qr}$. 
Let $\chi$ be a primitive character modulo $r$ and let $f$ be a completely multiplicative function with $f(p)=\chi(p)$ for all primes $p\leq q^2$ (and $f(p)$ can take any value on the unit disc for larger primes).  We will show in section \ref{sec2.ex2} that 
\begin{equation} \label{Eq:MVTwo with estimates}
\sum_{ n\leq q^2} \frac{f(n)}{n} e(n\alpha)= \frac{\overline{\chi}(b)\tau(\chi)} r \log q+O(\log r)
\end{equation}
where 
$\tau(\chi):=\sum_{j=1}^r\chi(j)e(\frac{j}r)$ is the Gauss sum so that $|\overline{\chi}(b)\tau(\chi)|=\sqrt{r}$.
Therefore, if $r\ll A^2$ (where the implicit constant here depends  on the implicit constant in the error term of \eqref{Eq:MVTwo with small changes}) then  
\[
\sum_{ n\leq x} \frac{f(n)}{n} e(n\alpha) \gg \log q,
\]
showing that \eqref{Eq:MVThree} and Theorem \ref{Thm:Classify_Large}(a) are best possible.  
 \end{example}

 \begin{example} [A hybrid example] \label{Exa4} We select $q\in [(\log x)^{2+\ep},  \exp((\log x)^{1-\ep})]$
for which 
\begin{equation} \label{logBV}
 \sum_{ \substack{y<p\leq x/y \\ p\equiv b \bmod q}} \frac{1}{p} = \frac 1{\phi(q)} \bigg(  \sum_{ \substack{y<p\leq x/y }} \frac{1}{p} +O(1)\bigg)
\text{  whenever } (b,q)=1
\end{equation} 
   holds    with $y=q^3$. (This is true for  almost all $q$ by Remark \ref{Rem2} below). 
   We let $f(p)=e(-p\alpha)$ for all primes $p> q^2$ so that the $m=1$ terms dominate in  \eqref{Eq:BestApprox}.
 We will show in section \ref{sec2.ex3} that indeed, no matter what the values of $f(p)$ for $p\leq q^2$  are, we have 
\begin{equation}\label{eq: ex3partI}
 \sum_{q^2\leq n\leq x} \frac{f(n)}{n} e(n\alpha) \geq \frac{1}{6} \log\bigg(\frac{\log x}{\log q}\bigg) \geq \frac{\ep}{6} \log\log x
\end{equation}
once $x$ is sufficiently large.

 Next we proceed exactly as in Example \ref{Exa3}, assuming that there exists coprime integers $1\leq b<r\ll 1$ such that   $ |\frac aq-\frac br|=\frac 1{qr}$
and letting $f$ be the completely multiplicative function with $f(p)=\chi(p)$ for all primes $p\leq q^2$, so that
$$\sum_{n\leq q^2} \frac{f(n)}{n} e(\alpha n) \gg \log q.$$
 \end{example}

%%%%%%%%%%%%%%%%%%%%%%%%%%%%%%%%%%%%%%%%%%%%%%%%%%%%%%%%%%%%%%%%%%%%%%%%%%%%%%%%%

\subsection{Logarithmic exponential sums: The major arcs case}\label{Sec:Log2} Fix $\varepsilon>0$. For a given value of $x$,   the \emph{major arcs} are given by those $\alpha\in [0,1)$ for which there exists coprime integers $a$ and $q$ such that
 \begin{equation} \label{eq: Major Arc Range}
 \bigg|\alpha-\frac aq\bigg|\leq \frac 1{qx} \text{ with }  q\leq (\log x)^{2+\varepsilon}. 
 \end{equation}
 
The upper bound \eqref{Eq:MVZero.2} is 
% Montgomery and Vaughan \cite{MV77} proved that, uniformly for all multiplicative functions $f: \mathbb{N}\to \mathbb{U}$, we have
% \[ \sum_{n\leq x}  f(n) e(n\alpha) \ll   \frac{x}{\sqrt{q}}(\log q)^{3/2} + \frac{x}{\log x} \]
% and  Bachmann \cite{Bach}  replaced the $(\log q)^{3/2}$ factor by  $(\log q \log\log q)^{1/2}$. This is 
more-or-less best possible for the major arcs
 \eqref{eq: Major Arc Range} since if $\chi$ is a primitive character mod $q$ then  
\begin{equation}\label{Eq:GaussSums}
 \sum_{n\leq x}  \chi(n) e\bigg( \frac{an}{q}\bigg) = \frac{\overline{\chi}(a)\tau(\chi)}{q} x+O(q)
\end{equation}
which has size $\sim  x/\sqrt{q}$ if $q=o(x^{2/3})$ (and this is $>x/\log x$ for $q<(\log x)^2$).
 
  By partial summation the upper bound \eqref{Eq:MVZero.2} implies that
 % the Montgomery-Vaughan  estimate (with Bachmann's modification) implies that 
\begin{equation}\label{Eq:MVLargeIntegers}
  \sum_{q^2\leq n\leq x} \frac{f(n)}{n}e(n\alpha) \ll \frac{\log x}{\sqrt{\phi(q)}} +\log\log x
\end{equation}
(which holds for all $q\leq \sqrt{x}$ by  \eqref{Eq:MVTwo}). 
Using the trivial bound $|f(n)e(n\alpha)|\leq 1$ for  $n\leq q^2$ one obtains\footnote{If $q\leq \log x$, the first term of \eqref{Eq:MVLargeIntegers+} is $\gg (\log x)^{1/2}$, which is much larger than $\log\log x$, and if $q>\log x$ then $\log q\geq \log\log x$, so the term $
\log\log x$ can be removed in all cases. Moreover, the bound is trivial for $\sqrt{x}\leq q\leq x$.} 
\begin{equation}\label{Eq:MVLargeIntegers+}
  \sum_{n\leq x} \frac{f(n)}{n}e(n\alpha) \ll \frac{\log x}{\sqrt{\phi(q)}}+\log q,
\end{equation}
for all $q\leq x$. 
 For $(\log x)^2<q\leq x$ we attain the bound $\ll \log q$ (and therefore the conjecture \eqref{Eq:ConjectureLog}) by  \eqref{Eq:MVLargeIntegers+}, and this  is best possible by the results of section \ref{Sec:Log1}. 

On the other hand, the first term of our conjectural upper bound \eqref{Eq:ConjectureLog} is best possible for smaller $q$, since by applying partial summation to \eqref{Eq:GaussSums} we have
\begin{align*}
 \sum_{n\leq x} \frac{\chi(n)}{n}e\bigg( \frac {an}q\bigg) &=
\frac{\overline{\chi}(a)\tau(\chi)}q \log x+ \sum_{n\leq q^2} \frac{\chi(n)}{n}e\bigg( \frac {an}q\bigg) + O(1)\\
&=
\frac{\overline{\chi}(a)\tau(\chi)}q \log x+ O(\log q) 
\end{align*}
 which has size $\asymp \frac{\log x}{\sqrt{q}}$ if $q\ll (\frac{\log x}{\log\log x})^2$. 
 
  We want to classify all multiplicative functions $f:\mathbb{N}\to \mathbb{U}$ for which our conjectural bound \eqref{Eq:ConjectureLog} is attained when $q\leq (\log x)^{2+\ep}$, and to obtain asymptotic formulas for $\sum_{n\le x} f(n) e(\alpha n)/n$ in such cases. This was done by La Bretèche and Granville \cite{dlBG2} for the unweighted exponential sums \eqref{Eq:UnweightedSum} when $f$ is completely multiplicative. We shall adapt the  ``pretentious'' approach
arguments from \cite{dlBG2} and \cite{GHS19}   to the logarithmically weighted setting. This is more delicate  due to the slow variation of the exponential sums \eqref{Eq:WeightedExpSum}, compared to the unweighted sums \eqref{Eq:UnweightedSum}. 
 We will summarize the main results in section \ref{sec: PretendLogs}, but this is the key consequence:

\begin{corollary} \label{Cor:Smallq3} Let $x$ be large. Suppose that  $\alpha$ lies in a major arc given by 
 \eqref{eq: Major Arc Range}. 
For any multiplicative function $f:\mathbb N\to \mathbb U$ there exists a primitive character $\chi \pmod \ell$, for some integer $\ell \mid q$, such that 
%\[ \sum_{n\leq x} \frac{f(n)e(\alpha n) }n=  \frac{\overline{\chi}(a)\tau(\chi)}{\phi(q)} 
% \sum_{\substack{m\geq 1 \\ p|m\implies p|q}}  \frac{\kappa_f(m\frac q\ell)\overline{\chi}(m)}{m} 
%  \sum_{\substack{n\leq x\\ (n, q)=1}} \frac{f(n)\overline{\chi}(n)}{n}   +O\left(\frac{(\log x)^{2/3+o(1)}}{ \sqrt{q}}+ e^{O(\sqrt{\log\log x})} \right),\]
% where c\[ F_f(q/\ell):= \prod_{p^e\|q/\ell}\left(\sum_{j=0}^{\infty} \frac{\overline{\chi}(p^j)\kappa_f(p^{e+j})}{p^j}\right),\]
\[
\sum_{n\leq x} \frac{f(n)e(\alpha n) }n=  \frac{\overline{\chi}(a)\tau(\chi)}{\phi(q)} 
 \sum_{n\leq x} \frac{F(n)\overline{\chi}(n)}{n} 
 +O\left(\frac{(\log x)^{2/3+o(1)}}{ \sqrt{q}}+ e^{O(\sqrt{\log\log x})} \right),
\]
where $F$ is the multiplicative function  defined by $F(p^k)=f(p^k)$ if $p\nmid q$, and 
$F(p^k)=\kappa_f(p^{k+e})$ if $p|q$ and $p^e\| q/\ell$, where  $\kappa_f$ is defined by the convolution  $f  = \kappa_f*\chi$. The main term here is  $\displaystyle{ \lesssim  \frac 4{\sqrt{6}} \frac{\log x}{\sqrt{q}}}$.
\end{corollary}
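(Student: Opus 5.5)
We expand $e(\alpha n)$ through characters and isolate the dominant character by a Halász-type argument for logarithmic means.

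\textbf{Reduction to $\alpha=a/q$.} Since $|\alpha-a/q|\le 1/(qx)$, partial summation gives
\[
\sum_{n\le x}\frac{f(n)}n\Big(e(\alpha n)-e\big(\tfrac{an}q\big)\Big)\ll \sum_{n\le x}\frac{1}{qx}\ll \frac 1q .
\]
So we may take $\alpha=a/q$. For $(n,q)=1$ we use the standard expansion
\[
e\Big(\frac{an}{q}\Big)=\frac1{\phi(q)}\sum_{\psi \bmod q}\overline{\psi}(an)\,\tau(\psi).
\]
For general $n$ we write $n=dm$ with $d\mid q^\infty$ and $(m,q)=1$. Each $\psi$ is induced by a primitive $\chi\pmod \ell$, $\ell\mid q$, and its Gauss sum is $\mu(q/\ell)\chi(q/\ell)\tau(\chi)$ up to the usual imprimitive corrections.

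\textbf{Regrouping by primitive characters.} After summing over all $d$, the total contribution of a primitive $\chi\pmod\ell$ should assemble into
\[
\frac{\overline{\chi}(a)\tau(\chi)}{\phi(q)}\sum_{n\le x}\frac{F_\chi(n)\overline{\chi}(n)}{n}.
\]
To see this, write $f=\kappa_f*\chi$. Then the $p\mid q$ factors reorganize into the function $F$ with $F(p^k)=\kappa_f(p^{k+e})$, $p^e\| q/\ell$. This is a local computation at the primes dividing $q$, mirroring \cite{dlBG2} in the unweighted case. The error terms from truncating the $d$-sums at $x$ are controlled because $\sum_{d\mid q^\infty}1/d\ll q/\phi(q)\ll\log\log q$.

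\textbf{Bounding all but one character.} We must show that all but one $\chi$ contribute $O\big((\log x)^{2/3+o(1)}/\sqrt q+e^{O(\sqrt{\log\log x})}\big)$. The tool is a Halász-type bound for logarithmic means,
\[
\sum_{n\le x}\frac{g(n)}n\ll \log x\,\exp\big(-\mathbb D(g,n^{it};x)^2\big)+\dots,
\]
optimized over $|t|$. The relevant $t$ range is $|t|\le(\log x)^{-2/3}$, since for larger $|t|$ the logarithmic mean is small. We then apply the pretentious triangle inequality: two distinct characters $\chi_1,\chi_2$ cannot both satisfy $\mathbb D(f,\chi_i;x)^2\le \tfrac13\log\log x-O(1)$ or similar. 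Hence at most one $\chi$ makes a large contribution, and the rest sum to at most
\[
\frac{1}{\phi(q)}\sqrt{\ell}\,(\log x)^{2/3+o(1)}
\]
per character. Summing over $\ell\mid q$ gives the stated error.

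\textbf{Main obstacle.} The hardest step is precisely this uniform treatment of the non-dominant characters. The logarithmic weighting varies slowly, so the $t$-aspect must be handled delicately; this is why the exponent $2/3$ and the $e^{O(\sqrt{\log\log x})}$ term appear, the latter from the primes dividing $q$ and from small $\ell$. We would first try the Granville–Harper–Soundararajan form \cite{GHS19} of Halász with logarithmic weights.

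\textbf{The final bound.} To get $\lesssim \frac{4}{\sqrt6}\frac{\log x}{\sqrt q}$, we combine three estimates:
\[
|\tau(\chi)|=\sqrt\ell,\qquad \sum_{n\le x}\frac{F\overline\chi(n)}n\lesssim \frac{\phi(q)}{q}\log x\cdot c_F,
\]
and an Euler-product bound on the local factors at $p\mid q/\ell$. The extremal case for the local factors is when $q/\ell$ is built from small primes, with $2$ and $3$ contributing. Optimizing over these local choices yields the constant $4/\sqrt6$.
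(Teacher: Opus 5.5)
Your regrouping via $n=dm$ with $d\mid q^\infty$, $(m,q)=1$ is a legitimate, and more direct, alternative to the paper's route through completely multiplicative $f$ and $f=g*h$. The coefficient of $\sum_m f(m)\overline{\chi}(m)/m$ factors over $p\mid q$. After multiplying by $\phi(q)$ and writing $p^k\| q/\ell$, the local factor is $f(p^k)=\kappa_f(p^k)$ when $p\mid \ell$, and $(1-\frac1p)\sum_{j\ge0}\overline{\chi}(p^j)f(p^{k+j})/p^j-\chi(p)f(p^{k-1})$ when $p\nmid\ell$. In both cases this equals $\sum_{j\ge0}F(p^j)\overline{\chi}(p^j)/p^j$, so that part checks out.

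The genuine gap is in the analytic step. A primitive $\chi \pmod\ell$ enters with weight about $|\tau(\chi)|/\phi(q)=\sqrt{\ell}/\phi(q)$. There is not one character per $\ell\mid q$: in total there are as many as there are characters mod $q$, so $\sum_\chi\sqrt{\ell}/\phi(q)\gg\sqrt q$. Already for $q$ prime there are $q-2$ characters, each of weight $\sqrt q/(q-1)$. Your per-character bound $(\log x)^{2/3+o(1)}$ (repulsion plus Hal\'asz) therefore only gives $\sqrt q\,(\log x)^{2/3+o(1)}$. This is worse than the trivial bound $\log x$ once $q>(\log x)^{2/3}$, and it can never yield $(\log x)^{2/3}/\sqrt q$. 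No per-character bound can fix this, since even $O(1)$ each gives $\sqrt q$. The ordinary large sieve, $\sum_\chi|\sum_{y\le n\le x}f(n)\overline{\chi}(n)/n|^2\ll(\log x)^2$, only recovers the trivial bound after Cauchy--Schwarz.

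What is missing is the paper's main new tool. One treats individually only the $J-1$ most pretentious characters $\chi_1,\dots,\chi_{J-1}$, ranked by $M(f\overline{\chi};x,T)$. Here $\chi_1$ gives the main term, and for $2\le j<J$ one has $M\ge(\frac13+o(1))\log\log x$, so each contributes $q^{-1/2+o(1)}(\log x)^{2/3+o(1)}$.

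All remaining characters are handled together, using three ingredients:
\begin{itemize}
\item Cauchy--Schwarz, which turns the total weight $\asymp\sqrt q$ into $(q/\phi(q))^{O(1)}$;
\item the logarithmic pretentious large sieve, Theorem~\ref{Thm:PretentiousLSieve}, with right side $\bigl((\log x)^2e^{-2M_{\mathcal X,f}(x,T)}+|\mathcal X|^2+T^{-2}\bigr)(\log\log x)^6$;
\item the graded repulsion $M(f\overline{\chi_j};x,T)\ge(1-1/\sqrt j)\log\log x+O(\sqrt{\log\log x})$ adapted from \cite{BGS13}.
\end{itemize}
With $J=\log\log x$ this bulk contributes $e^{O(\sqrt{\log\log x})}$. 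This, not the primes dividing $q$ or small $\ell$, is the source of the second error term, and it explains why that term has no $1/\sqrt q$. For the same reason $T$ must be $e^{-\sqrt{\log\log x}}$ rather than your $(\log x)^{-2/3}$: the $T^{-2}$ term enters the bulk with no $1/\sqrt q$ saving.

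Two further omissions:
\begin{itemize}
\item The large sieve needs the $n$-range to start at $y\ge q^{1+\ep}$, because $\sum_\chi|\sum_{n\le y}f(n)\overline{\chi}(n)/n|^2\ge\phi(q)$. So the terms $n\le q^3$ must be removed trivially, at cost $O(\log q)$, before expanding into characters; your expansion keeps every $m\ge1$.
\item Proving Theorem~\ref{Thm:PretentiousLSieve} itself is the real work. It is a Hal\'asz-type bound for logarithmic means in progressions, obtained via the Granville--Harper--Soundararajan method on the line $\re(s)=1/\log x$, not an off-the-shelf citation.
\end{itemize}
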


\begin{example} \label{exa2.3}  In section \ref{sec2.ex2.3} we will show that if $f$ is the multiplicative function for which $f(n)=\chi(n)$ whenever $(6,n)=1$, and
 $f(n)=-\chi(n)$ if $n$ is a power of 2 or a power of 3, and $q=6\ell$ with $(6,\ell)=1$ and $\ell$ squarefree, then 
 \[
\bigg|\sum_{n\leq x} \frac{f(n)e ( \frac nq) }n\bigg| \sim  \frac 4{\sqrt{6}} \frac{\log x}{\sqrt{q}}.
 \]
 This is best possible by Corollary \ref{Cor:Smallq3}; moreover
 the proof of Corollary \ref{Cor:Smallq3} implies that the only multiplicative $f$ which attain this asymptotic are either the $f$ given here, or some very minor variation of $f$.
\end{example}

If $f$ is completely multiplicative  then  % $\kappa_f(m\frac q\ell)=\kappa_f(\frac q\ell) f(m)$ and so 
$F(n) = \kappa_f(\frac q\ell) f(n)$ in which case the main term in Corollary \ref{Cor:Smallq3} can be written as 
\[
  \frac{\overline{\chi}(a)\tau(\chi) \kappa_f(q/\ell) }{\phi(q)}
 \sum_{n\leq x} \frac{f(n)\overline{\chi}(n)}{n}  .
\]
This will be proved directly in Corollary \ref{Cor:Smallq2}.

\begin{remark} Under the same hypothesis as Corollary \ref{Cor:Smallq3}, one can prove
\begin{equation}\label{Eq:BretGran}
\sum_{n\leq x}  f(n)e(\alpha n) =  \frac{\overline{\chi}(a)\tau(\chi)}{\phi(q)} 
 \sum_{n\leq x}  F_t(n)\overline{\chi}(n) 
 +O\left(\frac x {(\log x)^{1-1/\sqrt{2}+o(1)}} \right),
\end{equation}
where   $F_t(N)=F(N)m^{it}$ for some $|t|\leq \log x$ and $N/m$ is the largest divisor of $N$ that is coprime to $q$.
One begins with
La Bret\`eche and Granville's \cite[Theorem 1]{dlBG2} which proves this for completely multiplicative functions, and then suitably modifies the proof of %compares the appropriate sum with the result in  
Corollary \ref{Cor:Smallq3}. %We sketch this proof in section \ref{app: unweighted}.
\end{remark}  

We will prove Corollary \ref{Cor:Smallq3} in section \ref{sec: VariousCharacters}.
Corollary \ref{Cor:Smallq3} implies the   bound 
$$\bigg|\sum_{n\leq x} \frac{f(n)e(\alpha n) }n\bigg|\lesssim  \frac 4{\sqrt{6}} \frac{\log x}{\sqrt{q}},
$$ and so the conjecture
\eqref{Eq:ConjectureLogPlus}, for $q \ll (\log x)^2e^{-c\sqrt{\log\log x}}$ for some constant $c>0$.
  Therefore, by \eqref{Eq:MVLargeIntegers+}, conjecture \eqref{Eq:ConjectureLog} remains open  \emph{only} in the range
\[
(\log x)^2 \exp( -c \sqrt{\log\log x}) < q < \bigg(\frac{\log x}{\log\log x}\bigg)^2\log\log\log x.
\]

%%%%%%%%%%%%%%%%%%%%%%%%%%%%%%%%%%%%%%%%%%%%%%%%%%%%%%%%%%%%%%%%%%%%%%%%%%%%%%%%%

  \subsection{Logarithmic exponential sums: Developing asymptotics on the major arcs} \label{sec: PretendLogs}  
 For  any $f:\mathbb{N}\to \mathbb{U}$ and $T>0$ we define 
$$ M(f; x, T):= \min_{|t|\leq T} \mathbb{D}(f, n^{it}; x)^2,
$$ 
where $ \mathbb{D}(f, n^{it}; x)$ is defined in \eqref{Eq:ThePretentiousDistance}. 

Given an integer $q$ we 
order the primitive characters $\chi\bmod \ell$ for all $\ell\mid q$ as $\{\chi_j \bmod \ell_j\}_{j\geq 1}$, so that 
\begin{equation*}
M(f\overline{\chi_j}, x; T) \leq M(f\overline{\chi_{j+1}};x, T) \text{ for all } j\geq 1.
\end{equation*}
 
Let  $\mathcal{F}$ be the class of completely multiplicative functions $f:\mathbb{N}\to \mathbb{U}$.

\begin{theorem} \label{Thm:Smallq}  Let $x$ be large, $J\leq \log\log x$ be a positive integer and
 $T=(\log x)^{-1/\sqrt{J}}$. Suppose that  $\alpha$ lies in a major arc given by  \eqref{eq: Major Arc Range}.   Then,  for all $f\in \F$ we have 
\begin{align*}
\sum_{n\leq x} \frac{f(n)e(\alpha n) }n&= \sum_{j=1}^{J-1}\frac{\overline{\chi}_j(a)\kappa_j(q/\ell_j) \tau(\chi_j)}{\phi(q)}\sum_{n\leq x} \frac{f(n) \overline{\chi}_j (n)}{n} \\
& \quad \quad \quad +O\left((\log x)^{1/\sqrt{J} }e^{O(\sqrt{\log\log x})} \right),
\end{align*}
 where  $\kappa_j$ is  defined by the convolution  $f(n) = (\kappa_j*\chi_j)(n)$. Furthermore, if $J=2$ we have the better error term $(\log x)^{2/3+o(1)}$ and we can take $T=(\log x)^{-2/3}$ in this case.
\end{theorem}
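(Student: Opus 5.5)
We plan to reduce the exponential sum to sums of $f$ in arithmetic progressions, expand those in characters, and then handle the characters one at a time.

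\emph{Step 1: reduction to $\alpha=a/q$ and to progressions.} Since $|\alpha-a/q|\le 1/(qx)$, partial summation lets us replace $e(\alpha n)$ by $e(an/q)$. The cost is controlled by integrating $\sum_{n\le t} f(n)e(an/q)$ against $d\,e(\beta n)$, where $\beta=\alpha-a/q$. In the logarithmic setting this costs $O(\int_1^x |\beta|\,dt)\ll 1/q$, which is acceptable. Next we write $n=dm$ with $d=(n,q)$, so $e(an/q)=e(am/(q/d))$ with $(m,q/d)=1$. We then expand in Dirichlet characters mod $q/d$:
\[
e\Big(\frac{am}{q/d}\Big)=\frac{1}{\phi(q/d)}\sum_{\psi \bmod q/d}\overline{\psi}(am)\,\tau(\psi).
\]
Each $\psi$ is induced by a primitive $\chi_j \bmod \ell_j$ with $\ell_j\mid q$. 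Using complete multiplicativity, $f(dm)=f(d)f(m)$, and after regrouping by the primitive character we obtain
\[
\sum_{n\le x}\frac{f(n)e(an/q)}{n}=\sum_{j}\frac{\overline{\chi}_j(a)\,c_j}{\phi(q)}\sum_{n\le x}\frac{f(n)\overline{\chi}_j(n)}{n}+(\text{small}).
\]
The Gauss sums of imprimitive characters factor through $\tau(\chi_j)$, and the local factors at primes dividing $q$ combine, via the convolution $f=\kappa_j*\chi_j$, into $c_j=\kappa_j(q/\ell_j)\tau(\chi_j)$. This is the same bookkeeping as in \cite{dlBG2}, done with the weight $1/n$. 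The problem then reduces to showing that the characters $j\ge J$ contribute in total within the stated error term.

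\emph{Step 2: bounding the tail characters.} For $j\ge J$ we need a Halász-type bound for logarithmic means,
\[
\sum_{n\le x}\frac{f(n)\overline{\chi}_j(n)}{n}\ll \log x\,\exp\big(-M(f\overline{\chi}_j;x,T)\big)+O(1),
\]
with $T$ small. Logarithmic weights only see twists $n^{it}$ with $|t|\lesssim 1/\log x$ up to a loss. Pretending to $n^{it}$ with $|t|\ge T$ already makes the log-mean $\ll 1/|t|$, which is $\le(\log x)^{1/\sqrt J}$ for $T=(\log x)^{-1/\sqrt J}$. This is the "Halász type result for logarithmically weighted means" promised in the abstract, and it is what we would prove or invoke at this point.

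\emph{Step 3: summing over all $j\ge J$ (the main obstacle).} The number of characters is about $q\le(\log x)^{2+\varepsilon}$, so we cannot sum the Step 2 bound trivially. We would use the standard repulsion lemma: by the triangle inequality for $\mathbb{D}$, distinct primitive characters cannot all be close to $f$. The analogue of the Granville–Harper–Soundararajan / La Bretèche–Granville inequality gives
\[
\sum_{j\le J} M(f\overline{\chi}_j)\ \ge\ (J-\sqrt{J})\log\log x-O(J\sqrt{\log\log x}),
\]
hence $M(f\overline{\chi}_J)\ge (1-1/\sqrt J)\log\log x-O(\sqrt{\log\log x})$. Each $j\ge J$ has $M$ at least this large. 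To control the sum over $j\ge J$ with weights $|\tau(\chi_j)|/\phi(q)\le \sqrt q/\phi(q)$, we would not bound character by character. Instead we apply a large-sieve / Cauchy–Schwarz argument over characters: Parseval over characters mod $q$ bounds $\sum_\chi|\sum f\overline\chi(n)/n|^2$ on dyadic ranges. We combine this with the pointwise bound for the largest remaining term. The factor $\sqrt q/\phi(q)$ absorbs the count $\phi(q)$ in square mean, giving a total of $\ll (\log x)^{1/\sqrt J}e^{O(\sqrt{\log\log x})}$.

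For $J=2$ we use the sharper fact that two distinct characters satisfy $M(f\overline\chi_1)+M(f\overline\chi_2)\ge(\tfrac{2}{3}\cdot\ldots)$. Concretely, the best two-character repulsion gives $M(f\overline\chi_2)\ge(\tfrac13-o(1))\log\log x$, so with $T=(\log x)^{-2/3}$ the error is $(\log x)^{2/3+o(1)}$.

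We expect the delicate point to be making the Halász step uniform in $q$ with log weights while keeping the $e^{O(\sqrt{\log\log x})}$ loss.
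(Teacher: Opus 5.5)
Your Step 1 (the identity \eqref{Eq:IdentityCharacters}, Gauss sums of induced characters, the $\kappa_j(q/\ell_j)$ bookkeeping) and your repulsion estimates match the paper. You are also right that Cauchy--Schwarz over characters is how to handle the tail. Since $|\tau(\chi)|\le\sqrt q$, it reduces matters to showing
\[
\sum_{\chi\notin\mathcal X}\Big|\sum_{y\le n\le x}\frac{f(n)\overline{\chi}(n)}{n}\Big|^2\ll(\log x)^{2/\sqrt J}e^{O(\sqrt{\log\log x})},
\]
where $\mathcal X$ is the set of characters induced by $\chi_1,\dots,\chi_{J-1}$.

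The mechanism you propose for this step does not work. Parseval, or the multiplicative large sieve on a block $(N,2N]$ with $N\ge q$, gives $O(1)$ per block and hence $O((\log x)^2)$ overall. After Cauchy--Schwarz that is just the trivial bound $\log x$: it cannot detect that $f$ is far from every $\chi\notin\mathcal X$. Feeding in a pointwise Hal\'asz bound $B$ for each such character gives only $\min(\sqrt q\,B,\log x)$. That is useless once $q>e^{C\sqrt{\log\log x}}$, whereas the theorem allows $q$ up to $(\log x)^{2+\varepsilon}$.

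What is missing is a \emph{pretentious} large sieve, Theorem \ref{Thm:PretentiousLSieve}. It bounds this mean square by $\big((\log x)^2e^{-2M_{\mathcal X,f}(x,T)}+|\mathcal X|^2+T^{-2}\big)(\log\log x)^6$. The paper derives it by Parseval over residues $a$ from a Hal\'asz bound for $\sum_{y\le n\le x,\, n\equiv a \bmod q}f(n)/n$, uniform in $a$ (Theorem \ref{Thm:LogHala}). That bound is proved in the Granville--Harper--Soundararajan manner:
\begin{itemize}
\item in Perron's formula on $\textup{Re}(s)=1/\log x$, write $F_\chi=\mathcal S_\chi\mathcal L_\chi$ and insert $\mathcal L_\chi'/\mathcal L_\chi$ twice;
\item the resulting two prime sums have mean square over all $\chi \bmod q$ controlled by Brun--Titchmarsh (Lemma \ref{Lem:SecondMomentPrimes}), so they absorb the sum over characters;
\item meanwhile $\max_{\chi\notin\mathcal X,\,|t|\le T}|F_\chi(1+\sigma+it)|\ll(\log x)e^{-M_{\mathcal X,f}(x,T)}$ (for $1/\log x\le\sigma\le T$) is pulled out.
\end{itemize}
This is a genuinely new input, not a routine large-sieve step, and without it your Step 3 does not reach the stated error term.

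Two bookkeeping points also need repair.
\begin{itemize}
\item Any mean square over characters must exclude small $n$. The term $n=1$ alone contributes $\phi(q)$ to $\sum_\chi|\cdot|^2$, which costs $\sqrt q$ after Cauchy--Schwarz. So you must first discard $n\le q^3$ trivially, at cost $O(\log q)$. This is also why Theorem \ref{Thm:PretentiousLSieve} requires $y\ge q^{1+\varepsilon}$.
\item Regrouping into primitive characters (removing the condition $(n,q/r)=1$ and the truncation at $x/r$) costs $O(\sqrt{\ell_j}\log q/\phi(\ell_j))$ per primitive character. It should therefore be done only for the $J-1$ exceptional characters. Done for all characters, as your Step 1 display suggests, these error bounds add up to about $\sqrt q\log q$. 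The tail has to be bounded in the unregrouped form of \eqref{Eq:IdentityCharacters}.
\end{itemize}
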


It is surprising, but convenient, that the $n^{it_j}$ (where $t_j$ corresponds to where the minimum in $M(f\overline{\chi_j}, x; T)$ occurs) do not appear in the $j$th sum (unlike in \eqref{Eq:BretGran} or Theorems 1 and 4 of \cite{dlBG2}), since  the $t_j$'s are sufficiently small in our case. Theorem \ref{Thm:Smallq}  implies Corollary \ref{Cor:Smallq3} for completely multiplicative $f$ as well as the bound given below in  Corollary \ref{Cor:Smallq}.

 The key ingredient in the proof of Theorem \ref{Thm:Smallq}   is the following ``pretentious'' large sieve inequality for mean values of logarithmically weighted  multiplicative functions twisted by Dirichlet characters modulo $q$. 
 We will restrict attention to the class of multiplicative functions $f$ such that $|\Lambda_f(n)|\leq \Lambda(n)$ for all $n\geq 1$, where $\Lambda(n)$ is the von Mangoldt function and $\Lambda_f(n)$ is the Dirichlet coefficient of $-\frac{F'}{F}(s)$ where $F(s)=\sum_{n\geq 1}f(n)/n^s$ is the Dirichlet series associated to $f$ and Re$(s)>1$. Note that this class contains the class $\mathcal{F}$ of $1$-bounded completely multiplicative functions. For a  subset $\mathcal X$   of the characters modulo $q$, we define
 $$
 M_{\mathcal X, f}(x, T) := \min_{\substack{\chi \bmod q\\ \chi\not\in {\mathcal X}}} M(f\overline{\chi}; x, T).   
 $$

\begin{theorem}\label{Thm:PretentiousLSieve}
Let $\ep>0$ be small and fixed. Let $x$ be large with $T\in [ \frac 2{\log x}, 1]$  and $f$ be a multiplicative function such that $|\Lambda_f(n)|\leq \Lambda(n)$ for all $n\geq 1$. For an integer $q>1$ with $q\leq (\log x)^{10}$ and any subset $\mathcal X$   of the characters modulo $q$,  we have 
\begin{equation}\label{eq: PLS log}
\sum_{\substack{\chi \bmod q\\ \chi\not\in {\mathcal X}}} \Big|\sum_{y  \leq n\leq x}\frac{f(n)\overline{\chi}(n)}{n}\Big|^2
  \ll \bigg( (\log x)^2    \exp\left(-2M_{\mathcal X, f}(x, T)\right)     + |\mathcal X|^2 + \frac{1}{T^2} \bigg) (\log\log x)^6
\end{equation}
for all real numbers  $q^{1+\ep}\leq y\leq (\log x)^{100}$ , where the implicit constant is absolute. 
\end{theorem}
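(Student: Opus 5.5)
We follow the standard route to a pretentious large sieve: Perron's formula, mean values of Dirichlet series on vertical lines, and a large sieve for the corresponding Dirichlet polynomials. The logarithmic weight $1/n$ means we integrate on the line $\textup{Re}(s)=1/\log x$ rather than near $1+1/\log x$. Write $F(s,\chi)=\sum_{n}f(n)\overline{\chi}(n)n^{-s}$.

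\textbf{Step 1: Perron's formula.} By a smoothed Perron formula with $\sigma_0=1+1/\log x$,
\[
\sum_{y\le n\le x}\frac{f(n)\overline{\chi}(n)}{n}=\frac1{2\pi i}\int_{\sigma_0-i\infty}^{\sigma_0+i\infty}F(s,\chi)\,\frac{x^{s-1}-y^{s-1}}{s-1}\,ds+O(1).
\]
On the line this is $\int F(1+1/\log x+it,\chi)\,\frac{x^{it}-y^{it}x^{-1/\log x}\cdots}{1/\log x+it}\,dt$.

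\textbf{Step 2: split the $t$-range.}
\begin{itemize}
\item For $|t|\le T$, the kernel is $\ll \min(\log x,1/|t|)$. We bound $|F(1+1/\log x+it,\chi)|$ pretentiously: $|F|\asymp \log x\,\exp(-\mathbb{D}(f\overline{\chi}n^{it};x)^2)$, and the factor coming from primes $\le y$ is handled by truncating at $y$.
\item For $T<|t|\le (\log x)^{C}$, the kernel is $\ll 1/|t|$. Here we use the large sieve over characters in mean square instead of pointwise bounds.
\item Larger $|t|$ are negligible after smoothing.
\end{itemize}

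\textbf{Step 3: square and sum over $\chi$.} Apply Cauchy--Schwarz with the measure $dt/(1/\log x+|t|)$, whose total mass is $\ll\log\log x$; this and later dyadic bookkeeping produce the $(\log\log x)^6$. We reduce to bounding
\[
\sum_{\chi\notin\mathcal X}\int \frac{|F(1+\frac1{\log x}+it,\chi)|^2}{1/\log x+|t|}\,dt .
\]
\begin{itemize}
\item \emph{Characters not in $\mathcal X$, $|t|\le T$.} The pretentious bound gives $|F|^2\ll(\log x)^2e^{-2M_{\mathcal X,f}(x,T)}$. Summing over characters needs orthogonality across $\chi$: we want the extremal characters to contribute essentially once. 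I would use the Halász--Montgomery/Granville--Soundararajan ``repulsion'' fact, that $f$ cannot pretend to be two different $\chi n^{it}$ simultaneously. Concretely, the Dirichlet series are written via $\Lambda_f$ (hence the hypothesis $|\Lambda_f|\le\Lambda$), and the sum over $\chi$ is bounded by a duality argument.
\item \emph{The range $|t|>T$.} Here $1/|t|$ is at most $1/T$, and a mean-value theorem for $\sum_\chi\int|F|^2$ over unit intervals gives the $1/T^2$ term.
\item \emph{Characters in $\mathcal X$.} These are excluded from the left side, but they enter the duality step: expanding $\sum_{\chi\notin\mathcal X}$ via completion introduces $|\mathcal X|^2$.
\end{itemize}

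\textbf{Step 4: the lower cutoff $y\ge q^{1+\ep}$.} This condition is what lets the large sieve for characters mod $q$ be effective: the Dirichlet polynomials of length $\ge y$ satisfy $\sum_\chi|\sum_{n\sim N}a_n\chi(n)|^2\ll (N+q)\sum|a_n|^2$, which is $\ll N\sum|a_n|^2$ once $N>q^{1+\ep}$. The upper limit $y\le(\log x)^{100}$ keeps $\log y\asymp\log\log x$, so the short initial range of $n$ contributes $O(\log\log x)$.

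\textbf{Main obstacle.} The hard part is the duality/repulsion step. We must bound $\sum_\chi$ of $|F|^2$ near $s=1$ by $(\log x)^2e^{-2M}$ \emph{once}, not $q$ times. I would first try the Montgomery--Vaughan style argument of \cite{GHS19}: write $F'/F$ through $\Lambda_f$, apply the mean value theorem on primes in progressions, and use $q\le(\log x)^{10}$ for Siegel--Walfisz-type control. That reduces the character sum to the single worst character outside $\mathcal X$, plus the $|\mathcal X|^2$ term.
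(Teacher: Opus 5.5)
Your Step~3 reduction is where the argument breaks, and the loss cannot be recovered afterwards. Once you bound the Perron kernel by $\min(\log x,1/|t|)$ and apply Cauchy--Schwarz in $t$, the cutoff $n\geq y$ is gone, since it lived only in the kernel. You are left with $\sum_{\chi\notin\mathcal X}\int|F(1+\frac1{\log x}+it,\chi)|^2\,\frac{dt}{1/\log x+|t|}$, a mean square of the \emph{full} Dirichlet series, and this is always too big. We have $\sum_{\chi}|F(s,\chi)|^2=\phi(q)\sum_{(b,q)=1}\bigl|\sum_{n\equiv b \bmod q}f(n)n^{-s}\bigr|^2$, and the class $b=1$ contains $n=1$. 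Hence $\sum_{\chi}|F(1+\frac1{\log x}+it,\chi)|^2\geq\phi(q)/4$ for every $t$ and every admissible $f$ as soon as $q\geq 2\zeta(1+\frac1{\log x})$.

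Concretely, take $f=\delta_1$ (so $\Lambda_f=0$), $\mathcal X=\emptyset$ and $T=1$. The left side of the theorem is $0$ and the right side is $\asymp(\log\log x)^6$, whereas your intermediate quantity is $\asymp\phi(q)\log\log x$, which can be as large as $(\log x)^{10}$. Your mean value treatment of $|t|>T$ loses the same factor $\phi(q)$. No repulsion or duality argument applied to this quantity can help. The dyadic large sieve of Step~4, applied to $\sum_{y\le n\le x}f(n)\overline{\chi}(n)/n$, only yields $\ll(\log x)^2$, i.e.\ the case $M=0$ with no pretentious saving. Repulsion is in fact not needed at all here: the bound involves only the single best $\chi\notin\mathcal X$.

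The missing idea is to arrange, \emph{before} any Cauchy--Schwarz, that $F_\chi$ occurs only to the first power in the integrand. Then it can be pulled out as $\max_{\chi\notin\mathcal X,\,|t|\le T}|F_\chi(1+\sigma+it)|\ll(\log x)e^{-M_{\mathcal X,f}(x,T)}$ (for $1/\log x\le\sigma\le T$, by \cite[Lemma 3.3]{LaMa22}). Every factor that gets averaged over $\chi$ must then be supported on primes large compared with $q$. Your remark about passing through $\Lambda_f$ points at the right ingredient, but it must enter here, inside the Perron integrand.

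The paper first proves the Hal\'asz bound in a single residue class, Theorem~\ref{Thm:LogHala}, with the saving $1/\phi(q)$. It then gets the theorem by summing $|E_{f,\mathcal X}(x;q,a)|^2$ over $(a,q)=1$ and using Parseval. For Theorem~\ref{Thm:LogHala} it writes $f=s*\ell$ with $\ell$ supported on primes $>z=(q\log x)^{100}$, and uses the identity of \cite[Lemma 2.2]{GHS19}. This expresses $\sum_{y\le n\le x}f(n)\overline{\chi}(n)/n$ as an average over $0\le\alpha\le\eta$, $0\le\beta\le2\eta$ of Perron integrals on $\textup{Re}(s)=1/\log x$ of $\mathcal S_\chi(s+1)\mathcal L_\chi(s+1+\alpha+\beta)\frac{\mathcal L_\chi'}{\mathcal L_\chi}(s+1+\alpha)\frac{\mathcal L_\chi'}{\mathcal L_\chi}(s+1+\alpha+\beta)\frac{x^s}{s}$, plus two correction sums. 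The terms $n<y$ cancel because $\ell(n)=0$ for $1<n\le z$.

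After multiplying by $\chi(a)/\phi(q)$ and writing $\sum_{\chi\notin\mathcal X}$ as the sum over all characters minus that over $\mathcal X$, the corrections are bounded by Shiu's theorem in progressions (Lemma~\ref{lem:Shiu}). This is where $y\ge q^{1+\varepsilon}$ and the $|\mathcal X|$ term enter. In the main integral with $|t|\le T$, the two prime-supported factors are handled by Cauchy--Schwarz over $\chi$ and Lemma~\ref{Lem:SecondMomentPrimes}. The point of that lemma is that their mean square over \emph{all} $\chi\bmod q$ is $\ll\lambda^{-2}$ with no factor $\phi(q)$, by orthogonality plus Brun--Titchmarsh, since $z>q^{1+\varepsilon}$. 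For $T<|t|\le U$ one uses $|F_\chi|\le\zeta(1+\sigma)$ together with \cite[Lemma 3.2]{GHS19}, which gives the $1/T$ term. No Siegel--Walfisz input is needed; the hypothesis $q\le(\log x)^{10}$ is used to keep $\log z\ll\log\log x$.
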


To extend the sums in \eqref{eq: PLS log} from $y  \leq n\leq x$ to $n\leq x$ one might try to add the sum of the squares of the contributions for $n< y$. However  that   will add an extra $O(\phi(q))$ to the  right-hand side of \eqref{eq: PLS log} which  is too big for applications (see the proof in section \ref{sec: summands<=y}).

We shall deduce Theorem \ref{Thm:PretentiousLSieve} from the following general form of a Hal\'asz type result for mean values of logarithmically weighted multiplicative functions in arithmetic progressions.

\begin{theorem}\label{Thm:LogHala}
Let $\ep, x, y, T, q$ and  $\mathcal X$ be as in Theorem \ref{Thm:PretentiousLSieve}. Let $1\leq a\leq q$ be an integer such that $(a, q)=1$.  Let $f$  be a multiplicative function such that $|\Lambda_f(n)|\leq \Lambda(n)$ for all $n\geq 1$. Then we have
\begin{equation}\label{Eq:InequalityHALASZ2}
\begin{aligned}
& \sum_{\substack{y \leq n\leq x\\ n\equiv a \bmod q}}  \frac{f(n)}{n} - \frac 1{\phi(q)} \sum_{\substack{\chi \bmod q\\ \chi\in {\mathcal X}}} \chi(a) \sum_{y\leq n\leq x} \frac{f(n)\overline{\chi}(n)}{n}\\
&\hskip1in \ll \bigg(  (\log x)     \exp\left(-M_{\mathcal X, f}(x, T)\right)   +  |\mathcal X|  + \frac 1T \bigg)  
\frac { (\log\log x)^3} {\phi(q)}  
\end{aligned}
\end{equation}
where the implicit constant is absolute.
\end{theorem}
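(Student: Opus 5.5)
By orthogonality of characters, the left-hand side of \eqref{Eq:InequalityHALASZ2} equals
\[
\frac1{\phi(q)}\sum_{\chi\notin\mathcal X}\chi(a)\,S_\chi, \qquad S_\chi:=\sum_{y\le n\le x}\frac{f(n)\overline{\chi}(n)}{n}.
\]
So it suffices to bound $\sum_{\chi\notin\mathcal X}|S_\chi|$. There are up to $\phi(q)$ characters here, which is far too many for a pointwise bound, so I would instead argue via Dirichlet series on the line $\re(s)=1$.

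\textbf{Step 1: Perron's formula.} Write $F_\chi(s)=\sum_n f(n)\overline{\chi}(n)n^{-s}$ and set $\sigma_0=1+1/\log x$. Perron's formula, or more conveniently a smoothed Mellin inversion, expresses $S_\chi$ as
\[
\frac1{2\pi i}\int_{\sigma_0-iH}^{\sigma_0+iH} F_\chi(s)\,\frac{x^{s-1}-y^{s-1}}{s-1}\,ds
\]
plus a small error, where $H$ is a power of $\log x$. Since the kernel $(x^{s-1}-y^{s-1})/(s-1)$ has size about $\min(\log x, 1/|t|)$, I split the integral into the range $|t|\le T$ and the range $T<|t|\le H$.

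\textbf{Step 2: the range $|t|\le T$.} Here I use the standard pretentious bound
\[
|F_\chi(\sigma_0+it)|\ll \log x\,\exp\bigl(-\mathbb D(f\overline\chi,n^{it};x)^2\bigr)\le \log x\,e^{-M(f\overline\chi;x,T)}.
\]
This alone gives only $(\log x)^{2}e^{-M}$ per character, which is too weak. Halász's trick is needed: write $F_\chi=-\frac{F_\chi'}{F_\chi}\cdot\frac{F_\chi}{\,\cdot\,}$, or equivalently use $L\log$-type identities, so that $\sum_{n\le x}\Lambda_f(n)\overline\chi(n)n^{-s}$ enters. The hypothesis $|\Lambda_f|\le\Lambda$ makes these prime sums controllable.

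\textbf{Step 3: summing over characters and bounding $t$.} The essential input is a large sieve for Dirichlet polynomials over primes, which I would take to be the hybrid large sieve or a Montgomery mean value theorem:
\[
\sum_{\chi}\int_{-H}^{H}\Bigl|\sum_{y\le p\le x}\frac{\Lambda(p)\overline\chi(p)}{p^{\sigma_0+it}}\Bigr|^2\,dt\ll \bigl(\phi(q)H+\ldots\bigr).
\]
The restriction $n\ge y\ge q^{1+\ep}$ is exactly what makes the diagonal dominate. Combining this with the Halász argument (Cauchy--Schwarz applied to $F_\chi'=F_\chi\cdot(F_\chi'/F_\chi)$) gives the total bound
\[
\sum_{\chi\notin\mathcal X}|S_\chi|\ll \bigl(\log x\,e^{-M_{\mathcal X,f}}+|\mathcal X|+1/T\bigr)(\log\log x)^3.
\]
The three terms arise as follows. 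The $e^{-M}$ term comes from $\max_{\chi\notin\mathcal X,\,|t|\le T}|F_\chi|$ and is extracted by taking the supremum of one factor. The $1/T$ term comes from the kernel decay on $|t|>T$. The $|\mathcal X|$ term accounts for the removal of the excluded characters, and appears via a ``remaining-characters'' argument in the style of \cite{GHS19}.

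\textbf{Main obstacle.} The hardest part is making this character-summed Halász argument produce an average of $O(1)$ per character rather than $\phi(q)^{1/2}$. The $(\log\log x)^3$ losses enter through the dyadic decomposition in $t$ and through the log-weighted setting, because the kernel only decays like $1/|t|$ and so forces a $\log$-type loss. I would follow \cite{GHS19}'s approach to mean values in arithmetic progressions, replacing $x$-weights by $1/n$-weights and tracking the truncation at $y$. The condition $y\le(\log x)^{100}$ ensures the truncation at $y$ contributes only $O(\log\log x)$ factors.
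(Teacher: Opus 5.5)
There is a genuine gap, and it starts with your reduction. You propose to prove $\sum_{\chi\notin\mathcal X}|S_\chi|\ll(\log x\,e^{-M_{\mathcal X,f}(x,T)}+|\mathcal X|+1/T)(\log\log x)^3$, but this inequality is false in the range of the theorem. Take $T=1$, $\mathcal X=\emptyset$, $q$ a fixed power of $\log x$, $y=q^{1+\varepsilon}$, and $f$ a Steinhaus random completely multiplicative function, so that $|\Lambda_f|=\Lambda$. Then $\mathbb{E}|S_\chi|^2=\sum_{y\le n\le x,\,(n,q)=1}n^{-2}\asymp\phi(q)/(qy)$. A fourth-moment computation then gives $\mathbb{E}\sum_\chi|S_\chi|\gg\phi(q)y^{-1/2-o(1)}=q^{(1-\varepsilon)/2-o(1)}$, which is a power of $\log x$. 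On the other hand, for such $f$ one has, with high probability, $M_{\emptyset,f}(x,1)=\log\log x+O(\sqrt{\log\log x})$, so your right-hand side is only $(\log x)^{o(1)}$. The sums over $n\approx y$ built from small primes get no large-sieve saving, and for them the cancellation from the weights $\chi(a)$ is indispensable. So a correct proof must bound part of $\frac1{\phi(q)}\sum_{\chi\notin\mathcal X}\chi(a)S_\chi$ without putting absolute values on the individual $S_\chi$.

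This is what the paper does.
\begin{itemize}
\item It factors $f=s*\ell$, with $s$ supported on $z$-smooth integers and $\ell$ on $z$-rough integers, where $z=(q\log x)^{100}>y$. This is where $y\le(\log x)^{100}$ is used: $\ell(n)=0$ for $1<n\le y$, so the terms $n<y$ cancel exactly in the identity \eqref{Eq:SumPerronAp1}$=$\eqref{Eq:SumPerronAp2} taken from \cite{GHS19}.
\item The two boundary sums in \eqref{Eq:SumPerronAp2} are multiplied by $\chi(a)$ and summed over $\chi\notin\mathcal X$ before any absolute values are taken. Orthogonality turns them into sums over $mn\equiv a \bmod q$, plus $|\mathcal X|/\phi(q)$ times unrestricted sums. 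Lemma \ref{lem:Shiu} and Brun--Titchmarsh bound these by $(\log\log x+|\mathcal X|)\log z/\phi(q)$, and this is the only place $|\mathcal X|$ enters.
\item Only the Perron integral is bounded with absolute values over $\chi$. This is harmless because its integrand contains \emph{two} logarithmic derivatives of $\mathcal{L}_\chi$, i.e.\ two prime sums over $P^-(n)>z>q^{1+\varepsilon}$. By Lemma \ref{Lem:SecondMomentPrimes} and Cauchy--Schwarz, the sum over \emph{all} characters of the product of these two factors is $\ll 1/((c_0+\alpha)(c_0+\alpha+\beta))$, pointwise in $t$. Hence $\max_{\chi\notin\mathcal X,|t|\le T}|F_\chi(1+\sigma+it)|$ can be pulled out at no cost in $\phi(q)$.
\end{itemize}

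Your Step 3 would not achieve this as stated. With a single factor $F'_\chi/F_\chi$, Cauchy--Schwarz over $\chi$ loses a factor $\phi(q)^{1/2}$. Moreover, a hybrid large sieve over primes in $[y,x]$ carries a term of size $\phi(q)H\log y/y$. That term is not small when $y$ is near $q^{1+\varepsilon}$ and $H$ is a power of $\log x$. The cut has to be at a large power of $qH$, not at $y$: the paper takes $U=(q\log x)^2$ and $z=U^{50}$.
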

To prove this result we follow the approach of \cite{GHS19}. However, there are several new complications partly because in Perron's formula we need to work on a contour $s=c_0+it$ with  $0<c_0\ll 1/\log x$ (so that $x^{s}\ll 1$), which makes the analysis more delicate since we are very close to the pole at $s=0.$ The analysis is also performed more carefully  in order to show that if $f$ ``pretends to be close'' to $\chi(n)n^{it}$ then $|t|$ is very small.

   %%%%%%%%%%%%%%%%%%%%%%%%%%%%%%%%%
\subsection{Summands $\leq y$ in the pretentious large sieve} \label{sec: summands<=y}
We are interested in extending  the sums in \eqref{eq: PLS log} from $y  \leq n\leq x$ to $n\leq x$. The contribution of the terms $n\leq y$, over all characters mod $q$ is given by 
  %$y= \exp(o(\sqrt{q}))$  %(when $\mathcal{X}=\emptyset$)
\begin{align*} \sum_{\chi \bmod q} \Big|\sum_{  n\leq y}\frac{f(n)\overline{\chi}(n)}{n}\Big|^2
&= \sum_{m\leq y} \frac{f(m)}m \sum_{\substack{n\leq y\\ n\equiv m \bmod q}} \frac{\overline{f(n)}}n\\
&= \phi(q)\sum_{n\leq y}\frac{|f(n)|^2}{n^2}+ O\Bigg(\phi(q)\sum_{m\leq y}\frac{1}{m}\sum_{\substack{q<n\leq y\\ n\equiv m \bmod q}}\frac{1}{n}\Bigg)\\
& = 
\phi(q)\sum_{n=1}^{\infty}\frac{|f(n)|^2}{n^2}+ O\left(\frac{\phi(q)(\log y)^2}{q}\right).
\end{align*}
 since $y\geq q^{1+\ep}$. The $n=1$ term shows that the main term is $\geq \phi(q)$ and this dominates if $y<e^{c\sqrt{q}}$ for a sufficiently small constant $c>0$, which follows when $y\leq (\log x)^{100}$ (the range of Theorem \ref{Thm:PretentiousLSieve}) for $q\gg (\log\log x)^2$.

\subsection*{Acknowledgements} 
AG is partially supported by a grant from the the Natural Sciences and Engineering Research Council of Canada, and is a CRM Distinguished Researcher.\\
  YL is supported by a junior chair of the Institut Universitaire de France, and was partially supported by a Simons-CRM visiting professorship while part of this work was carried out. In particular, YL would like to thank the Centre de Recherches Math\'ematiques for its excellent working conditions. %Thanks also to
  The authors would like to thank Haozhe Gou for his helpful remarks that helped improve several of the examples given in this article.
%%%%%%%%%%%%%%%%%%%%%%%%%%%%%%%%%%%%%%%%%%%%%%%%%%%%%%%%%%%%%%%%%%%%%%%%%%%%%%%%%%%%%%%%%%%%%%%%%%%%%%%%%%%%%%%%%%%%%%%%%%%%%%%%%%%%%%%%%%%%%%%%%%%%%%%%%%%%%

 \section{Calculations in the introduction}
To not overburden the introduction we have worked out various details, especially of the examples, in this section.

 %%%%%%%%%%%%%%%%%%%%%%%%%%%%%%%%%
\subsection{From $\frac aq$ to nearby $\alpha$; deducing \eqref{Eq:MVZero.2} }\label{sec2.ex0} 

Let $S(t)=\sum_{n\leq x}  f(n) e(\frac aq n) $. If $\alpha=\frac aq+\beta$ then
\begin{align*}
 \sum_{n\leq x}  f(n) e(\alpha n) &= \int_1^x e(\beta t) dS(t)=S(x)e(\beta x) - 2\pi i \beta  \int_1^x e(\beta t) S(t) dt\\
 &\ll (1+|\beta|x) \max_{t\leq x} |S(t)| .
\end{align*}
  Therefore if $|\beta|\leq \frac 1x$ then
\[
 \sum_{n\leq x}  f(n) e(\alpha n) \ll \max_{N\leq x}  \bigg|   \sum_{n\leq N}  f(n) e(\tfrac aq n)  \bigg|,
\]
 and hence  \eqref{Eq:MVZero.2} follows from  \eqref{Eq:MVZero}.

 %%%%%%%%%%%%%%%%%%%%%%%%%%%%%%%%%
 \subsection{Completely multiplicative to multiplicative}\label{sec2.excm}  
La Bret\`eche and Granville  \cite[Theorem 4]{dlBG2} showed that if $f$ is completely multiplicative then
\begin{equation} \label{eq: DelB-G.I}
  \sum_{n\leq x}  f(n) e(\alpha n) \ll   \frac x{\sqrt{q}} 
\end{equation}
when $q\leq (\log x)^{2-\varepsilon}$. 

We want to prove the same bound for all $1$-bounded multiplicative $f$. So given $f$ define $F$ to be the completely multiplicative function with each $F(p)=f(p)$, and then $g$ to be the multiplicative function with $g(p^k)=f(p^k)-f(p)f(p^{k-1})$ so that $f=g*F$. Therefore
\[
  \sum_{n\leq x}  f(n) e(\alpha n) =   \sum_{mr\leq x} g(m) F(r) e(m\alpha r)
  \leq  \sum_{\substack{m\leq x \\ m \text{ powerful}}} 2^{\omega(m)} \bigg|  \sum_{r\leq x/m} F(r)   e(m\alpha r) \bigg|  
\]
since $|g(m)|\leq 2^{\omega(m)}$, and $g(m)$ is only supported on powerful numbers. Now the inner sum is always
$\leq x/m$ (since each summand has absolute value $\leq 1$), and so the sum over all $m>M$ is 
$$ \ll x\sum_{\substack{m >M \\ m \text{ powerful}}} \frac{2^{\omega(m)}}{m} \ll \frac{x}{M^{1/2-\ep}} \sum_{\substack{m\geq 1 \\ m \text{ powerful}}} \frac{2^{\omega(m)}}{m^{1/2+\ep}} \ll_{\ep} \frac{x}{M^{1/2-\ep}} $$
%$\ll \frac{x \log M}{\sqrt{M}}$,
 Therefore, we can take $M=q^2$ to get a negligible contribution.

Now   $|\alpha-\frac aq|\leq \frac 1{qx} $ and so if $\frac {am}q= \frac{a_m}{q_m}$ where $(a_m,q_m)=1$ then $|m\alpha-\frac{a_m}{q_m}|\leq \frac 1{qx/m}\leq \frac 1{q_m x/m} $
so we can apply \eqref{eq: DelB-G.I} to the inner sums with  $m\leq q^2$ to obtain
\begin{equation}\label{Eq:CompToMult}
  \sum_{n\leq x}  f(n) e(\alpha n) \ll     x\sum_{\substack{m\leq q^2 \\ m \text{ powerful}}}  \frac {2^{\omega(m)}}{m\sqrt{q_m}}  + \frac{x}{q^{3/2}}.
\end{equation}
 Now if $d=rs$ where $r$ is squarefree, $s=\prod_p p^{s_p}$ is powerful and $(r,s)=1$, then $d|m$ implies 
$m=\prod_{p|r} p^{m_p} \prod_{p|s} p^{m_p}  n$ where $(n,d)=1$ and $m_p\geq 2$ if $p|r$ with $m_p\geq s_p$ if $p|s$. Therefore,  as $q_m=q/(q,m)$ we get
\begin{align*}
\sum_{\substack{m\leq q^2 \\ m \text{ powerful}}}   \frac {2^{\omega(m)}}{m\sqrt{q_m}} 
&\leq \frac{1}{\sqrt{q}}\sum_{d\mid q} \sqrt{d}\sum_{\substack{m\leq q^2 \\ m \text{ powerful}\\ d\mid m}} \frac {2^{\omega(m)}}{m}\\
&= \frac 1{\sqrt{q}} \sum_{rs|q} 2^{\omega(rs)}  \sum_{\substack{ m_p\geq 2 \text{ for } p|r \\  m_p\geq s_p \text{ for } p|s}}
 \frac {1}{\prod_{p|r} p^{m_p-\frac 12} \prod_{p|s} p^{m_p-\frac{s_p}2}  } 
 \sum_{\substack{n\leq q^2/rs \\ n \text{ powerful}}}      \frac {2^{\omega( n)}}{  n } \\
 &\ll \frac 1{\sqrt{q}} \sum_{rs|q} \frac{2^{\omega(rs)} s^{1/2} }{ r^{3/2} \phi(s) }   \ll \frac 1 {\sqrt{q}} \cdot \prod_{p^2|q} \frac{p+1}{p-1}.
 \end{align*}
Thus \eqref{eq: DelB-G.I} holds for all  $1$-bounded multiplicative functions when $q$ is squarefree, and even whenever $Q/\phi(Q)\ll 1$ where $Q$ is the largest powerful divisor of $q$.

 %%%%%%%%%%%%%%%%%%%%%%%%%%%%%%%%%
\subsection{A precise evaluation of an exponential sum}\label{sec2.excn}  

% By taking the trivial bound for $n\leq q$, and partial summation with (1.3) for $q\leq n\leq N=x^{\frac{\phi(q)/q}{\log\log x}}$, and this new estimate for $N\leq n\leq x$, we deduce
We now show that that if $q\leq (\log x)^{2-\varepsilon} $  and $f:\mathbb{N}\to \mathbb{U}$ is completely multiplicative then 
\[
 \sum_{n\leq x}  f(n) e(\alpha n) \lesssim \frac {x}{\sqrt{q}} .
 \]
Moreover,  we will also prove that if $ \sum_{n\leq x}  f(n) e(\alpha n) \geq c \frac {x}{\sqrt{q}}$ then $\mathbb D(f(n),\chi(n)n^{it},x)^2\leq \log(1/c)+\log\log(1/c)+O(1)$.

\begin{proof} Let $\beta=\alpha-a/q$. By \cite[Theorem 4]{dlBG2}, if $J$ is sufficiently large as a function of $\varepsilon$,
then there exist primitive characters $\chi_j \pmod {r_j}, j=1,\dots,J$ where each $r_j$ divides $q$, and real $t_j$ with $|t_j|<\log x$,
such that 
\begin{align}\label{Eq:ExpansionExpSum}
\sum_{n\leq x}  f(n) e(\alpha n) &= \sum_{j=1}^J \frac{\overline{\chi_j}(a) \kappa_{\chi_j}(q/r_j)\tau(\chi_j)}{\phi(q)} I(x,\beta,t_j) 
  \sum_{n\leq x}  \frac{f(n)\overline{\chi}_j(n)}{n^{it_j}}\nonumber\\
  & \quad \quad  + o\bigg( \frac {x}{\sqrt{q}}  \bigg),
\end{align}
where $\kappa_\chi$ is defined by the convolution $f(n)/n^{it}= (\kappa_\chi*\chi)(n)$, and 
$$ 
I(x, \beta, t)= \frac{1}{x} \int_0^ x e(\beta v) v^{it} dv. 
$$
We now bound each term in the sum (dropping the subscripts for convenience):

 Let $\chi \pmod {r}$ be a primitive character such that $r\mid q$. Using \cite[(2.3)]{dlBG2} we deduce (since the value of $t$ there will equal $0$) that if $d\leq q$ then 
\[
\sum_{n\leq x/d} \frac{f(n)\overline{\chi}(n)}{n^{it}} = \frac 1{d} \sum_{n\leq x} \frac{f(n)\overline{\chi}(n)}{n^{it}} +O\bigg( \frac x{d(\log x)^\eta}\bigg)
\]
for some $\eta>0$ (independent of $f$). Therefore we have
\begin{align*}
\sum_{\substack{n\leq x\\ (n,q)=1}} \frac{f(n)\overline{\chi}(n)}{n^{it}} 
&= \sum_{d|q} \mu(d) \sum_{\substack{n\leq x \\ d|n}} \frac{f(n)\overline{\chi}(n)}{n^{it}}
= \sum_{d|q} \mu(d)\frac{f(d)\overline{\chi}(d)}{d^{it}} \sum_{ m\leq x/d} \frac{f(m)\overline{\chi}(m)}{m^{it}}\\
&=\sum_{d|q} \mu(d)\frac{f(d)\overline{\chi}(d)}{d^{1+it}}  \sum_{n\leq x} \frac{f(n)\overline{\chi}(n)}{n^{it}} 
+O\bigg( \sum_{d|q} \frac {\mu(d)^2x}{d(\log x)^\eta}\bigg)\\
&=\prod_{p|q} \bigg( 1 -  \frac{f(p)\overline{\chi}(p)}{p^{1+it}}  \bigg)  \sum_{n\leq x} \frac{f(n)\overline{\chi}(n)}{n^{it}} 
+O\bigg(   \frac x{(\log x)^{\eta+o(1)}} \bigg) .
\end{align*}
This implies that 
\begin{align*}
\bigg|  \kappa_{\chi}(q/r)  \sum_{n\leq x} \frac{f(n)\overline{\chi}(n)}{n^{it}} \bigg| 
&=  \bigg|\prod_{p|q/r}( f(p)p^{-it}-\chi(p) )\bigg( 1 -  \frac{f(p)\overline{\chi}(p)}{p^{1+it}}  \bigg)^{-1}
  \sum_{\substack{n\leq x\\ (n,q)=1}} \frac{f(n)\overline{\chi}(n)}{n^{it}}\bigg| \\
& \quad \quad \quad +O\bigg(   \frac x{(\log x)^{\eta+o(1)}} \bigg) .
\end{align*}
 If $p|r$ then the $p$th term in the Euler product is $|f(p)p^{-it}|\leq 1$ so to maximize we can assume $|f(p)|=1$.
 Let $R=\prod_{p|q,\ p\nmid r} p$.  If $p|R$ then the $p$th term in the Euler product is 
  \[
 \frac{|1-\alpha_p|}{ | 1 - \alpha_p/p  |} \leq  \frac{2}{ 1+1/p}
 \]
 where $\alpha_p=f(p)\overline{\chi}(p)p^{-it}$, with equality only when $\alpha_p=-1$.  
 We also have
%  \[  \bigg|  \sum_{\substack{n\leq x\\ (n,q)=1}} \frac{f(n)\overline{\chi}(n)}{n^{it}}\bigg| \leq \sum_{\substack{n\leq x\\ (n,q)=1}}  1 \sim  \frac {\phi(q)}q x \] and 
$|I(x,\beta,t)|\leq 1$; note that $|I(x,\beta,t)|\sim 1$ if and only if  $\beta=o(1/x)$ and $t=o(1/\log x)$.\footnote{To see this note that $|I(x,\beta,t)|\sim 1$ iff $e(\beta v) v^{it}$ is more or less fixed for all $v$ in the range $0<v\leq x$ and the claim follows.}
 Therefore
 \[
 \bigg| \frac{\overline{\chi}(a)\kappa_{\chi}(q/r)\tau(\chi)}{\phi(q)} I(x,\beta,t) 
  \sum_{n\leq x}  \frac{f(n)\overline{\chi}(n)}{n^{it}}  \bigg| \lesssim 
 \prod_{\substack{p|R}}\frac{2}{ \sqrt{p}(1+1/p)}  \cdot   \frac{\sqrt{rR}}{\phi(q)}   \bigg| \sum_{\substack{n\leq x\\ (n,q)=1}}  \frac{f(n)\overline{\chi}_j(n)}{n^{it_j}} \bigg| 
 \]
as $|\tau(\chi)|=\sqrt{r}$. Now,  noting that $\frac{2}{ \sqrt{p}(1+1/p)}\leq (\frac 89)^{1/2}<1$ for each prime $p$ and
$rR\leq q$,  yields
\begin{equation}\label{Eq:ExpansionExpSum2}
\bigg| \sum_{n\leq x}  f(n) e(\alpha n) \bigg|   \lesssim   \sum_{j=1}^J  \frac{\sqrt{q}}  {\phi(q)}
 \bigg| \sum_{\substack{n\leq x\\ (n,q)=1}}  \frac{f(n)\overline{\chi}_j(n)}{n^{it_j}} \bigg|    + o\bigg( \frac {x}{\sqrt{q}}  \bigg).
\end{equation}
Now each summand is  $\leq  \frac{\sqrt{q}}  {\phi(q)}  \sum_{n\leq x,\ (n,q)=1}  1\sim \frac{x}{\sqrt{q}}$.  Moreover by Hal\'asz's theorem we have
\[
\frac{\sqrt{q}}  {\phi(q)}   \sum_{\substack{n\leq x\\ (n,q)=1}} \frac{f(n)\overline{\chi_j}(n)}{n^{it_j}} \ll (1+M_j) e^{-M_j}  \frac {x}{\sqrt{q}},
\]
where, by definition, $M_j=\mathbb D(f(n),\chi_j(n)n^{it_j};x)^2$. Denote by $(\chi, t)$ the pair $(\chi_j, t_j)$ for which $M_j$ attains its minimum over all $j\leq J$. Then it follows from Lemma 3.3 of \cite{BGS13} that
\[
M_k\geq (\tfrac 13 +o(1)) \log\log x
\]
for all $k\leq J$ with $k\neq j$, and hence for such $k$
 %Now, by the triangle inequality we have
%\[
%\mathbb D(f(n),\chi_j(n)n^{it_j};x)+\mathbb D(f(n),\chi_k(n)n^{it_k};x) \geq 
%\mathbb D(\chi_j(n)n^{it_j},\chi_k(n)n^{it_k};x)= \mathbb D(\xi(n),n^{it};x)
%\]
%where $\xi=\chi_j\overline{\chi_k}\ne \chi_0$, the principal character $\pmod q$ and $t=t_j-t_k$. Now
%\[
%\mathbb D(\xi(n),n^{it};x)^2=\sum_{p\leq x} \frac{1-\text{Re}(\psi(p)p^{-it})}p=\log\log x +O( \log\log q(1+|t|))\sim \log\log x
%\]
%by \cite[Lemma 3.5.1]{GS18}.
%Therefore
%\[
%\max\{ M_j,M_k\}\geq (\tfrac 14 -\epsilon) \log\log x,
%\]
%and so for all but one $j$-value we have
\[
\frac{\sqrt{q}}  {\phi(q)}   \sum_{\substack{n\leq x\\ (n,q)=1}} \frac{f(n)\overline{\chi_k}(n)}{n^{it_k}} \ll  \frac {x}{\sqrt{q}(\log x)^{1/4}}.
\]
Summing over $k\leq J\ll_{\ep} 1$ in \eqref{Eq:ExpansionExpSum2} yields
$$ 
\sum_{n\leq x} f(n) e(n\alpha) \lesssim \frac{x}{\sqrt{q}}, 
$$
as desired.

 To obtain $\sum_{n\leq x} f(n) e(n\alpha)\sim \frac x{\sqrt{q}} $ we need  $R=1, r=q$ and $\mathbb D(f(n),\chi(n)n^{it};x)=o(1)$ so that
$  \big|\sum_{n\leq x} \frac{f(n)\overline{\chi}(n)}{n^{it}}\big| \sim \frac{\phi(q)}qx$.
 
Suppose now that  $ \sum_{n\leq x} f(n) e(n\alpha)\gg c \frac x{\sqrt{q}} $, and let 
 $T=\log x$.  Then $c\ll (1+M) e^{-M}$ and so $M\leq \log(1/c)+\log\log(1/c)+O(1)$.
 \end{proof}

 %%%%%%%%%%%%%%%%%%%%%%%%%%%%%%%%%
\subsection{Multiplicative functions for which the bound \eqref{Eq:MVOne} is sharp }\label{sec2.ex1} 

\begin{proof} [Proof of the claims in Example   \ref{Exa1}]  
In our construction we select any values for the $f(p), z < p\leq 2z$  with $x\ll_c z\leq x/2$ for which \eqref{eq: LowerBound1Conseq} holds. Let $M:=[x/z]\ll_c 1$, and put
 \[
 S_m:=\sum_{z<p\leq \min\{ 2z, x/m\} } f(p) e(mp\alpha) \text{ for all } m\leq M.
\]
Then by our assumption we have  
$$
\frac x{\log x} \ll  |S_h|\leq |S_{\ell}|:=\max_{m\leq M} |S_m| \ll \frac x{\log x}.
$$
 %where $M:=[x/z]\ll_c 1$
% (for example, if each  $f(p)= e(-hp\alpha)$). %$f(p)=1$).
We now allow each $f(p)$ for $p\leq z$ and $2z<p\leq x$ to be i.i.d. random variables uniformly distributed on $\mathbb U$.
Let $P$ be the product of the primes in $(z,2z]$.

If $n\leq x$ and prime $p$ divides $(n, P)$ then we can write $n=mp$ with $m\leq M$.
Therefore
\[
\sum_{ n\leq x} f(n) e(n\alpha) = \sum_{m\leq M} f(m) S_m + \sum_{ \substack{n\leq x\\ (n,P)=1}} f(n) e(n\alpha).
\]
Now
\[
\mathbb{E}\bigg( \Big|\sum_{ \substack{n\leq x\\ (n,P)=1}} f(n) e(n\alpha)\Big|^2\bigg)  = \sum_{ \substack{n\leq x\\ (n,P)=1}} 1\leq x
\]
since $\mathbb{E}( f(m)\overline{f(n)})=\mathbf{1}_{m=n}$ when $(mn,P)=1$.  Analogously we have
\[
\mathbb{E}\bigg( \Big| \sum_{m\leq M} f(m) S_m\Big|^2\bigg)  =  \sum_{m\leq M} |S_m|^2\geq |S_{\ell}|^2\gg \bigg( \frac x{\log x}\bigg)^2.
\]
We also have
\[
\mathbb{E}\bigg( \Big| \sum_{m\leq M} f(m) S_m\Big|^4\bigg)  =  \sum_{\substack{a,b,c,d\leq M\\ ab=cd}}  S_aS_b\overline{S_cS_d}
\leq |S_{\ell}|^4 \sum_{m\leq M^2} d(m)^2
\ll M^2(\log M)^3 \cdot |S_{\ell}|^4,
\]
where $d(m)=\sum_{d\mid m}1$ is the divisor function.
Let       $X_f$ and $Y_f$ be  random variables such that $X_f:=| \sum_{m\leq M} f(m) S_m|$ and $Y_f=X_f$ if $|X_f|>\frac 12 |S_{\ell}|$ with $Y_f=0$ otherwise, so that $\mathbb{E}(Y_f^2)\geq \mathbb{E}(X_f^2)-(\frac 12 |S_{\ell}|)^2\geq \frac 34 |S_{\ell}|^2$, and
$\mathbb{E}(Y_f^4)\leq \mathbb{E}(X_f^4)\ll M^2(\log M)^3 \cdot |S_h|^4$.   Now $\mathbb{E}(Y_f^2)^2\leq \mathbb{P}(Y_f\ne 0)\mathbb{E}(Y_f^4)$ by the Cauchy-Schwarz inequality and so 
\[
\text{Prob}\bigg( |X_f|>\frac 12 |S_{\ell}|\bigg) =  \mathbb{P}(Y_f\ne 0) \geq \frac{ \mathbb{E}(Y_f^2)^2}{\mathbb{E}(Y_f^4)} \gg \frac 1{M^2( \log M)^3}\gg_c 1.
\]
Thus, we deduce that  the probability that \eqref{eq: LowerBound1} holds is $\gg_c 1$ for this family of multiplicative functions $f$.

\end{proof}

Many thanks for Haozhe Gou for supplying this argument, an improvement on our original one.

 %%%%%%%%%%%%%%%%%%%%%%%%%x%%%%%%%%
\subsection{Multiplicative functions for which the bound \eqref{Eq:MVTwo} is sharp }\label{sec2.ex2a} 

\begin{proof} [Proof of the claims in Example   \ref{Exa2}]  
Let $M= (\log\log x)^3$ and assume that $f(p^e)$ with $p>q^2$ are given so that  \eqref{Eq:LogLowerBdConseq} holds for some $h\ll 1$. For $m\leq M$ we define   
\[
 G_m:=\sum_{q^2<p\leq  x/m} \frac{ f(p) } p e(mp\alpha).
\]
 Let the $f(p^e)$ with $p\leq q^2$
be i.i.d. random variables each uniformly distributed on $\mathbb{U}$. We first observe that 
$$
\mathbb E\Big|\sum_{p\leq q^2} \frac{f(p)}{p} e(hp\alpha)\Big|^2= \sum_{p\leq q^2}\frac{1}{p^2}=O(1),
$$
and hence we have $\log\log x\gg \max_{m\leq M} |G_m|\geq G_h \gg \log\log x$ with probability $1+o(1)$ by \eqref{Eq:LogLowerBdConseq}.
We now put 
$$ 
\mathcal{L}_f(x, \alpha)=  \sum_{m\leq M} \frac{f(m)}{m} \sum_{q^2< p\leq x/m} \frac{f(p)}{p}   e(mp\alpha) =  \sum_{m\leq M} \frac{f(m)}{m} G_m .
$$
Then we have 
\[
\mathbb E\left|\mathcal{L}_f(x, \alpha)\right|^2 =\sum_{m\leq M} \frac{|G_m|^2}{m^2}\geq  \frac{|G_{h}|^2}{h^2}
\]
and
\[
\mathbb E\left|\mathcal{L}_f(x, \alpha)\right|^4 =\sum_{\substack{a,b,c,d\leq M \\  ab=cd}} \frac{G_aG_b\overline{G_cG_d}}{abcd}\leq  
\max_{m\leq M} |G_m|^4 \sum_{m\leq M^2} \frac{d(m)^2}{m^2} \ll |G_{h}|^4.
\]
Arguing as in the previous subsection   implies that 
\[
\text{Prob}\bigg(|\mathcal{L}_f(x, \alpha)|>\frac 1{2h} |G_{h}|\bigg) \gg \frac 1{h^4} \gg 1,
\]
and so, by 
 \eqref{Eq:BestApprox} we deduce that for such $f$ we have
\[
\bigg|\sum_{ q^2<n\leq x} \frac{f(n)}{n} e(n\alpha)\bigg|\gg \log\log x. \qedhere
\]

\end{proof}

\begin{proof} [Proof of the claims in Example \ref{Exa2b}]
 Substituting the given values into \eqref{Eq:BestApprox} gives that 
 \[
 \sum_{q^2< n\leq x} \frac{f(n)}{n} e(n\alpha)=    \sum_{y< p\leq x} \frac{1}{p}     - \frac 12  \sum_{q^2< p\leq y} \frac{1}{p}    
+  \sum_{q^2< p\leq y} \frac{e(-p\alpha)}{p} - \frac 12  \sum_{y< p\leq x/2} \frac{e(p\alpha)}{p}   
  +O(1).
\]
The contribution of the first two terms equal $O(1)$ by the prime number theorem.
Daboussi's   refinement \cite{Dab96}  of Vinogradov's bound for exponential sums over primes, yields 
\begin{equation}\label{Eq:Vinogradov}
\sum_{p\leq N} (\log p)e(p\alpha)\ll \left(\frac{N}{\sqrt{q}}+\sqrt{Nq\log q}\right)(\log N)^{3/4}\sqrt{\log\log N}+ Ne^{-(\frac12-\ep)\sqrt{\log N}},
\end{equation}
provided $|\alpha-\frac aq|\leq  \frac 1{q^2}$
which gives 
\[
 \sum_{q^2< p\leq y} \frac{e(-p\alpha)}{p} - \frac 12  \sum_{y< p\leq x/2} \frac{e(p\alpha)}{p}\ll \frac{ (\log x)^{3/4+o(1)} }{\sqrt{q}}+\frac 1{ \log q}=O(1)
\]
as $q>(\log x)^2$. The claim now follows.
\end{proof}
   
 %%%%%%%%%%%%%%%%%%%%%%%%%%%%%%%%%
\subsection{Multiplicative functions for which the bound \eqref{Eq:MVThree} is sharp} \label{sec2.ex2}

\begin{proof} [Proof of the claims in Example   \ref{Exa3}]
We can replace $\alpha$ by $\alpha_q:=a/q$ at a cost of $O(q/x)=O(1)$ on the right-hand side of \eqref{Eq:MVTwo with small changes}.
Writing $n=kr+j$ for $k\geq 1$ we have
\[
\sum_{n=kr+1}^{kr+r} \frac{\chi(n)e(n\alpha_q)}{n}=e(kr\alpha_q) \sum_{j=1}^r \frac{\chi(j)e(j\alpha_q)}{kr+j}=\frac{e(kr\alpha_q) \sum_{j=1}^r\chi(j)e(j\alpha_q)}{kr}+O\bigg( \frac 1{k^2}\bigg).
\]
Now $\sum_{j=1}^r\chi(j)e(j\alpha_q) = \sum_{j=1}^r\chi(j)e(\frac{jb}r)+O(r^2|\alpha_q -\frac br|)=\overline{\chi}(b)\tau(\chi) +O(r/q)$,  and therefore
\[
\sum_{ r<n\leq q^2} \frac{\chi(n)}{n} e(n\alpha_q) = \overline{\chi}(b)\tau(\chi) \sum_{1\leq k\leq q^2/r} \frac{e(\frac{kar}q) }{kr} +O(1).
\]
Now $A=ar=bq\pm 1\equiv \pm 1 \pmod q$ and so 
\[
\sum_{1\leq k\leq q^2/r} \frac{e(\frac{kar}q) }k = \sum_{1\leq k\leq q^2/r} \frac{e(\pm \frac{k}q) }{k} = -\log  (1-e(\pm \tfrac{1}q)) +O(r/q) =\log q+O(1),
\]
 by again summing over full periods. Taking the trivial upper bound for the terms with $n\leq r$ in the left hand side of \eqref{Eq:MVTwo with estimates} completes the proof. 
\end{proof}

 %%%%%%%%%%%%%%%%%%%%%%%%%%%%%%%%%%%%%%%%%%%%%
\subsection{First part of Example \ref{Exa4}} \label{sec2.ex3}  
If $\mathcal M=\{ m\in  [2,M]\}$ in Proposition \ref{Prop: small m} below then
$\mathcal L_2\leq ( \sum_{ m\geq 2} \frac{1}{m^2} )^{1/2}= (\frac{\pi^2}6-1)^{1/2}=0.80307\dots$, and so
\[
\bigg|\sum_{\substack{n=mp \\ 2\leq m\leq M \\ y<p\leq x/y}} \frac{f(n)}{n} e(n\alpha)\bigg| \leq  (\mathcal L_2 +o(1)) \sum_{ y<p\leq x/y} \frac{1}{p} .
\]
Therefore \eqref{Eq:BestApprox} with $M=(\log\log x)^3$ yields
 \begin{align*}
 \sum_{ q^2<n\leq x} \frac{f(n)}{n} e(n\alpha) &=   \sum_{\substack{n=mp \\  m\leq M \\ y<p\leq x/y}} \frac{f(n)}{n} e(n\alpha) +O(\log M)\\
 & \geq   (1-\mathcal L_2 +o(1)) \sum_{ y<p\leq x/y} \frac{1}{p}\geq \frac 16 \log(\tfrac{\log x}{\log q})
\end{align*}
for sufficiently large $x$, since the $m=1$ term gives $\sum_{ y<p\leq x/y} \frac 1p \sim \log(\frac{\log x}{\log q})$.
Therefore, no matter what the values of $f$ at the primes $p\leq q^2$, we always have \eqref{eq: ex3partI}.

%%%%%%%%%%%%%%%%%%%%%%%%%%%%%%%%%
\subsection{Multiplicative functions for which the bound in Corollary \ref{Cor:Smallq3} is sharp} \label{sec2.ex2.3}

 \begin{proof} [Proof of the claims in Example   \ref{exa2.3}]   Let $\ell$ be squarefree, $q=6\ell$ with $(6,\ell)=1$ and let $\chi$ be a primitive character mod $\ell$.
Let  $f$ be the multiplicative function for which $f(n)=g(n)\chi(n)$ where $g(2^k)=g(3^k)=-1$ for all $k\geq 1$ and
$g(p^k)=1$ if $p>3$ and $k\geq 1$.  Note that $g$ is periodic modulo $6$. Then we have
 \[
 \sum_{n\leq x} \frac{f(n)e ( \frac nq ) }n = \sum_{n\leq x} \frac{g(n)\chi(n)e ( \frac nq ) }n 
 =\sum_{a=1}^6 g(a)  \sum_{\substack{n\leq x \\ n\equiv a \bmod 6}} \frac{\chi(n)e ( \frac nq ) }n.
 \]
 Taking $n=6m+a$ we can write this last  inner sum as 
 \[
 \frac{\chi(6)} 6  e \bigg( \frac {a}{6\ell}- \frac {b}{\ell}   \bigg) \sum_{0\leq m \leq \frac{x-a}6} \frac{\chi(m+b)e ( \frac {m+b}{\ell} ) }{m+a/6}
 \]
where $b\equiv a/6 \pmod \ell$. This last sum can be partitioned into intervals of length $\ell$ and is easily shown to be
$\frac{\tau(\chi)}\ell \log x+O(1)$. Therefore 
\[
 \sum_{n\leq x} \frac{f(n)e ( \frac nq ) }n = \frac{\chi(6)} 6 \frac{\tau(\chi)}\ell \log x \cdot \sum_{a=1}^6 g(a)   e \bigg( \frac {a}{6\ell}- \frac {b}{\ell}   \bigg)  +O(1) .
\]
Writing  $a=6k/d$ where $(a,6)=6/d$ then $(k,d)=1$ and we can take $b=\frac{k-\ell\cdot (k\ell^{-1})_d}d$ where $(t)_d$ is some residue of $t \pmod d$. Therefore $ \frac {a}{6\ell}- \frac {b}{\ell}  =    \frac{ (k\ell^{-1})_d}d \pmod 1$, and so
\begin{align*}
\sum_{a=1}^6 g(a)   e \bigg( \frac {a}{6\ell}- \frac {b}{\ell}   \bigg)&=1+ g(1) e\left(\frac{\ell^{-1}}{6}\right)+ g(2) e\left(\frac{\ell^{-1}}{3}\right)\\
& \quad \quad \quad + g(3) e\left(\frac{\ell^{-1}}{2}\right)+g(4) e\left(\frac{2\ell^{-1}}{3}\right)+g(5) e\left(\frac{5\ell^{-1}}{6}\right) \\
&= 1+e( \tfrac 16) +e(- \tfrac 16) -e( \tfrac 13) -e(- \tfrac 13)-e( \tfrac 12)=4,
\end{align*}
since $\ell^{-1}\equiv \pm 1 \pmod 6$, $g(1)=g(5)=g(6)=1$ and $g(2)=g(4)=g(3)=-1$. We deduce that 
\[
 \bigg|\sum_{n\leq x} \frac{f(n)e ( \frac nq ) }n\bigg| = \frac{4} 6 \cdot \frac{ \log x}{\sqrt{\ell}}    +O(1)= \frac{4} {\sqrt{6}} \cdot \frac{ \log x}{\sqrt{q}}    +O(1) . \qedhere
\]
\end{proof} 

%%%%%%%%%%%%%%%%%%%%%%%%%%%%%%%%%%%%%%%%%%%%%%%%%%%%%%%%%%%%%%%%%%%%%%%%%%%%%%%%%%%%%%%%%%%%%%%%%%%%%%%%%%%%%%%%%%%%%%%%%%%%%%%%%%%%%%%%%%%%%%%%%%%%%%%%%%%%%%%%%%%%%%%%%%%%%%%%%%%%%%%%%%%%%%%%

\section{Exponential sums over smooth numbers}

\subsection{Auxiliary results about smooth numbers}

In this section, we gather several important results on the distribution of smooth numbers, which will be useful in the proof of Theorem \ref{Thm:Smooth}.  We start by Buchstab's classical identity 
\begin{equation}\label{Eq:Buch}
\Psi(x, y)=1+\sum_{p\leq y} \Psi\left(\frac{x}{p}, p\right),
\end{equation}
which is valid for all $x\geq y\geq 2$. This was used by de Bruijn \cite{Bru51} to prove that   
\begin{equation}\label{Eq:AsymptoticSmoothClassic}
\Psi(x, y) \sim \rho(u)x
\end{equation} for $\exp((\log x)^{5/8+\ep})\leq y\leq x$, where  $u:=\log x/\log y$ and $\rho$ is the Dickman-de Bruijn function, defined as the continuous solution to the  differential delay equation
$u\rho'(u)=-\rho(u-1)$ for $u>1$, with initial condition $\rho(u)=1$ for $u\in [0, 1]$. Note that $\rho(u)=((e+o(1))/(u\log u))^u$ as $u\to \infty$, which shows that $y$-smooth numbers are a very sparse set when $u\to \infty$. 

Hildebrand \cite{Hi86} substantially improved the range of validity of  \eqref{Eq:AsymptoticSmoothClassic} to
 $$\exp((\log\log x)^{3/5+\ep})\leq y\leq x.$$ In particular, an important special case of \eqref{Eq:AsymptoticSmoothClassic} gives 
 \begin{equation}\label{Eq:ChangeRegimSmoothCount}
\Psi(x, \mathcal L(x)^c)= \frac{x}{\mathcal L(x)^{1/(2c)+o(1)}}, 
 \end{equation}
 for any fixed positive constant $c$, where as before $ \mathcal L(x)= \exp(\sqrt{\log x\log\log x})$.
 Canfield,  Erd\H{o}s, and Pomerance \cite{CEP83} proved the following weaker estimate, which is valid in the much larger range $(\log x)^{1+\ep}\leq y
 \leq x$:
 \begin{equation}\label{Eq:CEPSmooth}
\Psi(x, y) = \frac{x}{u^{u+o(u)}}.
\end{equation}
In particular, for any fixed $A>1$ one has 
\begin{equation}\label{Eq:SmoothPowersLog}
\Psi\big(x, (\log x)^A\big) = x^{1-1/A+o(1)}.
\end{equation}
 
Hildebrand and Tenenbaum \cite{HiTe86} established an asymptotic formula for $\Psi(x,y)$ in the larger range $x\geq y\geq 2$ such that $y\to\infty$ and $u\to \infty$ as $x\to \infty$, in terms of the ``saddle point'' $\lambda=\lambda(x, y)>0$, which is defined as the unique solution to 
$$ 
\sum_{p\leq y} \frac{\log p}{p^{\lambda}-1}=\log x.
$$
Moreover, they obtained a simple approximation for $\lambda(x, y)$ on the whole range $x\geq y\geq 2$. In particular  \cite[Lemma 2]{HiTe86} implies that  in the range $\log x< y \leq x$ we have 
\begin{equation}\label{Eq:estimateLambda}
\lambda= 1-\frac{\log(u\log (u+1))+O(1)}{\log y}.
\end{equation}
We will need the following lemma, which is an immediate consequence of their results. 
\begin{lemma}\label{Lem:HildTenebaum2x}
Let $c>1$ be a fixed constant. For all real numbers $x\geq y\geq 2$  we have 
$$ \Psi(cx, y) \ll_c  \Psi(x, y).$$

\end{lemma}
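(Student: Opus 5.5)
The plan is to deduce the lemma from the Hildebrand--Tenenbaum saddle point estimates, treating separately the ranges where the saddle point machinery applies and the degenerate ranges. Since $\Psi(x,y)$ is nondecreasing in $x$, it suffices to prove the bound for $c$ in a fixed compact range. I will also assume $x$ is larger than a constant depending on $c$, since for bounded $x$ the claim is trivial: $\Psi(x,y)\geq 1$ while $\Psi(cx,y)\leq cx$.

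\emph{Case $y\geq cx$.} Then $\Psi(cx,y)=\lfloor cx\rfloor\leq 2c\lfloor x\rfloor = 2c\,\Psi(x,y)$.

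\emph{Case $x\leq y<cx$.} Here $\Psi(x,y)=\lfloor x\rfloor$, and trivially $\Psi(cx,y)\leq cx\ll_c \Psi(x,y)$.

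\emph{Case $y$ bounded, or $y\leq \log x$.} For $y\leq (\log x)^{1/2}$ (say), or more generally for $y\le \log x$, one can count directly. Then $\Psi(x,y)$ is the number of lattice points $(e_p)_{p\leq y}$ with $\sum e_p\log p\leq \log x$. This count is asymptotic to the volume of a simplex, with lower order corrections, namely
\[
\frac{1}{\pi(y)!}\prod_{p\leq y}\frac{\log x}{\log p}\,(1+o(1)),
\]
uniformly when $\pi(y)=o(\log x/\log y)$. Replacing $x$ by $cx$ changes this by the factor $(1+\log c/\log x)^{\pi(y)}=O(1)$. In the transitional range near $y\asymp \log x$ I would instead rely on the Hildebrand--Tenenbaum uniform formula, which covers all $x\geq y\geq 2$ once $u$ and $y$ tend to infinity. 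For bounded $y$ the simplex count suffices.

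\emph{Main case $2\le y\leq x$ with $u,y\to\infty$.} Hildebrand--Tenenbaum give, uniformly,
\[
\Psi(x,y)=\frac{x^{\lambda}\zeta(\lambda,y)}{\lambda\sqrt{2\pi\phi_2(\lambda,y)}}\Big(1+O\Big(\frac1u+\frac{\log y}{y}\Big)\Big).
\]
I would instead use the cleaner consequence from their paper (Theorem 3 / Lemma in HiTe86): uniformly for $1\leq d\leq y$,
\[
\Psi(dx,y)=\Psi(x,y)\,d^{\alpha(x,y)}\,(1+O(1/u+\log y/y)).
\]
With $\alpha\leq 1$ this gives $\Psi(cx,y)\ll c\,\Psi(x,y)$ directly when $c\leq y$. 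If the quoted statement is needed in its more basic form, I would derive it by showing that $\lambda(cx,y)-\lambda(x,y)\ll 1/(\log x\log y)$ via the defining equation and its derivative $\phi_2\asymp u(\log y)^2$. Then $(cx)^{\lambda'}\zeta(\lambda',y)/(x^\lambda\zeta(\lambda,y))\ll c$ by the saddle point minimality of $\lambda\mapsto x^{\lambda}\zeta(\lambda,y)$, and the ratios of $\lambda$ and $\phi_2$ are $1+o(1)$.

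The main obstacle is uniformity across the boundary between ranges, especially $y$ near $\log x$ and $u$ bounded. For bounded $u$ I would use $\Psi(x,y)\gg_u x$ (e.g.\ $\rho(u)x$ from Hildebrand's range, or the trivial $\Psi(x,y)\geq \sum_{x/y<p\le y}\dots$), which makes the bound trivial.
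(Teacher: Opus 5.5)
Your proposal rests on the same input as the paper, whose entire proof is the citation of Theorem 3 of Hildebrand--Tenenbaum, $\Psi(dx,y)=\Psi(x,y)\,d^{\alpha(x,y)}\{1+O(1/u+\log y/y)\}$ uniformly for $x\ge y\ge 2$, $1\le d\le y$; so the argument is correct. Your case analysis is unnecessary: that theorem already holds for all $x\ge y\ge 2$, where the bounded error term still gives the upper bound, and iterating it with $d=2$ handles $c>y$. Note also that your first two cases are vacuous since $y\le x$. Finally, the simplex asymptotic is not valid merely when $\pi(y)=o(\log x/\log y)$; it requires roughly $y^2=o(\log x\log y)$. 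You only need it for bounded $y$, though, so this does not affect the proof.
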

\begin{proof}
  This follows from \cite[Theorem 3]{HiTe86}.
\end{proof}
We shall also use the following nice result of Hildebrand \cite{Hi85}, which shows that  $\Psi(x, y)$ is subadditive as a function of $x$ if $y$ is large enough.
\begin{lemma}[Theorem 4 of \cite{Hi85}] \label{Lem:HildebrandSubAdd}
There exists a constant $y_0\geq 2 $ such that for all real numbers $y\geq y_0$ and $x, z\geq y$  we have 
$$ \Psi(x+z, y)\leq \Psi(x, y)+\Psi(z, y).$$
    
\end{lemma}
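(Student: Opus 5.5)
The statement immediately above is Lemma \ref{Lem:HildebrandSubAdd}, which is cited verbatim from Hildebrand \cite[Theorem 4]{Hi85}. Within this paper nothing further is needed beyond the citation. Still, here is how I would prove it from scratch.

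The plan is to compare $\Psi(x+z,y)-\Psi(x,y)$ with $\Psi(z,y)$, assuming without loss of generality that $x\ge z\ge y$. I would split according to the size of $u_z=\log z/\log y$.

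\emph{Case $z$ close to $y$.} If $u_z$ is bounded, say $z\le y^{C}$, then $\Psi(z,y)\gg z$. In this regime I would need the count of $y$-smooth numbers in the short interval $(x,x+z]$ to be at most $\Psi(z,y)$. When $x$ is comparable to $z$, the local density of smooth numbers, $\rho(u_x)\le\rho(u_z)$, is smaller, and the Hildebrand--Tenenbaum asymptotics give
\[
\Psi(x+z,y)-\Psi(x,y)\approx z\,\rho(u_x)\,\frac{\zeta(\lambda)}{\cdots}.
\]
The point is that the density of $y$-smooth numbers near $x$ is essentially nonincreasing in $x$. The smallest $z$, namely $z\approx y$, gives $\Psi(z,y)=\lfloor z\rfloor$, and then the claim is trivial.

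\emph{General case.} I would use the saddle-point formula \cite[Theorem 1]{HiTe86},
\[
\Psi(X,y)=\frac{X^{\lambda}\zeta(\lambda,y)}{\lambda\sqrt{2\pi\sigma_2}}\Bigl(1+O\Bigl(\frac1u+\frac{\log y}{y}\Bigr)\Bigr),
\]
together with the local estimate $\Psi(X(1+\delta),y)-\Psi(X,y)\approx\bigl((1+\delta)^{\lambda}-1\bigr)\Psi(X,y)$ from \cite[Theorem 3]{HiTe86}, valid uniformly for $\delta\ge 1/y$. Writing $\delta=z/x\le 1$, subadditivity reduces to
\[
\bigl((1+\delta)^{\lambda_x}-1\bigr)\Psi(x,y)\le\Psi(\delta x,y).
\]
The left side is about $\lambda\delta\Psi(x,y)$, while $\Psi(\delta x,y)\approx\delta^{\lambda}\Psi(x,y)$. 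So the target inequality becomes $\lambda\delta\lesssim\delta^{\lambda}$, which holds with room to spare because $\lambda<1$ and $\delta\le 1$.

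The main obstacle is the boundary regime where $\delta$ is close to $1$ and the inequality $\lambda\delta\le\delta^\lambda$ is tight, specifically $(1+\delta)^{\lambda}-1$ versus $\delta^{\lambda}$ at $\delta=1$. There one needs $2^{\lambda}-1\le 1$ with a margin that beats the error terms $O(1/u+\log y/y)$. This is exactly why the lemma requires $y\ge y_0$ and not arbitrary $y$. I would handle it by using the strict concavity of $t\mapsto t^\lambda$, which gives a margin of $\gg(1-\lambda)\gg\log(u\log u)/\log y$ when $\lambda$ is bounded away from $1$. I would treat $u$ bounded, where $\lambda$ is near $1$, via de Bruijn/Hildebrand asymptotics $\Psi\sim\rho(u)x$ and the monotonicity of $\rho$ directly.
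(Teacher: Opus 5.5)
The paper gives no proof of this lemma (it is quoted from \cite[Theorem 4]{Hi85}), so your opening sentence is all the paper relies on. Your from-scratch sketch, however, has a genuine gap at its central step. Theorem~3 of \cite{HiTe86} is a ratio statement, $\Psi(cX,y)=c^{\lambda}\Psi(X,y)\bigl(1+O(1/u+\log y/y)\bigr)$ for $1\le c\le y$. It is not a local estimate for $\Psi(X(1+\delta),y)-\Psi(X,y)$. Subtracting $\Psi(x,y)$ leaves an error $O\bigl((1/u+\log y/y)\Psi(x,y)\bigr)$. This swamps both the main term $\bigl((1+\delta)^{\lambda}-1\bigr)\Psi(x,y)\approx\lambda\delta\Psi(x,y)$ and the target $\Psi(z,y)\approx\delta^{\lambda}\Psi(x,y)$ as soon as $\delta^{\lambda}$ falls below a constant multiple of $1/u+\log y/y$ (roughly $z\le x/u$ when $\lambda$ is near $1$). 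For $z<x/y$, even the comparison of $\Psi(z,y)$ with $\Psi(x,y)$ lies outside the range $c\le y$.

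So your reduction to $\lambda\delta\lesssim\delta^{\lambda}$ says nothing about short intervals, and that is where the content of the lemma lies. You must show that $(x,x+z]$, with $z$ as small as $y$ and $x$ arbitrarily large, contains at most $\Psi(z,y)$ integers that are $y$-smooth. None of the estimates you invoke controls smooth numbers in such intervals, which is why the result comes from a paper on smooth numbers in short intervals. In particular the case $z=y$ is not trivial: $\Psi(y,y)=\lfloor y\rfloor$, but $(x,x+y]$ may contain $\lfloor y\rfloor+1$ integers. You therefore need one of them to have a prime factor exceeding $y$, which is a Sylvester--Schur type fact.

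Your treatment of $\delta\approx 1$ does not close either. The margin $2-2^{\lambda}\asymp 1-\lambda\approx\log(u\log u)/\log y$ beats the error $O(1/u)$ only when $u\log u$ is large compared with $\log y$. So besides bounded $u$ there is an untreated range in which $u\to\infty$ but $u\log u$ is not large compared with $\log y$.

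The fix via $\Psi(X,y)=X\rho(u_X)\bigl(1+O(\log(u+1)/\log y)\bigr)$ plus monotonicity of $\rho$ fails for the same reason. The gain $\rho(u)-\rho(u+\log 2/\log y)\approx\rho(u-1)\log 2/(u\log y)$ is, relative to $\rho(u)$, of the same order $\log(u+1)/\log y$ as the error term, so it cannot decide the inequality. You would need second-order information (for instance de Bruijn's $\Lambda(x,y)$, whose relative error is much smaller), and even that only handles intervals that are not short.
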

Let $\Psi_q(x, y)$ denote the number of $y$-smooth integers $\leq x$ which are coprime to a positive integer $q$.  Tenenbaum \cite{Te93} showed that 
\begin{equation}\label{Eq:TenSmoothCrible}
 \Psi_q(x, y)\asymp \frac{\phi(q)\Psi(x, y)}{q},   
\end{equation}
under the  general conditions 
$$ 
P^+(q)\leq y
\quad \textup{ and } \log (1+\omega(q))\leq   \frac{c\log y}{\log (u+1)},
$$
for some positive constant $c$.
In particular, \eqref{Eq:TenSmoothCrible} is valid for all real numbers $x\geq y\geq 2$ and positive integers $q$ in the range $q\leq \min\Big(y, \exp\big(y^{c/\log(u+1)}\big)\Big).$
We shall also make repeated use of the following nice ``localization'' bound of
La Bretèche and Tenenbaum \cite{BrTe05}. 
\begin{lemma}[Th\'eor\`eme 2.4 (i) of \cite{BrTe05}]\label{Lem:LocalSmoothBreTen}
Let $x\geq y\geq 2$ be real numbers, and $d$, $q$ be positive integers such that  $1\leq d\leq x/y$,  $P^+(q)\leq y$ and $\omega(q)\ll y^{1/2-\ep}$. Then we have 
$$
\Psi_q\left(\frac{x}{d}, y\right)\ll \frac{\prod_{p|q}(1-p^{-\lambda})}{d^{\lambda}}\Psi(x, y).
$$
Moreover, when $q=1$, this bound is valid in the whole range $x\geq y\geq 2$ and $1\leq d\leq x$.

\end{lemma}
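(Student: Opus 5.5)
The statement is Lemma \ref{Lem:LocalSmoothBreTen}, which is cited from \cite{BrTe05}. Still, here is how I would prove it, via Rankin's trick combined with the saddle-point machinery of Hildebrand--Tenenbaum. Write $\zeta(s,y)=\prod_{p\le y}(1-p^{-s})^{-1}$ and $\zeta_q(s,y)=\prod_{p\le y,\,p\nmid q}(1-p^{-s})^{-1}$. The key input is the Hildebrand--Tenenbaum estimate
\[
\Psi(x,y)\asymp \frac{x^\lambda\zeta(\lambda,y)}{\lambda\sqrt{2\pi\sigma_2}},\qquad \sigma_2=\sum_{p\le y}\frac{p^\lambda(\log p)^2}{(p^\lambda-1)^2}\asymp \Big(1+\frac{\log x}{y}\Big)\log x\log y,
\]
valid uniformly for $x\ge y\ge2$.

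The first step is to bound $\Psi_q(x/d,y)$ with the \emph{same} exponent $\lambda=\lambda(x,y)$ rather than the saddle point for $x/d$, and to retain the Gaussian saving. The crude Rankin bound
\[
\Psi_q(x/d,y)\le (x/d)^\lambda \zeta_q(\lambda,y)=d^{-\lambda}\prod_{p\mid q}(1-p^{-\lambda})\,x^\lambda\zeta(\lambda,y)
\]
already produces the factor $\prod_{p\mid q}(1-p^{-\lambda})/d^{\lambda}$, but it loses $\lambda\sqrt{\sigma_2}$ against $\Psi(x,y)$. To recover this loss I would use the smoothed Perron formula. For $\kappa=\lambda$,
\[
\Psi_q(X,y)\le \frac1{2\pi i}\int_{\kappa-i\infty}^{\kappa+i\infty}\zeta_q(s,y)\,X^s\,\frac{e^{s}-1}{s^2}\,ds
\]
(a weight majorizing the indicator on a logarithmic window of width $1$), and then bound the integrand by $|\zeta_q(\kappa+i\tau,y)|\le\zeta_q(\kappa,y)\exp(-c\,\tau^2\sigma_2)$ for small $\tau$, together with the Hildebrand--Tenenbaum decay estimates for larger $\tau$. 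This localizes the integral to $|\tau|\lesssim\sigma_2^{-1/2}$ and gives the extra factor $\sigma_2^{-1/2}$. Applying the identical estimate to both $\Psi_q(x/d,y)$ and $\Psi(x,y)$ at the common abscissa $\lambda$, the ratio of the two is
\[
\frac{\Psi_q(x/d,y)}{\Psi(x,y)}\ll \frac{\zeta_q(\lambda,y)}{\zeta(\lambda,y)}\cdot d^{-\lambda}\cdot(\text{ratio of the Gaussian factors}).
\]

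The main obstacle is showing that the decay of $|\zeta_q(\lambda+i\tau,y)/\zeta_q(\lambda,y)|$ is comparable to that of the full $\zeta$. Removing the primes dividing $q$ reduces $\sigma_2$ only by the contribution of those primes, which is negligible provided $\omega(q)\ll y^{1/2-\ep}$: the largest possible loss comes from the $\omega(q)$ smallest primes and is $o(\sigma_2)$ in the relevant ranges. This is exactly where that hypothesis enters, and I would handle it by comparing $\sum_{p\mid q}$ with the full sum via the prime number theorem.

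The restriction $d\le x/y$ ensures $x/d\ge y$, so we stay in the regime where the saddle-point lower bound for $\Psi(x,y)$ is not degenerate relative to the upper bound. When $q=1$ there is no sieve loss, and for $x/y<d\le x$ one can bound $\Psi(x/d,y)\le x/d\le\ldots$ directly using $\Psi(x,y)\gg x^{\lambda}\cdot(\text{small})$; more precisely, one uses the monotone comparison $\Psi(X,y)\le \Psi(x,y)(X/x)^{\lambda}$ for $X\le x$, which follows from the same integral argument.
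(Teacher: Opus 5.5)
The paper does not prove this lemma; it is quoted from Th\'eor\`eme 2.4(i) of \cite{BrTe05}, so your sketch has to stand on its own. Its central step has a genuine gap. The framework is fine: take Rankin's factor $d^{-\lambda}\prod_{p\mid q}(1-p^{-\lambda})$ at the common abscissa $\lambda=\lambda(x,y)$ and divide by the Hildebrand--Tenenbaum lower bound for $\Psi(x,y)$. What fails is the claim that bounding the smoothed Perron integral \emph{in absolute value} recovers the factor $1/(\lambda\sqrt{\sigma_2})$. This is false when $u$ is small compared with $\log\log y$.

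For $\lambda\asymp 1$ the weight $|(e^s-1)/s^2|$ is $\asymp 1$ on $|\tau|\le 1$. For $1/\log y\le|\tau|\le 1$, the Hildebrand--Tenenbaum decay bound is only $\exp\{-cu\tau^2/((1-\lambda)^2+\tau^2)\}$, which is never smaller than $e^{-cu}$. So the range $1-\lambda\le|\tau|\le 1$ only receives a relative bound of size $e^{-cu/2}$. That is not $\ll 1/(\lambda\sqrt{\sigma_2})\asymp 1/(\sqrt{u}\log y)$ unless $u\gg\log\log y$.

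This is a defect of the method, not merely of the quoted decay estimate. For bounded $u$ one has $|\zeta(\lambda+i\tau,y)|\asymp\min(\log y,1/|\tau|)$ on $|\tau|\le 1$. Hence the absolute-value integral is $\asymp X^{\lambda}\log\log y$, whereas the bound you need is $X^{\lambda}\zeta(\lambda,y)/(\lambda\sqrt{\sigma_2})\asymp X^{\lambda}$. This is the familiar $\log\log$ loss incurred when a Perron integral near the pole of $\zeta$ is estimated without using cancellation.

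To close the gap you need an input that captures this cancellation. For instance, use a saddle-point estimate for $\Psi_q(x/d,y)$ at its \emph{own} saddle point (a sieved analogue of the Hildebrand--Tenenbaum formula). Then compare it with the corresponding expression at $\sigma=\lambda(x,y)$, using the convexity of $\sigma\mapsto\sigma\log(x/d)+\log\zeta_q(\sigma,y)$, so that the Gaussian gain absorbs the change in the prefactor $1/(\sigma\sqrt{\sigma_2(\sigma)})$. Your absolute-value argument can only be retained where $u$ is large compared with $\log\log y$.

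Your last paragraph is also circular. The ``monotone comparison'' $\Psi(X,y)\le\Psi(x,y)(X/x)^{\lambda}$ is exactly the $q=1$ case of the lemma, even with constant $1$, and it does not follow from your integral argument. For $x/y<d\le x$ the bound is instead elementary: combine $\Psi(x/d,y)\le x/d$ with the Hildebrand--Tenenbaum lower bound for $\Psi(x,y)$.
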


Finally, we state some results on the distribution of smooth numbers in  arithmetic progressions. For real numbers $x\geq y\geq 2$ and positive coprime integers $a, q$ we define 
$$\Psi(x, y; q, a):= \sum_{\substack{n\leq x\\ P^+(n)\leq y\\ n\equiv a \bmod q}} 1. $$
Refining a result of Soundararajan \cite{So08}, Harper \cite{Har12a} proved that 
\begin{equation}\label{Eq:HarperSmoothAP}
  \Psi(x, y; q, a)\sim \frac{\Psi_q(x, y)}{\phi(q)}  
\end{equation}
in the range $x\geq y\geq 2$, and $q\leq y^{4\sqrt{e}-\ep}$ such that $\log x/\log q\to \infty$ as $x\to \infty$. We will also need a strong bound on the error term of this asymptotic formula.  The following lemma is an immediate consequence of the ``Bombieri-Vinogradov'' theorem for smooth numbers, proved by Harper \cite{Har12b}, which improves upon results of Fouvry and Tenenbaum \cite{FoTe91} and of Granville  \cite[Theorem 2]{Gr93}.
\begin{lemma}\label{Lem:BVHarperSmooth} There exists positive constants  $K$ and $c$ such that the following holds. For all real numbers $x, y$ such that $x$ is large enough and  $(\log x)^K\leq y\leq x$, and all positive integers $a, q$ such that $(a, q)=1$ and  $q\leq y^c$ we have  
\begin{align*}
\Psi(x, y; q, a)&=\frac{\Psi_q(x, y)}{\phi(q)} \\
& \quad +O_A\left(\Psi(x, y)\bigg( \frac{e^{-cu/(\log(u+1))^2}}{(\log x)^A} +y^{-c}\bigg)+\sqrt{\Psi(x, y)} q (\log x)^{7/2}\right), 
\end{align*}
for any positive constant $A>1$.
    
\end{lemma}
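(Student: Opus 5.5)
We derive Lemma \ref{Lem:BVHarperSmooth} from Harper's Bombieri--Vinogradov theorem for smooth numbers \cite{Har12b} by bounding a single modulus by the whole sum. Harper's result, in the form we will use, says the following. There are constants $K,c>0$ such that for $(\log x)^K\leq y\leq x$ and $Q\leq \sqrt{\Psi(x,y)}$, and any $A>0$,
\[
\sum_{q\leq Q}\max_{(a,q)=1}\bigg|\Psi(x,y;q,a)-\frac{\Psi_q(x,y)}{\phi(q)}\bigg|\ll_A \Psi(x,y)\bigg(\frac{e^{-cu/(\log(u+1))^2}}{(\log x)^A}+y^{-c}\bigg)+\sqrt{\Psi(x,y)}\,Q(\log x)^{7/2}.
\]

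Given $q\leq y^c$ and $(a,q)=1$, we take $Q=q$. The left-hand side is a sum of nonnegative terms, so the single term for our $q$ and $a$ is at most the whole sum. This gives exactly the error term claimed in the lemma, with $Q$ replaced by $q$.

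It remains to check that $Q=q$ is admissible, i.e.\ $q\leq \sqrt{\Psi(x,y)}$. If $y\geq (\log x)^K$ with $K$ large, then \eqref{Eq:SmoothPowersLog} gives $\Psi(x,y)\geq x^{1-1/K+o(1)}$, so $\sqrt{\Psi(x,y)}\geq x^{1/3}$. Shrinking $c$ so that $y^c\leq x^c\leq x^{1/3}$ ensures $q\leq y^c\leq\sqrt{\Psi(x,y)}$. If $q$ were larger, the term $\sqrt{\Psi(x,y)}\,q(\log x)^{7/2}$ would already exceed the trivial bound $\Psi(x,y)$, and the lemma would hold anyway.

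The main point needing care is matching the exact hypotheses and error terms in \cite{Har12b}. Harper's error term has the form $\Psi(x,y)\big(e^{-cu/\log^2(u+1)}(\log x)^{-A}+y^{-c}\big)+\sqrt{\Psi(x,y)}\,Q(\log x)^{7/2}$, valid for $Q\leq\sqrt{\Psi(x,y)}$. Since $K$ and $c$ are only claimed to exist, we may take $K$ larger and $c$ smaller than Harper's constants, so no further work is required.
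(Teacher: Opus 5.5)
Your proposal is correct and follows the paper's proof: both take $Q=q$ in Harper's Theorem 1 and bound the single term for that modulus by the full sum over $q\le Q$. The only difference is how you check that $Q=q$ is admissible ($q\le\sqrt{\Psi(x,y)}$). The paper uses the simpler observation $\Psi(x,y)\ge y$, so that $q\le y^c\le\sqrt{y}$ once $c\le 1/2$, instead of the bound $\Psi(x,(\log x)^K)=x^{1-1/K+o(1)}$.
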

\begin{proof}
    This is immediate consequence of \cite[Theorem 1]{Har12b} by taking $Q=q$ and noting that $\Psi(x,y)\geq y$. 
\end{proof}
Finally, assuming the Generalized Riemann hypothesis, we have the better estimate (see \cite[Eq. (6.1)]{Gr93})
\begin{equation}\label{Eq:SmoothAPUnderGRH}
\Psi(x, y; q, a) = \frac{\Psi_q(x, y)}{\phi(q)}\left(1+O\left(\frac{q (\log y)^2}{\sqrt{y}}\right)\right),  
\end{equation}
which holds uniformly for $x\geq y\geq 2$ and $q\leq y^{1/2}/(\log y)^2$.

%%%%%%%%%%%%%%%%%%%%%%%%%%%%%%%

\subsection{Improving the upper bound \eqref{Eq:AndrewRegis}}
By optimizing one step in the proof of \eqref{Eq:AndrewRegis} we obtain the following proposition, whose proof we shall include for completeness. 
\begin{proposition}
\label{Pro:ImprovingRegis}
Let $x$ be large, $2\leq y\leq x$ and suppose that  $|\alpha-a/q|\leq 1/q^2$ where $(a, q)=1$ and $q\leq x$. Then  
 for all multiplicative functions $f:\mathbb{N}\to \mathbb{U}$ one has
\begin{align*}
\sum_{\substack{n\leq x\\ P^+(n)\leq y}}  f(n) e(n\alpha)&\ll 
\left(\frac{\sqrt{xy}}{\log y}+ \frac{x}{\sqrt{q}}+\sqrt{xq\log(2x/q)}\right) \sqrt{\log y}  + \frac{x}{\mathcal{L}(x)^{1/2+o(1)}}. 
\end{align*}   
\end{proposition}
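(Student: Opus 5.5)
We follow the La Bretèche–Granville proof of \eqref{Eq:AndrewRegis}, which is a Montgomery–Vaughan/Vinogradov style bilinear decomposition, and change only one step: the dyadic decomposition of the large prime factor is replaced by a weighted Cauchy--Schwarz.

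\textbf{Step 1 (factor out a large prime).} Set $z=\mathcal L(x)^{c}$ for a suitable small $c$. Integers $n\le x$ with $P^+(n)\le y$ and no prime factor in $(z,y]$ are $z$-smooth. By \eqref{Eq:ChangeRegimSmoothCount} there are at most $x\mathcal L(x)^{-1/2+o(1)}$ of them, and this gives the last term. The remaining $n$ we weight with $\log$. Since $\log n=\sum_{d\mid n}\Lambda(d)$, the weighted sum $\sum f(n)e(n\alpha)\log n$ splits into two pieces. The first is the sum over $n=mp$ with $z<p\le y$, carrying $f(mp)$ and $\log p$. The second is a term with prime powers or $p\mid m$, which is negligible or handled trivially. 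Partial summation then removes the $\log n$ weight at the cost of a factor $1/\log x$ on the bilinear part, plus $O(x/\log x)$, which is absorbed. Multiplicativity gives $f(mp)=f(m)f(p)$ when $p\nmid m$. So the main object is
\[
B=\sum_{z<p\le y}\ \sum_{\substack{m\le x/p\\ P^+(m)\le y,\ p\nmid m}} f(m)f(p)(\log p)\, e(mp\alpha).
\]

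\textbf{Step 2 (Cauchy--Schwarz).} We apply Cauchy--Schwarz with the variable $m$ outside, inserting weights. Previously the range $p\in(P,2P]$ was split dyadically, which costs a $\log y$ factor from summing over $\log y/\log 2$ dyadic blocks (roughly, this sums $\sqrt{\cdot}$ over blocks). Instead we write $B=\sum_m f(m)\sum_{p} a_p e(mp\alpha)\mathbf 1_{p\le x/m}$ and estimate
\[
|B|^2\le \Big(\sum_{m\le x/z}w_m^{-1}\Big)\Big(\sum_m w_m\Big|\sum_{z<p\le \min(y,x/m)}a_pe(mp\alpha)\Big|^2\Big)
\]
with $w_m=m$, or keep the dyadic split in $m$ only. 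Expanding the square gives the diagonal terms $p_1=p_2$ and the off-diagonal terms, which are controlled by $\sum_{m\le M}\min(M,\|m(p_1-p_2)\alpha\|^{-1})$. The standard bound
\[
\sum_{h\le H}\min\big(X/h,\|h\alpha\|^{-1}\big)\ll (X/q+H+q)\log(2qH)
\]
applies. The diagonal contributes $\sum_p(\log p)^2\, x/p\ll x\log y$, so the diagonal part of $B$ is $\ll\sqrt{x\cdot x\log y}$ only once it is properly weighted against $y$. The key saving is that summing over all $p\le y$ in one block, rather than dyadically, yields $\big(\sum_p \log^2 p/p\big)^{1/2}\asymp\log y$ inside a single square root instead of the sum of $\log y$ square roots. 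Dividing by $\log x\ge\log y$ then produces the claimed $\sqrt{\log y}$ factor in place of $\log y$. The off-diagonal terms give $x^2/q+xy+xq\log(2x/q)$ under the root, matching the three main terms.

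\textbf{Main obstacle.} The hardest part is handling the constraint $p\le x/m$, which couples $m$ and $p$, without reintroducing a dyadic split in $p$. I would first try to remove it by Perron's formula or by a smooth truncation in $mp$; that costs only $O(\log)$ under the square root, absorbed in the $\sqrt{\log y}$ loss. Alternatively, I would split $m$ dyadically, where there are only $O(\log x)$ blocks, and exploit the fact that each block's contribution decays geometrically in the block scale relative to the diagonal. Checking that this bookkeeping gives exactly $\sqrt{\log y}$, and not $\sqrt{\log x}$, is the step I expect to require care.
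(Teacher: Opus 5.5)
Your Step 1 is where the argument breaks. Weighting by $\log n=\sum_{d\mid n}\Lambda(d)$ is the Montgomery--Vaughan device, and it is the source of the $x/\log x$ term in \eqref{Eq:MVZero}. The proposition has no such term. Two things go wrong concretely.

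First, $\sum_{d\mid n}\Lambda(d)$ runs over \emph{all} prime powers dividing $n$, including $p^k$ with $p\le z$, and you silently drop these. Trivially they contribute up to $x\sum_{p\le z}(\log p)/p\asymp x\log z$ to the weighted sum, i.e.\ $x\log z/\log x$ after removing the weight. Bounding them nontrivially is the original problem again.

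Second, the $O(x/\log x)$ you call ``absorbed'' is not. For $y\le x/(\log x)^2$ and $(\log x)^4\le q\le x/(\log x)^5$, the entire right-hand side is $O(x/(\log x)^{3/2})$. Moreover, the proof of Theorem \ref{Thm:Unweighted} needs the smooth part to be $\ll x/(\sqrt M\log x)$, so an $x/\log x$ loss is fatal there.

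The paper uses no log weights. It discards $n$ with $P^+(n)\le\mathcal L(x)$ (cost $\Psi(x,\mathcal L(x))=x/\mathcal L(x)^{1/2+o(1)}$) and those with $P^+(n)^2\mid n$ (cost $x/\mathcal L(x)$). Each remaining $n$ factors uniquely as $n=pm$ with $p=P^+(n)$ and $P^+(m)<p$. This gives $\sum_{\mathcal L(x)<p\le y}f(p)\sum_{m\le x/p,\,P^+(m)<p}f(m)e(mp\alpha)$, with no partial summation at all.

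Your Step 2 also misidentifies where the $\sqrt{\log y}$ comes from. The dyadic split in $p$ is kept, and your quantity $(\sum_{p\le y}\log^2p/p)^{1/2}/\log x$ does not produce $\sqrt{\log y}$. The actual argument runs as follows.
\begin{itemize}
\item In each block $M<p\le 2M$, Cauchy--Schwarz is applied over the \emph{primes} first, gaining $\sqrt{\pi(2M)}\asymp\sqrt{M/\log M}$. Only then is the sum of squares enlarged to all integers $m\in(M,2M]$ by positivity. De la Bret\`eche did these in the opposite order and so lost the factor $1/\sqrt{\log M}$.
\item Expanding the square, the admissible $m$ for each pair $(n_1,n_2)$ form an interval of length $\le M$. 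So the coupling $p\le x/m$, $P^+(n)<p$ that you worry about is harmless, and no Perron formula is needed.
\item This gives $\ll x+\frac xM\sum_{h<x/M}\min(M,\|h\alpha\|^{-1})$, which Lemma \ref{Lem:StandardExpSum} bounds. Each block then contributes $\ll\sqrt{xM/\log M}+\bigl(x/\sqrt q+\sqrt{xq\log(x/q)}\bigr)/\sqrt{\log M}+x/\sqrt M$.
\item Summing over the $\asymp\log y$ blocks, $\sum_M(\log M)^{-1/2}\asymp\sqrt{\log y}$. This is exactly the claimed $\sqrt{\log y}$ factor.
\end{itemize}
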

Before proving this result we need the following basic lemma. Here and throughout, $\|\cdot\|$ denotes the distance to the nearest integer.
\begin{lemma}\label{Lem:StandardExpSum}
Let $q\geq 2$, and assume that $|\alpha-a/q|\leq 1/q^2$ where $(a, q)=1$. For all real numbers $H, M\geq 2$ we have 
$$
\sum_{h \leq H} \min\left(M, \frac{1}{\|h\alpha\|}\right)\ll M+\frac{HM}{q}+H\log\big(\min(q,M)\big)+ q\log\left(2+\frac{HM}{q}\right).
$$
\end{lemma}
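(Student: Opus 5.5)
The plan is the standard Vinogradov block argument. Write $\alpha=a/q+\beta$ with $|\beta|\le 1/q^2$. Split the range $1\le h\le H$ into consecutive blocks $kq< h\le (k+1)q$, with $0\le k\le H/q$. There are at most $H/q+1$ of them.

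Within one block write $h=kq+j$ with $1\le j\le q$. Then
\[
h\alpha = kq\alpha + \frac{ja}{q} + j\beta .
\]
Since $(a,q)=1$, the residues $ja \bmod q$ run over a complete system as $j$ varies. The perturbation $j\beta$ has size at most $|\beta| q\le 1/q$, and it is monotone in $j$. So the points $h\alpha \bmod 1$ inside one block are $q$ points that are pairwise separated by $\gg 1/q$ (after the usual thinning by a bounded factor). Hence, apart from $O(1)$ of them, the $r$-th closest point to an integer satisfies $\|h\alpha\|\gg r/q$.

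This gives the per-block bound
\[
\sum_{kq<h\le (k+1)q} \min\Bigl(M, \frac1{\|h\alpha\|}\Bigr) \ll M + \sum_{1\le r\le q/2}\min\Bigl(M,\frac{q}{r}\Bigr).
\]
The inner sum is evaluated in two cases.
\begin{itemize}
\item If $M\ge q$, every term is $q/r$, and the sum is $\ll q\log q$.
\item If $M<q$, the terms with $r\le q/M$ contribute $M\cdot q/M=q$, and the remaining terms contribute $q\log(M)$. The total is $\ll q\log(2+M)$.
\end{itemize}
In both cases each full block contributes $\ll M+q\log\min(q,M)$, up to the harmless adjustment for small $M$, which comes from the $q$ term.

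When $H\ge q$, multiplying by the number of blocks $\ll H/q$ gives
\[
\frac{HM}{q}+H\log\min(q,M),
\]
plus a single extra block, which is covered by $M+q\log(\cdots)$.

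When $H<q$ there is only one partial block of length $H$. Here I will use the spacing more carefully. The $H$ points are still $\gg1/q$-separated, so the sum is
\[
\ll M+\sum_{r\le H}\min\Bigl(M,\frac qr\Bigr).
\]
If $HM\le q$, every term is at most $M$, so the sum is $\le HM\le q\ll q\log(2+HM/q)$. Otherwise the sum is $\ll q\bigl(1+\log(HM/q)\bigr)$, by splitting at $r=q/M$.

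Combining the two regimes gives the stated bound
\[
M+\frac{HM}{q}+H\log\min(q,M)+q\log\Bigl(2+\frac{HM}{q}\Bigr).
\]

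The main obstacle is bookkeeping the logarithmic terms uniformly across the regimes $M\lessgtr q$ and $H\lessgtr q$, so that the cross terms are absorbed into $q\log(2+HM/q)$. The key facts making this work are the following.
\begin{itemize}
\item For $H\ge q$, $q\log(M/q)\le H\log\min(q,M)$ is false in general. So within each block I keep the term $\sum_{r}\min(M,q/r)\le q\log(2+M)$ only when $M\le q$; when $M>q$ it is $q\log q$.
\item Summed over blocks this gives $H\log q=H\log\min(q,M)$.
\item The final partial block is bounded by the $H<q$ analysis, which yields the $q\log(2+HM/q)$ term.
\end{itemize}
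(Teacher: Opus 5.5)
Your proof is correct and follows essentially the same route as the paper. Both write $\alpha=a/q+\beta$ and use that, within a run of at most $q$ consecutive $h$, the points $h\alpha \bmod 1$ lie within $O(1/q)$ of distinct (commonly shifted) multiples of $1/q$. This bounds each block by $\ll M+\sum_{r}\min(M,q/r)$, and the case $H<q$ is treated separately to produce the $q\log(2+HM/q)$ term. The paper's split at $H\le q/2$ with blocks $h=mq+r$, $|r|\le q/2$, is only a cosmetic difference from yours.

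One imprecision, which the paper's ``$1/(2q)$-spaced'' claim shares: the points are not literally pairwise $\gg 1/q$-separated. What you actually use still holds, namely that the $r$-th smallest $\|h\alpha\|$ in a block is $\gg r/q$ apart from $O(1)$ exceptions.
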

\begin{proof}
Write $\alpha=a/q+\beta$ where $|\beta|\leq 1/q^2$. First assume that $H\leq q/2$. Then for all $h\leq H$ we have  $h\alpha=ha/q+h\beta$ and $|h\beta|\leq 1/(2q)$. Thus the set $\{ha/q+h\beta: h\leq H\}$ is $1/(2q)$-spaced modulo $1$ and hence 
\begin{align}\label{Eq:StandardExpSum1}
\sum_{h \leq H} \min\left(M, \frac{1}{\|h\alpha\|}\right)&\ll M+ \sum_{1\leq \ell\leq H} \min\left(M, \frac{q}{\ell}\right)
\nonumber\\&
\ll M+ M \sum_{1\leq \ell \leq \min(H, q/M)}1+ q\sum_{\min(H, q/M)< \ell \leq H} \frac{1}{\ell}
\nonumber\\
&\ll M+ q\log\left(2+\frac{HM}{q}\right).
\end{align}
We now assume that $H>q/2$. By \eqref{Eq:StandardExpSum1} it suffices to bound the contribution of the terms $q/2<h\leq H$. Write $h= mq+r$ where $m\geq 1$ and $|r|\leq q/2$. Then $\|h\alpha\|= \|ra/q+ r\beta+ mq\beta\|$. For a fixed $1\leq m\leq 2H/q$, the set $\{ra/q+r\beta+mq\beta: |r|\leq q/2\}$ is again $1/(2q)$-spaced modulo $1$, since $|r\beta|\leq 1/(2q)$. Thus, we get 
\begin{align}\label{Eq:StandardExpSum2}
\sum_{q/2<h \leq H} \min\left(M, \frac{1}{\|h\alpha\|}\right)&\ll \sum_{1\leq m\leq 2H/q}\left(M+ \sum_{\ell=1}^{q-1}\min\left(M, \frac{q}{\ell}\right)\right)\nonumber\\
&\ll \frac{HM}{q}+ H\log\big(\min(q, M)\big),
\end{align}
where we have used that  
$$ \sum_{\ell=1}^{q-1}\min\left(M, \frac{q}{\ell}\right)\leq q\sum_{\ell=1}^{q-1}\frac{1}{\ell}\ll q\log q,$$
if $M\geq q$ and 
$$ \sum_{\ell=1}^{q-1}\min\left(M, \frac{q}{\ell}\right)\leq  M \sum_{1\leq \ell \leq  q/M}1+ q\sum_{q/M<\ell\leq q-1}\frac{1}{\ell}\ll q\log M,$$
otherwise.
Combining  \eqref{Eq:StandardExpSum1} and \eqref{Eq:StandardExpSum2} completes the proof.
\end{proof}
\begin{proof}[Proof of Proposition \ref{Pro:ImprovingRegis}]
 It follows from \eqref{Eq:ChangeRegimSmoothCount} that
 $$\sum_{\substack{n\leq x\\ P^+(n)\leq y}}  f(n) e(n\alpha)= \sum_{\substack{n\leq x\\ \mathcal L(x)<P^+(n)\leq y}}  f(n) e(n\alpha)+ O\bigg(\frac x {\mathcal L(x)^{1/2+o(1)}}\bigg),
   $$
   so we may assume that $y>\mathcal L(x)$, otherwise the result follows readily.
   Let $\mathcal L(x)<p\leq y$ be a prime number. The contribution of the terms $n$ such that $p^2\mid n$ is 
   $$ \ll \sum_{\mathcal L(x)<p\leq \min(y, \sqrt{x})} \frac{x}{p^2}\ll \frac{x}{\mathcal L(x)}, $$
   which is acceptable. Therefore, we deduce that 
   \begin{align}
\label{Eq:ImprovingRegisSplit}
\sum_{\substack{n\leq x\\ P^+(n)\leq y}}  f(n) e(n\alpha)= \sum_{\mathcal L(x)<p\leq y} f(p)\sum_{\substack{ n\leq x/p \\ P^{+}(n)<p}} f(n) e(n\alpha p) + O\bigg(\frac x {\mathcal L(x)^{1/2+o(1)}}\bigg).
   \end{align}
   We now split the sum over primes in dyadic intervals. By the Cauchy-Schwarz inequality we get 
\begin{align}\label{Eq:ImprovingRegisDyadic}
   \sum_{M<p\leq 2M}\Bigg|\sum_{\substack{ n\leq x/p \\ P^{+}(n)<p}} f(n) e(n\alpha p)\Bigg|\ll \frac{\sqrt{M}}{\sqrt{\log M}}\Bigg(\sum_{M<p\leq 2M}\Bigg|\sum_{\substack{ n\leq x/p \\ P^{+}(n)<p}} f(n) e(n\alpha p)\Bigg|^2\Bigg)^{1/2}.
   \end{align}
  Next, we bound the sum over primes on the right hand side by the same sum over all integers $M\leq m\leq 2M$, by positivity\footnote{Our improvement comes from using Cauchy-Schwarz before this step, unlike in \cite{dlB98}.}. Opening up the square we obtain 
\begin{align}\label{Eq:ImprovingRegisIntegers}
  &\sum_{M<m\leq 2M}
\Bigg|\sum_{\substack{ n\leq x/m \\ P^{+}(n)<m}} f(n) e(mn\alpha )\Bigg|^2= \sum_{M<m\leq 2M}
   \sum_{\substack{ n_1, n_2\leq x/m \\ P^{+}(n_1n_2)<m}} f(n_1)\overline{f(n_2)} e(m(n_1-n_2)\alpha)\nonumber
   \\
   & \quad \quad \quad \quad =\sum_{n_1, n_2< \frac{x}{M}} f(n_1)\overline{f(n_2)}\sum_{\max(P^+(n_1n_2), M)<m \leq \min(\frac{x}{n_1}, \frac{x}{n_2}, 2M)}e(m(n_1-n_2)\alpha) \nonumber\\
   & \quad \quad \quad \quad \ll \sum_{n_1< n_2<x/M} \min\left(M, \frac{1}{\|(n_2-n_1)\alpha\|}\right) +x\\
   & \quad \quad \quad \quad \ll \frac{x}{M}\sum_{h<x/M}\min\left(M, \frac{1}{\|h\alpha\|}\right) +x. \nonumber  
  \end{align}
  Combining this estimate with Lemma \ref{Lem:StandardExpSum} and  \eqref{Eq:ImprovingRegisDyadic} implies that 
$$\sum_{M<p\leq 2M}\Bigg|\sum_{\substack{ n\leq x/p \\ P^{+}(n)<p}} f(n) e(n\alpha p)\Bigg|\ll \frac{\sqrt{xM}}{\sqrt{\log M}}+ \frac{x}{\sqrt{q\log M}}+ \frac{\sqrt{xq\log (x/q)}}{\sqrt{\log M}}+\frac{x}{\sqrt{M}}.$$ 
Summing this bound over the $M$'s of the form $M=2^j\mathcal L(x)$ with $0\leq j\leq \frac{1}{\log 2}\log (y/\log \mathcal L(x))$ and combining this with \eqref{Eq:ImprovingRegisSplit} completes the proof. 
\end{proof}

%%%%%%%%%%%%%%%%%%%%%%%%%%%%%%%%%%%%%%%%%%%%%%%%%%%%%%%%%%%

\subsection{Proof of Theorem \ref{Thm:Smooth}} Since the result holds trivially if $q$ is bounded, we assume throughout that $q$ is sufficiently large. 
 We also replace $\alpha$ by $a/q$ at the expense of an error term of size $O(\Psi(x, y)/q)$ since  $|\alpha-a/q|\leq 1/(qx)$; that is, we take $\alpha=a/q$ throughout the proof. 
Moreover, since we included the term $\Psi(x, q^2)$ in the upper bound, we only need to bound the sum 
$$S(x, y;\alpha):=\sum_{\substack{n\leq x\\ q^2<P^+(n)\leq y}}  f(n) e(n\alpha)= \sum_{q^2< p\leq y}\sum_{\substack{m\leq x/p\\ P^+(m)\leq p}}  f(mp) e(mp\alpha).$$
We start by handling the  contribution of the $m$'s such that $p|m$ in the above sum, which is  
$$ \ll \sum_{q^2<p\leq y} \ \Psi\left(\frac{x}{p^2}, p\right)\ll \sum_{q^2<p\leq y} \ \frac{1}{p^{\lambda}}\Psi\left(\frac{x}{p}, p\right) \ll \frac{1}{q^2}\sum_{q^2<p\leq y} \Psi\left(\frac{x}{p}, p\right)\ll \frac{\Psi(x, y)}{q^2}, $$
by \eqref{Eq:Buch}, Lemma  \ref{Lem:LocalSmoothBreTen}   and since $1-\lambda\ll 1/\log q$ by \eqref{Eq:estimateLambda} and our assumption $q^{\log(u+1)/A}\leq y$. Therefore, we obtain 
\begin{align}\label{Eq:SplitSumS}
|S(x, y;\alpha)|&\leq \Big|\sum_{q^2< p\leq y} f(p) \sum_{\substack{m\leq x/p\\ P^+(m)< p}}  f(m) e(mp\alpha)\Big| + O\left(\frac{\Psi(x, y)}{q^2}\right)\nonumber\\
&\leq 
\sum_{q^2< p\leq y} \Big| \sum_{\substack{m\leq x/p\\ P^+(m)< p}}  f(m) e(mp\alpha)\Big| + O\left(\frac{\Psi(x, y)}{q^2}\right).
\end{align}
We now split the above sum over the primes $q^2<p\leq y$ into short intervals $I_j=(t_j,t_{j+1}]$ for $0\leq j\leq J$,
where $t_0=q^2$, $t_J\sim y$, and $t_{j+1}=t_j+ t_j^{2/3}$ for each $j$. We also select the integer $J_0$ so that 
$t_{J_0}\sim \min\{ x/q,y\}$ so that $J_0=J$ if $y\leq x/q$.

Let $0\leq j\leq J$ and assume that $p\in I_j$. Then we observe that 
\begin{equation}\label{Eq:ApproxSumShortPrimesSmooth}
\sum_{\substack{m\leq x/p\\ P^+(m)< p}} f(m) e(mp\alpha)=\sum_{\substack{m\leq x/t_j\\ P^+(m)<  t_j}} f(m) e(mp\alpha) +E_1(j),
\end{equation}
where
\begin{align*}
 |E_1(j)|&\leq  \Psi\left(\frac{x}{t_j}, t_j\right)-\Psi\left(\frac{x}{t_{j+1}}, t_j\right)+ \sum_{\substack{m\leq x/t_j\\ t_j\leq P^+(m)<t_{j+1}}}1\\
 & \leq \Psi\left(\frac{x}{t_{j+1}}+ \frac{x}{t_j^{4/3}}, t_j\right)-\Psi\left(\frac{x}{t_{j+1}}, t_j\right)+ \sum_{t_j\leq p_1<t_{j+1}} \Psi\left(\frac{x}{p_1t_j}, p_1\right),
 \end{align*}
since $x/t_j\leq x/t_{j+1}+  x/t_j^{4/3}$.

First, we assume that $q^2\leq t_j\leq x^{1/10}.$  In this case we have  $x/t_j^{4/3}>t_j$ for all $0\leq j\leq J$. Therefore, using Lemmas \ref{Lem:HildebrandSubAdd} and \ref{Lem:LocalSmoothBreTen} we obtain
\begin{align*}
|E_1(j)| &\ll \Psi\left(\frac{x}{t_{j}^{4/3}}, t_j\right)+ \frac{1}{t_j^{2\lambda}}\sum_{t_j\leq p_1<t_{j+1}} \Psi\left(x, p_1\right)  
\\
&\ll \frac{\Psi(x, y)}{t_j^{4\lambda/3}} + \frac{\Psi\left(x, y\right)}{t_j^{2\lambda}}\sum_{t_j\leq p_1<t_{j+1}} 1  \ll \frac{\Psi(x, y)}{t_j^{2\lambda-2/3}}.
\end{align*}
 Therefore, we obtain
\begin{align}\label{Eq:ErrorSumSmooth3}
\sum_{\substack{0\leq j\leq J\\ t_j\leq x^{1/10}}}\sum_{t_j< p\leq t_{j+1}} |E_1(j)|&\ll  \Psi(x, y)\sum_{j=0}^J\sum_{t_j< p\leq t_{j+1}}\frac{1}{t_j^{2\lambda-2/3}} \ll \Psi(x, y)\sum_{j=0}^J\frac{t_{j+1}-t_j}{t_j^{2\lambda-2/3}} \nonumber\\
&\ll \Psi(x, y)\int_{q^2}^y \frac{dt}{t^{2\lambda-2/3}}\ll \frac{\Psi(x, y)}{q^{2/3}},
\end{align}
where we have used that $1-\lambda\ll  1/\log q$ and $q$ is sufficiently large. 

Now, we assume that $ x^{1/10}< t_j\leq y$. In this case we know that 
$\Psi(x, y)\asymp x$, and we use that 
\begin{align*}
|E_1(j)|&\leq \sum_{x/t_{j+1}<m\leq x/t_j}1+ \sum_{\substack{m\leq x/t_j\\ t_j\leq P^+(m)< t_{j+1}}}1  \\
&\ll \left(\frac{x}{t_{j}}- \frac{x}{t_{j+1}}+1\right)+ \sum_{t_j\leq  p_1<t_{j+1}} \ \sum_{\substack{m\leq x/t_j\\ p_1|m}}1 \nonumber\\
&\ll \frac{x}{t_{j}^{4/3}}+1 + \frac{x}{t_j^2}\sum_{t_j\leq p_1<t_{j+1}}1\ll \frac{x}{t_{j}^{4/3}}+1.
\end{align*}
Hence, we get  
\begin{align}
\label{Eq:SumErrorjSmooth}
\sum_{\substack{0\leq j\leq J\\  t_j>x^{1/10}}}\sum_{t_j< p\leq t_{j+1}} |E_1(j)|&\ll  x\sum_{j=0}^J\sum_{t_j< p\leq t_{j+1}}\frac{1}{t_j^{4/3}}+ \frac{y}{\log y} \nonumber \\
& \ll x\sum_{j=0}^J\frac{t_{j+1}-t_j}{t_j^{4/3}} +\frac{y}{\log y}\\
&\ll x\int_{q^2}^y \frac{dt}{t^{4/3}}+\frac{y}{\log y}\ll \frac{\Psi(x, y)}{q^{2/3}}+\frac{y}{\log y}, \nonumber
\end{align}
Inserting the estimates \eqref{Eq:ErrorSumSmooth3} and \eqref{Eq:SumErrorjSmooth} into \eqref{Eq:SplitSumS} gives 
\begin{equation}
\label{Eq:MainSumSmooth}
|S(x, y;\alpha)|\leq \sum_{j=0}^J\sum_{t_j< p\leq t_{j+1}} \Big| \sum_{\substack{m\leq x/t_j\\ P^+(m)< t_j}}  f(m) e(mp\alpha)\Big| + O\left(\frac{\Psi(x, y)}{q^{2/3}}+\frac{y}{\log y} \right).
\end{equation}

Next, we handle the main term on the right hand side of \eqref{Eq:MainSumSmooth}, which equals 
\begin{align}\label{Eq:MainResidueSmooth} \sum_{j=0}^J\sum_{t_j< p\leq t_{j+1}} \Big| \sum_{\substack{m\leq x/t_j\\ P^+(m)< t_j}}  f(m) e(mp\alpha)\Big|&=\sum_{j=0}^J\sum_{\substack{1\leq r\leq q-1\\ (r, q)=1}} \Big| \sum_{\substack{m\leq x/t_j\\ P^+(m)< t_j}}  f(m) e(mr\alpha)\Big|\sum_{\substack{t_j< p\leq t_{j+1}\\ p\equiv r \bmod q}} 1 \nonumber\\ 
&\ll \sum_{j=0}^J\frac{t_{j+1}-t_j}{\phi(q)\log t_j}\sum_{\substack{1\leq r\leq q-1\\ (r, q)=1}} \Big| \sum_{\substack{m\leq x/t_j\\ P^+(m)< t_j}}  f(m) e(mr\alpha)\Big|,
\end{align}
where the last bound follows from the Brun-Titchmarsh Theorem, since $t_{j+1}-t_j=t_j^{2/3}\geq q^{4/3}$ for all $0\leq j\leq J$. We now fix $0\leq j\leq J$  and use the Cauchy-Schwarz inequality which gives
\begin{align}\label{Eq:MainTermCSSmooth}
&\Bigg(\sum_{\substack{1\leq r\leq q-1\\ (r,q)=1}} \bigg|\sum_{\substack{m\leq x/t_j\\ P^+(m)< t_j}} f(m) e(m r \alpha)\bigg|\Bigg)^2
\leq \phi(q)\sum_{\substack{1\leq r\leq q-1\\ (r,q)=1}} \bigg|\sum_{\substack{m\leq x/t_j\\ P^+(m)< t_j}} f(m) e(m r \alpha)\bigg|^2\nonumber \\
& \quad \quad \quad \quad \quad \quad= \phi(q)\sum_{\substack{m_1, m_2\leq x/t_j\\ P^+(m_1m_2)< t_j}} f(m_1)\overline{f(m_2)}\sum_{\substack{1\leq r\leq q-1\\ (r,q)=1}}e\Big((m_1-m_2)\frac{ra}{q}\Big)\nonumber\\
& \quad \quad \quad \quad \quad \quad \leq  \phi(q)\sum_{\substack{m_1, m_2\leq x/t_j\\ P^+(m_1m_2)< t_j}} |c_q(m_1-m_2)|,
\end{align}
where $c_q(\ell)$ is Ramanujan's sum, defined by 
\begin{equation}\label{Eq:DefRamanujan}
c_q(\ell):= \sum_{\substack{1\leq b\leq q-1\\ (b,q)=1}} e\left(\frac{b\ell}{q}\right)= \sum_{d\mid (q, \ell)} d\mu\left(\frac{q}{d}\right),
\end{equation}
where the second identity follows by M\"obius inversion  (see for example   \cite[Ch. 26, Eq. (6)]{D80}. 
Therefore, by \eqref{Eq:MainResidueSmooth}  and \eqref{Eq:MainTermCSSmooth} we obtain 
\begin{align}
\label{Eq:MainSumSmooth2}
|S(x, y;\alpha)| &\ll \sum_{j=0}^J\frac{t_{j+1}-t_j}{\sqrt{\phi(q)}\log t_j}\Bigg(\sum_{\substack{m_2\leq m_1\leq x/t_j\\ P^+(m_1m_2)< t_j}}|c_q(m_1-m_2)|\Bigg)^{1/2} \nonumber\\
& \quad \quad \quad + O\left( \frac{\Psi(x, y)}{q^{2/3}}+ \frac{y}{\log y}\right).
\end{align} 
We now bound the right hand side of \eqref{Eq:MainTermCSSmooth} for those $t_j\leq x/q$ (that is for $j\leq J_0$). We note that 
\begin{equation}\label{Eq:OffDiagonalSmooth}
\begin{aligned}
 & \sum_{\substack{m_1,m_2\leq x/t_j\\ P^+(m_1m_2)< t_j}}|c_q(m_1-m_2)|
=  \sum_{b_1,b_2 \bmod q} |c_q(b_1-b_2)| 
\sum_{\substack{m\leq x/t_j\\ m \equiv b_1 \bmod q\\ P^+(m)< t_j}} 1 \cdot \sum_{\substack{n\leq x/t_j\\ n \equiv b_2 \bmod q\\ P^+(n)< t_j}} 1.
\end{aligned}
\end{equation}
First, assume that $t_j\geq x^{1/10}$. In this case we have for all $b\pmod q$
\begin{equation}
\label{Eq:BoundSmoothAPShort1}
\sum_{\substack{m\leq x/t_j\\ m \equiv b \bmod q\\ P^+(m)< t_j}} 1 \leq \sum_{\substack{m\leq x/t_j\\ m \equiv b \bmod q}} 1\ll \frac{x}{t_jq}\ll \frac{\Psi\left(x/t_j, t_j\right)}{q} ,
\end{equation} 
since $t_j\leq x/q$ and $\Psi(x/t_j, t_j)\asymp x/t_j$ by our assumption on $t_j$. 
 
We now assume that  $t_j< x^{1/10}$. Fix a residue class $b \pmod q$ and let $d= (b, q)$. Then by \eqref{Eq:HarperSmoothAP}  we have
\begin{equation}\label{Eq:BoundSmoothAPShort2}
\sum_{\substack{m\leq x/t_j\\ m \equiv b \bmod q\\ P^+(m)< t_j}} 1= \Psi\left(\frac{x}{t_j d}, t_j; \frac{q}{d}, \frac{b}{d}\right) \ll \frac{1}{\phi(q/d)} \Psi_{q/d}\left(\frac{x}{t_jd},t_j\right). 
\end{equation}
 Moreover, note that $d\leq q=x^{o(1)}\leq x/t_j^2$ and hence by Lemma \ref{Lem:LocalSmoothBreTen} we get 
\begin{align*}
\Psi_{q/d}\left(\frac{x}{t_jd},t_j\right) 
 &\ll \frac{\prod_{p|(q/d)}(1-p^{-\lambda})}{d^{\lambda}} \Psi\left(\frac{x}{t_j}, t_j\right) \\
 & \ll \frac {\phi(q/d)}{d^\lambda \cdot (q/d)}\Psi\left(\frac{x}{t_j}, t_j\right) \ll   \frac {\phi(q/d)} {q}\Psi\left(\frac{x}{t_j}, t_j\right),
 \end{align*}
since $1-\lambda \ll 1/\log q$, which implies that $d^{\lambda}\asymp d$ and $$\prod_{p|(q/d)}\big(1-p^{-\lambda}\big) \asymp \exp\Big(-\sum_{p|q/d} \frac{1}{p^{\lambda}}\Big)\asymp \exp\Big(-\sum_{p|(q/d)} \frac{1}{p}\Big)\asymp \frac{\phi(q/d)}{q/d}.$$
Inserting this bound in \eqref{Eq:BoundSmoothAPShort2} shows that \eqref{Eq:BoundSmoothAPShort1} holds in this case as well, for all residue classes $b \pmod q$.

Thus, combining the estimates \eqref{Eq:OffDiagonalSmooth} and \eqref{Eq:BoundSmoothAPShort1}   implies that for all $0\leq j\leq J_0$,
we obtain 
\begin{align}\label{Eq:OffDiagonalSmooth2}
\sum_{\substack{m_1,m_2\leq x/t_j\\ P^+(m_1m_2)< t_j}}|c_q(m_1-m_2)|& \ll   \frac {\Psi (x/t_j,t_j)^2}{q^2} \sum_{b_1,b_2 \bmod q} |c_q(b_1-b_2)| \nonumber\\
&
=\frac {\phi(q)2^{\omega(q)}}{q} \Psi\left(\frac{x}{t_j}, t_j\right)^2 ,
 \end{align}
since
\begin{align*}
  \sum_{b_1,b_2 \bmod q} |c_q(b_1-b_2)| &= q\sum_{h\bmod q}|c_q(h)|= q  \sum_{d|q} |c_q(d)| \sum_{\substack{h \bmod q \\ (h,q)=d}} 1
 \\
& = q \phi(q)\sum_{d|q} \mu(q/d)^2= q \phi(q) 2^{\omega(q)},
\end{align*}
where we have used that $c_q(h) =\mu(q/(q, h))\phi(q)/\phi(q/(q, h))$ for all positive integers $h$.

We now insert the estimate  \eqref{Eq:OffDiagonalSmooth2} in \eqref{Eq:MainTermCSSmooth} and then back into \eqref{Eq:MainResidueSmooth} to get 
\begin{align*}
\sum_{j=0}^{J_0}\sum_{t_j< p\leq t_{j+1}} \Big| \sum_{\substack{m\leq x/t_j\\ P^+(m)< t_j}}  f(m) e(mp\alpha)\Big|
\ll \frac{2^{\omega(q)/2}}{\sqrt{q}} \sum_{j=0}^{J_0} \frac{t_{j+1}-t_j}{\log t_j}\Psi\left(\frac{x}{t_j}, t_j\right).
\end{align*}
Finally, since $2/3>7/12$, we use Huxley's prime number theorem in short intervals \cite{Hu72}, together with Lemma \ref{Lem:HildTenebaum2x} and \eqref{Eq:Buch} to obtain
\begin{align}\label{Eq:Buch+Huxley}
\sum_{j=0}^{J_0} \frac{t_{j+1}-t_j}{\log t_j}\Psi\left(\frac{x}{t_j}, t_j\right)& \ll \sum_{j=0}^{J_0}\sum_{t_j<p\leq t_{j+1}}  \Psi\left(\frac{2x}{p}, p\right)\nonumber\\ &\ll \sum_{q^2<p\leq y}  \Psi\left(\frac{x}{p}, p\right) \leq \Psi(x, y),  
\end{align}
so that 
\begin{equation}\label{Eq:UpToJ0}
\sum_{j=0}^{J_0}\sum_{t_j< p\leq t_{j+1}} \Big| \sum_{\substack{m\leq x/t_j\\ P^+(m)< t_j}}  f(m) e(mp\alpha)\Big| \ll
\frac{2^{\omega(q)/2}}{\sqrt{q}} \Psi(x, y).
\end{equation}
Combining this estimate with \eqref{Eq:MainSumSmooth} completes the proof when $J_0=J$, that is, when $y\leq x/q$,
since the error term $y/\log y$ is $\ll \Psi(x,y)/\sqrt{q}$ in our range.

Now assume that $x/q<y\leq x$. It remains to handle the contribution of the $j$'s such that $x/q<t_j\leq y$.  Writing $h=m_1-m_2$ and dropping the condition $P^+(m_1m_2)<t_j$ we get 
\begin{equation}\label{Eq:DoubleSumSingleRamanujan}
\sum_{\substack{m_2\leq m_1\leq x/t_j\\ P^+(m_1m_2)< t_j}}|c_q(m_1-m_2)|\leq \frac{x}{t_j}\sum_{0\leq h\leq x/t_j }|c_q(h)|.
\end{equation}
We now bound the sum over $h$ on the right hand side. The contribution of the term $h=0$ (which corresponds to the diagonal terms $m_1=m_2$) is $\phi(q)$. Moreover, since $c_q(h) =\mu(q/(q, h))\phi(q)/\phi(q/(q, h))$,  the contribution of the terms $0<h\leq x/t_j$ equals 
\begin{align*} \phi(q)\sum_{d\mid q}\frac{|\mu(q/d)|}{\phi(q/d)}\sum_{\substack{0< h\leq x/t_j\\ (h, q)=d}} 1 &\leq \frac{x\phi(q)}{t_j}  \sum_{d\mid q}\frac{|\mu(q/d)|}{d\phi(q/d)}\\
&=\frac{x \phi(q)}{qt_j}  \sum_{d\mid q}\frac{|\mu(d)|d}{\phi(d)} \ll \frac{x \sqrt{\phi(q)}2^{\omega(q)}}{\sqrt{q}t_j},
\end{align*}
since 
$|\{0< h\leq x/t_j : \  (h, q)=d\}|\leq |\{0< h\leq x/t_j : \   d\mid h\}|\leq x/(dt_j),$
and 
$$\sum_{d\mid q}\frac{|\mu(d)|d}{\phi(d)}= \prod_{p\mid q}\left(1+\frac{p}{p-1}\right)= 2^{\omega(q)}\prod_{p\mid q}\left(1+\frac{1}{2(p-1)}\right)\ll 2^{\omega(q)}\sqrt{\frac{q}{\phi(q)}}.  $$
Therefore, we obtain
\begin{equation}\label{Eq:BoundShortSumRamanujan}\sum_{0\leq h\leq x/t_j }|c_q(h)|\ll  \phi(q)+ \frac{x \sqrt{\phi(q)}2^{\omega(q)}}{\sqrt{q}t_j}. \end{equation}
Combining this estimate with \eqref{Eq:DoubleSumSingleRamanujan}, we deduce that the contribution to the right hand side of \eqref{Eq:MainSumSmooth2} of the terms $j$ such that $x/q< t_j\leq y$ (that is, $J_0<j\leq J$) is
\begin{align*}
&\sum_{\substack{j \leq J\\ x/q< t_j\leq y}}\frac{t_{j+1}-t_j}{\sqrt{\phi(q)}\log t_j}
\bigg(\frac{\sqrt{x}{\sqrt{\phi(q)}}}{\sqrt{t_j}}+ \frac{x}{t_j} 2^{\omega(q)/2}\frac{\phi(q)^{1/4}}{q^{1/4}}\bigg)\\
&\ll \sqrt{x}\int_{x/q}^y\frac{dt}{\sqrt{t}\log t}
+ x \frac{2^{\omega(q)/2}}{(q\phi(q))^{1/4}}\int_{x/q}^{y}\frac{dt}{t\log t}\\
&\ll \frac{\sqrt{xy}}{\log x}+ x \frac{2^{\omega(q)/2}}{(q\phi(q))^{1/4}} \log\bigg(\frac{\log y}{\log (x/q)}\bigg)\\
&\ll \frac{\sqrt{xy}}{\log x}+ \frac{x}{\log x} \frac{2^{\omega(q)/2} \log q}{(q\phi(q))^{1/4}} \ll  \frac{2^{\omega(q)/2} }{\sqrt{q}} \Psi(x,y) 
\end{align*}
as $\log x/\log q \geq (\log\log q)^{1/4} \gg (q/\phi(q))^{1/4}$ and $\Psi(x,y)\gg x$.
Combining this estimate with \eqref{Eq:MainSumSmooth2} and  \eqref{Eq:UpToJ0} completes the proof.

\begin{proof}[Proof of Corollary \ref{Cor:Smooth}]
Assume that $q\leq \mathcal L(x)^{c_0}$ is sufficiently large, where $c_0=\frac 1{\sqrt{2}}-\ep$.
Theorem \ref{Thm:Smooth} implies that 
\[
\sum_{\substack{n\leq x\\ P^+(n)\leq y}}  f(n) e(n\alpha)\ll  \frac{\sqrt{xy}}{\log x}  1_{y>x/q} + 
\frac{2^{\omega(q)/2}}{\sqrt{q}}\Psi(x, y) +\Psi(x, q^2).
 \]
So our goal is to show that 
\begin{equation}\label{Eq:ThirdTermSmoothSmall}
\frac{\Psi(x, y)}{\sqrt{q}}\gg\Psi(x, q^2),
\end{equation} for all real numbers $y$ such that $ (2q)^{\log(u+1)/A}\leq y\leq x$, where $A>1$ is a fixed constant.

Suppose that $\mathcal L(x)^{c_1}\leq y\leq x,$ where $c_1=10/\ep$. Then it follows from \eqref{Eq:ChangeRegimSmoothCount} that 
$$ \frac{\Psi(x,y)}{\sqrt{q}}\geq  \frac{\Psi(x, \mathcal L(x)^{c_1})}{\mathcal L(x)^{c_0/2}}= \frac{x}{\mathcal L(x)^{1/(2c_1)+c_0/2+o(1)}}\geq \Psi(x, \mathcal L(x)^{2c_0})\geq \Psi(x, q^2), $$
since $1/(2c_1)+c_0/2<1/(4c_0)$. 

Now, suppose that $y\leq \mathcal L(x)^{c_1}$. In this case we have $u=\log x/\log y\gg \sqrt{\log x/\log\log x}$ so that $\log(u)\geq (\log\log x)/3.$ In particular, this implies $
\log q\ll \log y/ (\log u)\ll u$. Moreover $y\geq (2q)^{\log\log x/3A}=(\log x)^{(\log 2q)/3A}$, which is
$\geq (\log x)^{10}$ if  $q$ is large enough (say $q>q_0$). Therefore, by \eqref{Eq:CEPSmooth} we have 
$$ \frac{\Psi(x, y)}{\sqrt{q}}= x\exp\big(-(1+o(1))u\log  u+O(\log q)\big)=x\exp\big(-(1+o(1))u\log  u\big).$$
Let $v=\log x/(2\log q)\gg u\log u$, since $\log y\gg \log u\log q$. Thus,  if $q\geq (\log x)^2$ then another application of \eqref{Eq:CEPSmooth} gives
$ \Psi(x, q^2) = xv^{-(1+o(1))v}\leq x\exp(-c_2u(\log u)^2),$ for some positive constant $c_2$, which implies \eqref{Eq:ThirdTermSmoothSmall} in this case.

If $q_0<q\leq (\log x)^2$, then it follows from \eqref{Eq:SmoothPowersLog} that $\Psi(x,q^2)\leq \Psi\big(x, (\log x)^4\big)= x^{3/4+o(1)}\leq \Psi(x, (\log x)^{10})/\sqrt{q} \leq \Psi(x, y)/\sqrt{q}$, as desired.  If $q\leq q_0$ then $y\geq (\log x)^c$ where $c=(\log 2)/3A$, so
that $\sqrt{q} \Psi(x,q^2)\ll (\log x)^{\pi(q_0^2)} \ll \Psi(x, y)$.
\end{proof}

%%%%%%%%%%%%%%%%%%%%%%%%%%%%%%%%%%%%%%%%%%%%%%%
  
\subsection{Optimality of the bounds in Theorem \ref{Thm:Smooth}}

%%%%%%%%%%%%%%%%%%%%%%%%%%%%%%%%%%%%%%%%%%%%%%%%
\subsection{The major arcs}\label{SubSec:MajorArcsSmooth}
 Here we let $\alpha=a/q$, and choose $\chi$ to be a primitive Dirichlet character modulo $q$. In this case we have 
\begin{equation}\label{Eq:CharMajorArcsSmooth}\sum_{\substack{n\leq x\\ P^+(n)\leq y}} \chi(n) e(n\alpha)= \sum_{\substack{b\bmod q\\ (b, q)=1}} \chi(b) e(ba/q) \Psi(x, y; q, b).  
 \end{equation}
 
 First suppose that $q$ and $y$ satisfy 
 \begin{equation}\label{Eq:RangeSmallqSmoothChar}
 2\leq q\leq (\log x)^A, \  \text{ and } \  (\log x)^{B\log(1+\omega(q))}\leq y\leq x,
\end{equation}
where $B=B(A)$ is a positive large constant.  In particular, we have $y\geq (\log x)^{B\log 2}$.
%In this range with $y<x/q$,  Corollary \ref{Cor:Smooth}   implies that
%\[
%\sum_{\substack{n\leq x\\ P^+(n)\leq y}}  f(n) e(n\alpha)\ll   \frac{2^{\omega(q)/2}}{\sqrt{q}}\Psi(x, y) .
%\]
Therefore, if $B$ is suitably large then  Lemma \ref{Lem:BVHarperSmooth} implies that 
 \begin{equation}\label{Eq:AsymptoticSmoothAPStrong}
 \Psi(x, y;q, b)= \frac{\Psi_q(x, y)}{\phi(q)}+ O\left(\frac{\Psi(x, y)}{q^2}\right), 
 \end{equation}
 for all $b$ such that $(b, q)=1.$ Inserting this estimate in \eqref{Eq:CharMajorArcsSmooth} implies that 
 
 %Hence, we need strong estimates for $\Psi(x, y; b, q)$ in a wide range of the parameters. When $q\leq (\log x)^A$ (where $A$ is a positive constant), Fouvry and Tenenbaum \cite{FoTe90} showed that there exist positive constants $c_1, c_2$ such that uniformly for $x$ and $y$ in the range 
 %$$ \exp(c_1(\log\log x)^2)\leq y\leq x,$$
 %we have 
 %$$ \Psi(x, y; b, q)= \frac{\Psi_q(x, y)}{\phi(q)}\left(1+O\left(\exp\left(-c_2\sqrt{\log y}\right)\right)\right).$$
%Inserting this asymptotic formula in \eqref{Eq:CharMajorArcsSmooth} gives 
\begin{align}\label{Eq:ExponentialCharacterSmooth}
\sum_{\substack{n\leq x\\ P^+(n)\leq y}} \chi(n) e(n\alpha)&=  \frac{\Psi_q(x,y)}{\phi(q)} \sum_{\substack{b\bmod q\\ (b, q)=1}} \chi(b) e(ba/q)+ O\left(\frac{\Psi(x, y)}{q}\right)\nonumber\\
& =\frac{\overline{\chi(a)}\tau(\chi)}{\phi(q)}\Psi_q(x, y)+ O\left(\frac{\Psi(x, y)}{q}\right).
\end{align}
Furthermore, note that $y\geq (u+1)^{\frac{B}{2}\log (1+\omega(q))}$, and hence if $B$ is suitably large we have by \eqref{Eq:TenSmoothCrible}
$$ \Psi_q(x, y)\gg \frac{\phi(q)\Psi(x, y)}{q}.$$
Combining this estimate with \eqref{Eq:ExponentialCharacterSmooth} we get 
\begin{equation}\label{Eq:LargeCharacterMajorSmooth} \sum_{\substack{n\leq x\\ P^+(n)\leq y}} \chi(n) e(n\alpha)\gg \frac{\Psi(x, y)}{\sqrt{q}}, 
\end{equation}
showing, as desired, that  Corollary \ref{Cor:Smooth} is best possible in this range with $y\leq x/q$, up to the factor $2^{\omega(q)/2}$, which we guess should not be there.

Now assume that $q$ and $y$ satisfy 
\begin{align}\label{Eq:RangeLargeqSmoothChar}
 &  (\log x)^A \leq  q\leq \exp\left(c_1\frac{\sqrt{\log x}}{\log\log x}\right) ,\nonumber\\
   \text{ and } \quad \quad    & \max\left(q^{c_2}, (\log x)^{B\log(1+\omega(q))}\right)\leq y\leq \exp\left(c_3\frac{\log x}{\log q(\log\log q)^2}\right),
\end{align}
 Then $q\leq y^{1/c_2}$ and $\log y\leq c_3 \log x /(\log q(\log\log q)^2$, which implies that $q\leq \exp\big((c_3+o(1))u/(\log(u+1)^2\big)$. 
Thus, if $c_2$ is suitably large and $c_3$ is suitably small, it follows from Lemma \ref{Lem:BVHarperSmooth} that \eqref{Eq:AsymptoticSmoothAPStrong} is valid in this case as well. Moreover, since 
$y\geq (\log x)^{B\log(1+\omega(q))}\geq (u+1)^{B\log(1+\omega(q))}$ we see that \eqref{Eq:TenSmoothCrible} holds and hence \eqref{Eq:LargeCharacterMajorSmooth} is also valid in this range of the parameters $y$ and $q$.   %This condition implies that $$ \log\log x \ll \log q\ll \frac{u}{(\log u)^2}

Finally, assuming the Generalized Riemann Hypothesis in the range 
\begin{equation}\label{Eq:RangeGRHqSmoothChar}
  \max\left( q^3(\log q)^{5}, (\log x)^{B\log(1+\omega(q))}\right)\leq y\leq x
\end{equation}
and using \eqref{Eq:SmoothAPUnderGRH}, we derive similarly to \eqref{Eq:ExponentialCharacterSmooth} 
$$ \sum_{\substack{n\leq x\\ P^+(n)\leq y}} \chi(n) e(n\alpha)=\frac{\overline{\chi(a)}\tau(\chi)}{\phi(q)}\Psi_q(x, y)+ O\left(\Psi_q(x, y)\frac{q(\log y)^2}{\sqrt{y}}\right)\gg \frac{\sqrt{q}}{\phi(q)}\Psi_q(x, y),$$
  Thus we again have \eqref{Eq:LargeCharacterMajorSmooth} since $y\geq (\log x)^{B\log(1+\omega(q))}$. %We conclude that $2\leq 

%%%%%%%%%%%%%%%%%%%%%%%%%%%%%%%%%%%%%%%%%%%%%%%%%%%%%%%

\subsection{The minor arcs: Proof of Proposition \ref{Pro:OptimalMinorArcSmooth}}

We shall construct a multiplicative function $f:\mathbb{N}\to \mathbb{U}$ (in fact many such functions) which satisfy \eqref{Eq:ProSmoothLargeMinor}. First, we choose $f$ such that $f(p^k)=0$ for all primes $2x/y<p\leq y/2$ and all positive integers $k$. This gives
\begin{align*} 
\sum_{\substack{n\leq x\\ P^+(n)\leq y}}  f(n) e(n\alpha)
 &= \sum_{\substack{n\leq x\\ 
 \frac y2<P^+(n)\leq y}}  f(n) e(n\alpha)+ \sum_{\substack{n\leq x\\ P^+(n)\leq 2x/y}}  f(n) e(n\alpha)\\
 & =\sum_{ 
 \frac y2< p\leq y} f(p)\sum_{n\leq x/p}  f(n) e(np\alpha)+ \sum_{\substack{n\leq x\\ P^+(n)\leq 2x/y}}  f(n) e(n\alpha).
\end{align*}
Next, to optimize the first sum, we let $f(p)=e^{-i\theta_p}$ for each $y/2<p\leq y$, where $\theta_p$ is the argument of the inner sum $\sum_{n\leq x/p} f(n)e(n\alpha)$. This yields 
$$ \sum_{\substack{n\leq x\\ P^+(n)\leq y}}  f(n) e(n\alpha)
=\sum_{ 
 \frac y2< p\leq y} \bigg|\sum_{n\leq x/p}  f(n) e(np\alpha)\bigg|+ \sum_{\substack{n\leq x\\ P^+(n)\leq 2x/y}}  f(n) e(n\alpha).$$
Finally, on the remaining primes $p
\leq 2x/y$, we choose $f$ to be a Steinhaus random multiplicative function\footnote{This means that $\{f(p)\}_{p\leq 2x/y}$ is a sequence of independent random variables, uniformly distributed on the unit circle, and for all $n>1$ such that $P^+(n)\leq 2x/y$ we have $f(n)=f(p_1)^{a_1}\cdots f(p_k)^{a_k}$, if $n=p_1^{a_1}\cdots p_k^{a_k}$.} satisfying the following events 
\begin{equation}\label{Eq:Event1SmoothMinor}
\mc{A}_1: \bigg|\sum_{\substack{n\leq x\\ P^+(n)\leq 2x/y}}  f(n) e(n\alpha)\bigg|\leq 2\sqrt{x},
\end{equation}
and 
\begin{equation}\label{Eq:Event2SmoothMinor}
   \mc{A}_2: \sum_{ 
 \frac y2< p\leq y} \bigg|\sum_{n\leq x/p}  f(n) e(np\alpha)\bigg|\geq c_0 \frac{\sqrt{xy}}{\log x}, 
\end{equation}
for some suitably small constant $c_0>0$.
We will show that $\pr(\mc{A}_1)\geq 3/4$ and $\pr(\mc{A}_2)\geq 3/4$, so that $\pr(\mc{A}_2\cap \mc{A}_2)\geq 1/2$. This implies that for at least  half of all such multiplicative functions $f$, we have 
$$\bigg|\sum_{\substack{n\leq x\\ P^+(n)\leq y}}  f(n) e(n\alpha)\bigg|\ge \frac{c_0}{2}\frac{\sqrt{xy}}{\log x}, $$
as desired. 

We now establish these claims. We first consider the event $\mc{A}_1$, since it is much easier to handle. We have 
\begin{align*}
\ex\bigg(\bigg|\sum_{\substack{n\leq x\\ P^+(n)\leq 2x/y}}  f(n) e(n\alpha)\bigg|^{2}\bigg)&= \sum_{\substack{n_1, n_2\leq x\\ P^+(n_1n_2)\leq 2x/y}} e\big((n_1-n_2)\alpha\big)\ex\big(f(n_1)\overline{f(n_2)}\big)\\
&= \Psi(x, 2x/y)\leq x.
\end{align*}
 Thus, by Markov's inequality we have 
 $$ \pr(\mc{A}_1^c)\leq \frac{1}{4x}\ex\bigg(\bigg|\sum_{\substack{n\leq x\\ P^+(n)\leq 2x/y}}  f(n) e(n\alpha)\bigg|^{2}\bigg)\leq \frac14, 
 $$
 which implies that $\pr(\mc{A}_1)\geq 3/4.$

We now consider the event $\mc{A}_2$. For a positive integer $k$ we define 
 $$ M_k(f):= \sum_{\frac y2<p\leq y} \bigg| \sum_{m\leq x/p} f(m) e(mp\alpha)\bigg|^k.$$
 By H\"older's inequality we have 
\begin{equation}\label{Eq.HolderSmooth}
M_2(f) \leq M_1(f)^{2/3} M_4(f)^{1/3}.
\end{equation}
Thus, to show that $M_1(f)$ is large,  we need to obtain a lower bound for $M_2(f)$ and an upper bound for $M_4(f)$. To this end, we will prove that   $M_2(f)$ is large uniformly for all $f$, and then bound  $\ex(M_4(f)).$   

We start by handling $M_2(f)$.
Opening up the square gives
$$ M_2(f)= \sum_{\frac y2<p \leq y} \ \sum_{n_1, n_2
\leq x/p} f(n_1)\overline{f(n_2)} e((n_1-n_2) p\alpha).$$
The contribution of the diagonal terms $n_1=n_2$ is 
\begin{equation}\label{Eq.DiagonalM2Smooth}
\sum_{\frac y2<p \leq y} \left\lfloor\frac{x}{p}\right\rfloor\asymp \frac{x}{\log x}.   
\end{equation}
Next, we bound the contribution of the off-diagonal terms, which is  
$$
 \leq 2 \sum_{n_1< n_2<2x/y} \bigg|\sum_{\frac y2<p\leq \min(y, x/n_1)} e((n_2-n_1)p\alpha)\bigg|, 
$$
uniformly in $f$. Putting $h=n_2-n_1$, this becomes 
\begin{equation}
\label{Eq:OffDiagonalMinorSmooth}
 \ll \sum_{n_1<2x/y} \ \sum_{0<h<2x/y}\bigg|\sum_{\frac y2<p\leq \min(y, x/n_1)} e(h p\alpha)\bigg|. 
\end{equation}
To bound the inner exponential sum, we use \eqref{Eq:Vinogradov} (with $\ep/2$ instead of $\ep$), which by partial summation gives
\begin{equation}\label{Eq:Vinogradov2}
\sum_{p\leq N} e\left(\frac{bp}{r}\right)\ll \left(\frac{N}{\sqrt{r}}+\sqrt{Nr\log r} \right)(\log N)^{-1/4+o(1)}+Ne^{-(\frac12-\frac{\ep}{2})\sqrt{\log N}} .
\end{equation}
% \begin{equation}\label{Eq:Vinogradov2}
% \sum_{p\leq N} e\left(\frac{bp}{r}\right)\ll \left(\frac{N}{\sqrt{r}}+\sqrt{Nr\log r} +Ne^{-(\frac12-\frac{\ep}{2})\sqrt{\log N}}\right)(\log N)^{-1/4}\sqrt{\log\log N}, \end{equation}
for all positive integers $ r\geq 2$ and $1\leq b\leq r$ such that $(b, r)=1$.
Write $ha/q= h_1a/q_1$ where $q_1= q/d$ and $h_1= h/d$ with $d=(q,h)$. By \eqref{Eq:Vinogradov2} we get 
\begin{equation}\label{Eq:Vinogradov3}
 \sum_{\frac y2<p\leq \min(y, x/n_1)} e(h p\alpha) \ll \Big(\frac{y}{\sqrt{q/d}}+ \sqrt{yq(\log q)/d}\Big)(\log y)^{-1/4+o(1)}
 +ye^{-(\frac12-\frac{\ep}{2})\sqrt{\log y}}.
 \end{equation} 
% \begin{equation}\label{Eq:Vinogradov3}
%  \sum_{\frac y2<p\leq \min(y, x/n_1)} e(h p\alpha) \ll \Big(\frac{y}{\sqrt{q_1}}+ \sqrt{yq_1\log(2q_1)}+ye^{-(\frac12-\frac{\ep}{2})\sqrt{\log y}}\Big)% (\log y)^{-1/4}\sqrt{\log\log y}.   \end{equation}
For $d\mid q$ we have $|\{1\leq h<2x/y :  (h, q)=d\}|\leq |\{1\leq h<2x/y :  d\mid h\}|\ll x/(yd).$ Thus, by  \eqref{Eq:OffDiagonalMinorSmooth} and \eqref{Eq:Vinogradov3} we deduce that the contribution of the off-diagonal terms to $M_2(f)$ is 
\begin{align*}
&\ll \frac{x}{y} \sum_{d\mid q} \ \sum_{\substack{1\leq h<2x/y\\ (h, q)=d}} \bigg(\bigg(\frac{y}{\sqrt{q/d}}+ \sqrt{yq(\log q)/d}\bigg)(\log x)^{-1/4+o(1)} + ye^{-(\frac12-\frac{\ep}{2})\sqrt{\log y}}\bigg)\\
& \ll \Bigg(\frac{x^2}{y\sqrt{q}}  \sum_{d\mid q} \frac{1}{\sqrt{d}} +  \frac{x^2\sqrt{q\log q}}{y^{3/2}} \sum_{d\mid q}\frac{1}{d^{3/2}} \Bigg) 
(\log x)^{-1/4+o(1)}+ \frac{x^2}{y}e^{-(\frac12-\frac{\ep}{2})\sqrt{\log y}} \sum_{d\mid q} \frac1d \\
&\ll \Bigg(\frac{x^2}{y\sqrt{q}} \prod_{p\mid q} \left(1-\frac{1}{\sqrt{p}}\right)^{-1} +  \frac{x^2\sqrt{q\log q}}{y^{3/2}} \Bigg)(\log x)^{-1/4+o(1)} + \frac{x^2 q}{y\phi(q)}e^{-(\frac12-\frac{\ep}{2})\sqrt{\log y}}\\
&\ll \frac{x^2}{yq^{1/2+o(1)}}   (\log x)^{-1/4} +  \frac{x^2\sqrt{q}}{y^{3/2}}  (\log x)^{O(1)} + \frac{x^2  }{y }e^{-(\frac12-\frac{2\ep}{3})\sqrt{\log y}}
=o\bigg( \frac x{\log x} \bigg)
\end{align*}
as  $\prod_{p\mid q} (1-1/\sqrt{p})^{-1}=q^{o(1)}$, carefully using the ranges for $q$ and $y$ in the hypothesis of Proposition \ref{Pro:OptimalMinorArcSmooth}. 
Comparing this estimate with  \eqref{Eq.DiagonalM2Smooth}, we deduce that
\begin{equation}\label{Eq:LBoundSecondMomentMinorSmooth}
M_2(f) \geq c_1 \frac{x}{\log x},
\end{equation}
uniformly over all multiplicative functions $f$ such that $|f(p)|=1$ for $p\leq 2x/y,$ if $x$ is large enough, where $c_1$ is a positive constant. 

We now find an upper bound for the fourth moment $M_4(f)$, averaged over  Steinhaus random multiplicative functions $f$ (on the primes $p\leq 2x/y$). Thus we consider
\begin{align*}
    & \ex \big(M_4(f)\big)= \sum_{\frac y2<p\leq y} \ex\bigg(\bigg| \sum_{m\leq x/p} f(m) e(mp\alpha)\bigg|^4\bigg)\\
    & \quad =\sum_{\frac y2<p\leq y} \ \sum_{m_1, m_2, n_1, n_2\leq x/p} e\Big(\big((m_1+m_2)-(n_1+n_2)\big)p\alpha \Big) \ex\Big(f(m_1m_2) \overline{f(n_1n_2)}\Big)\\
    & \quad= \sum_{\frac y2<p\leq y} \ \sum_{\substack{m_1, m_2, n_1, n_2\leq x/p\\ m_1m_2=n_1n_2}} e\Big(\big((m_1+m_2)-(n_1+n_2)\big)p\alpha \Big). 
\end{align*}
The contribution of the diagonal terms $m_1+m_2=n_1+n_2$ equals 
\begin{equation}\label{Eq:DiagonalFourthMomentSmooth}2 \sum_{\frac y2<p\leq y} \bigg(\frac{x}{p}+O(1)\bigg)^2 \leq c_2\frac{x^2}{y\log x}, \end{equation}
for some positive constant $c_2$, since $m_1m_2=n_1n_2$ and $m_1+m_2=n_1+n_2$ imply $\{m_1, m_2\}= \{n_1, n_2\}$. On the other hand, the contribution of the off-diagonal terms $m_1+m_2\neq n_1+n_2$ is 
\begin{equation}\label{Eq:OffDiagonalFourthMomentSmooth}
\sum_{\substack{m_1, m_2, n_1, n_2\leq 2x/y\\ m_1m_2=n_1n_2\\ m_1+m_2\neq n_1+n_2}} \ \sum_{\frac y2<p\leq \min\big(y, \frac{x}{m_1}, \frac{x}{m_2}, \frac{x}{n_1}, \frac{x}{n_2}\big)} \  e\Big(\big((m_1+m_2)-(n_1+n_2)\big)p\alpha \Big). 
\end{equation}
Fix $m_1, m_2, n_1, n_2\leq 2x/y$ such that $m_1m_2=n_1n_2$, and let $h= (m_1+m_2)-(n_1+n_2)\neq 0.$ Write $h\alpha= ha/q= h_1a/q_1$, where $q_1= q/(q, h)$ and $h_1= h/(q, h)$.  Therefore, by \eqref{Eq:Vinogradov2} we have 
 \begin{align*}
& \sum_{\frac y2<p\leq \min\big(y, \frac{x}{m_1}, \frac{x}{m_2}, \frac{x}{n_1}, \frac{x}{n_2}\big)} \  e\Big(\big((m_1+m_2)-(n_1+n_2)\big)p\alpha \Big)\\
&\ll \Big(\frac{y}{\sqrt{q_1}}+ \sqrt{yq_1\log(2q_1)}\Big)(\log y)^{-1/4+o(1)}+ye^{-(\frac12-\frac{\ep}{2})\sqrt{\log y}}\\
&\ll \Big(\frac{\sqrt{xy}}{\sqrt{q}}+ \sqrt{yq\log q} \Big)(\log y)^{-1/4+o(1)}+ye^{-(\frac12-\frac{\ep}{2})\sqrt{\log y}}\\
 \end{align*} 
since  $q_1\geq q/|h|\gg qy/x.$ Now Theorem 3 of \cite{HNR15} gives
\[
\sum_{\substack{m_1, m_2, n_1, n_2\leq T\\ m_1m_2=n_1n_2}} 1= \ex\bigg(\bigg|\sum_{n\leq T} f(n)\bigg|^4\bigg)\asymp T^2 \log T,
\]
and taking $T=2x/y$ we deduce, from  \eqref{Eq:OffDiagonalFourthMomentSmooth},  that the contribution of the off-diagonal terms to $\ex(M_4(f))$ is 
\begin{align}\label{Eq:OffDiagonalFourthMomentSmooth2}
 &\ll \frac{x^{5/2}}{y^{3/2}\sqrt{q}}\log\left(\frac{2x}{y}\right)(\log x)^{-1/4+o(1)} +  \frac{x^2\sqrt{q\log q}}{y^{3/2}} (\log x)^{3/4+o(1)} +\frac{x^2}{y}e^{-(\frac12-\frac{2\ep}{3})\sqrt{\log y}} \\
 & \quad = o\bigg( \frac{x^2}{y\log x}\bigg), \nonumber
\end{align} 
using the ranges  for $q$ and $y$ in the hypothesis of Proposition \ref{Pro:OptimalMinorArcSmooth}. 
 Combining \eqref{Eq:DiagonalFourthMomentSmooth} and \eqref{Eq:OffDiagonalFourthMomentSmooth2} we obtain 
\begin{equation}\label{Eq:UBoundFourthMomentSmoothMinor} \ex\big(M_4(f)\big)\leq 2c_2\frac{x^2}{y\log x}.
\end{equation}
Choosing $c_0=\sqrt{c_1^3/(8c_2)}$, and using \eqref{Eq.HolderSmooth} and \eqref{Eq:LBoundSecondMomentMinorSmooth} we deduce that 
$$ \pr(\mc{A}_2^c)\leq \pr\left(M_4(f)\geq \frac{c_1^3}{c_0^2}\frac{x^2}{y\log x}\right)\leq \frac{c_0^2y \log x}{c_1^3x^2} \ex(M_4(f))\leq \frac{1}{4}, $$
by Markov's inequality and \eqref{Eq:UBoundFourthMomentSmoothMinor}. This completes the proof.

%%%%%%%%%%%%%%%%%%%%%%%%%%%%%%%%%%%%%%%%%%%%%%%%%%%%%%%%%%%%%%%%%%%%%%%%%%%%%%%%%%%%%%%%%%%%%%%%%%%%%%%%%%%%%%%%%%%%%%%%%%%%%%%%%%%%%%%%%%%%%%%%%%%%%%%%%%%%%%%%%%%%%%%%%%%%%%%%%%%%%%%%%%%%%%

\section{Large unweighted exponential sums in the minor arcs}

In this section we prove Theorem \ref{Thm:Unweighted} and Corollary \ref{Cor:Unweighted}, which show that if the exponential sum \eqref{Eq:UnweightedSum} is large in the minor arcs, then the main contribution comes from terms involving the  primes $p\asymp x$. 
\begin{proof}[Proof of Theorem \ref{Thm:Unweighted}]Let $y=x/Q$
where $Q:=\min \{ q,(x/q)^{1/2}\}\leq x^{1/3}$. We write 
$$ 
\sum_{n\leq x}f(n) e(n\alpha)= \sum_{\substack{n\leq x\\ P^+(n)\leq y}}  f(n) e(n\alpha)+\sum_{\substack{n\leq x\\ P^+(n)> y}} f(n) e(n\alpha).
$$
Recall that $\mathcal{L}(x)= \exp(\sqrt{\log x\log\log x})$. If $(\log x)^{2+\ep}\leq q\leq \mathcal{L}(x)$, then $y=x/q$ and \eqref{Eq:ySmallTheoremSmooth} implies that 
 \[
\sum_{\substack{n\leq x \\ P^+(n)\leq y}}  f(n) e(\alpha n) \ll 
   \frac{2^{\omega(q)/2}}{\sqrt{q}} x+ \Psi\big(x, \mathcal{L}(x)^2\big)    \ll\frac{x}{\sqrt{M}\log x},
\]
where the last estimate follows from \eqref{Eq:ChangeRegimSmoothCount} and our assumption on $M$.
On the other hand, if $\mathcal{L}(x)\leq q\leq x/(\log x)^{3+\ep}$ we use Proposition \ref{Pro:ImprovingRegis} which gives 
\begin{align*}
\sum_{\substack{n\leq x \\ P^+(n)\leq y}}  f(n) e(\alpha n) &\ll 
\frac{x}{\sqrt{Q\log x}}+\frac{x\sqrt{\log x} }{\sqrt q} + \sqrt{xq\log(2x/q)\log x}+ \frac x {\mathcal{L}(x)^{1/2+o(1)}} \\
& \ll \frac{x}{\sqrt{M}\log x},
\end{align*}
since
$M \leq  \min\Big( \mathcal{L}(x)^{1-\ep}, Q/\log x,  q/(\log x)^3,  x/\big(q(\log x)^3\log(2x/q)\big)\Big) $ which follows from the hypotheses.

Thus, in all cases we have
 $$
 \sum_{n\leq x}f(n) e(n\alpha)=  \sum_{y< p\leq x}f(p) \sum_{m\leq x/p} f(m) e(m p \alpha)+ O\bigg(\frac{x}{\sqrt{M}\log x} \bigg).
 $$
 Therefore, to complete the proof it suffices to show that 
\begin{equation}\label{Eq:TheSumLargePrimes}
 \sum_{y< p\leq x/M}f(p) \sum_{m\leq x/p} f(m) e(m p \alpha)\ll \begin{cases} \displaystyle{\frac{x}{\sqrt{M}\log x}} & \text{ if } q\leq x^{1-\ep},\\
 \\
\displaystyle{\frac{x\sqrt{\log M}}{\sqrt{M}\log x}} & \text{ if } x^{1-\ep} \leq q \leq x/(\log x)^{3+\ep}. \end{cases} 
\end{equation}
Since $|\alpha-a/q|\leq 1/(qx)$, replacing $\alpha$ by $a/q$ introduces an error term of size $O(x/q)$ which is acceptable. Thus for the remaining part of the proof we take $\alpha=a/q$.  

{\bf Case 1: $q\leq x^{1-\ep}$}. Divide the first sum over the primes $y=x/Q<p\leq x/M$ into short intervals $I_j=(x/t_{j+1},x/t_{j}]$,
where $t_0=M$, $t_J\sim Q$, and $t_{j+1}=t_j+ t_j^{1/2}$ for each $j$.
Now the absolute value of our sum is 
\begin{align*}
& \leq \sum_{j=0}^J \ \sum_{x/t_{j+1}\leq p\leq x/t_j}\bigg|\sum_{m\leq x/p} f(m) e(m p \alpha)\bigg|\\
&  = \sum_{j=0}^J \ \sum_{x/t_{j+1}\leq p\leq x/t_j}\bigg|\sum_{m\leq t_j} f(m) e(m p \alpha)\bigg| +O(E),
\end{align*}
where 
\begin{align*}
E&\ll\sum_{j=0}^J (t_{j+1}-t_j) \big(\pi(x/t_{j})-\pi(x/t_{j+1})\big)\\
&\ll\frac{x}{\log x} \sum_{j=0}^J \frac{t_{j+1}-t_j}{t_j^{3/2}}\ll \frac{x}{\log x}\int_M^{Q}\frac{dt}{t^{3/2}}\ll \frac{x}{\sqrt{M}\log x},
\end{align*}
by the Brun-Titchmarsh Theorem. Furthermore, we have 
\begin{align}\label{Eq:breackSumAP}
\sum_{x/t_{j+1}\leq p\leq x/t_j}\bigg|\sum_{m\leq t_j} f(m) e(m p \alpha)\bigg|
&= \sum_{\substack{1\leq r\leq q-1\\ (r,q)=1}}\ \bigg|\sum_{m\leq t_j} f(m) e(m r \alpha)\bigg|\sum_{\substack{x/t_{j+1}\leq p\leq x/t_j\\ p\equiv r\bmod q}} 1\nonumber\\
& \ll \frac{x(t_{j+1}-t_j)}{t_j^2\log x} \frac{1}{\phi(q)}\sum_{\substack{1\leq r\leq q-1\\ (r,q)=1}}\ \bigg|\sum_{m\leq t_j} f(m) e(m r \alpha)\bigg|,
\end{align}
by the Brun-Titchmarsh Theorem, since $x/t_j-x/t_{j+1}\asymp x/t_j^{3/2}\geq x/Q^{3/2}\geq x^{1/4}q^{3/4}\geq   q^{1+\ep/4}$ by our assumption on $q$ and $Q$. Now by the Cauchy-Schwarz inequality we have similarly to \eqref{Eq:MainTermCSSmooth} 
\begin{align}\label{Eq:CS_Ramanujan}
\Bigg(\sum_{\substack{1\leq r\leq q-1\\ (r,q)=1}} \bigg|\sum_{m\leq t_j} f(m) e(m r \alpha)\bigg|\Bigg)^2
&\leq \phi(q)\sum_{\substack{1\leq r\leq q-1\\ (r,q)=1}} \bigg|\sum_{m\leq t_j} f(m) e(m r \alpha)\bigg|^2\nonumber \\
&= \phi(q)\sum_{m_1, m_2\leq t_j} f(m_1)\overline{f(m_2)}c_q(m_1-m_2)\\
& \ll \phi(q)t_j\sum_{0\leq h\leq t_j} |c_q(h)|.
\end{align}
Since
$ t_j\leq Q\leq q$ for all $0\leq j\leq J$, it follows from \eqref{Eq:BoundShortSumRamanujan} that    
  
$$
\sum_{\substack{1\leq r\leq q-1\\ (r,q)=1}} \bigg|\sum_{m\leq t_j} f(m) e(m r \alpha)\bigg|\ll \phi(q) \sqrt{t_j}+\frac{\phi(q)^{3/4}}{q^{1/4}}2^{\omega(q)/2}t_j.$$
Using this bound in \eqref{Eq:breackSumAP} gives 
\begin{align*}
&\sum_{j=0}^J \ \sum_{x/t_{j+1}\leq p\leq x/t_j}\bigg|\sum_{m\leq t_j} f(m) e(m p \alpha)\bigg|\\
&\ll \frac{x}{\log x}\sum_{j=0}^J \frac{t_{j+1}-t_j}{t_j^{3/2}}+ \frac{2^{\omega(q)/2}}{(q\phi(q))^{1/4}}\frac{x}{\log x}\sum_{j=0}^J \frac{t_{j+1}-t_j}{t_j}\\
&\ll \frac{x}{\sqrt{M}\log x}+
\frac{2^{\omega(q)/2}}{(q\phi(q))^{1/4}} x\ll \frac{x}{\sqrt{M}\log x},
\end{align*}
since 
$$ \sum_{j=0}^J \frac{t_{j+1}-t_j}{t_j^{3/2}}\ll\int_M^{Q}\frac{dt}{t^{3/2}} \ll \frac{1}{\sqrt{M}} \ \ \text{ and } \ \ \sum_{j=0}^J \frac{t_{j+1}-t_j}{t_j}\ll\int_M^{Q}\frac{dt}{t} \leq \log x,$$
and the result follows in this case.

\medskip

{\bf Case 2: $x^{1-\ep}\leq q\leq x/(\log x)^{3+\ep}$}. In this case we split the first sum over the primes $y=x/Q<p\leq x/M$ in \eqref{Eq:TheSumLargePrimes} into dyadic intervals $x/L<p\leq 2x/L$ where $M\lesssim L\lesssim Q$. Now we have 
\begin{equation}\label{Eq:DyadicLargeq}
\begin{aligned}
\sum_{\frac{x}{L}<p\leq \frac{2x}{L}} f(p) \sum_{m\leq x/p} f(m) e(mp\alpha)
&= \sum_{m\leq L} f(m) \sum_{\frac{x}{L}<p \leq \min(\frac{2x}{L}, \frac{x}{m})} f(p) e(mp\alpha)\\
& \ll L^{1/2} \left(\sum_{m\leq L} \Big|\sum_{\frac{x}{L}<p \leq \min(\frac{2x}{L}, \frac{x}{m})} f(p) e(mp\alpha)\Big|^2\right)^{1/2},
\end{aligned}
\end{equation}
by the Cauchy-Schwarz inequality. Opening up the inner sum and changing the order of summation shows that it equals
\begin{equation}\label{Eq:DyadicLargeq2}
\begin{aligned}
&\sum_{\frac{x}{L}<p_1, p_2 \leq \frac{2x}{L}} f(p_1)\overline{f(p_2)} \sum_{m\leq \min\big(L, \frac{x}{p_1}, \frac{x}{p_2}\big)} e((p_2-p_1)\alpha m)\\
&\ll \frac{x}{\log x}+\sum_{\frac{x}{L}<p_1< p_2 \leq \frac{2x}{L}} \min\left(L, \frac{1}{||(p_2-p_1)\alpha||}\right)\\
&\leq  \frac{x}{\log x}+\sum_{0< h\leq x/L}\min\left(L, \frac{1}{||h\alpha||}\right) \#\{p_1, p_2\leq 2x/L : p_2=h+p_1\}\\
& \ll \frac{x}{\log x}+\frac{x}{L(\log x)^2} \sum_{0< h\leq x/L}\min\left(L, \frac{1}{||h\alpha||}\right)\frac{h}{\phi(h)},
\end{aligned}
\end{equation}
by a standard sieve estimate (see for example \cite[Theorem 3.11]{HR74}). To bound the inner sum over $h$ we use that 
$$\frac{h}{\phi(h)}\ll \sum_{m\mid h} \frac{\mu^2(m)}{m}\ll \sum_{\substack{m\mid h\\ m<\sqrt{h}}} \frac{\mu^2(m)}{m}, $$ 
since 
$$ 
\sum_{\substack{m\mid h\\ m>\sqrt{h}}} \frac{\mu^2(m)}{m}\leq \frac{1}{\sqrt{h}}\sum_{m\mid h} \mu^2(m)= \frac{2^{\omega(h)}}{\sqrt{h}}= h^{-1/2+o(1)}.
$$ 
Therefore, we deduce that\begin{equation}\label{Eq:BoundSumIntegerDistance}
\sum_{0< h\leq x/L}\min\left(L, \frac{1}{||h\alpha||}\right)\frac{h}{\phi(h)} \ll \sum_{m\leq (x/L)^{1/2}}\frac{\mu^2(m)}{m}  \sum_{0< n\leq x/(mL)}\min\left(L, \frac{1}{||mn\alpha||}\right).
\end{equation}
Note that $m\alpha= (m/(m,q))a/(q/(m,q)$. Hence, it follows from Lemma \ref{Lem:StandardExpSum} that
\begin{align*}
    \sum_{0< n\leq x/(mL)}\min\left(L, \frac{1}{||mn\alpha||}\right)
    &\ll
    \frac{x(m,q)}{m q}+L+\frac{x}{mL}\log L+ \frac{q}{(m, q)}\log\left(2+\frac{x(m, q)}{mq}\right)\\
    & \ll \frac{x}{mL}\log L+ \frac{q}{(m, q)}\log\left(2+\frac{x(m, q)}{mq}\right)
\end{align*}
since $L\ll x^{\ep}$, $q\geq x^{1-\ep}$ and $m\leq \sqrt{x}$. Combining this bound with \eqref{Eq:BoundSumIntegerDistance} gives
\begin{align*}
& \sum_{0< h\leq \frac{x}{L}}\min\left(L, \frac{1}{||h\alpha||}\right)\frac{h}{\phi(h)}\\
& \ll \sum_{m\leq \sqrt{\frac{x}{L}}}\frac{\mu^2(m)}{m}  \left(\frac{x}{mL}\log L+ \frac{q}{(m, q)}\log\left(2+\frac{x(m, q)}{mq}\right)\right)\\
& \ll \frac{x}{L}\log L+ q\log(x/q)\log x.
\end{align*}
Inserting this estimate in \eqref{Eq:DyadicLargeq2} and then back into \eqref{Eq:DyadicLargeq} we obtain 
$$
\sum_{x/L<p\leq 2x/L} f(p) \sum_{m\leq x/p} f(m) e(mp\alpha)
\ll \frac{x}{\log x} \frac{(\log L)^{1/2}}{\sqrt{L}}+ \frac{(xq\log(x/q))^{1/2}}{\sqrt{\log x}}.
$$
Summing over $M\lesssim L \lesssim Q= (x/q)^{1/2}$ in dyadic ranges implies that 
\begin{align*}
\sum_{y<p\leq x/M} f(p) \sum_{m\leq x/p} f(m) e(mp\alpha) &\ll \frac{x}{\sqrt{M}\log x}(\log M)^{1/2}+ \frac{\sqrt{xq}}{\sqrt{\log x}} \log (x/q)^{3/2}\\
&\ll \frac{x}{\sqrt{M}\log x}(\log M)^{1/2},
\end{align*}
as desired.

\end{proof}

\begin{proof}[Proof of Corollary \ref{Cor:Unweighted}]
 Let $M= A(1/c)^2\log(1/c)$ where $A$ is a suitably large constant. By our assumption and Theorem \ref{Thm:Unweighted} it follows that 
$$ \Big|\sum_{m\leq M}f(m) \sum_{x/M< p\leq x/m} f(p) e(m p \alpha) \big|\geq  \frac{cx}{2\log x}.$$
This shows that there exists $h\leq M$, such that 
$$\Big|\sum_{x/M< p\leq x/h} f(p) e(h p \alpha) \big|\gg  \frac{c^3}{\log(1/c)}\frac{x}{\log x}.$$
Splitting the interval $(x/M, x/h]$ into dyadic intervals $(z, 2z]$, we deduce that there exists $z\in  [x/M, x/h]$ such that 
$$\Big|\sum_{z< p\leq 2z} f(p) e(h p \alpha) \big|\gg  \frac{c^3}{\log(1/c)}\frac{z}{\log z}, $$
as desired.
\end{proof}

%%%%%%%%%%%%%%%%%%%%%%%%%%%%%%%%%%%%%%%%%%%%%%%%%%%%%%%%%%%%%%%%%%%%%%%%%%%%%%%%%%%%%%%%%%%%%%%%%%%%%%%%%%%%%%%%%%%%%%%%%%%%%%%%%%%%%%%%%%%%%%%%%%%%%%%%%%%%%%%%%%%%%%%%%%%%%%%%%%%%%%%%%%%%%%%%%%%%%%%%%%%%%%%%%%%%%%%%%%%%%%%%%%%%%%%%%%%%%%%%%%%%%%%%%%%%

 \section{Exponential sums in the logarithmic case: The minor arcs}

\subsection{Simplifying large sums in the logarithmic case}

 In this section we prove the following result which will imply Theorem \ref{cor: Cut sum2}.
 
\begin{theorem}\label{Thm:Main}
    Let  $ \alpha\in [0,1)$ and fix $\delta>0$. Let $x$ be large and suppose that  $|\alpha-a/q|\leq 1/(qx)$ with $(a, q)=1$. Let  $ y=q^{3/2+\delta}$.   Then
uniformly for all multiplicative functions $f:\mathbb{N}\to \mathbb{U}$ we have 
\begin{equation}\label{Eq:StatementThm1.1}
 \begin{aligned}
 \sum_{ n\leq x} \frac{f(n)}{n} e(n\alpha)&=  
 \sum_{n\leq y}  \frac{f(n)}{n} e(n\alpha) + \sum_{y< p\leq x} \frac{f(p)}{p}\sum_{m<\min\{y, x/p\}} \frac{f(m)}{m} e(mp\alpha)\\
 & \quad \quad
+O\left(1+\frac{2^{\omega(q)}\log q }{q^{1/2}} \cdot  \log x  \right).
\end{aligned}
\end{equation}
\end{theorem}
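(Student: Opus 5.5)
We will adapt the proof of Theorem \ref{Thm:Smooth} to the logarithmic setting, replacing $\alpha$ by $a/q$ throughout. This costs $\sum_{n\le x}|e(n\alpha)-e(na/q)|/n\ll \sum_{n\leq x} \frac{1}{n}\cdot\frac{n}{qx}\ll 1/q$, which is acceptable. Write each $y<n\leq x$ with $P^+(n)>y$ as $n=mp$, where $p=P^+(n)$, and split $\sum_{y<n\le x}$ into three parts. The first is $n$ that are $y$-smooth. The second is $n=mp$ with $p>y$ and $p^2\mid n$; this part contributes $\ll\sum_{p>y}\log x/p^2\ll \log x/y$, which is negligible. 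The third is $n=mp$ with $p>y$, $p\nmid m$ and $P^+(m)<p$. In this last part we would like to drop the condition $P^+(m)<p$ and restrict to $m<y$. The terms with $m<\min\{y,x/p\}$ give exactly the displayed double sum, except that we must remove the terms with $y<P^+(m)$, which is impossible when $m<y$ anyway; the condition $p\nmid m$ is also automatic since $m<y<p$. So it remains to show that two sums are $O(1+2^{\omega(q)}q^{-1/2}\log q\log x)$: the $y$-smooth $n\in(y,x]$, and the $n=mp$ with $p>y$ and $m\geq y$.

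Both remaining sums have the shape $\sum_{p}\frac{f(p)}p\sum_{m} \frac{f(m)}{m}e(mp\alpha)$ over ranges with $m\ge y$ or $p\le y$. For $n$ with $y<n$ and $P^+(n)\le y$, we use a Buchstab-type decomposition $n=mp$ with $p=P^+(n)$ to get the same shape with $p\le y$, $m>y/p$. In each case we handle the bilinear sums as in \eqref{Eq:MainResidueSmooth}--\eqref{Eq:OffDiagonalSmooth2}. First we group the primes into short intervals $(t_j,t_{j+1}]$ with $t_{j+1}-t_j=t_j^{2/3}$, replacing the $m$-range by one depending only on $t_j$; the error is controlled by the trivial bound, as in \eqref{Eq:ErrorSumSmooth3}. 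Next we sort $p$ by residue class $r\bmod q$, applying Brun--Titchmarsh since $t_j^{2/3}\ge q^{1+\delta'}$ once $p>q^{3/2+\delta}$. Where $p\le q^{3/2}$ we instead keep $m\ge y/p\ge q$ long, and apply Cauchy--Schwarz over $r$. Finally we open the square to obtain Ramanujan sums $\sum_{m_1,m_2}|c_q(m_1-m_2)|/(m_1m_2)$. Using \eqref{Eq:BoundShortSumRamanujan}-type estimates in the logarithmic form, $\sum_{m_1,m_2\le N}\frac{|c_q(m_1-m_2)|}{m_1m_2}\ll \phi(q)\sum 1/m^2+\frac{\phi(q)2^{\omega(q)}}{q}(\log N)^2$ for dyadic ranges $m\sim N\ge q$, the resulting contribution per dyadic block of $m$ and of $p$ is $\ll 2^{\omega(q)/2}q^{-1/2}(\log)$. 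Summing over dyadic blocks of $m$ gives the factor $\log x$, and summing over dyadic blocks of $p$ on a $\sum 1/p$ scale gives at most $\log q$ additional factors. Together these produce $2^{\omega(q)}q^{-1/2}\log q\log x$.

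The main obstacle is the diagonal term $\phi(q)\sum 1/m^2$ after Cauchy--Schwarz. It yields $\sum_{t_j}\frac{t_j^{2/3}}{t_j\log t_j}\,(\phi(q)\cdot\tfrac1{N})^{1/2}\phi(q)^{-1/2}$, and this is only small if $N\gg$ the number of residues, i.e.\ $m\gtrsim q$. This forces the threshold $y=q^{3/2+\delta}$: for $p>y$ we need $m\ge y$, which is large, while for $y$-smooth $n>y$ with $p\le q^{3/2}$ we need $m\ge q^{\delta}$ paired against short prime intervals. I would first try to balance the interval length $t^{2/3}$ against $q$, and check that $\sum_{j}\sqrt{N}^{-1}$ converges to $O(1)$. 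If it does not converge, I would fall back on the large sieve in place of Brun--Titchmarsh for the small primes $p\le q^{3/2}$.
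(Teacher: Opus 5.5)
Your reduction is correct up to the point where two sums remain. The treatment of the $y$-smooth range $y<n\le x$, $P^+(n)\le y$, however, has a genuine gap. This range needs real cancellation. Trivially $\sum_{n>y,\,P^+(n)\le y}1/n\asymp\log q$, which is admissible only when $\sqrt q\ll 2^{\omega(q)}\log x$. But the theorem is needed precisely for $q\ge(\log x)^{2+\varepsilon}$, where the error must be $O(1)$.

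Your bilinear scheme cannot reach the small primes. That scheme writes $n=mp$ with $p=P^+(n)$, sorts $p$ into classes mod $q$ via Brun--Titchmarsh, and applies Cauchy--Schwarz over the classes. For $p\le q$, a residue class mod $q$ contains at most one or two primes of a dyadic block, so the factor $1/\phi(q)$ on which your Cauchy--Schwarz step depends is unavailable. Yet the $n>q^{3/2+\delta}$ with $P^+(n)\le q$ already carry $\gg_\delta\log q$ of the trivial mass, about $(\int_{3/2+\delta}^\infty\rho(u)\,du)\log q$.

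Your patch ``keep $m\ge y/p\ge q$ long'' rests on a false inequality: for $p$ near $q^{3/2}$ one only has $y/p\ge q^{\delta}$. Your fallback to ``the large sieve'' is not carried out. Nor is the diagonal term the real obstruction; it is harmless wherever Brun--Titchmarsh applies. The exponent $3/2$ comes from needing intervals $(t,t+t/L(t)]$ with $t/L(t)\ge q^{1+\epsilon}$ and $L(y)>q^{1/2}$, so that the trimming loss $\log x/(L(y)\log y)$ is admissible.

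The paper runs no bilinear argument in this range. It bounds $n>Y:=\min(y^{\log\log y},x)$ trivially via $\sum_{n>Y,\,P^+(n)\le y}1/n\ll 1/\log y$. For $y<n\le Y$ it applies partial summation to the La Bret\`eche--Granville bound \eqref{Eq:AndrewRegis}, obtaining $\ll 1+(\log y)^2\log\log y/\sqrt q\ll 1$. That bound handles very small $P^+(n)$ through the sparsity term $x\mathcal L(x)^{-1/2+o(1)}$. It handles the remaining primes by extending $p$ to all integers of a dyadic range and using $\sum_h\min(M,1/\|h\alpha\|)$ (see the proof of Proposition \ref{Pro:ImprovingRegis}), not primes in residue classes. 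This is the ingredient your plan is missing.

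There is a second, repairable error in the piece $p>y$, $m\ge y$. Summing over dyadic blocks of $p$ does not cost ``at most $\log q$''. Each block carries Brun--Titchmarsh weight $\asymp 1/\log P$, and after summing the $m$-blocks your bound per $p$-block is $2^{\omega(q)/2}q^{-1/2}\log(x/P)$. The resulting total is $\asymp 2^{\omega(q)/2}q^{-1/2}\log x\,\log(\log x/\log y)$, which exceeds the claimed error when $\log q=o(\log\log x)$ (e.g.\ for prime $q$).

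Within your decomposition, the fix is to use $P^+(m)<p$. The trimmed set $\{y\le m\le x/t_j:\ P^+(m)<t_j\}$ has $\sum 1/m\ll\min(\log t_j,\log(x/t_j))$. So the off-diagonal term is $\ll\frac{2^{\omega(q)}}{\phi(q)}\log x\,\min(\log t_j,\log(x/t_j))$. Since $\int_y^x\frac{\sqrt{\min(\log t,\log(x/t))}}{t\log t}\,dt\ll\sqrt{\log x}$, the loss disappears.

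The paper gets the same effect differently. It writes $n=s\,p\,m$ with $s$ $y$-smooth and $p$ the \emph{least} prime factor exceeding $y$. When $m>1$ is $p$-rough, the sieve gives $\sum_m 1/m\ll\log x/\log t_j$ (Lemma \ref{lem: Key}(i)). The case $n=sp$ with $s\ge y$ is Lemma \ref{lem: Key}(ii). Summing over $s$ afterwards costs $\log y\asymp\log q$, which is where the factor $\log q$ in the error term comes from.
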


We will deduce Theorem \ref{Thm:Main} from the following key lemma, which shows that the contribution to the exponential sum $\sum_{n\leq x} f(n)e(n\alpha)/n$ of the terms $n=mp$ where $p$ is a large prime and $m>1$ is either $p$-rough or large and $y$-smooth,  is negligible.  Here and throughout we let  $P^{-}(n)$ denotes the smallest prime factor of $n$, with the standard convention $P^{-}(1)=\infty$.
\begin{lemma} \label{lem: Key} Let $x$ be large and fix $\delta>0$. Let $1\leq a\leq q\leq x$ be positive coprime integers.  
Let

\begin{itemize}
    \item[(i)]  $\mathcal M_p=\{ m>1:   P^-(m)>p\}$ and  $B=\sqrt{x}$, or 
\item[(ii)] $\mathcal M_p=\{ m\geq y: P^+(m)\leq y\}$ and $B=x/y$.  
\end{itemize}
If   $y\geq q^{3/2+\delta}$ then for all multiplicative functions $f:\mathbb{N}\to \mathbb{U}$ we have
\[
\sum_{\substack{n=pm\leq x \\ p \text{ prime} \\ y<p\leq B\\ m\in \mathcal M_p}}  \frac{f(n)}{n}e\bigg(\frac{na}{q}\bigg) \ll \frac{2^{\omega(q)/2}}{\phi(q)^{1/2}} \log x.
\]
\end{lemma}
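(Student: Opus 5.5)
We argue as in the proof of Theorem \ref{Thm:Smooth} and Case 1 of Theorem \ref{Thm:Unweighted}: for each prime $p$ the inner sum over $m$ depends on $p$ only through $p \bmod q$ (and the range of $m$). The strategy is to group primes by residue class, apply Cauchy--Schwarz over residues, and reduce to Ramanujan sums.

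\textbf{Step 1: dyadic decomposition.} Decompose $p$ into dyadic ranges $P<p\leq 2P$ with $y<P\leq B$. Crucially, in both cases (i) and (ii) the set $\mathcal M_p$ is independent of $p$ within a short range, up to boundary effects. In case (ii) it is exactly independent. In case (i) the condition $P^-(m)>p$ couples $m$ and $p$, so we subdivide $(P,2P]$ into short intervals $(t,t+t^{2/3}]$ and replace $P^-(m)>p$ by $P^-(m)>t$, as in \eqref{Eq:ApproxSumShortPrimesSmooth}. The error consists of those $m$ having a prime factor in $(t,t+t^{2/3}]$, and contributes
\[
\ll \sum_{p}\frac1p\sum_{t<p_1\leq t+t^{2/3}}\frac{\log x}{p_1}\ll \log x\sum_p \frac{1}{p\,t^{1/3}},
\]
which is negligible since $p>y$. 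Similarly we replace $m\leq x/p$ by $m\leq x/t$ at the cost of terms $m\in(x/(t+t^{2/3}),x/t]$, weighted by $1/(pm)$, which is again negligible.

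\textbf{Step 2: residues and Brun--Titchmarsh.} For a short interval $I=(t,t+h]$ write
\[
\sum_{p\in I}\frac{f(p)}p\,S(p), \qquad S(p)=\sum_{m\in\mathcal M}\frac{f(m)}m e\Big(\frac{mpa}q\Big).
\]
Bound this by
\[
\frac1t\sum_{(r,q)=1}|S(r)|\,\#\{p\in I:\ p\equiv r \bmod q\}\ll \frac{h}{t\phi(q)\log t}\sum_{(r,q)=1}|S(r)|,
\]
using Brun--Titchmarsh, which applies since $h=t^{2/3}\geq y^{2/3}\geq q^{1+2\delta/3}$. This is exactly where the hypothesis $y\geq q^{3/2+\delta}$ enters.

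\textbf{Step 3: Cauchy--Schwarz and Ramanujan sums.} As in \eqref{Eq:CS_Ramanujan},
\[
\Big(\sum_{(r,q)=1}|S(r)|\Big)^2\leq \phi(q)\sum_{m_1,m_2\in\mathcal M,\ m_i\leq x/t}\frac{|c_q(m_1-m_2)|}{m_1m_2}.
\]
Bounding this double sum is the main obstacle. Grouping by $m_1,m_2$ modulo $q$ with logarithmic weights, dyadically in $m_1,m_2$, we need the equidistribution bound
\[
\sum_{m\in\mathcal M,\ M<m\leq 2M,\ m\equiv b \bmod q}1\ll \frac{\#\{m\in \mathcal M : M<m\leq 2M\}}{q}+(\text{small}),
\]
which fails for small $M$.

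For $M\geq q$ the trivial count $\ll M/q$ suffices. Inserting it and using $\sum_{b_1,b_2}|c_q(b_1-b_2)|=q\phi(q)2^{\omega(q)}$ gives a dyadic block bound $\ll \phi(q)2^{\omega(q)}/q$, which yields $\ll \frac{\phi(q)2^{\omega(q)}}{q}(\log x)^2$ summed over dyadic pairs. That loses a factor of $\log x$ compared with what we need, since we want the final quantity to be $\ll \phi(q)\,2^{\omega(q)}(\log(x/t))^2/q$ per short interval, giving after all steps
\[
\frac{2^{\omega(q)/2}}{\phi(q)^{1/2}}\sum_p\frac{\log(x/p)}{p\log x}\ll\frac{2^{\omega(q)/2}}{\phi(q)^{1/2}}\log x\cdot O(1)?
\]
Instead, we should use the densities directly. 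In case (i), $\sum_{m\in\mathcal M,\,m\leq x/t}1/m\ll \log x/\log t$ by sieve bounds for $t$-rough numbers, and rough numbers equidistribute mod $q$ when $q<t$. In case (ii), by \eqref{Eq:Buch}-type bounds, $\sum_{m\in\mathcal M,\,m\leq x/t}1/m\ll \log y$.

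With these, each short interval contributes
\[
\frac{h}{t\log t}\cdot\frac{2^{\omega(q)/2}}{\phi(q)^{1/2}}\cdot\frac{\log x}{\log t}
\]
in case (i). Summing over $t>y$ gives
\[
\frac{2^{\omega(q)/2}}{\phi(q)^{1/2}}\cdot\frac{\log x}{\log y}\ll \frac{2^{\omega(q)/2}}{\phi(q)^{1/2}}\log x.
\]
In case (ii) we get $\log y\sum_p 1/p\ll \log x$.

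The small-$M$ ranges ($m<q$), which occur only in case (i), are handled by the diagonal term $\phi(q)\sum 1/m^2$. Using $m\geq P^-(m)>t>q$ shows these ranges do not occur, and in case (ii) $m\geq y>q$. Hence $M\geq q$ always, so the counting bound $\ll M/q+1$ holds, with the $+1$ absorbed.
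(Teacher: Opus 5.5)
Your Steps 1--3 follow the paper. These are the short intervals of length about $t^{2/3}$, the trimming of $m\le x/p$ and of $\mathcal M_p$, Brun--Titchmarsh in progressions, and Cauchy--Schwarz down to $D:=\sum_{m_1,m_2\in\mathcal M,\, m_i\le x/t}|c_q(m_1-m_2)|/(m_1m_2)$.

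\textbf{The gap is the bound for $D$, which is the heart of the lemma.} Your closing justification, ``so the counting bound $\ll M/q+1$ holds'', is exactly the trivial count you had just rejected. It ignores the sparsity of $\mathcal M$ and gives only $D\ll 2^{\omega(q)}\frac{\phi(q)}{q}(\log x)^2$. That yields a total of $\frac{2^{\omega(q)/2}}{\sqrt q}\log x\cdot\log\frac{\log x}{\log y}$. The loss is $\log(\log x/\log y)$, not $\log x$ as you wrote, but it is still unbounded, since $y$ may be as small as $q^{3/2+\delta}$ with $q$ fixed.

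The per-interval bound you then state needs, up to harmless factors, $D\ll 2^{\omega(q)}W^2$ with $W=\sum_{m\in\mathcal M,\,m\le x/t}1/m$. That is, $\mathcal M$ must be equidistributed, at its true density, in residue classes modulo every $d\mid q$.
\begin{itemize}
\item In case (i) this is true but must be proved. An upper-bound sieve gives $\#\{M<m\le 2M:\ m\equiv b \bmod d,\ P^-(m)>t\}\ll M/(\phi(d)\log t)$ for $M\ge t$, because $d\le q<t^{2/3}$ forces $M/d\ge t^{1/3}$. With this, your route does close case (i), even with $\log x/\log y$ in place of the paper's $\log x/\sqrt{\log y}$ (up to a factor $(q/\phi(q))^{1/2}$).
\item In case (ii) you assert nothing of the kind. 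What is needed is $\Psi(X,y;d,b)\ll \Psi(X,y)/\phi(d)$ uniformly for $y\le X\le x/y$, including large $u$. That is a nontrivial theorem on smooth numbers in progressions, so your ``$\log y\sum_p 1/p$'' is unsupported.
\end{itemize}

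\textbf{The missing idea is to treat the off-diagonal asymmetrically, so that no equidistribution of $\mathcal M$ is needed at all.} This is what the paper does.
\begin{itemize}
\item Bound $|c_q(m-n)|\le\sum_{d\mid(q,m-n)}d\,|\mu(q/d)|$ and order the pair as $m>n$. Then $m\equiv n\bmod d$ forces $m>d$.
\item So the larger variable can be summed over all integers: $\sum_{n<m\le x,\ m\equiv n \bmod d}1/m\ll(\log x)/d$. Only the smaller variable keeps the constraint $n\in\mathcal M$.
\item This gives $D_{\mathrm{off}}\ll 2^{\omega(q)}\log x\cdot W$. Here $W\ll \log x/\log t$ in case (i), by Mertens over the primes in $(t,x]$, and $W\ll\log y$ in case (ii). The diagonal is $\ll\phi(q)/y$.
\item Each short interval then contributes $\frac{h}{t\log t}\cdot\frac{2^{\omega(q)/2}}{\phi(q)^{1/2}}\sqrt{W\log x}$, plus a negligible diagonal part.
\item Summing over $t>y$ yields $\frac{2^{\omega(q)/2}}{\phi(q)^{1/2}}\frac{\log x}{\sqrt{\log y}}$ in case (i).
\item In case (ii) it yields $\frac{2^{\omega(q)/2}}{\phi(q)^{1/2}}\sqrt{\log x\log y}\,\log\frac{\log x}{\log y}\ll\frac{2^{\omega(q)/2}}{\phi(q)^{1/2}}\log x$.
\end{itemize}
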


%%%%%%%%%%%%%%%%%%%%%%%%%%%%%%%%%
\begin{proof} Note that the result is trivial if $y>\sqrt{x}$ (since the sum over primes is empty in both cases (i) and (ii)) so we might assume throughout the proof that $y\leq \sqrt{x}$. We  first divide the sum over the primes $t_0=y<p\leq  B\lesssim t_J$ into short intervals $I_j=(t_j,t_{j+1}]$,
taking $t_{j+1}=t_j(1+ 1/L(t_j))$ for each $j$, where $L:\mathbb{R}\to\mathbb{R}$ is a continuous positive increasing function to be chosen, and such that for all $u\geq y$ we have
\begin{equation}\label{Eq:ChoiceL}
1<L(u)<u^{1-\ep}/q,   
\end{equation}
 %for some $L(u)$  a positive increasing function for which
for some suitably small $\ep>0$, which depends only on $\delta$. This choice implies that $\log t_j>\log (t_{j+1}- t_j) >\log ( \frac{t_{j+1}- t_j}q) > \log t_j^\ep=\ep \log t_j$. 
Therefore, the absolute value of our exponential sum is
\begin{equation}\label{Eq:BoundS1S3}
S:=\Bigg| \sum_{\substack{n=pm\leq x \\ p \text{ prime} \\ y<p\leq B\\ m\in \mathcal M_p}}  \frac{f(p)f(m)}{mp}e\left(\frac{mpa}{q}\right)\Bigg|
\leq \sum_{0\leq j< J} \sum_{t_j<p\leq t_{j+1}}\frac{1}{p} \Bigg|\sum_{\substack{m\leq x/p\\ m\in \mathcal M_p}}\frac{f(m)}{m} e\left(\frac{mpa}{q}\right)\Bigg| 
 \end{equation}
and we now split the primes into different residue classes modulo $q$. First though we need to remove the dependence on $p$ in the index of the inner summation on $m$, noting that
\begin{equation}\label{Eq:Trimming}
\Bigg|\sum_{\substack{m\leq x/p\\ m\in \mathcal M_p}}\frac{f(m)}{m} e\left(\frac{mpa}{q}\right)
- \sum_{\substack{m\leq x/t_j\\ m\in \mathcal M_j}}\frac{f(m)}{m} e\left(\frac{mpa}{q}\right)\Bigg|
\leq \sum_{\substack{x/t_{j+1}\leq m\leq x/t_j\\ m\in \mathcal M_p}}\frac{1}{m}+ 
\sum_{\substack{m\leq x/t_j\\ m\in \mathcal M_j\setminus \mathcal M_p}}\frac{1}{m},
 \end{equation}
where $\mathcal M_j=\mathcal M_p$ for the smallest prime $p$ in $I_j$. 

(i)\ For the first sum we are sieving an interval and so this is
\begin{equation}
\label{Eq:FirstError_i}
\leq \frac{t_j}{x}\sum_{\substack{x/t_{j+1}\leq m\leq x/t_j\\ P^{-}(m)>t_j}}1
\ll    \frac{1}{\log t_j} (1-t_j/ t_{j+1}) \ll \frac{1}{L(t_j)\log t_j}.
\end{equation}
In the second sum let $p_1= P^-(m)$ and write $m=p_1^kn$ so that our sum is
\begin{align*} \leq\sum_{\substack{t_j<p_1\leq t_{j+1} \\ p_1 \text{ is a prime}}} \sum_{\substack{k\geq 1\\ p_1^k\leq x/t_j}}\frac{1}{p_1^k}\sum_{\substack{n\leq x/(p_1^kt_j) \\ P^-(n)> p_1}} \frac{1}{n} 
&\leq \sum_{\substack{t_j<p_1\leq t_{j+1} \\ p_1 \text{ is a prime}}} \sum_{\substack{k\geq 1\\ p_1^k\leq x/t_j}}\frac{1}{p_1^k}\prod_{\substack{p_1< p_2 \leq x\\ p_2 \text{ is a prime}}}\left(1-\frac{1}{p_2}\right)^{-1} \\ 
& \ll \log x\sum_{\substack{t_j<p_1\leq t_{j+1} \\ p_1 \text{ is a prime}}} \frac{1}{p_1\log p_1}
 \leq \log x \frac{\pi( t_{j+1})-\pi(t_j)}{t_j\log t_j} \\
 & \ll \frac{ t_{j+1}- t_j}{t_j\log t_j \log(t_{j+1}- t_j)} \log  x \ll  \frac{   \log x}{L(t_j)(\log t_j)^2} 
\end{align*}
by Mertens' Theorem and then the Brun-Titchmarsh theorem. This error bound is always larger than the bound in \eqref{Eq:FirstError_i}.
 Therefore the contribution of these error terms to the right-hand side of \eqref{Eq:BoundS1S3} is 
\[ \ll \log x\sum_{y<p\leq x} \frac{1}{pL(p)(\log p)^2}
  \leq \frac{\log x}{L(y)}\sum_{y<p\leq x} \frac{1}{p(\log p)^2}\ll \frac{\log x}{L(y)(\log y)^2}.
\]
 
 (ii)\ In this case we only have the first term from the right-hand side of \eqref{Eq:Trimming}. In all we sum over $m\in[y,x/y]$
 and each $m\in [x/t_{j+1}, x/t_j)$ has the weight
 \[
 \frac 1m \sum_{t_j<p\leq t_{j+1}}\frac{1}{p} \leq  \frac 1m \frac{\pi( t_{j+1})-\pi(t_j)}{t_j }\ll \frac 1m \frac 1 {L(t_j)\log t_j} 
 \leq \frac 1m \frac 1{L(y)\log y} ,
 \]
 by the Brun-Titchmarsh Theorem, so in total the contribution of such terms to the right hand side of \eqref{Eq:BoundS1S3} is 
  \[
 \ll  \sum_{\substack{y\leq m\leq x/y\\ P^+(m)\leq y}}\frac{1}{m} \frac 1{L(y)\log y} \ll  \frac 1{L(y)}.
 \]
 
 We deduce that, in both cases,  
\begin{align*} 
 S &\leq \sum_{0\leq j< J} \sum_{\substack{t_j<p\leq t_{j+1} }}\frac{1}{p} \Bigg|\sum_{\substack{1<m\leq x/t_j\\ m\in \mathcal M_j}}\frac{f(m)}{m} e\left(\frac{mpa}{q}\right)\Bigg|+O\left(\frac{\log x}{L(y)\log y}\right) \\
&= \sum_{0\leq j< J} \sum_{\substack{1\leq r\leq q-1\\ (r,q)=1}}\Bigg|\sum_{\substack{1<m\leq x/t_j\\ m\in \mathcal M_j}}\frac{f(m)}{m} e\left(\frac{mra}{q}\right)\Bigg|\sum_{\substack{t_j<p\leq t_{j+1}\\ p\equiv r\bmod q}}\frac{1}{p} +O\left(\frac{\log x}{L(y)\log y}\right)
\end{align*} 
splitting the primes $p$ into residue classes modulo $q$.
By the Brun-Titchmarsh Theorem 
$$ 
\sum_{\substack{t_j<p\leq t_{j+1}\\ p\equiv r\bmod q}}\frac{1}{p} \leq \frac{1}{t_j} \# \{  t_j<p\leq t_{j+1} : p \equiv r\bmod q\}
$$
$$
\ll  \frac{t_{j+1}/t_{j}-1}{\phi(q)\log\left( \frac{t_{j}}{qL(t_j)}\right)}\asymp \frac{t_{j+1}/t_{j}-1}{\phi(q) \log t_j} \ll 
\frac{1}{\phi(q)L(t_j) \log t_j}
$$
since   $q<t_j^{1-\ep}/L(t_j)$.
Inserting this estimate above and   changing variables $b=ar$ we obtain  the bound
\begin{equation}\label{EqBoundS4}
S  \ll   \sum_{0\leq j< J} \frac{1}{L(t_j) \log t_j} \cdot  \frac{1}{\phi(q)}\sum_{\substack{1\leq b\leq q-1\\ (b,q)=1}}\Bigg|\sum_{\substack{1<m\leq x/t_j\\ m\in \mathcal M_j}}\frac{f(m)}{m} e\left(\frac{mb}{q}\right)\Bigg| +\frac{\log x}{L(y)\log y}. 
\end{equation}
Using the Cauchy-Schwarz inequality, we get
\begin{equation}\label{Eq:NiceCorners}
\frac{1}{\phi(q)} \sum_{\substack{1\leq b\leq q-1\\ (b,q)=1}}\Bigg|\sum_{\substack{1<m\leq x/t_j\\ m\in \mathcal M_j}}\frac{f(m)}{m} e\left(\frac{mb}{q}\right)\Bigg|\leq \Bigg(\frac{1}{\phi(q)} \sum_{\substack{1\leq b\leq q-1\\ (b,q)=1}}\Bigg|\sum_{\substack{1<m\leq x/t_j\\ m\in \mathcal M_j}}\frac{f(m)}{m} e\left(\frac{mb}{q}\right)\Bigg|^2\Bigg)^{1/2}.
\end{equation}
Expanding the square and changing the order of summation gives
\begin{align}
&\frac{1}{\phi(q)} \sum_{\substack{1\leq b\leq q-1\\ (b,q)=1}}\Bigg|\sum_{\substack{1<m\leq x/t_j\\ m\in \mathcal M_j}}\frac{f(m)}{m} e\left(\frac{mb}{q}\right)\Bigg|^2 \nonumber \\
&= \sum_{\substack{1<m, n\leq x/t_j\\ m,n\in \mathcal M_j }}  \frac{f(m)\overline{f(n)}}{mn}\cdot \frac{1}{\phi(q)} \sum_{\substack{1\leq b\leq q-1\\ (b,q)=1}} e\left(\frac{(m-n)b}{q}\right) \notag \\
& \leq \frac{1}{\phi(q)} \sum_{\substack{1<m, n\leq x/t_j\\ m,n\in \mathcal M_j }} \frac{|c_q(m-n)|}{mn} ,\label{Eq:SecondMoment}
\end{align} 
 where $c_q(\ell)$ is defined in \eqref{Eq:DefRamanujan}.
The contribution of the diagonal terms $m=n$ to the right hand side of \eqref{Eq:SecondMoment} is 
\begin{equation}\label{Eq:Contrib_OffDiagonal}
 \ll   \sum_{\substack{1<m\leq x/t_j\\ m\in \mathcal M_j}} \frac{1}{m^2}. 
\end{equation}
 
(i)\ In this case, this sum is 
\[ 
\leq \prod_{\substack{p_1\geq t_j\\ p_1 \text{ is prime}}} \left(1+\frac{1}{p_1^2-1}\right)-1\leq \exp \bigg(\sum_{\substack{p_1\geq t_j\\\ p_1\textup{ is prime }}} \frac{1}{p_1^2-1}\bigg)-1 \ll \frac{1}{t_j\log t_j} \leq \frac{1}{y \log t_j} \
\]
since  $y\leq t_j$.

ii) \ In this case we have 
$$\sum_{\substack{1<m\leq x/t_j\\ m\in \mathcal M_j}} \frac{1}{m^2}\leq \sum_{m\geq y} \frac{1}{m^2}\ll \frac{1}{y},$$
so that in both cases, \eqref{Eq:Contrib_OffDiagonal} is  $\ll 1/y$.

Next, the absolute value of  the off-diagonal terms in the right hand side of \eqref{Eq:SecondMoment} is 
\[ \leq  \frac{1}{\phi(q)} \sum_{d\mid q}d\left|\mu\left(\frac{q}{d}\right)\right|\sum_{\substack{1<m\ne n\leq x/t_j\\ m,n\in \mathcal M_j \\ m\equiv n\bmod d}}  \frac{1}{mn}  \ll  \frac{2^{\omega(q)}}{\phi(q)}   \log x    \sum_{\substack{ 1<n\leq x/t_j\\
 n\in \mathcal M_j  }}  \frac{1}{n} 
\]
where we replace the pair $m,n$ by $\max\{m,n\}, \min\{m,n\}$ and since
\[
\sum_{\substack{n<m\leq x/t_j\\ m\in \mathcal M_j  \\ m\equiv n\bmod d}}  \frac{1}{m}\leq \sum_{\substack{ d<m\leq x/t_j\\m\equiv n\bmod d}} \frac{1}{m}\ll \frac{\log x}{d} .
\]

(i)\ In this case we have
$$  \sum_{\substack{ 1<n\leq x/t_j\\  n\in \mathcal M_j  }}  \frac{1}{n} \leq \prod_{\substack{t_j< p_1\leq x\\  p_1 \text{ is prime }}}\left(1-\frac{1}{p_1}\right)^{-1}\ll\frac{\log x}{\log t_j},
$$
leading to the error bound $\ll  \frac{2^{\omega(q)}}{\phi(q)}   \frac{(\log x)^2}{\log t_j} $ for the off-diagonal terms.

(ii)\ In this case we have
$$  \sum_{\substack{ 1<n\leq x/t_j\\  n\in \mathcal M_j  }}  \frac{1}{n} \leq \prod_{\substack{  p_1\leq y\\  p_1 \text{ is prime }}}\left(1-\frac{1}{p_1}\right)^{-1}\ll \log y
$$
leading to the error bound $\ll  \frac{2^{\omega(q)}}{\phi(q)}   (\log x)(\log y) $ for the off-diagonal terms. In both cases, these bounds exceed $1/y$.

 We now insert these estimates into \eqref{Eq:SecondMoment} and then into \eqref{Eq:NiceCorners}, and from there into our bound for $S$ in \eqref{EqBoundS4}.
% $$  \frac{1}{\phi(q)} \sum_{\substack{1\leq b\leq q-1\\ (b,q)=1}}\Bigg|\sum_{\substack{1<m\leq x/t_j\\ m \in \mathcal M_j  } \frac{f(m)}{m} e\left(\frac{mb}{q}\right)\Bigg|\ll \text{(i) } \frac{2^{\omega(q)/2}}{\phi(q)^{1/2} } \frac{\log x}{\sqrt{\log t_j}} \text{ or (ii) } \frac{2^{\omega(q)/2}}{\phi(q)^{1/2}} \sqrt{\log x \log y} $$
 
 (i) This leads to the bound
 \begin{align}\label{Eq:BoundS_i}
  S   &\ll \frac{2^{\omega(q)/2}\log x}{\phi(q)^{1/2}}\sum_{0\leq j< J} \frac{1}{L(t_j)(\log t_j)^{3/2}}  +\frac{\log x}{L(y)\log y} \nonumber\\
&\ll \frac{2^{\omega(q)/2}}{\phi(q)^{1/2}}\frac{\log x}{\sqrt{\log y}} +\frac{\log x}{L(y)\log y},
\end{align}
since 
\begin{equation}\label{Eq:CalculationIntegralE} \sum_{0\leq j< J} \frac{1}{L(t_j)(\log t_j)^{3/2}}  = \sum_{0\leq j< J} \frac{t_{j+1}-t_j}{t_j(\log t_j)^{3/2}}   \ll \int_y^{x} \frac{1}{t (\log t)^{3/2}} dt \ll \frac{1}{\sqrt{\log y }}.
\end{equation}
We now choose $L(u)=u^{1/3-\ep}$, which satisfies assumption \eqref{Eq:ChoiceL} since $y\geq q^{3/2+\delta}$. On the other hand, taking $\ep>0$ small enough insures that
$L(y)  >q^{1/2}$. Thus the second error term in \eqref{Eq:BoundS_i} is smaller than the first.
% and so
% \[ S \ll \frac{2^{\omega(q)/2}}{\phi(q)^{1/2}}\frac{\log x}{\sqrt{\log q}} . \]

 (ii) In this case we obtain the bound
 \begin{align*}
  S &\ll \frac{(2^{\omega(q) }\log x\log y)^{1/2}}{\phi(q)^{1/2}}\sum_{0\leq j< J} \frac{1}{L(t_j)(\log t_j) }  +\frac{\log x}{L(y)\log y}\\
&\ll  \frac{(2^{\omega(q) }\log x\log y)^{1/2}}{\phi(q)^{1/2}} \log\bigg(\frac{\log x}{\log y} \bigg)  +\frac{\log x}{L(y)\log y}
\ll \frac{2^{\omega(q)/2}}{\phi(q)^{1/2}} \log x,
\end{align*}
where the second inequality follows by a similar calculation to \eqref{Eq:CalculationIntegralE}. This completes the proof.
\end{proof}

\begin{proof} [Proof of Theorem \ref{Thm:Main}] We express our exponential sum over the range $y<n\leq x$ as
\begin{equation}
\label{Eq:SpliExponentialS}\sum_{ y<n\leq x} \frac{f(n)}{n} e(n\alpha)= \sum_{\substack{y<n\leq x  \\ P^+(n)\leq y}}  \frac{f(n)}{n} e(n\alpha)+ \sum_{\substack{n\leq x  \\ P^+(n)> y}}  \frac{f(n)}{n} e(n\alpha). 
\end{equation}
We first bound the contribution of the first sum.
To this end we split the range $y<n \leq x$ into two parts: $y<n\leq Y$, and $Y< n\leq x$, where $Y=\min(y^{\log\log y}, x).$ By \cite[Lemma 3.2] {BGGK18} the contribution of the second part is 
\[
\Bigg|\sum_{\substack{Y<n\leq x  \\ P^+(n)\leq y}}  \frac{f(n)}{n} e(n\alpha)\Bigg|\leq \sum_{\substack{n>Y  \\ P^+(n)\leq y}}  \frac{1}{n} \ll \frac 1{\log y}.
\]
On the other hand,  \eqref{Eq:AndrewRegis}  implies that if  $z\geq t\geq q^{3/2}$ then
\[
  \sum_{\substack{n\leq z \\ P^+(n)\leq t}}  f(n) e(\alpha n) \ll 
\sqrt{zt}  + \frac{z}{\sqrt q}  \log t  + \frac z {(\log z)^2},
\]
and so, by partial summation, if $t<z<Z$ then 
\[
\sum_{\substack{z<n\leq Z \\ P^+(n)\leq t}}  \frac{f(n)}n e(\alpha n)
  \ll \sqrt{t/z}  + \frac{\log (Z/z)}{\sqrt q}  \log t  + \frac 1 {\log z}. 
\]
In particular, this implies 
\begin{equation}\label{Eq:AndrewRegis2}
\sum_{\substack{y<n\leq Y \\ P^+(n)\leq y}}  \frac{f(n)}n e(\alpha n)\ll 1.
\end{equation}

We now turn our attention to the second sum on the right hand side of \eqref{Eq:SpliExponentialS}. First, if $ y\geq \sqrt{x}$, then the result holds trivially since 
$$  \sum_{\substack{n\leq x  \\ P^+(n)> y}}  \frac{f(n)}{n} e(n\alpha)= \sum_{y< p\leq x} \frac{f(p)}{p} \sum_{m\leq x/p} \frac{f(m)}{m} e(mp\alpha).$$
Therefore we might assume that $ y= q^{3/2+\delta}< \sqrt{x}$. Moreover, since  $|\alpha-a/q|\leq 1/(qx)$ we may replace $\alpha$ by $a/q$ at the expense of an error term of size $O(1)$.

Every integer $n\leq x$ with $P^{+}(n)>y$ can be written as $n=sb$  where $P^+(s)\leq y$ and  $b>1$ with $P^-(b)>y$. Let $p=P^-(b)$. The contribution of the $n$ for which $p^2$ divides $n$ is 
\[
\leq \sum_{p> y} \ \sum_{\substack{n\leq x \\ p^2|n}} \frac 1n = \sum_{p> y} \frac 1{p^2} \sum_{m\leq x/p^2} \frac 1m%\leq \sum_{p> y} \frac {1+\log^+(x/p)}{p^2} 
\ll \frac{\log x}{y\log y}.
\]
Therefore, we may assume that $p^2\nmid n$. We separate these integers into various classes:

(i) $b=pm$ with $m>1$ and $P^-(m)>p$. Now $m\geq P^-(m)>p$ and so $x\geq n= smp>sp^2$, which implies that $p<\sqrt{x/s}$.
These integers contribute
\[
\sum_{\substack{s\leq x \\ P^+(s)\leq y}} \frac{f(s)}s  \sum_{y<p\leq  \sqrt{x/s}}  \frac{f(p)} p \sum_{\substack{ 1<m\leq x/ps \\ P^-(m)>p}} 
\frac{f(m)}{m} e\left(\frac{mp\cdot sa}{q}\right)+O(1),
\]
and then by Lemma \ref{lem: Key}(i), this is 
\begin{align*}
& \ll \sum_{\substack{ s\leq x \\ P^+(s)\leq y \\  q_s=q/(q,s)}} \frac{1}s  \frac{2^{\omega(q_s)/2}}{\phi(q_s)^{1/2}} \log x 
\leq  \log x \cdot \sum_{d|q}  \sum_{\substack{   P^+(r)\leq y \\  (q/d,r)=1 }} \frac{1}{dr}  \frac{2^{\omega(q/d)/2}}{\phi(q/d)^{1/2}}\\
& \ll    \frac{\log x \log y}q \cdot \sum_{d|q}    2^{\omega(q/d)/2} \phi(q/d)^{1/2} 
 \ll   \frac{2^{\omega(q)}}{q^{1/2}}  \log x \log y,  
\end{align*}
upon writing $s=dr$, and since 
$$
\sum_{\substack{   P^+(r)\leq y \\  (q/d,r)=1 }} \frac{1}{r}\ll \frac{\phi(q/d)}{q/d}
\log y.$$
\medskip

(ii) $b=p$ and $s\geq y$.  These integers contribute $\displaystyle{\ll  \frac{2^{\omega(q)}}{\phi(q)^{1/2}}  \log x} $ by Lemma \ref{lem: Key}(ii).
\medskip

(iii) $b=p$ and $s< y$ yields the second term on the right hand side of \eqref{Eq:StatementThm1.1}. 
 This completes the proof.
\end{proof}
%%%%%%%%%%%%%%%%%%%%%%%%%%%%%%%%%%%%%%%%%%%%%%%%%%%%%%%%%%%%%%%%%%%%%%%%%%%%%%%%%%%%
\subsection{Reducing the number of terms in the sum: Proof of Theorem \ref{cor: Cut sum2}}\label{sec: Otherway}

 \begin{corollary} \label{cor: Cut sum} 
 Let $x$ be large and fix $ 0<\delta\leq 1/2$ and $0<\ep<1/10$, and suppose that  $\alpha$ is on a minor arc as in \eqref{eq: Minor Arc Range} with $  q \leq x^{1/2-\ep}$. If  $ y=q^{3/2+\delta}$   and $3\leq M <q^{1-\ep}$   then uniformly for all multiplicative functions $f:\mathbb{N}\to \mathbb{U}$ we have
 \[
 \sum_{y< n\leq x} \frac{f(n)}{n} e(n\alpha)=   
    \sum_{m\leq M} \frac{f(m)}{m} \sum_{y< p\leq x/m} \frac{f(p)}{p}   e(mp\alpha)  +O\bigg( 1 +   \bigg(    \frac { \log 2M  }{M}   \bigg)^{1/2}\log\log x\bigg). 
\]
\end{corollary}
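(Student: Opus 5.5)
The plan is to start from Theorem \ref{Thm:Main}, which (after subtracting the $n\le y$ terms) gives
\[
\sum_{y<n\le x}\frac{f(n)}{n}e(n\alpha)=\sum_{y<p\le x}\frac{f(p)}{p}\sum_{m<\min\{y,x/p\}}\frac{f(m)}{m}e(mp\alpha)+O\Big(1+\frac{2^{\omega(q)}\log q}{q^{1/2}}\log x\Big).
\]
Since $q\ge(\log x)^{2+\ep}$, we have $2^{\omega(q)}\log q\,\log x/q^{1/2}\le q^{-\ep/(2(2+\ep))+o(1)}\log x\cdot(\log x)^{-1}\cdots$; more precisely $\log x\le q^{1/(2+\ep)}$, so this error is $q^{-1/2+1/(2+\ep)+o(1)}=o(1)$. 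Interchanging sums, the main term equals $\sum_{m<y}\frac{f(m)}{m}\sum_{y<p\le x/m}\frac{f(p)}{p}e(mp\alpha)$ (note $p>y>m$, so the condition $m<\min\{y,x/p\}$ is just $m<y$, $p\le x/m$). The terms $m\le M$ are exactly the claimed main term, so it remains to show that the tail $T:=\sum_{M<m<y}\frac{f(m)}{m}G_m$, with $G_m:=\sum_{y<p\le x/m}\frac{f(p)}{p}e(mp\alpha)$, is $O(1+(\log 2M/M)^{1/2}\log\log x)$.

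To bound $T$ I will split $m$ dyadically, $m\in(K,2K]$ with $M<K<y$, and, mimicking Case 2 of the proof of Theorem \ref{Thm:Unweighted}, split $p$ dyadically into $(P,2P]$ with $y<P\le x/K$. For each block I will apply Cauchy--Schwarz in $m$:
\[
\Big|\sum_{K<m\le 2K}\frac{f(m)}{m}\sum_{P<p\le 2P}\frac{f(p)}{p}e(mp\alpha)\Big|\le \frac{1}{\sqrt K}\Big(\sum_{K<m\le 2K}\Big|\sum_{P<p\le \min(2P,x/m)}\frac{f(p)}{p}e(mp\alpha)\Big|^2\Big)^{1/2}.
\]
Opening the square, the diagonal $p_1=p_2$ contributes $\ll K/(P\log P)$. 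The off-diagonal part is $\ll P^{-2}\sum_{p_1<p_2\le 2P}\min(K,\|(p_2-p_1)\alpha\|^{-1})$. Using the sieve bound for prime pairs with difference $h$ and the $h/\phi(h)$ decomposition as before, this reduces to $P^{-1}(\log P)^{-2}\sum_{h\le P}\frac{h}{\phi(h)}\min(K,\|h\alpha\|^{-1})$. Lemma \ref{Lem:StandardExpSum} (with $H=P$, $M=K$) then bounds this by roughly
\[
P^{-1}(\log P)^{-2}\big(PK/q+P\log K+q\log(PK)\big).
\]
Since $P>y=q^{3/2+\delta}$ and $K<y$, these terms are $\ll (\log P)^{-2}(K/q+\log K)+q\log x/P$. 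The diagonal term should dominate, giving a block bound of $\ll K^{-1/2}\big(K/(P\log P)+(\log 2K)/(\log P)^2+q\log x/P\big)^{1/2}$, which is roughly $(\log 2K/K)^{1/2}/\log P$ plus smaller terms.

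Summing over dyadic $P\in(y,x/K]$ gives $\sum_P 1/\log P\ll\log(\log x/\log y)\le\log\log x$, so the $(K,2K]$ block contributes $(\log 2K/K)^{1/2}\log\log x$. Summing this over dyadic $K>M$ yields the claimed $(\log 2M/M)^{1/2}\log\log x$. The leftover pieces, namely $K/(P\log P)$, $K/(q(\log P)^2)$ and $q\log x/P$, need $P\gg q^{1+\ep}$ and $K\le y$ to be summable to $O(1)$. This is where $y=q^{3/2+\delta}$, $M<q^{1-\ep}$ and $q\le x^{1/2-\ep}$ enter. The main obstacle is this last bookkeeping, in particular the $K/q$ term when $K$ is close to $y>q$. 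If the bookkeeping fails there, I would instead treat large $m$ directly: for $K\ge q^{1-\ep}$, reorganise $p$ into residue classes mod $q$ via Brun--Titchmarsh and apply Ramanujan-sum Cauchy--Schwarz in $m$, as in Lemma \ref{lem: Key}, to obtain $O(1)$.
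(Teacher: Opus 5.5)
Your proof is correct. After the common first step (Theorem \ref{Thm:Main}, whose error is $o(1)$ for $q\ge(\log x)^{2+\ep}$), you bound the tail $\sum_{M<m<y}$ differently from the paper.

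\textbf{How the routes differ.} In Lemma \ref{lem: Cut sum} the paper also applies Cauchy--Schwarz in $m$ on dyadic blocks. It then keeps the primes $y<p\le x/(2M)$ together and sorts the pairs $(p,\ell)$ by $p-\ell \bmod q$. It applies Brun--Titchmarsh in progressions, $\sum_{p\equiv r}1/p\ll\log(\log x/\log y)/\phi(q)$, which is valid since $y>q^{1+\ep}$. Finally it evaluates $\sum_b n_b\bigl|\sum_{M\le m<2M}e(mb\alpha)/m\bigr|$ using $\bigl|\sum_m e(mc/r)/m\bigr|\ll\min\{1,1/(M\|c/r\|)\}$. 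You instead split $p$ dyadically as well and use the prime-pair sieve with Lemma \ref{Lem:StandardExpSum}, as in Case 2 of Theorem \ref{Thm:Unweighted}.

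\textbf{Your bookkeeping closes.} After dividing by $K$, the $PK/q$ term contributes $\ll 1/(\sqrt{\phi(q)}\log P)$ per block, so $\ll\log q\log\log x/\sqrt{\phi(q)}=o(1)$ overall. The $q\log x/P$ term totals $\ll(\log x/(Mq^{1/2+\delta}))^{1/2}=o(1)$. The diagonal is negligible. So the fallback you mention is unnecessary.

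\textbf{Two corrections.} First, the diagonal does not dominate. The dominant piece is the term $H\log(\min(q,M))$ of Lemma \ref{Lem:StandardExpSum} (with $H=P$ and cap $K$), giving $\log K/(\log P)^2$, and it is this that produces $(\log 2K/K)^{1/2}/\log P$. Second, you cannot apply Lemma \ref{Lem:StandardExpSum} with the weight $h/\phi(h)$ still present. Remove it via $h/\phi(h)\ll\sum_{d\mid h,\, d<\sqrt h}\mu^2(d)/d$ after replacing $\alpha$ by $a/q$. This costs a factor $\log P$ on the $K$ and $q\log x$ terms and a factor $q/\phi(q)$ on the $PK/q$ term, all harmless.

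\textbf{What each route buys.} The paper's route bounds each $m$-block with no secondary terms that depend on $q$ versus $\log x$: Lemma \ref{lem: Cut sum} holds for any $q\le x^{1/2-\ep}$. The price is the $\delta_{q,M}$ case split, which is where $M<q^{1-\ep}$ enters. Your route reuses existing machinery and avoids that split. However, its secondary terms are $o(1)$ only because of the minor-arc bound $q\ge(\log x)^{2+\ep}$, which the corollary assumes anyway.
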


Theorem \ref{cor: Cut sum2} follows from Corollary \ref{cor: Cut sum} by taking $y=q^2$ and $M=(\log\log x)^3$ (so that $M <q^{1-\ep}$ in the range \eqref{eq: Minor Arc Range}).

We deduce Corollary \ref{cor: Cut sum} from the following lemma inserted into \eqref{Eq:StatementThm1.1} of Theorem \ref{Thm:Main}, and with the error term there being $O(1)$ as $q\geq (\log x)^{2+\ep}$.

\begin{lemma} \label{lem: Cut sum} Let $x$ be large and fix $ 0<\delta\leq 1/2$ and $0<\ep<1/10$. 
Suppose that $|\alpha-a/q|\leq 1/(qx)$ with $(a, q)=1$ and $1\leq a\leq q\leq  x^{1/2-\ep}$. If  $ y=q^{3/2+\delta}$   and $3\leq M <q^{1-\ep}$   then uniformly for all multiplicative functions $f:\mathbb{N}\to \mathbb{U}$ we have
\begin{align*}
\sum_{\substack{y< p\leq x, m\leq y \\ mp\leq x}} \frac{f(mp)}{mp} e(mp\alpha) &=
\sum_{\substack{  m\leq M \\ y<p\leq  x/m}} \frac{f(mp)}{mp} e(mp\alpha)\\
& \quad \quad \quad \quad + O\bigg( \frac1q+ \frac{\log q}{\log x} +   \bigg(    \frac { \log 2M  }{M}   \bigg)^{1/2}\log\log x\bigg). 
\end{align*}
\end{lemma}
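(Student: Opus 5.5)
The goal is to show that the terms with $M<m\leq y$ contribute only the stated error. First replace $\alpha$ by $a/q$. Since $|e(mp\alpha)-e(mpa/q)|\ll mp/(qx)$, this costs $\sum_{mp\leq x}1/(qx)\ll 1/q$. So it suffices to bound
\[
R:=\sum_{M<m\leq y}\frac{f(m)}{m}\sum_{y<p\leq x/m}\frac{f(p)}{p}e\Big(\frac{mpa}q\Big).
\]
Split the $m$-range dyadically into blocks $m\in(N,2N]$ with $M<N\leq y$, and the $p$-range dyadically into blocks $p\in(P,2P]$ with $y<P\leq x/N$. For a fixed block pair, apply Cauchy--Schwarz over $m$ to remove $f(m)$:
\[
\Bigg|\sum_{N<m\leq 2N}\frac{f(m)}{m}\sum_{P<p\leq 2P}\frac{f(p)}{p}e\Big(\frac{mpa}{q}\Big)\Bigg|^2\leq \frac1N\sum_{N<m\leq 2N}\frac1{m}\Bigg|\sum_{\substack{P<p\leq 2P\\ p\leq x/m}}\frac{f(p)}{p}e\Big(\frac{mpa}{q}\Big)\Bigg|^2 .
\]
The cutoff $p\leq x/m$ should first be removed, either by a smoothing or by a cheap Perron/partial-summation truncation, since it matters only for the single top block. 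Then open the square and swap the sums. This gives a diagonal term $p_1=p_2$ of size $\ll \frac1N\cdot\frac1{P\log P}$. The off-diagonal terms are
\[
\frac1{N^2P^2}\sum_{p_1\ne p_2}\min\Big(N,\frac1{\|(p_1-p_2)a/q\|}\Big).
\]

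To bound the off-diagonal terms, set $h=p_1-p_2$ with $0<|h|\leq P$. Count pairs of primes with difference $h$ using the sieve bound $\ll \frac{P}{(\log P)^2}\frac{h}{\phi(h)}$, exactly as in Case 2 of the proof of Theorem \ref{Thm:Unweighted}. Then apply Lemma \ref{Lem:StandardExpSum}, after removing $h/\phi(h)$ via $\sum_{d|h,d\leq\sqrt h}\mu^2(d)/d$. The result is
\[
\sum_{h\leq P}\frac{h}{\phi(h)}\min\Big(N,\frac{1}{\|ha/q\|}\Big)\ll \frac{PN}{q}+P\log N+q\log(2+PN/q),
\]
up to logarithms. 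Since $P>y=q^{3/2+\delta}>q$ and $N\leq y$, the off-diagonal total is
\[
\ll\frac{1}{N^2P(\log P)^2}\Big(\frac{PN}{q}+P\log N+\dots\Big)\ll\frac{\log N}{N^2(\log P)^2}+\frac{1}{Nq(\log P)^2}.
\]
Hence the block pair contributes $\ll (\log 2N/N)^{1/2}\cdot\frac{1}{\log P}$, together with the diagonal part $(NP\log P)^{-1/2}$, which is negligible.

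Now sum over the blocks. The sum over dyadic $P$ of $1/\log P$ from $y$ to $x$ is $\ll\log(\log x/\log y)\leq\log\log x$. The sum over dyadic $N>M$ of $(\log N/N)^{1/2}$ is $\ll(\log 2M/M)^{1/2}$. Together these give $(\log 2M/M)^{1/2}\log\log x$.

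The main obstacle is the range where $q\log(PN/q)$ dominates, and the boundary effects. The term $q\log(PN/q)$ is harmless only because $P>q^{3/2}$, so I would check its contribution $q/(NP)$ carefully. The term $\log q/\log x$ in the statement suggests it arises from the top edge $p\approx x/m$ when the cutoff is handled, or from the blocks with $P$ close to $x/N$. I would handle the cutoff by writing $\mathbf 1_{p\leq x/m}$ via the truncated Perron formula and absorbing the loss into that term. An alternative is to give up on the top block $P\asymp x/N$ and bound it trivially, which yields $\sum_{m>M}\frac1m\sum_{x/2m<p\leq x/m}\frac1p\ll\sum_m \frac{1}{m\log(x/m)}\ll \log y/\log x\asymp\log q/\log x$. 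That matches the stated error exactly, so I would take this route.
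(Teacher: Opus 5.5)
Your proof is sound in outline and shares the paper's skeleton. Both replace $\alpha$ by $a/q$ and cut $m$ dyadically. Both discard the primes near $x/m$ trivially, at a cost $\ll 1/\log x$ per $m$-block; over $O(\log q)$ blocks this is exactly the $\frac{\log q}{\log x}$ term. Both then apply Cauchy--Schwarz over $m$.

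Where you differ is the off-diagonal bound. The paper does not split the primes. It keeps the whole range $y<p\le x/(2M)$ and uses that $e(mpa/q)$ depends only on $p \bmod q$. Brun--Titchmarsh gives $\sum_{y<p\le X,\ p\equiv r \bmod q}1/p\ll\log(\log x/\log y)/\phi(q)$. Hence the squared block sum is at most $(\log\log x/\phi(q))^2\sum_{b}n_b\,\big|\sum_{M\le m<2M}e(mba/q)/m\big|$, where $n_b$ counts pairs of reduced residues mod $q$ with difference $b$. This is evaluated with $\sum_m e(mc/r)/m\ll\min\{1,1/(M\|c/r\|)\}$ and a sum over $r\mid q$. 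So the paper needs only upper bounds for primes in progressions, with no binary input, and gets the factor $\log\log x$ in one step.

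You instead cut $p$ dyadically, count prime pairs $p_1-p_2=h$ with the upper-bound sieve, and apply Lemma~\ref{Lem:StandardExpSum} after removing $h/\phi(h)$. That is, you recycle the Case~2 machinery from the proof of Theorem~\ref{Thm:Unweighted}. The dyadic cut in $p$ is what makes the sieve bound $P/(\log P)^2$ efficient, and summing $1/\log P$ linearly over the blocks recovers the same $\log(\log x/\log y)$.

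Two slips need repair.

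First, write $S_m$ for the inner sum over $p$. Cauchy--Schwarz gives $\big(\sum_{N<m\le 2N}|S_m|/m\big)^2\le\big(\sum_{N<m\le 2N}1/m\big)\sum_{N<m\le 2N}|S_m|^2/m\ll\sum_{N<m\le 2N}|S_m|^2/m$, with no factor $1/N$. Accordingly the off-diagonal part is $\frac{1}{NP^2}\sum_{p_1\ne p_2}\min\big(N,1/\|(p_1-p_2)a/q\|\big)$, not $\frac{1}{N^2P^2}\sum\cdots$. The block bound $(\log 2N/N)^{1/2}/\log P$ that you finally state is the square root of the correctly normalized $\frac{\log N}{N(\log P)^2}$, not of what you displayed. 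So the conclusion survives.

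Second, you silently drop the terms with $q\mid h$. With the weight $h/\phi(h)$ they contribute $\ll PN/\phi(q)$ to the $h$-sum, hence $\ll\frac{1}{\phi(q)(\log P)^2}$ to the squared block sum. This dominates $\frac{\log N}{N(\log P)^2}$ once $N\gg q\log q$, and $N$ runs up to $y=q^{3/2+\delta}$. It adds $\ll\phi(q)^{-1/2}/\log P$ per block, so $\ll\frac{\log q}{\phi(q)^{1/2}}\log\log x$ after summing over $P$ and the $O(\log q)$ dyadic values of $N\le y$. That is absorbed into $\big(\frac{\log 2M}{M}\big)^{1/2}\log\log x$ precisely because $M<q^{1-\ep}$; this is where that hypothesis enters your argument.

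Your cost $\ll 1/q$ for the step $\alpha\to a/q$ is correct, because each $n\le x$ has at most one representation $n=mp$ with $m\le y<p$.
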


\begin{proof}   We prove the result for $a/q$ and note that the difference is bounded by 
\[
 \sum_{n\leq x} \bigg|\frac{f(n)}n (e(n\alpha)-e(n\tfrac aq)) \bigg| \leq \sum_{n\leq x} \frac{1}n| (e(n(\alpha-\tfrac aq))-1) |
\ll x |\alpha-\tfrac aq|\leq \frac 1q.
\]
 
We will prove that if $3\leq M\leq y/2$ then
\begin{equation}\label{Eq:TargetBoundDyadic}
  \sum_{\substack{M\leq m<2M \\ y<p\leq x/m}} \frac{f(mp)}{mp} e(mp\alpha)  \ll \frac 1{\log x} +
   \bigg(    \frac { \log M +   \delta_{q,M}  }{M}   \bigg)^{1/2}\log\log x,  
\end{equation}
where $\delta_{q,M}=1$ if $M<q^{1-\ep}$ and equals $\frac q{\phi(q)}$ otherwise.
Summing this bound over dyadic intervals with $M=y/2,y/4,\dots, M_0$, where $3\leq M_0\leq q^{1-\ep}$ gives the desired result.

We partition $y<p\leq x/m$ into $y<p\leq x/(2M)$ and $x/(2M)<p\leq x/m$. Since $M\leq y< x^{1-2\ep}$ we have $\log (x/M) \asymp \log x$ and so 
 \[
  \Bigg|\sum_{\substack{M\leq m<2M \\   x/(2M)<p\leq x/m}} \frac{f(mp)}{mp} e(mp\alpha)\Bigg| \leq
    \sum_{\substack{M\leq m<2M \\   x/(2M)<p\leq x/M}}  \frac{1}{mp} \ll \frac 1{\log(x/M)}\ll \frac 1{\log x}.
 \]
 For the remaining primes $p$ we observe that
\[
\Bigg|\sum_{\substack{M\leq m<2M \\ y<p\leq x/(2M)}} \frac{f(mp)}{mp} e(mp\alpha)\Bigg|^2  \leq \Bigg(\sum_{ M\leq m<2M }  \frac 1m \bigg| \sum_{y<p\leq x/(2M)} \frac{f(p)}{p} e(mp\alpha) \bigg|\Bigg)^2.
\]
By the Cauchy-Schwarz inequality this is 
\begin{align*}
 &\ll \sum_{ M\leq m<2M }  \frac 1m \bigg| \sum_{y<p\leq x/(2M)} \frac{f(p)}{p} e(mp\alpha) \bigg|^2\\
 &= \sum_{\substack{y<p,\ell\leq x/(2M)\\ p, \ell \text{ primes }}} \frac{f(p)\overline{f(\ell)}}{p\ell}  \sum_{ M\leq m<2M}  \frac {e(m(p-\ell)\alpha)}m\\
& \leq \sum_{b=0}^{q-1} \Big|\sum_{ M\leq m<2M }  \frac {e(mb\alpha)}m\Big| \sum_{\substack{r, s \bmod q\\ r-s \equiv b \bmod q\\ (rs, q)=1}} \ \ \sum_{\substack{y<p\leq x/(2M) \\ p\equiv r \bmod q}} \frac{1}{p} \sum_{\substack{y<\ell\leq x/(2M) \\ \ell\equiv s \bmod q}} \frac{1}{\ell} .
\end{align*}
Using the Brun-Titchmarsh theorem and partial summation we deduce that 
\begin{equation}\label{Eq:BoundBrunTitchmarsh}
 \Bigg|\sum_{\substack{M\leq m<2M \\ y<p\leq x/(2M)}} \frac{f(mp)}{mp} e(mp\alpha)\Bigg|^2\ll \bigg(\frac{\log (\tfrac{\log x}{\log y})}{\phi(q)}\bigg)^2\sum_{b=0}^{q-1} n_b \bigg|  \sum_{ M\leq m<2M }  \frac {e(mb\alpha)}m \bigg|,
 \end{equation}
where 
\begin{align*}
n_b:&=\# \{ r,s \pmod q:   r-s\equiv b \pmod q \ \& \ (rs,q)=1\} \\
& = \prod_{p^e\| q} \# \{ s \mod {p^e}:    (s(s+b),p)=1\} \\
& = q \prod_{p|(b,q)} \bigg(1-\frac 1p\bigg) \prod_{p\mid q, p\nmid b}\bigg(1-\frac 2p\bigg) \asymp \frac{\phi(q)^2}q \prod_{p|(b,q)}\frac p{p-1}.
\end{align*}
   If we write $ba/q=c/r$ where $(c,r)=1$ and $r=q/(q,b)$ then 
\begin{align*}
(e(c/r)-1) \sum_{   M\leq m<2M }  \frac {e(mb\alpha)}m &= (e(c/r)-1) \sum_{ M\leq m<2M }  \frac {e(mc/r)}m \\
& = \sum_{ M\leq m<2M}  \frac {e(mc/r)}{m(m-1)} + O\left(\frac{1}{M}\right) \ll \frac 1M.
\end{align*}
Therefore we obtain
\[
n_b\bigg|  \sum_{   M\leq m<2M }  \frac {e(mb\alpha)}m\bigg|  \ll  \frac{\phi(q)^2}q \prod_{p|q/r}\frac p{p-1} \cdot \min \bigg\{   \frac 1{M\| c/r\|} ,1   \bigg\},
\]
where $\|\cdot\|$ denotes the distance to the nearest integer. This gives
\begin{align}
\label{Eq:BoundSumDyadic} &\frac 1{\phi(q)}\sum_{\substack{0\leq b \leq q-1\\(b,q)=q/r}} n_b \bigg|  \sum_{ M\leq m<2M }  \frac {e(mb\alpha)}m \bigg| 
\ll   \frac{\phi(q)}q \prod_{p|q/r}\frac p{p-1}   \sum_{\substack{|c|\leq \frac r2 \\ (c,r)=1}}   \min \bigg\{   \frac r{M|c|} ,1   \bigg\}
\nonumber\\
& \quad \quad \quad \quad \quad \quad \quad \quad \ll  \frac{\phi(q)}q \prod_{p|q/r}\frac p{p-1}  \bigg( \sum_{\substack{ c\leq r/M \\ (c,r)=1}} 1 +  \sum_{\substack{r/M<c<r/2 \\ (c,r)=1} } \frac r{Mc} \bigg).
\end{align}
If $M\geq 3$ then the second sum here is $\displaystyle{\ll \frac rM\cdot \frac{\phi(r)}r \log M = \phi(r) (\log M)/M}$.
Therefore the total contribution of the second sum to the right hand side of \eqref{Eq:BoundSumDyadic}   is 
\begin{equation}\label{Eq:ContribSecondSum}
 \frac{\phi(q)\log M}{q M}  \sum_{r|q}  \prod_{p|q/r}\frac p{p-1}\cdot  \phi(r)  =\frac{\log M}{M}\frac{\phi(q)}q \cdot  q \prod_{p|q}  \bigg( 1+\frac{1}{p(p-1)}  \bigg)  \ll \phi(q)\frac{\log M}{M}.
\end{equation}
 We now focus on the first sum in \eqref{Eq:BoundSumDyadic}. This sum is $\ll r/M \leq (\phi(r)/M) \cdot(q/\phi(q)) $ and so its total 
contribution to the right hand side of \eqref{Eq:BoundSumDyadic} is  
\begin{equation}\label{Eq:ContribFirstSum}
 \frac{1}{M}  \sum_{r|q} \prod_{p|q/r}\frac p{p-1}\cdot  \phi(r)    \ll \frac {q}{M},   
\end{equation}
by the same calculation above.
However we can do better if $M<q^{1-\epsilon}$
since in the range $ r>q^{1-\varepsilon/2}$ we have $r/M >r^{\ep/2}$ and hence
$$
\sum_{\substack{ c\leq r/M \\ (c,r)=1}} 1 \ll  \frac rM\cdot \frac{\phi(r)}r =  \frac{\phi(r)}M.$$
Combining this with the weaker bound $r/M$ which we use in the range $r<q^{1-\ep/2}$, and appealing to the same calculation above shows that in the case $3\leq M< q^{1-\ep}$ the contribution of the first sum to the right hand side of \eqref{Eq:BoundSumDyadic} is  
\[
\frac{\phi(q)}{M}+\frac{\phi(q)}{qM}  \sum_{\substack{r|q\\ r<q^{1-\ep/2}}}  \prod_{p|q/r}\frac p{p-1}\cdot r
 \leq \frac{\phi(q)}{M}+\frac{1}{M}\sum_{\substack{r|q\\ r<q^{1-\ep/2}}} r \leq \frac{\phi(q)}{M}+\frac{q^{1-\ep/2}d(q)}{M}\ll \frac{\phi(q)}{M},
\]
 where $d(\cdot)$ is the divisor function. Therefore, using this estimate  together with \eqref{Eq:BoundSumDyadic},  \eqref{Eq:ContribSecondSum} and \eqref{Eq:ContribFirstSum}  implies that 
 $$\frac 1{\phi(q)}\sum_{0\leq b \leq q-1} n_b \bigg|  \sum_{ M\leq m<2M }  \frac {e(mb\alpha)}m \bigg|\ll \phi(q)\frac{\log M+\delta_{q, M}}{M}. 
 $$
 Inserting this bound in \eqref{Eq:BoundBrunTitchmarsh} completes the proof of \eqref{Eq:TargetBoundDyadic} and thus of the lemma.
 \end{proof}

%%%%%%%%%%%%%%%%%%%%%
\subsection{Bounding the logarithmically weighted sum over primes for almost all moduli $q$}\label{sec: LargePrimes}
In this section we prove the following proposition, which will be useful in the construction of the ``hybrid" multiplicative function  in Example \ref{Exa4}.

\begin{proposition} \label{Prop: small m} Let $x$ be large and $y\leq x^{1/3}$ be a real number. Let $ q\leq y$ be a positive integer such that 
\eqref{logBV} holds  for all $b$ with $(b,q)=1$.
Let $\mathcal M \subset [1, y] $ be a set of integers, and define $\mathcal L_1:=\sum_{m\in \mathcal M}\frac 1m$, and $\mathcal L_2=( \sum_{ m \in \mathcal M } \frac{1}{m^2} )^{1/2}$.   We also assume that
for each integer $d|q$ and each $(b,d)=1$ we have \begin{equation}\label{Eq:Hyp2}
\sum_{\substack{d<m\in \mathcal M\\   m\equiv b \bmod d}}\frac 1m\ll \frac {\mathcal L_1}d.
\end{equation}
Then for $\alpha=a/q$ with $(a, q)=1$ and all multiplicative functions $f:\mathbb{N}\to \mathbb{U}$ we have
\[
\bigg|\sum_{\substack{n=mp \\ m\in \mathcal M \\ y<p\leq x/y}} \frac{f(n)}{n} e(n\alpha)\bigg|
\leq \sum_{ y<p\leq x/y} \frac{1}{p} \cdot \bigg( \mathcal L_2   +O\bigg(\bigg(  \frac{1}{\log(\log x/\log y)}+ \frac {2^{\omega(q)}} {\phi(q) }  \bigg)   \frac{\mathcal L_1^2}{\mathcal L_2} \bigg)\bigg). 
\]
\end{proposition}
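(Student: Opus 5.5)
The plan is to bound the sum by taking absolute values over primes and then use Cauchy--Schwarz against the weights $1/p$. Write the sum as $\sum_{y<p\le x/y}\frac{f(p)}{p}\,T(p)$, where
\[
T(p)=\sum_{m\in\mathcal M}\frac{f(m)}{m}e\Big(\frac{mpa}{q}\Big).
\]
Since $m\le y$ and $p\le x/y$, we have $mp\le x$ automatically, so the inner sum does not depend on $p$ except through the phase. Also $p>y\ge m$ means $(m,p)=1$, so $f(mp)=f(m)f(p)$.

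Put $P:=\sum_{y<p\le x/y}1/p$. By Cauchy--Schwarz,
\[
\Big|\sum_p \frac{f(p)}{p}T(p)\Big|\le P^{1/2}\Big(\sum_p \frac{|T(p)|^2}{p}\Big)^{1/2}.
\]
It therefore suffices to show
\[
\sum_{y<p\le x/y}\frac{|T(p)|^2}{p}\le P\Big(\mathcal L_2^2+O\big(E\,\mathcal L_1^2\big)\Big),\qquad E=\frac1{\log(\log x/\log y)}+\frac{2^{\omega(q)}}{\phi(q)}.
\]
Then $\sqrt{\mathcal L_2^2+O(E\mathcal L_1^2)}\le \mathcal L_2+O(E\mathcal L_1^2/\mathcal L_2)$ gives the claim.

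Expanding the square, the left side equals
\[
\sum_{m,n\in\mathcal M}\frac{f(m)\overline{f(n)}}{mn}\sum_{p}\frac{e((m-n)pa/q)}{p}.
\]
The diagonal $m=n$ gives exactly $P\mathcal L_2^2$. For the off-diagonal terms, split the primes into reduced residue classes $r\bmod q$ (primes dividing $q$ are excluded since $p>y\ge q$). Hypothesis \eqref{logBV} says each class carries $\frac1{\phi(q)}(P+O(1))$. Hence
\[
\sum_p\frac{e(hpa/q)}{p}=\frac{P+O(1)}{\phi(q)}\,c_q(h),
\]
because $\sum_{(r,q)=1}e(hra/q)=c_q(h)$, with the $O(1)$ errors summed over $\phi(q)$ classes giving $O(1)\cdot\frac{1}{\phi(q)}\sum_r 1=O(1)$. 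Since $P\asymp\log(\log x/\log y)$, this $O(1)$ is $O(P/\log(\log x/\log y))$. The contribution of the $O(1)$ part over all pairs $m,n$ is at most $O(\mathcal L_1^2)$, which is $P\cdot O(\mathcal L_1^2/\log(\log x/\log y))$, as required.

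The main term is $\frac{P}{\phi(q)}\sum_{m\ne n}\frac{|c_q(m-n)|}{mn}$, and this is the step needing care. I would use $|c_q(h)|\le\sum_{d\mid(q,h)}d\,|\mu(q/d)|$, so it is at most
\[
\frac{P}{\phi(q)}\sum_{d\mid q}d\,|\mu(q/d)|\sum_{\substack{m\ne n\\ m\equiv n \bmod d}}\frac1{mn}.
\]
By symmetry, for the inner sum I fix the smaller element $n$ and sum over $m>n$ with $m\equiv n \bmod d$; such $m$ exceed $d$ whenever $d\le n$. For $d>n$, the condition $m\equiv n$, $m>n$ forces $m>d$ anyway. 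So hypothesis \eqref{Eq:Hyp2} (applied with $b=n$ after extracting $\gcd(n,d)$, which I would handle by writing $n=gn'$ and applying the hypothesis modulo $d/g$; this is the technical point to check) bounds the $m$-sum by $O(\mathcal L_1/d)$. The inner double sum is then $\ll\mathcal L_1^2/d$, and summing over $d\mid q$ with weight $d\,|\mu(q/d)|$ gives $\ll 2^{\omega(q)}\mathcal L_1^2$. The off-diagonal main term is therefore $\ll P\,\frac{2^{\omega(q)}}{\phi(q)}\mathcal L_1^2$, completing the bound.
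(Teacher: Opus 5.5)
Your outline is the paper's proof: bound the sum by $\sum_p p^{-1}|T(p)|$, apply Cauchy--Schwarz against $1/p$, bound the diagonal by $P\mathcal L_2^2$, use \eqref{logBV} to turn each off-diagonal prime sum into $\frac{P}{\phi(q)}c_q(m-n)+O(1)$, and finish with $|c_q(h)|\le\sum_{d\mid(q,h)}d|\mu(q/d)|$ and \eqref{Eq:Hyp2}. (The error in \eqref{logBV} depends on the class, so the correct form is $\frac{P}{\phi(q)}c_q(h)+O(1)$ rather than $\frac{P+O(1)}{\phi(q)}c_q(h)$; your next sentence in fact uses the former.) Fixing the smaller element $n$ and noting $m\ge n+d>d$ is a slightly tidier version of the paper's bookkeeping over classes $b \bmod d$.

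The gap is exactly the step you flagged, and your patch does not close it. Write $m=gm'$, $n=gn'$ with $g=(n,d)$. The class $n' \bmod d/g$ is reduced, but the numbers $m'=m/g$ are in general not elements of $\mathcal M$. Since \eqref{Eq:Hyp2} only concerns $\mathcal M$, it says nothing about $\sum 1/m'$. Applying \eqref{Eq:Hyp2} to $m$ itself modulo a divisor $d'$ of $d$ coprime to $n$ does not help either: it gives only $\mathcal L_1/d'$, losing a factor $d/d'$.

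In fact no argument can repair this from the coprime-only hypothesis, because the proposition is then false. Take a large prime $q$, $y=q^3$, $\mathcal M=\{qk:1\le k\le q\}$, $f\equiv 1$, and let $x\to\infty$.
\begin{itemize}
\item \eqref{Eq:Hyp2} holds trivially for $d=1$, and vacuously for $d=q$ with $(b,q)=1$.
\item \eqref{logBV} holds with an absolute constant, by Brun--Titchmarsh and Siegel--Walfisz.
\item Every $e(mp\alpha)=1$, so the sum equals $P\mathcal L_1=PH_q/q$, where $H_q=\sum_{k\le q}1/k$.
\item Since $\mathcal L_2\le 1.3/q$ and $\mathcal L_1^2/\mathcal L_2\le H_q^2/q$, the claimed bound is $\frac{P}{q}(1.3+O((\log q)^2/q)+o(1))$ as $x\to\infty$.
\end{itemize}
This bound is smaller than $PH_q/q$ once $q$ is large, a contradiction.

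The paper's proof gets past this point only by applying \eqref{Eq:Hyp2} to every class $1\le b\le d$. Its bound $\sum_{m\ne n,\ m\equiv n\equiv b \bmod d}\frac{1}{mn}\ll\frac{\mathbf 1_{b\in\mathcal M}}{b}\cdot\frac{\mathcal L_1}{d}+\frac{\mathcal L_1^2}{d^2}$ includes $b$ not coprime to $d$, so it effectively assumes the hypothesis for all residues $b \bmod d$. That stronger form does hold in its only application, $\mathcal M=\{2\le m\le M\}$. With the hypothesis read that way, apply it directly with $b=n$, with no gcd extraction; your proof is then complete and agrees with the paper's.
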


\begin{proof}  The absolute value of our sum can be written as 
\[
\bigg|\sum_{ y<p\leq x/y} \frac{f(p)}{p} \bigg(   \sum_{ m\in \mathcal M } \frac{f(m)}{m} e(m\cdot p\alpha) \bigg)\bigg|
    \leq S:= \sum_{ y<p\leq x/y} \frac{1}{p} \bigg|  \sum_{ m\in \mathcal M } \frac{f(m)}{m} e(m\cdot p\alpha) \bigg|
\]
and this  bound can be attained   by taking each $f(p)$ on the unit circle with  the conjugate angle to the given sum. We square and use the  Cauchy-Schwarz inequality so that 
\begin{equation}\label{Eq:CS_Sum}
S^2\leq S_2 \cdot  \sum_{ y<p\leq x/y} \frac{1}{p}
\end{equation} where
\[
S_2:=\sum_{ y<p\leq x/y} \frac{1}{p} \bigg|  \sum_{ m\in \mathcal M } \frac{f(m)}{m} e(m\cdot p\alpha) \bigg|^2 =
\sum_{ m,n\in \mathcal M } \frac{f(m)\overline{f(n)}}{mn}    \sum_{ y<p\leq x/y} \frac{e(p(m-n)\alpha)}{p} . 
\]
Taking $|f(m)\overline{f(n)}|\leq 1$ this is bounded by the sum of the diagonal terms (which yields our main term), plus the absolute value of the non-diagonal terms, so that 
\[
S_2\leq \mathcal{L}_2^2    \sum_{ y<p\leq x/y} \frac{1}{p}  +
\sum_{ m\ne n\in \mathcal M } \frac{1}{mn}  \bigg|  \sum_{ y<p\leq x/y} \frac{e(p(m-n)\alpha)}{p}  \bigg|.
\]
We now partition the latter sum into residue classes modulo $q$ to obtain
\[
  \bigg|   \sum_{(b,q)=1} e(b(m-n)a/q) \sum_{ \substack{y<p\leq x/y \\ p\equiv b \bmod q}} \frac{1}{p}\bigg|   
  = \frac{|c_q(m-n)|}{\phi(q)}  \sum_{ y<p\leq x/y} \frac{1}{p} +O(1)
 \]
 using our hypothesis \eqref{logBV} to evaluate the sums in arithmetic progressions. Summing over $m,n$ the total error term is $O(\mathcal L_1^2)$ and our main term is $\sum_{ y<p\leq x/y} \frac{1}{p}$ times
 
\begin{equation}\label{Eq:SumRamanujan2}
 \frac 1{\phi(q)} \sum_{ m\ne n\in \mathcal M } \frac{|c_q(m-n)|}{mn} \leq \frac 1{\phi(q) }  \sum_{d\mid q} d\mu\left(\frac{q}{d}\right)^2  \sum_{ \substack{m\ne n\in \mathcal M \\ m\equiv n \bmod d}} \frac{1}{mn}.
\end{equation}
If $1\leq b\leq d$ then by the hypothesis \eqref{Eq:Hyp2} we have
\[
\sum_{ \substack{m\ne n\in \mathcal M \\ m\equiv n\equiv b \bmod d}} \frac{1}{mn} \ll \frac {  \mathbf{1}_{b\in \mathcal M}}b\cdot  \frac{\mathcal L_1}d + \frac{\mathcal L_1^2}{d^2},
\]
so the left hand side of \eqref{Eq:SumRamanujan2} is 
\[
\ll \frac 1{\phi(q) }  \sum_{d\mid q}  \mu\left(\frac{q}{d}\right)^2 \sum_{b=1}^d \bigg( \frac {\mathbf{1}_{b\in \mathcal M}}b\cdot \mathcal L_1  + \frac{\mathcal L_1^2}{d}\bigg) \ll  \frac {2^{\omega(q)}} {\phi(q) }      \mathcal L_1^2 .
\]
Therefore
\begin{align*}
S_2 
&\leq \sum_{ y<p\leq x/y} \frac{1}{p} \bigg(  \mathcal L_2^2     +O\bigg(  \frac {2^{\omega(q)}} {\phi(q) }   \mathcal L_1^2 \bigg)\bigg)  +O( \mathcal L_1^2)\\
&=
\mathcal L_2^2\sum_{ y<p\leq x/y} \frac{1}{p} \bigg( 1 \mathcal      +O\bigg( \bigg( \frac 1{\log(\log x/\log y)}+ \frac {2^{\omega(q)}} {\phi(q) } \bigg)  \frac{\mathcal L_1^2}{\mathcal L_2^2} \bigg)\bigg). 
\end{align*}
Inserting this bound in \eqref{Eq:CS_Sum} completes the proof.
\end{proof}
 
\begin{remark}\label{Rem2}  We observe that
\[
 \sum_{ \substack{y<p\leq x/y \\ p\equiv b \bmod q}} \frac{1}{p} - \frac 1{\phi(q)}  \sum_{ \substack{y<p\leq x/y }} \frac{1}{p} =   \bigg[ \frac{\pi(t,q,b)-\frac{\pi(t)}{\phi(q)}}t\bigg]_y^{x/y} 
 + \int_y^{x/y}  \frac{\pi(t,q,b)-\frac{\pi(t)}{\phi(q)}}{t^2} dt
\]
and so if $E(x;Q):=\sum_{Q<q<2Q} \max_{(b,q)=1} |\pi(x,q,b)-\frac{\pi(x)}{\phi(q)}|$ then
\[
\sum_{Q<q<2Q} \max_{(b,q)=1} \bigg|  \sum_{ \substack{y<p\leq x/y \\ p\equiv b \bmod q}} \frac{1}{p} - \frac 1{\phi(q)}  \sum_{ \substack{y<p\leq x/y }} \frac{1}{p}  \bigg| \leq \frac{E(x/y;Q)}{x/y}+\frac{E(y;Q)}{y}
+  \int_y^{x/y}  \frac{E(t;Q)}{t^2} dt
\]
 which is $\ll  \frac 1{(\log y)^A}$ for any fixed constant $A>0$, provided $y>Q^{2+\ep}$ using the Bombieri-Vinogradov Theorem in the form $E(t;Q)\ll \frac t{(\log t)^{A+1}}$ for all $t>Q^{2+\ep}$.
 
 Now $\sum_{Q<q<2Q}\frac 1{\phi(q)} \asymp 1$ and so the hypothesis  \eqref{logBV} holds for almost all integers $q$ (in fact one can deduce that the number of exceptions in $[Q,2Q]$ is $\ll _A Q/(\log Q)^A$
 for any given positive constant $A$).
 \end{remark}

%%%%%%%%%%%%%%%%%%%%%%%%%%%%%%%%%%%%%%%%%%%%%%%%%%%%%%%%%%%%%%%%%%%%%%%%%%%%%%%%%%%%%%%%%%%%%%%%%%%%%%%%%%%%%%%%%%%%%%%%%%%%%%%%%%%%%%%%%%%%%%%%%%%%%%%%%%%%%%%%%%%%%%%%%%%%%%%%%%%%%%%%%%%%%%%%%%%%%%%%%%%%%%%%%%%%%%%%%%%%%%%%%%%%%%%%%%%%%%%%%%%%%%%%%%%%%

 \section{Logarithmic pretentious large sieve}

 In this section we will prove Theorems  \ref{Thm:PretentiousLSieve} and \ref{Thm:LogHala}.

\subsection{Preliminary results}
We will need the following auxiliary lemmas.
 \begin{lemma} 
\label{lem:Shiu}  Let $\ep>0$ be small and fixed. Let $q\geq 2$ be a positive integer and $1\leq a\leq q-1$ be such that  $(a, q)=1$. Let $x, y$ be real numbers such that $q^{1+\ep}<y <x/10$.  Then uniformly for all multiplicative functions $f:\mathbb{N}\to \mathbb{U}$ we have 
 $$ 
\sum_{\substack{ y \leq n \le x \\ n\equiv a \pmod q }} \frac{|f(n)|}{n} \ll \frac{\log(\log x/\log y)}{\phi(q)} \exp\Big( \sum_{\substack{ p\le x \\ p\nmid q}} \frac{|f(p)|}{p}\Big). 
 $$  
 \end{lemma}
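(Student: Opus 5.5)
We will deduce the bound from Shiu's Brun--Titchmarsh theorem for multiplicative functions in arithmetic progressions, combined with partial summation over dyadic ranges. Set $g=|f|$; it is a nonnegative multiplicative function bounded by $1$. Shiu's theorem, in the form valid for $1$-bounded nonnegative multiplicative functions, states that uniformly for $q^{1+\ep}\le z$ and short intervals of length $\gg z^{\ep'}$,
\[
\sum_{\substack{z<n\le 2z\\ n\equiv a \bmod q}} g(n)\ll \frac{z}{\phi(q)}\frac{1}{\log z}\exp\Big(\sum_{\substack{p\le z\\ p\nmid q}}\frac{g(p)}{p}\Big).
\]
This range is exactly why the hypothesis $q^{1+\ep}<y$ appears. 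We apply it to each dyadic block $(z,2z]$ with $y\le z\le x$ and divide by $z$. This gives
\[
\sum_{\substack{z<n\le 2z\\ n\equiv a\bmod q}}\frac{g(n)}{n}\ll \frac{1}{\phi(q)\log z}\exp\Big(\sum_{\substack{p\le z\\ p\nmid q}}\frac{g(p)}{p}\Big)\le \frac{1}{\phi(q)\log z}\exp\Big(\sum_{\substack{p\le x\\ p\nmid q}}\frac{g(p)}{p}\Big),
\]
where we enlarge the prime sum up to $x$ using the positivity of $g(p)/p$.

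Next we sum over the dyadic values $z=2^k y$ with $z\le x$. The weights satisfy $\sum 1/\log z\asymp\int_y^x \frac{dt}{t\log t}=\log(\log x/\log y)$, which is the claimed bound. A final partial block near $x$ is handled the same way, and the condition $y<x/10$ ensures there is at least one full block and that $\log(\log x/\log y)\gg 1/\log x$ absorbs it. The endpoint $n=y$ is harmless.

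The main obstacle is to get the precise form of Shiu's theorem with uniformity in $q$ up to $z^{1/(1+\ep)}$, together with the factor $\frac{1}{\log z}\exp(\sum_{p\le z,p\nmid q} g(p)/p)$. We need the version in which primes dividing $q$ are excluded, since $n\equiv a\bmod q$ with $(a,q)=1$ forces $(n,q)=1$; Shiu's original statement already has this form with the sum over $p\le z$, $p\nmid q$. If only the version $\frac{\phi(q)}{q}\frac{1}{\log z}\exp(\sum_{p\le z} g(p)/p)$ were available, we would remark that $\frac{\phi(q)}{q}\exp(\sum_{p\mid q} g(p)/p)\ll 1$, so the two forms agree. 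Uniformity in $g$ holds because Shiu's constant depends only on the bound $g(p^k)\le 1$ (and on $\ep$).
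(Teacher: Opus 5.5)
Your proposal is correct and follows essentially the same route as the paper: split $[y,x]$ into dyadic blocks, apply Shiu's Theorem~1 to each block (with the hypothesis $q^{1+\ep}<y$ placing every block in Shiu's range), and sum the resulting weights $1/\log z$ to get $\log(\log x/\log y)$. The only cosmetic difference is that you index the blocks as $(2^k y,2^{k+1}y]$ rather than the paper's $(2^j,2^{j+1}]$, which makes the range condition and the handling of the endpoint blocks slightly cleaner.
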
 
 \begin{proof} Splitting the range of summation in dyadic intervals and using Theorem 1 of Shiu \cite{Shiu80} we derive 
\begin{align*}\sum_{\substack{ y \leq n \le x \\ n\equiv a \bmod q }} \frac{|f(n)|}{n} 
&\leq \sum_{\log y/\log 2-1\leq j\leq \log x/\log 2+1} \frac{1}{2^j}\sum_{\substack{ 2^j < n \le 2^{j+1} \\ n\equiv a \bmod q }} |f(n)|\\
&\ll \frac{1}{\phi(q)}\sum_{\log y/\log 2-1\leq j\leq \log x/\log 2+1} \frac{1}{j} \exp\Big( \sum_{\substack{ p\le x \\ p\nmid q}} \frac{|f(p)|}{p}\Big)\\
& \ll \frac{\log(\log x/\log y)}{\phi(q)} \exp\Big( \sum_{\substack{ p\le x \\ p\nmid q}} \frac{|f(p)|}{p}\Big),
 \end{align*}
as desired.   
 \end{proof} 
\begin{lemma}\label{Lem:SecondMomentPrimes}
     Let $0<\lambda<1/4$ and $\{a_n\}_{n\geq 1}$ be arbitrary complex numbers with $|a_n|\leq 1$. Let $q\geq 2$ be an integer and $z>q^{1+\ep}$ be a real number. For any $t\in \mathbb{R}$ we have $$\sum_{\chi\bmod q} \left|\sum_{P^{-}(n)>z} \frac{a_n\Lambda(n) \chi(n)}{n^{1+\lambda+it}}\right|^2 \ll \frac{1}{z^{2\lambda} \lambda^2}. 
     $$ 
 \end{lemma}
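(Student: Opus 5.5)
We expand the square and use orthogonality of characters modulo $q$. Writing $b_n = a_n\Lambda(n)n^{-1-\lambda-it}$ for $P^-(n)>z$ (and $b_n=0$ otherwise), we have
\[
\sum_{\chi\bmod q}\Big|\sum_n b_n\chi(n)\Big|^2=\phi(q)\sum_{\substack{r \bmod q\\ (r,q)=1}}\Big|\sum_{n\equiv r \bmod q} b_n\Big|^2 .
\]
We do not take absolute values inside the inner sum at this stage. The cleanest route is instead to apply the multiplicative large sieve in its dyadic form. Split the $n$-range into dyadic blocks $N<n\le 2N$ with $N\ge z$. For each block the large sieve inequality for characters modulo a single $q$ gives
\[
\sum_{\chi\bmod q}\Big|\sum_{N<n\le 2N} c_n\chi(n)\Big|^2\le (N+q)\sum_{N<n\le 2N}|c_n|^2 .
\]
Here $c_n=b_n$, so $\sum|c_n|^2\ll \sum_{N<n\le 2N}\Lambda(n)^2n^{-2-2\lambda}\ll (\log N)N^{-1-2\lambda}$. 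Since $N\ge z>q^{1+\ep}$, we have $N+q\ll N$, and the block contributes $\ll (\log N)N^{-2\lambda}$.

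To combine the blocks we use Cauchy--Schwarz with weights. Write $\sum_n = \sum_k S_k$ over dyadic blocks $N_k=2^kz$. Then
\[
\Big|\sum_k S_k\Big|^2\le \Big(\sum_k w_k^{-1}\Big)\Big(\sum_k w_k|S_k|^2\Big)
\]
with weights $w_k=N_k^{\lambda}$. This gives
\[
\sum_\chi\Big|\sum_n\Big|^2\ll \Big(\sum_k N_k^{-\lambda}\Big)\sum_k N_k^{\lambda}(\log N_k)N_k^{-2\lambda}.
\]
The first factor is $\ll z^{-\lambda}/\lambda$, since it is a geometric series with ratio $2^{-\lambda}$ and $1-2^{-\lambda}\asymp\lambda$. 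The second factor is $\sum_k (\log z + k)(2^kz)^{-\lambda}\ll z^{-\lambda}\big(\log z/\lambda+1/\lambda^2\big)$. The product is therefore $\ll z^{-2\lambda}(\lambda^{-2}\log z+\lambda^{-3})$. This is too large by a factor of roughly $\log z$ or $1/\lambda$, and that loss is the main obstacle.

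To remove it, I would avoid the crude dyadic Cauchy--Schwarz and use the large sieve in its weighted (Montgomery--Vaughan / Hilbert inequality) form, which tolerates smooth weights. The form needed is
\[
\sum_{\chi \bmod q}\Big|\sum_n c_n\chi(n)\Big|^2\ll\sum_n (n+q)|c_n|^2 .
\]
This follows from the Montgomery--Vaughan large sieve with individual weights: $\sum_\chi|\sum c_n\chi(n)|^2\le \phi(q)\sum_r|\sum_{n\equiv r}c_n|^2$, and in each residue class the generalized Hilbert inequality, applied to the spacings $q$ between the points $n$, gives $\sum (q+O(n))\ldots$. With this inequality we get directly
\[
\sum_{\chi}\Big|\sum_n b_n\chi(n)\Big|^2\ll\sum_{n>z}(n+q)\frac{\Lambda(n)^2}{n^{2+2\lambda}}\ll\sum_{n>z}\frac{\Lambda(n)\log n}{n^{1+2\lambda}}.
\]
By partial summation using Chebyshev's bound, this is $\ll\int_z^\infty \frac{\log t}{t^{1+2\lambda}}\,dt\ll \frac{\log z}{\lambda z^{2\lambda}}+\frac{1}{\lambda^2z^{2\lambda}}$.

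The term $\log z/(\lambda z^{2\lambda})$ is $\ll 1/(\lambda^2 z^{2\lambda})$ only when $\lambda\log z\ll1$. When $\lambda\log z\gg1$, the factor $z^{-2\lambda}$ absorbs the extra logarithm: $\log z\, z^{-2\lambda}\ll \lambda^{-1}z^{-\lambda}$. This still leaves a loss in the exponent, so it does not give the claimed bound in that regime. I would therefore remove one factor of $\log n$ at the source. The fix is to use the prime-only structure: the $n=p^k$ with $k\ge2$ contribute negligibly, and the weighted large sieve should be applied with a Brun--Titchmarsh input in place of the trivial bound. In the sum over residue classes, each class has prime density $\ll 1/(\phi(q)\log n)$ because $n>z>q^{1+\ep}$. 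The diagonal then becomes $\sum_{p>z}(\log p)^2p^{-1-2\lambda}\cdot(\log p)^{-1}$, which is $\sum_{p>z}\log p\,p^{-1-2\lambda}\ll\frac{1}{\lambda z^{2\lambda}}$. The off-diagonal terms are handled by the Hilbert inequality with the same Brun--Titchmarsh density. Verifying that the Brun--Titchmarsh density genuinely saves the $\log$ in the off-diagonal terms is the step I expect to be the main difficulty.
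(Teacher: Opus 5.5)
Your proposal has a genuine gap: it never reaches the stated bound. Both of your large-sieve routes lose a factor; you find $z^{-2\lambda}(\lambda^{-2}\log z+\lambda^{-3})$ and $z^{-2\lambda}(\lambda^{-1}\log z+\lambda^{-2})$. The step meant to recover the loss (Brun--Titchmarsh combined with a Hilbert-inequality treatment of off-diagonal terms) is left unverified by your own account.

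The intermediate ``weighted large sieve'' $\sum_{\chi}|\sum_n c_n\chi(n)|^2\ll\sum_n (n+q)|c_n|^2$ is also false for sequences with unbounded support. Take $c_n=1/(n\log n)$ on $n\equiv 1 \bmod q$, $q<n\le N$. The left side is $\asymp \phi(q)q^{-2}(\log\log N)^2\to\infty$, while the right side stays bounded. The Hilbert inequality does not apply here, since $|\sum_{n\equiv r}c_n|^2$ is not a bilinear form with a $1/(m-n)$ kernel.

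The missing idea is that no cancellation is needed. The target $\lambda^{-2}z^{-2\lambda}$ is essentially $\big(\sum_{n>z}\Lambda(n)n^{-1-\lambda}\big)^2$, the square of the $\ell^1$ norm of your sequence. Your loss comes from applying $\ell^2$ (large sieve) bounds to a sequence supported on primes, where $\Lambda(n)^2\approx\Lambda(n)\log n$.

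So take exactly the step you declined. After orthogonality, bound $|\sum_{n\equiv r}b_n|\le\sum_{n\equiv r,\,P^-(n)>z}\Lambda(n)n^{-1-\lambda}$. Every such $n$ exceeds $z>q^{1+\varepsilon}$, so Brun--Titchmarsh on dyadic blocks $(2^j,2^{j+1}]$ gives, uniformly in $r$,
\[
\sum_{\substack{p>z\\ p\equiv r \bmod q}}\frac{\log p}{p^{1+\lambda}}\ll\frac{1}{\phi(q)}\sum_{2^j\geq z/2}2^{-\lambda j}\ll\frac{1}{\phi(q)\,\lambda z^{\lambda}} .
\]
The prime part is therefore $\ll\phi(q)\cdot\phi(q)\cdot(\phi(q)\lambda z^{\lambda})^{-2}=\lambda^{-2}z^{-2\lambda}$. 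The prime powers $p^k$ ($k\ge 2$, $p>z$) contribute trivially at most $\phi(q)\big(\sum_{p>z}\log p/p^{2}\big)^2\ll q/z^2\le 1/z$, which is admissible.

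This is the paper's proof, written there as $\phi(q)\sum_n\frac{\Lambda(n)}{n^{1+\lambda}}\sum_{m\equiv n}\frac{\Lambda(m)}{m^{1+\lambda}}$ with Brun--Titchmarsh applied to the inner sum. No diagonal/off-diagonal analysis occurs.

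Your own dyadic scheme would also work if you replaced the large-sieve factor $N+q$ by this Brun--Titchmarsh input. Each block would then contribute $\ll N^{-2\lambda}$ instead of $(\log N)N^{-2\lambda}$, and your weighted Cauchy--Schwarz would give $(\sum_k N_k^{-\lambda})^2\ll\lambda^{-2}z^{-2\lambda}$.
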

 \begin{proof}
      By the orthogonality relations for characters we have
      
\begin{equation}\label{Eq:OrthogonalitySecondMoment}
\sum_{\chi\bmod q} \left|\sum_{P^{-}(n)>z} \frac{a_n\Lambda(n) \chi(n)}{n^{1+\lambda+it}}\right|^2 \leq \phi(q)\sum_{P^{-}(n)>z} \frac{\Lambda(n)}{n^{1+\lambda}}\sum_{\substack{P^{-}(m)>z\\ m\equiv n\bmod q}} \frac{\Lambda(m)}{m^{1+\lambda}}.
\end{equation}
Fix  $n>z$. By the Brun-Tichmarsh Theorem the inner sum over $m$ equals 
\begin{align*}
\sum_{\substack{p>z\\ p\equiv n\bmod q}} \frac{\log p}{p^{1+\lambda}} +O\left(\frac{1}{z}\right)
 &\ll \sum_{j\geq \log z/\log 2-1 } \frac{j}{2^{j(1+\lambda)}} \sum_{\substack{2^j< p\leq  2^{j+1}\\  p\equiv n\bmod q}}1 +\frac{1}{z}\\
 & \ll \frac{1}{\phi(q)} \sum_{j\geq \log z/\log 2-1 } \frac{1}{2^{\lambda j}}+\frac{1}{z}\ll \frac{1}{\phi(q) z^{\lambda} \lambda}.
\end{align*}
 Inserting this bound in \eqref{Eq:OrthogonalitySecondMoment} completes the proof. 
 \end{proof}

\subsection{Proof of Theorem \ref{Thm:LogHala}}

  %We start by the orthogonality relation for characters which gives 
%$$ \sum_{\substack{n\leq x\\ n\equiv a \bmod q}}\frac{f(n)}{n}=\frac{1}{\phi(q)}\sum_{\chi\bmod q} \chi(a) \sum_{n\leq x} \frac{f(n) \overline{\chi}(n)}{n}.$$
%Since the result is trivial if $T<1/\log x$, we assume throughout that $T\geq 1/\log x$. 
 To shorten our notation we let
\begin{align*}
E_{f,{\mathcal X}}(x;q;a):&=\sum_{\substack{y\leq n\leq x\\ n\equiv a \bmod q}}  \frac{f(n)}{n} - \frac 1{\phi(q)} \sum_{\substack{\chi \bmod q\\ \chi\in {\mathcal X}}} \chi(a) \sum_{y\leq n\leq x} \frac{f(n)\overline{\chi}(n)}{n}\\
& =\frac 1{\phi(q)} \sum_{\substack{\chi\bmod q\\ \chi \notin \mathcal{X}}} \chi(a) \sum_{y\leq n\leq x} \frac{f(n)\overline{\chi}(n)}{n},    
\end{align*}
where the last equality follows by the orthogonality relations of characters.
Let $z> y$ be a parameter to be chosen. We define the multiplicative functions 
$s(\cdot)$  and $\ell(\cdot)$   by
\[
 s(p^k)=\begin{cases} f(p^k) \\  0   \end{cases}  
 \ \ \text{and} \ \ 
\ell(p^k)=\begin{cases} 0 & \text{if} \ p\leq z,\ k\geq 1 \\  f(p^k) & \text{if} \ p> z,\ k\geq 1;
      \end{cases}
\]
so that $f$ is the convolution of $s$ and $\ell$. For a Dirichlet character  $\chi \bmod q$ we define for Re$(s)>1$
\[
 {\mathcal S}_{\chi}(s) = \sum_{n\geq 1}\frac{s(n)\overline{\chi}(n)}{n^{s}},  \ \   \  \
{\mathcal L}_{\chi}(s) 
=\sum_{n\geq 1}\frac{\ell(n)\overline{\chi}(n)}{n^{s}}, \ \ \textup{ and } \ \ - \frac{\mathcal {L}^{\prime}_{\chi}(s)}{\mathcal {L}_{\chi}(s)} = \sum_{n=2}^{\infty}  \frac{\Lambda_{\ell}(n)\overline{\chi}(n)}{n^{s}}. \ \  
\]
Then for $\re(s)>1$ we have 
$$ F_{\chi}(s):= \sum_{n\geq 1} \frac{f(n)\overline{\chi}(n)}{n^s}= \eS_{\chi}(s) \eL_{\chi}(s). $$
Let  $c_0:=1/\log x$ and $\eta:=1/\log z.$ By a slight variation of Lemma 2.2 of \cite{GHS19} we obtain 
\begin{equation}
\label{Eq:SumPerronAp1}
\int_{\alpha=0}^{\eta}\int_{\beta=0}^{2\eta} \frac{1}{2\pi i } \int_{c_0-i\infty}^{c_0+i\infty} 
\eS_{\chi}(s+1)\eL_{\chi}(s+1+\alpha+\beta) \frac{\eL^{\prime}_{\chi}}{\eL_{\chi}}(s+1+\alpha)\frac{\eL^{\prime}_{\chi}}{\eL_{\chi}}(s+1+\alpha+\beta) \frac{x^s}{s} \ ds \ d\beta \ d\alpha
\end{equation}
\begin{align}
\label{Eq:SumPerronAp2} 
&= \sum_{n\le x} \frac{f(n)\overline{\chi}(n)}{n}  -\sum_{mn\le x} \frac{s(m)\overline{\chi}(m)}{m} \frac{\ell(n)\overline{\chi}(n)}{n^{1+\eta}} \nonumber \\
&  \quad \quad \quad \quad \quad \quad \quad \quad - \int_0^{\eta} \sum_{mnk \le x} \frac{s(m)\overline{\chi}(m)}{m} \frac{\Lambda_{\ell}(k)\overline{\chi}(k)}{k^{1+\alpha}} \frac{\ell(n)\overline{\chi}(n)}{n^{1+2\eta+\alpha}} d\alpha, \nonumber \\
&= \sum_{y\leq n\le x} \frac{f(n)\overline{\chi}(n)}{n}  -\sum_{y\leq mn\le x} \frac{s(m)\overline{\chi}(m)}{m} \frac{\ell(n)\overline{\chi}(n)}{n^{1+\eta}} 
\nonumber \\
& \quad \quad \quad \quad \quad \quad \quad \quad- \int_0^{\eta} \sum_{mnk \le x} \frac{s(m)\overline{\chi}(m)}{m} \frac{\Lambda_{\ell}(k)\overline{\chi}(k)}{k^{1+\alpha}} \frac{\ell(n)\overline{\chi}(n)}{n^{1+2\eta+\alpha}} d\alpha,
\end{align}
where the last equality follows since $s(n)=f(n)$ and  $\ell(n)=0$ for all $1<n\leq y$.
 We multiply \eqref{Eq:SumPerronAp1} and \eqref{Eq:SumPerronAp2} by $\chi(a)/\phi(q)$ and sum over all characters  $\chi$ modulo $q$ such that $\chi\not\in\mathcal{X}$.
 The contribution of the second term in \eqref{Eq:SumPerronAp2} is  
\[
\leq \sum_{\substack{y\leq mn \le x \\ mn\equiv a \bmod q}} \frac{|s(m)|}{m} \frac{|\ell(n)|}{n^{1+\eta}}+ \frac{ |{\mathcal X}|}{\phi(q)} \sum_{y\leq mn \le x} \frac{|s(m)|}{m} \frac{|\ell(n)|}{n^{1+\eta}}
\]
and then applying Lemma \ref{lem:Shiu} to the multiplicative function which equals $f(p)/p$ on the primes $p\leq z$, and $f(p)/p^{1+\eta}$ on the primes $p> z$, this is
\begin{equation} \label{Eq:ErrSecondAp} 
\begin{aligned}
& \ll \frac{\log(\log x/\log y) +|\mathcal{X}|}{\phi(q) } \exp\Big(\sum_{p\le z} \frac{|f(p)|}{p} + \sum_{z< p\le x} \frac{|f(p)|}{p^{1+\eta}} \Big)  \\
&
\ll  \frac{(\log(\log x/\log y)+|\mathcal{X}|)\log z}{\phi(q)}.
\end{aligned}
\end{equation}
 
By swapping the sum and integral and performing the integration over $\alpha$ we find that the contribution of the third term in \eqref{Eq:SumPerronAp2}  is
\begin{align*}
& \ll\sum_{\substack{mnk \le x \\ mnk\equiv a \bmod q}} \frac{|s(m)|}{m} \frac{|\ell(n)|}{n^{1+2\eta}} \frac{|\Lambda_{\ell}(k)|}{k\log k} + \frac{|\mathcal{X}|}{\phi(q)}\sum_{mnk \le x } \frac{|s(m)|}{m} \frac{|\ell(n)|}{n^{1+2\eta}} \frac{|\Lambda_{\ell}(k)|}{k\log k} \\
& \leq \sum_{\substack{mn \le x/z \\ (mn, q)=1}} \frac{|s(m)|}{m} \frac{|\ell(n)|}{n^{1+2\eta}} \sum_{\substack{z<k\leq x/(mn)\\ k\equiv (mn)^{-1}a\bmod q}} \frac{\Lambda(k)}{k\log k} \\
& \quad \quad \quad + \frac{|\mathcal{X}|}{\phi(q)}\sum_{mn \le x/z } \frac{|s(m)|}{m} \frac{|\ell(n)|}{n^{1+2\eta}}\sum_{z<k\leq x/(mn)}\frac{\Lambda(k)}{k\log k} \\
& \ll  \frac{(1+|\mathcal{X}|)\log(\log x/\log y)\log z}{\phi(q)},
\end{align*}
applying Lemma \ref{lem:Shiu} exactly as in \eqref{Eq:ErrSecondAp}, and
since  
$$\sum_{\substack{y<k\leq x\\ k\equiv b\bmod q}} \frac{\Lambda(k)}{k\log k} \ll \frac{\log(\log x/\log y)}{\phi(q)}$$
by the Brun-Titchmarsh Theorem. 

We now turn to the task of bounding the contribution of the integral on \eqref{Eq:SumPerronAp1}. 
We first fix $\alpha$ and $\beta$ and consider the inner integral over $s$ multiplied by $\chi(a)/\phi(q)$ and summed over  $\chi\not\in\mathcal{X}$, which equals 
\begin{equation}\label{Eq:PerronIntegral}
\begin{aligned}
&\frac{1}{\phi(q)}\sum_{\substack{\chi\bmod q\\ \chi \notin \mathcal{X}}}\chi(a)  \frac{1}{2\pi i } \int_{c_0-i \infty}^{c_0+i \infty} 
\eS_{\chi}(s+1)\\
& \quad \quad \quad \quad \times \eL_{\chi}(s+1+\alpha+\beta) \sum_{P^{-}(n)>z} \frac{\Lambda_\ell(n)\overline{\chi}(n)}{n^{s+1+\alpha}}\sum_{P^{-}(n)>z} \frac{\Lambda_\ell(n)\overline{\chi}(n)}{n^{s+1+\alpha+\beta}} \frac{x^s}{s} ds. 
\end{aligned}
\end{equation}
By the proof of Lemma 2.5 of \cite{GHS19} it follows from the quantitative Perron's formula that the part of the integral in \eqref{Eq:PerronIntegral} with $|t|\geq U$, integrated over $0\leq \alpha, \beta\leq 2/\log z$ is $\ll (\log x)^2/U.$ We choose $U=(q\log x)^2$ and  $z=U^{50}.$ Therefore, we might replace the integral \eqref{Eq:PerronIntegral} by the following 
\begin{equation}\label{Eq:IntegralPerron2}
\begin{aligned}
&\frac{1}{\phi(q)}\sum_{\substack{\chi\bmod q\\ \chi \notin \mathcal{X}}} \chi(a)  \frac{1}{2\pi i } \int_{c_0-i U}^{c_0+i U} 
\eS_{\chi}(s+1)\\
& \quad \quad \quad \quad \times \eL_{\chi}(s+1+\alpha+\beta) \sum_{P^{-}(n)>z} \frac{\Lambda_\ell(n)\overline{\chi(n)}}{n^{s+1+\alpha}}\sum_{P^{-}(n)>z} \frac{\Lambda_\ell(n)\overline{\chi(n)}}{n^{s+1+\alpha+\beta}} \frac{x^s}{s} ds  
\end{aligned}
\end{equation}
at the cost of an error term of size $O(1/q^2)$ in \eqref{Eq:InequalityHALASZ}, which is acceptable. 
Moreover, for $s=c_0+it$ we have 
\begin{align*}
 \left|\frac{\eS_{\chi}(s+1)}{\eS_{\chi}(s+1+\alpha+\beta)}\right| & \ll \exp\left(\sum_{p\leq z} \frac{1}{p^{1+c_0}}- \frac{1}{p^{1+c_0+\alpha+\beta}}\right)\\
 & \ll \exp\left((\alpha+\beta)\sum_{p\leq z} \frac{\log p}{p}\right)\ll  1.
 \end{align*} Therefore, the expression in \eqref{Eq:IntegralPerron2} is 
\begin{equation}\label{Eq:IntegralPerron3}
\begin{aligned}
 & \ll\frac{1}{\phi(q)}\int_{-U}^{U}\sum_{\substack{\chi\bmod q\\ \chi \notin \mathcal{X}}} \left|F_{\chi}(1+c_0+\alpha+\beta+it)\right|\\
 & \quad \quad \quad \quad \quad \times\bigg|\sum_{P^{-}(n)>z} \frac{\Lambda_\ell(n)\overline{\chi}(n)}{n^{1+c_0+\alpha+it}}\bigg|\bigg|\sum_{P^{-}(n)>z} \frac{\Lambda_\ell(n)\overline{\chi}(n)}{n^{1+c_0+\alpha+\beta+it}} \bigg| \frac{dt}{|c_0+it|}.
\end{aligned}
\end{equation}
We split the integral over $t$ into two parts $|t|\leq T$ and $T< |t|\leq U$. In the second part we use that 
$\left|F_{\chi}(1+c_0+\alpha+\beta+it)\right|\leq \zeta(1+c_0+\alpha+\beta)\ll (c_0+\alpha+\beta)^{-1}$ uniformly in $t$ and $\chi$. Therefore we deduce that the contribution of this part is 
\begin{align*}
&\ll \frac{1}{(c_0+\alpha+\beta)T\phi(q)}\int_{-U}^{U} \sum_{\chi\bmod q}\bigg|\sum_{P^{-}(n)>z}\frac{\Lambda_\ell(n)\overline{\chi}(n)}{n^{1+c_0+\alpha+it}}\bigg|\bigg|\sum_{P^{-}(n)>z} \frac{\Lambda_\ell(n)\overline{\chi}(n)}{n^{1+c_0+\alpha+\beta+it}} \bigg| dt\\
& \ll \frac{1}{(c_0+\alpha+\beta)T\phi(q)} \\
& \quad \times\bigg(\int_{-U}^{U} \sum_{\chi\bmod q}\bigg|\sum_{P^{-}(n)>z}\frac{\Lambda_\ell(n)\overline{\chi}(n)}{n^{1+c_0+\alpha+it}}\bigg|^2 dt\cdot \int_{-U}^{U} \sum_{\chi\bmod q}\bigg|\sum_{P^{-}(n)>z}\frac{\Lambda_\ell(n)\overline{\chi}(n)}{n^{1+c_0+\alpha+\beta+it}}\bigg|^2 dt\bigg)^{1/2}
\end{align*}
by using the Cauchy-Schwarz inequality twice (first over the sum over $\chi$ and then  over the integral over $t$). By Lemma 3.2 of \cite{GHS19} as $z>q^{3}$ this is 
\begin{equation}\label{Eq:ContributionIntegralLargeT}
\begin{aligned}
 &\ll \frac{1}{(c_0+\alpha+\beta)T\phi(q)} \left(\sum_{n>z}\frac{\Lambda(n)}{n^{1+2c_0+2\alpha}}\right)^{1/2} \left(\sum_{n>z}\frac{\Lambda(n)}{n^{1+2c_0+2\alpha+2\beta}}\right)^{1/2}\\
 & \ll \frac{1}{(c_0+\alpha)^{1/2}(c_0+\alpha+\beta)^{3/2} T\phi(q)}. 
\end{aligned}
\end{equation}
We now handle the contribution of the part of the integral in \eqref{Eq:IntegralPerron3} over $|t|\leq T$, which is
\begin{equation}\label{Eq:IntegralPerron4}
\begin{aligned}&\leq \frac{1}{\phi(q)}\max_{\substack{\chi \notin \mathcal{X}\\ |t|\leq T}} \left|F_{\chi}(1+c_0+\alpha+\beta+it)\right|   \\
 & \quad \quad \quad \quad \quad \times\int_{-T}^{T}\sum_{\chi\bmod q}\bigg|\sum_{P^{-}(n)>z} \frac{\Lambda_\ell(n)\overline{\chi}(n)}{n^{1+c_0+\alpha+it}}\bigg|\bigg|\sum_{P^{-}(n)>z} \frac{\Lambda_\ell(n)\overline{\chi}(n)}{n^{1+c_0+\alpha+\beta+it}} \bigg| \frac{dt}{|c_0+it|}.
\end{aligned}
\end{equation}
By Lemma \ref{Lem:SecondMomentPrimes} and the Cauchy-Schwarz inequality  the inner sum over $\chi$ is bounded by 
\begin{equation}\label{Eq:CauchySchwarzCHI}
\begin{aligned}
&\bigg(\sum_{\chi\bmod q}  \bigg|\sum_{P^{-}(n)> z} \frac{\Lambda_\ell(n\overline{\chi}(n)}{n^{1+c_0+\alpha+it}}\bigg|^2\bigg)^{1/2}\bigg(\sum_{\chi\bmod q}  \bigg|\sum_{P^{-}(n)>z} \frac{\Lambda_\ell(n)\overline{\chi}(n)}{n^{1+c_0+\alpha+\beta+it}}\bigg|^2\bigg)^{1/2}\\
& \ll \frac{1}{(c_0+\alpha) (c_0+\alpha+\beta)}.
\end{aligned}
\end{equation}
%since $y^{c_0+\alpha+\beta}\asymp 1.$
This implies that the integral over $t$ in \eqref{Eq:IntegralPerron4}  is
$$ \ll \frac{1}{(c_0+\alpha) (c_0+\alpha+\beta)}  \int_{-T}^T \frac{dt}{\sqrt{c_0^2+t^2}}\ll \frac{\log (T\log x)}{(c_0+\alpha) (c_0+\alpha+\beta)},$$
by a change of variable $u=t/c_0=t\log x.$
Combining this estimate with \eqref{Eq:ContributionIntegralLargeT} and \eqref{Eq:IntegralPerron4}, we deduce that the quantity in \eqref{Eq:IntegralPerron2} is 
\begin{align*} &\ll \frac{\log (T\log x)}{(c_0+\alpha) (c_0+\alpha+\beta)\phi(q)}\max_{\substack{\chi \notin \mathcal{X}\\ |t|\leq T}} \left|F_{\chi}(1+c_0+\alpha+\beta+it)\right|\\
& \quad \quad \quad \quad + \frac{1}{(c_0+\alpha)^{1/2}(c_0+\alpha+\beta)^{3/2} T\phi(q)}. \end{align*}
Integrating this over $\alpha$ and $\beta$, and making the change of variables $\sigma=c_0+\alpha+\beta$ for the integral over $\beta$, we deduce that the integral in \eqref{Eq:SumPerronAp1} multiplied by $\chi(a)/\phi(q)$ and summed over $\chi\notin \mathcal{X}$ is   
\begin{align}\label{Eq:FinalBoundIntegralPerron} &\ll \frac{\log(T\log x)}{\phi(q)}\int_{0}^{\eta}\frac{1}{c_0+\alpha}\left(\int_{c_0+\alpha}^{2\eta+c_0+\alpha}\max_{\substack{\chi \notin \mathcal{X}\\ |t|\leq T}} \left|F_{\chi}(1+\sigma+it)\right| \frac{d\sigma}{\sigma} \right)d\alpha \nonumber\\
& \quad \quad \quad \quad + \frac{1}{T\phi(q)}\int_{0}^{\eta} \frac{1}{(c_0+\alpha)^{1/2}}\left(\int_{c_0+\alpha}^{2\eta+c_0+\alpha}\frac{d\sigma}{\sigma^{3/2}} \right) d\alpha +\frac1{q^2} \nonumber\\
& \ll \frac{(\log\log  x)\log (T\log x)}{\phi(q)}\int_{1/\log x}^{1} \max_{\substack{\chi \notin\mathcal{X}\\ |t|\leq T}} \left|F_{\chi}(1+\sigma+it)\right|\frac{d\sigma}{\sigma} +\frac{\log\log x}{T\phi(q)},
\end{align}
since 
$$\int_{0}^{\eta} \frac{1}{(c_0+\alpha)^{1/2}}\left(\int_{c_0+\alpha}^{2\eta+c_0+\alpha}\frac{d\sigma}{\sigma^{3/2}}\right) d\alpha \ll \int_{0}^{\eta} \frac{1}{c_0+\alpha}d\alpha \leq  \int_{1/\log x}^{1}\frac{1}{\alpha}d\alpha=  \log\log x.$$
Using that $|F_{\chi}(1+\sigma+it)|\leq\zeta(1+\sigma)\ll 1/\sigma$ uniformly in $\chi$ and $t$, we observe that
$$ \int_{T}^{1}  \max_{\substack{\chi \notin \mathcal{X}\\ |t|\leq T}} \left|F_{\chi}(1+\sigma+it)\right|\frac{d\sigma}{\sigma} \ll \int_{T}^{1}\frac{d\sigma}{\sigma^2} \ll \frac{1}{T}.$$
Furthermore, for any character $\chi \bmod q$,  Lemma 3.3 of \cite{LaMa22} implies that 
$$ \max_{|t|\leq T} |F_{\chi}(1+\sigma+it)|\ll (\log x)e^{-M(f\overline{\chi}; x, T)},$$
 uniformly in the range $1/\log x\leq \sigma\leq T$. Inserting these estimates in \eqref{Eq:FinalBoundIntegralPerron} yields
 \begin{equation}\label{Eq:InequalityHALASZ}
\begin{aligned}
E_{f,{\mathcal X}}(x;q;a) & \ll \frac {(\log x)(\log\log x)(\log(T\log x))^2} {\phi(q)}   \exp\left(-M_{\mathcal X, f}(x, T)\right) \\
&  \quad \quad \quad \quad  + \frac{|\mathcal X|}{\phi(q)}(\log\log x)^2+ \frac{(\log\log x) \log(T\log x)}{\phi(q)T},
\end{aligned}
\end{equation}
 and this can be simplified since $T\leq 1$ to yield \eqref{Eq:InequalityHALASZ2}. \hfil \qed

\subsection{Proof of Theorem  \ref{Thm:PretentiousLSieve}}
We observe that
\begin{align*} 
\sum_{(a,q)=1}   
& \Big| E_{f,{\mathcal X}}(x;q,a) \Big|^2
= \sum_{(a,q)=1} \Big|\frac{1}{\phi(q)} \sum_{\substack{\chi \bmod q\\ \chi\not\in {\mathcal X}}} \chi(a) \sum_{y\leq n\leq x}\frac{f(n)\overline{\chi}(n)}{n}\Big|^2\\
& \quad = \frac{1}{\phi(q)^2} \sum_{\substack{\chi, \psi \bmod q\\ \chi, \psi \not\in {\mathcal X}}} \ \sum_{y\leq n\leq x}\frac{f(n)\overline{\chi}(n)}{n} \sum_{y\leq m\leq x}\frac{\overline{f(m)}\psi(m)}{m} \sum_{a \bmod q} \chi(a)\overline{\psi(a)} 
\\
& \quad = \frac{1}{\phi(q)} \sum_{\substack{\chi \bmod q\\ \chi\not\in {\mathcal X}}} \Big|\sum_{y\leq n\leq x}\frac{f(n)\overline{\chi}(n)}{n}\Big|^2. 
\end{align*}
Bounding each   $E_{f,{\mathcal X}}(x;q,a)$ by  \eqref{Eq:InequalityHALASZ}   and summing over $(a, q)=1$ yields
\begin{equation}\label{eq: PLS log2}
\begin{aligned} 
\sum_{\substack{\chi \bmod q\\ \chi\not\in {\mathcal X}}} \Big|\sum_{y  \leq n\leq x}\frac{f(n)\overline{\chi}(n)}{n}\Big|^2
& \ll (\log x)^2(\log\log x)^2\log(T\log x)^4   \exp\left(-2M_{\mathcal X, f}(x, T)\right) \\
&    + |\mathcal X|^2(\log\log x)^4+ \frac{(\log\log x)^2\log(T\log x)^2}{T^2}
\end{aligned}
\end{equation}
and \eqref{eq: PLS log} follows as $T\leq 1$. \hfil \qed

%%%%%%%%%%%%%%%%%%%%%%%%%%%%%%%%%%%%%%%%%%%%%%%%%%%%%%%%%%%%%%%%%%%%%%%%%%%%%%%%%%%%%%%%%%%%%%%%%%%%%%%%%%%%%%%%%%%%%%%%%%%%%%%%%%%%%%%%%%%%%%%%%%%%%%%%%%%%%%%%%%%%%%%%%%%%%%%%%%

 \section{ Exponential sums in the logarithmic case: The major arcs}
   
   %Logarithmically weighted exponential sums}

  %%%%%%%%%%%%%%%%%%%%%%%%%%%%%%%%%%%%%%%%%%%%%%%%%%%%%%%%%%%%%%%%%%
  
%%%%%%%%%%%%%%%%%%%%%%%%%%%%%
%\section{The major arcs case} \label{sec: Small $q$}
 In this section we prove Theorem \ref{Thm:Smallq} and Corollary \ref{Cor:Smallq3}, and deduce a Hal\'asz-type upper bound for logarithmically weighted exponential sums with multiplicative  coefficients.  We will also classify the multiplicative functions for which  \eqref{Eq:MVThree} is attained and prove Theorem \ref{Thm:Classify_Large}.
 
 \subsection{Proof of Theorem \ref{Thm:Smallq}}
Since   $|\alpha-a/q|\leq 1/(qx)$ we may replace $\alpha$ by $a/q$ at the expense of an error term of size $O(1)$. We start by recording the following identity (see for example   \cite[Proposition 2.3]{Go12}) 
\begin{equation}\label{Eq:IdentityCharacters}
\sum_{q^{3}\leq n\leq x} \frac{f(n)}{n} e\bigg(\frac{an}{q}\bigg)= \sum_{r\mid q}\frac{f(r)}{r}\frac{1}{\phi(q/r)}\sum_{\chi \bmod q/r}%\mathcal{G}
\tau(\chi) \overline{\chi}(a)\sum_{q^3/r\leq n\leq x/r} \frac{f(n) \overline{\chi}(n)}{n}.
\end{equation}
Let $2/\log x\leq T\leq 1$ and order the primitive characters $\chi\bmod r$ for $r\mid q$ (including the trivial character $\chi_0$ which equals $1$ for all integers)  as $\{\chi_j \bmod \ell_j\}_{j\geq 1}$, where
\begin{equation*}
M(f\overline{\chi_j}, x; T) \leq M(f\overline{\chi_{j+1}};x, T), 
\end{equation*}
for all $j \geq 1$. Then it follows from Lemma 3.3 of \cite{BGS13} that 
\begin{equation}\label{Eq:BaGrSo}
    M(f\overline{\chi_{2}};x, T)\geq \left(\frac{1}{3}+o(1)\right)\log\log x. 
\end{equation}Moreover, by a slight variation of Lemma 3.1 of \cite{BGS13} we have  
\begin{equation}\label{Eq:BoundRepulsive}
M\left(f\overline{\chi_j};x, T\right) \geq 
\left(1-\frac{1}{\sqrt{j}}\right)\log\log x+O\left(\sqrt{\log\log x}\right).
\end{equation}
Therefore, if $\chi \bmod r$ is induced by $\chi_{j}$, then
\begin{equation}\label{Eq:DistanceImprimitive}
\begin{aligned}
M\left(f\overline {\chi};x, T\right) \geq M\left(f\overline{\chi_j};x, T\right) +O\Big(\sum_{p \mid r} \frac{1}{p}\Big) 
\geq M\left(f\overline{\chi_j};x, T\right) +O(\log_3 x),
\end{aligned}
\end{equation}
since $\sum_{p\mid r}1/p\ll \log_2 r\ll \log_3 x$. 
 For each $r\mid q$, let $\mathcal{X}(r)$ be the set of characters $\chi\bmod q/r$ such that $\chi$ is induced by one of the $\chi_j$ for $1\leq j\leq J-1$. Note that $|\mathcal{X}(r)|\leq J-1$. Using the Cauchy-Schwarz inequality, together with Theorem \ref{Thm:PretentiousLSieve} and the estimates   \eqref{Eq:BoundRepulsive} and \eqref{Eq:DistanceImprimitive},   we deduce that the contribution to \eqref{Eq:IdentityCharacters}  of all characters $\chi \notin \cup_{r\mid q} \mathcal{X}(r) $ is
\begin{align}\label{Eq:BadCharPretentious}
&\leq\sum_{r\mid q} \frac{1}{r\phi(q/r)} \sum_{\substack{\chi \bmod q/r\\ \chi\notin \mathcal{X}(r)}}|\tau(\chi)| \Bigg|\sum_{q^3/r\leq n\leq x/r} \frac{f(n) \overline{\chi}(n)}{n}\Bigg|\nonumber\\
& \leq \sum_{r\mid q} \frac{\sqrt{q/r}}{r\sqrt{\phi(q/r)}}\Bigg(\sum_{\substack{\chi \bmod q/r\\ \chi\notin \mathcal{X}(r)}} \Bigg|\sum_{q^3/r\leq n\leq x/r} \frac{f(n) \overline{\chi}(n)}{n}\Bigg|^2\Bigg)^{1/2}\nonumber\\
& \ll 
  (\log x)^{1/\sqrt{J}}\exp\left(O\left(\sqrt{\log\log x}\right)\right)+ \frac{(\log\log x)^4}{T}
\end{align}
since $|\tau(\chi)|\leq \sqrt{q/r}$ and 
\begin{equation}
\label{Eq:BoundFirstCoefficientSumSmall}\sum_{r\mid q} \frac{\sqrt{q/r}}{r\sqrt{\phi(q/r)}}\leq \left(\frac{q}{\phi(q)}\right)^{3/2}\ll (\log\log q)^{3/2}\ll (\log\log \log x)^{3/2}.
\end{equation}
If $J=2$, we can replace the term $(\log x)^{1/\sqrt{J}}\exp(O(\sqrt{\log\log x}))$ on the right hand side of \eqref{Eq:BadCharPretentious} by $(\log x)^{2/3+o(1)}$ by \eqref{Eq:BaGrSo}.

Thus, it now remains to estimate the contribution of the characters $\chi\bmod q/r$ for $r\mid q$, that are induced by $\chi_j \bmod \ell_{j}$ for some $1\leq j\leq J-1$. Fix such a $j$. If $\chi\bmod q/r$ is induced by $\chi_j$ then we must have $r \mid (q/\ell_j)$. In this case Lemma 4.1 of \cite{GS07} gives
$$\tau(\chi)= \mu\left(\frac{q}{r\ell_{j}}\right)\chi_j\left(\frac{q}{r\ell_j}\right)\tau(\chi_j).$$
Therefore, the contribution of such characters to \eqref{Eq:IdentityCharacters} is
\begin{equation}\label{Eq:ContributionInduced}
\overline{\chi_j}(a)\tau(\chi_j)\sum_{r\mid (q/\ell_j)} \frac{f(r)}{r}\cdot \frac{1}{\phi(q/r)}  \mu\left(\frac{q}{r\ell_j}\right)\chi_j\left(\frac{q}{r\ell_j}\right)
\sum_{\substack{q^3/r\leq n \leq x/r\\ (n, q/r)=1}} \frac{f(n)\overline{\chi_j}(n)}{n}.
\end{equation}
Furthermore, it follows from Lemma 4.4 of \cite{GS07} that
\begin{align*}
\sum_{\substack{q^3/r\leq n \leq x/r\\ (n, q/r)=1}} \frac{f(n)\overline{\chi_j}(n)}{n} &= \sum_{\substack{n \leq x\\ (n, q/r)=1}} \frac{f(n)\overline{\chi_j}(n)}{n} +O(\log q)\\
&= \prod_{p \mid  \frac qr} \left(1-\frac{f(p)\overline{\chi_j}(p)}{p}\right) \sum_{n \leq x}  \frac{f(n)\overline{\chi_j}(n)}{n} + O\left(\log q\right).
\end{align*}
Hence, we deduce that the expression in \eqref{Eq:ContributionInduced} becomes
\begin{align*}
&\overline{\chi_j}(a)\tau(\chi_j)\sum_{r\mid (q/\ell_j)} \frac{f(r)}{r}\cdot \frac{1}{\phi(q/r)}  \mu\left(\frac{q}{r\ell_j}\right)\chi_j\left(\frac{q}{r\ell_j}\right)
\prod_{p \mid  \frac qr} \left(1-\frac{f(p)\overline{\chi_j}(p)}{p}\right) \sum_{n \leq x}  \frac{f(n)\overline{\chi_j}(n)}{n}\\
& \quad \quad \quad \quad  \quad \quad  \quad \quad + E_5, 
\end{align*}
where 
\begin{align*}
E_5 &\ll  \sqrt{\ell_j}  \log q\sum_{\substack{r\mid (q/\ell_j)\\ (q/(r\ell_j), \ell_j)=1}} \frac{1}{r\phi(q/r)}  \mu^2\left(\frac{q}{r\ell_j}\right) \\
& =  \sqrt{\ell_j}  \log q  \frac{\ell_j}{q \phi( \ell_j)}   \prod_{p|q/\ell_j, p\nmid \ell_j} 2 \bigg( 1 + \frac {1}{2p-2}\bigg)  \ll \frac{ \sqrt{\ell_j} \log q}{\phi(\ell_j)}. 
\end{align*}
Finally, by \cite[Eq. (6.6)]{GS07} we have 
\begin{align*} &\sum_{r\mid (q/\ell_j)} \frac{f(r)}{r}\cdot \frac{1}{\phi(q/r)}  \mu\left(\frac{q}{r\ell_j}\right)\chi_j\left(\frac{q}{r\ell_j}\right)
\prod_{p \mid  \frac qr} \left(1-\frac{f(p)\overline{\chi_j}(p)}{p}\right)\\
&= \frac{1}{\phi(q)}\prod_{\substack{p^{\alpha}\parallel q/\ell_j\\ \alpha\geq 1 }} \left(f(p^{\alpha})-\chi_j(p) f(p^{\alpha-1})\right)= \frac{\kappa_j(q/\ell_j)}{\phi(q)}. 
\end{align*}
Collecting the above estimates  and noting that $\sum_{n\leq q^3} f(n) e(na/q)/n\ll \log q$ completes the proof. \hfill \qed

%%%%%%%%%%%%%%
\subsection{Bounding the contribution of various characters} \label{sec: VariousCharacters}

We now prove  Corollary \ref{Cor:Smallq3} for completely multiplicative $f$:

\begin{corollary} \label{Cor:Smallq2} Let $x$ be large. Suppose that  $\alpha$ lies in a major arc given by 
 \eqref{eq: Major Arc Range}. 
For any completely multiplicative function $f:\mathbb N\to \mathbb U$ there exists a primitive character $\chi \pmod \ell$, for some integer $\ell \mid q$, such that 
\[
\sum_{n\leq x} \frac{f(n)e(\alpha n) }n=  \frac{\overline{\chi}(a)\kappa(q/\ell)  \tau(\chi)}{\phi(q)}\sum_{n\leq x} \frac{f(n) \overline{\chi} (n)}{n} +O\left(  \frac{(\log x)^{2/3+o(1)}}{ \sqrt{q}}+ e^{O(\sqrt{\log\log x})} \right),
\]
 where  $\kappa$ is  defined by the convolution  $f(n) = (\kappa*\chi)(n)$. %The main term here is $\ll \frac{\log x}{\sqrt{q}}$.
\end{corollary}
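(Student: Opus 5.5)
The plan is to apply Theorem \ref{Thm:Smallq} with $J=2$. In that case the sum on the right has the single term $j=1$, the error is $(\log x)^{2/3+o(1)}$, and $T=(\log x)^{-2/3}$. We take $\chi=\chi_1\pmod{\ell_1}$, the primitive character with $\ell_1\mid q$ that minimizes $M(f\overline{\chi};x,T)$. Since $f$ is completely multiplicative, $\kappa=\kappa_1$ is the function with $f=\kappa*\chi$, so the main term is exactly the one in the statement. The difficulty is that Theorem \ref{Thm:Smallq} gives the error $(\log x)^{2/3+o(1)}$, whereas we need $(\log x)^{2/3+o(1)}/\sqrt{q}+e^{O(\sqrt{\log\log x})}$. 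So the proof must go back into the argument of Theorem \ref{Thm:Smallq} and keep track of the $q$-dependence in the two error sources.

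\textbf{First error source: $n\le q^3$.} The first loss comes from bounding the terms $n\le q^3$ trivially by $O(\log q)$. We have $\log q\ll \log\log x$, since $q\le(\log x)^{2+\ep}$. This is therefore $\ll e^{O(\sqrt{\log\log x})}$ and is fine.

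\textbf{Second error source: the remaining characters.} The second loss is the contribution \eqref{Eq:BadCharPretentious} of the characters not induced by $\chi_1$. Here the factor $\sum_{r\mid q}\sqrt{q/r}/(r\sqrt{\phi(q/r)})$ was bounded crudely. Instead, I will keep the $1/\phi(q/r)$ in front and apply Cauchy--Schwarz with Theorem \ref{Thm:PretentiousLSieve} for each $r$. This gives a contribution
\[
\sum_{r\mid q}\frac{\sqrt{q/r}}{r\,\phi(q/r)}\,\sqrt{\phi(q/r)}\Big((\log x)e^{-M_2}+1+T^{-1}\Big)(\log\log x)^3 .
\]
Using $M_2\ge(\frac13+o(1))\log\log x$ from \eqref{Eq:BaGrSo}, together with \eqref{Eq:DistanceImprimitive}, the bracket is $(\log x)^{2/3+o(1)}$. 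The arithmetic factor is $\ll \frac{1}{\sqrt q}(q/\phi(q))^{O(1)}\sum_{r\mid q}r^{-3/2}$, which is $\ll q^{-1/2}(\log\log\log x)^{O(1)}$. So this contribution is $(\log x)^{2/3+o(1)}/\sqrt q$, as required.

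\textbf{Remaining error terms.} The error $E_5\ll \sqrt{\ell}\log q/\phi(\ell)\ll\log\log x$ is absorbed in $e^{O(\sqrt{\log\log x})}$. So are the tail and replacement-of-$\alpha$ errors of size $O(1)$. Here I also use that $\ell\ge1$ may be small.

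\textbf{Main obstacle.} The main obstacle is confirming that the large sieve of Theorem \ref{Thm:PretentiousLSieve} is applied per modulus $q/r$ with the lower limit $q^3/r\ge (q/r)^{1+\ep}$. It must also hold in the required range, namely $q/r>1$ and $y\le(\log x)^{100}$. The degenerate case $r=q$, where only the trivial character occurs, is then handled directly: it is either $\chi_1$ or contributes $\frac1q\sum_{n\le x/q}f(n)/n\ll \log x/q$, which is acceptable. I also need to check that the $1/\phi(q/r)$ normalization really survives, rather than being lost as in \eqref{Eq:BoundFirstCoefficientSumSmall}. If the per-character saving is insufficient for small $q/r$, I would bound those finitely many characters individually by Hal\'asz's theorem in logarithmic form (Lemma 3.3 of \cite{LaMa22}), giving $(\log x)e^{-M_2}\cdot|\tau(\chi)|/(r\phi(q/r))$, which again carries the $q^{-1/2}$ factor.
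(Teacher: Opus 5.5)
\textbf{There is a genuine gap.} Your treatment of the second error source rests on an algebra slip. The factor you obtain after Cauchy--Schwarz is
\[
\frac{\sqrt{q/r}}{r\,\phi(q/r)}\,\sqrt{\phi(q/r)}=\frac1r\sqrt{\frac{q/r}{\phi(q/r)}} .
\]
This is exactly the factor already appearing in \eqref{Eq:BadCharPretentious}. Its $r=1$ term is $\sqrt{q/\phi(q)}\ge 1$, and the sum over $r\mid q$ is $\le (q/\phi(q))^{3/2}$ by \eqref{Eq:BoundFirstCoefficientSumSmall}. It is not $\ll q^{-1/2}(q/\phi(q))^{O(1)}\sum_{r\mid q}r^{-3/2}$.

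The paper lost nothing at this point. The per-character saving $|\tau(\chi)|/\phi(q/r)\approx (q/r)^{-1/2}$ is necessarily cancelled by the factor $\sqrt{\phi(q/r)}$ incurred when passing from the $\ell^2$ bound of Theorem \ref{Thm:PretentiousLSieve} to an $\ell^1$ bound over $\phi(q/r)$ characters. So with $J=2$ the non-exceptional characters contribute $(\log x)^{2/3+o(1)}$ with no factor $q^{-1/2}$. Moreover, since $T=(\log x)^{-2/3}$, the $1/T$ term alone is already of size $(\log x)^{2/3}$.

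Your fallback does not rescue this. The characters to be handled are not finitely many; there are $\asymp\phi(q/r)$ of them. Bounding each one individually by a Hal\'asz-type estimate costs about $\frac{\sqrt q}{\phi(q)}\cdot\phi(q)\,(\log x)^{2/3}$, which is far worse.

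\textbf{The missing idea} is to let the number of separately treated characters grow, which is what the repulsion bound \eqref{Eq:BoundRepulsive} allows. The paper applies Theorem \ref{Thm:Smallq} with $J=\log\log x$, so that $T=e^{-\sqrt{\log\log x}}$.
\begin{itemize}
\item The large-sieve error $(\log x)^{1/\sqrt J}e^{O(\sqrt{\log\log x})}$, together with the $1/T$ term, is then $e^{O(\sqrt{\log\log x})}$. No saving in $q$ is needed there, because every $\chi_j$ with $j\ge J$ satisfies $M(f\overline{\chi_j};x,T)\ge(1-1/\sqrt J)\log\log x+O(\sqrt{\log\log x})$.
\item The terms $2\le j<J$ of the main sum are bounded one at a time, as in \eqref{Eq:MainTermAsymptoticDev}. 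On one hand, $|\kappa_j(q/\ell_j)\tau(\chi_j)|/\phi(q)\le 2^{\omega(q)}\sqrt q/\phi(q)=q^{-1/2+o(1)}$. On the other hand, \eqref{Eq:BaGrSo} and Theorem 1.6 of \cite{LaMa22} give $\sum_{n\le x}f(n)\overline{\chi_j}(n)/n\ll(\log x)e^{-M(f\overline{\chi_j};x,T)}+1/T\ll(\log x)^{2/3+o(1)}$.
\item Summing over at most $\log\log x$ such $j$ gives the $(\log x)^{2/3+o(1)}/\sqrt q$ term, using $q^{o(1)}=(\log x)^{o(1)}$. Only $j=1$ survives as the main term.
\end{itemize}
Your remarks on the $n\le q^3$ terms and on $E_5$ are fine.
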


\begin{proof}[Proof of Corollary \ref{Cor:Smallq2}]
Take $J=\log\log x$ in Theorem \ref{Thm:Smallq}  (so that $T=(\log x)^{-\frac{1}{\sqrt{J}}}= \exp(-\sqrt{\log\log x})$) and let $\chi=\chi_1$  and $\ell=\ell_1$.

Since
 $\kappa_j(p^{\alpha})= f(p)^{\alpha}-\chi_j(p) f(p)^{\alpha-1}$ for all $\alpha\geq 1$, we have 
$|\kappa_j(q/\ell_j)|\leq 2^{\omega(Q_j)}$ where $Q_j$ is the largest squarefree divisor of $q$ that is coprime to $\ell_j$. 
Therefore
\begin{equation}\label{Eq:MainTermAsymptoticDev}
\begin{aligned}
\bigg| \frac{\overline{\chi}_j(a)\kappa_j(q/\ell_j)   \tau(\chi_j)}{\phi(q)}\sum_{n\leq x} \frac{f(n) \overline{\chi}_j (n)}{n}\bigg| & \leq \frac{2^{\omega(Q_j)}  \sqrt{\ell_j} }{\phi(q)}\bigg| \sum_{n\leq x} \frac{f(n) \overline{\chi}_j (n)}{n}\bigg|
\\
&   \ll q^{-1/2+o(1)}\bigg| \sum_{n\leq x} \frac{f(n) \overline{\chi}_j (n)}{n}\bigg|.
\end{aligned}
\end{equation}
Moreover \eqref{Eq:BaGrSo}  
and \eqref{Eq:BoundRepulsive}  imply that
\[
M\left(f\overline{\chi_j};x, T\right) \geq  \left(\frac{1}{3}+o(1)\right)\log\log x,
\]
for all $j\geq 2$, and so by
 Theorem 1.6 of \cite{LaMa22} we obtain
$$  \sum_{n\leq x} \frac{f(n) \overline{\chi}_j (n)}{n}  \ll (\log x) e^{-M\left(f\overline{\chi_j};x, T\right)} +\frac1T  \ll (\log x)^{2/3+o(1)}. $$    
We use this last result to bound the terms with $1<j<J$, which gives the desired bound.

 %Finally, to bound the main term we use that
%\[
%\bigg| \sum_{n\leq x} \frac{f(n) \overline{\chi} (n)}{n}\bigg|  \leq \sum_{\substack{ n\leq x\\ (n,\ell)=1}} \frac{1}{n}
%\lesssim   \frac{\phi(\ell)}{\ell} \log x.
%\]
%Inserting this estimate in the first inequality of \eqref{Eq:MainTermAsymptoticDev} and letting $Q=Q_1$ gives 
%\begin{equation}\label{Eq:correctbd}
%\bigg| \frac{\overline{\chi}(a)\kappa(q/\ell)   \tau(\chi)}{\phi(q)}\sum_{n\leq x} \frac{f(n) \overline{\chi} (n)}{n}\bigg| \leq  \frac{\log x}{\sqrt{q}}    \frac{ 1}  { \sqrt{q/(Q\ell)}}
%\prod_{p|Q} \frac {2p^{1/2}}{p-1} \ll \frac{\log x}{\sqrt{q}},
%\end{equation}
%which completes the proof.
\end{proof}

\subsection{Proof of Corollary \ref{Cor:Smallq3}}
We 
order the primitive characters $\chi\bmod \ell$ for all $\ell\mid q$ as $\{\chi_j \bmod \ell_j\}_{j\geq 1}$, so that 
\begin{equation}\label{Eq:OrderCharCor1.3}
M(f\overline{\chi_j}, x; T) \leq M(f\overline{\chi_{j+1}};x, T) \text{ for all } j\geq 1,
\end{equation}
where  $T=\exp(-\sqrt{\log\log x})$. We also let $\chi=\chi_1$ and $\ell=\ell_1$. Let $g$ be the completely multiplicative function (that is, $g\in \mathcal F$) for which $g(p)=f(p)$ for all primes $p$ and write $f=g*h$ so that 
$h(p^j)=f(p^j)-f(p)f(p^{j-1})$ for all $j\geq 1$, which implies $h(p)=0$ and $|h(p^j)|\leq 2$ for all $j\geq 2$. In particular, we have 
\begin{equation}
\label{Eq:BoundSumHConvolution} 
\sum_{k\geq 1}\frac{|h(k)|}{k^{u}}\leq  \prod_{p} \left(1+\frac{2}{p^{u} (p^{u}-1)}\right)\ll_{u} 1,
\end{equation}
for every $u>1/2.$
Now
%Let $B= (\log x)^3$. By \eqref{Eq:BoundSumHConvolution} we have
%\[
%\bigg|  \sum_{k\geq B} \frac{h(k)}{k} \sum_{ m\leq x/k} \frac{g(m)}{m}  e\left(\frac{mka}{q}\right) \bigg| \ll 
 %\log x \sum_{k\geq B} \frac{|h(k)|}{k} \leq B^{-1/3} \log x \sum_{k\geq 1} \frac{|h(k)|}{k^{2/3}} \ll 1.
%\]
%Inserting this bound in \eqref{Eq:FirstExpressionConvolution} and using \eqref{Eq:BoundSumHConvolution} we get
\begin{align}\label{Eq:ExpressionMultiplicativef}
\sum_{n\leq x}  \frac{f(n)}{n} e(n\alpha) &= \sum_{n\leq x}  \frac{f(n)}{n} e\left(\frac{na}{q}\right) + O(1) \nonumber\\
& = \sum_{k\leq x} \frac{h(k)}{k} \sum_{m\leq x/k} \frac{g(m)}{m}  e\left(\frac{mka}{q}\right)+ O(1)\nonumber\\
&= \sum_{r\mid q} \sum_{\substack{b\leq x/r\\ (b, q/r)=1}} \frac{h(br)}{br}\sum_{m\leq x/(br)} \frac{g(m)}{m}  e\left(\frac{mba}{q/r}\right)+O(1)\\
&= \sum_{r\mid q} \sum_{\substack{b\geq 1\\ (b, q/r)=1}} \frac{h(br)}{br}\sum_{m\leq x} \frac{g(m)}{m}  e\left(\frac{mba}{q/r}\right)+O(1),\nonumber
\end{align}
upon writing $r=(k, q)$ and $b= k/r$ and using that 
\begin{equation}\label{Eq:BoundSumHConvolution2}
\sum_{r|q}\sum_{\substack{b\geq 1\\ (b, q/r)=1}} \frac{|h(br)|}{br}\log(br)=\sum_{k\geq 1} \frac{|h(k)|}{k}\log k\ll 1,
\end{equation} 
by \eqref{Eq:BoundSumHConvolution}.  
Furthermore, by  Corollary \ref{Cor:Smallq2} for $g\in  \mathcal F$ there exists a primitive character $\psi_k\pmod{q_k}$ such that $q_k\mid q/r$ and
\begin{align}\label{eq:ContribCharCor1.3}
   \sum_{m\leq x} \frac{g(m)}{m}  e\left(\frac{mba}{q/r}\right)&= \frac{\overline{\psi_{k}}(ba)\kappa_g(q/rq_k) \tau(\psi_k)}{\phi(q/r)}\sum_{m\leq x} \frac{g(m)\overline{\psi_k}(m)}{m} \nonumber\\& \quad \quad +O\left(\frac{ (\log x)^{2/3+o(1)}}{ \sqrt{q/r}}+e^{O(\sqrt{\log\log x})} \right),
\end{align}
 where  $\kappa_g=\kappa_{g,\psi_k}$ is  defined by the convolution  $g(n) = (\kappa_g*\psi_k)(n)$. Moreover, it follows from \eqref{Eq:OrderCharCor1.3} and the proof of Corollary \ref{Cor:Smallq2} that $\psi_k=\chi$ if $\ell\mid q/r$. On the other hand, if $\ell\nmid q/r$ then $\psi_k\neq \chi$, and hence by \eqref{Eq:BaGrSo} and \eqref{Eq:BoundRepulsive} we obtain 
$$M(g\overline{\psi_k}, x;T)= M(f\overline{\psi_k}, x;T)\geq \left(\frac13+o(1)\right)\log\log x.$$
Therefore, Theorem 1.6 of \cite{LaMa22} implies that in this case
$$ 
\sum_{m\leq x}\frac{g(m){\overline{\psi_k}}(m)}{m}\ll (\log x)e^{-M(g\overline{\psi_k}, x; T)} +\frac{1}{T}\ll (\log x)^{2/3+o(1)}.$$
Inserting this estimate in \eqref{eq:ContribCharCor1.3} and using \eqref{Eq:MainTermAsymptoticDev} we deduce that if $\ell\nmid q/r$ then 
$$\sum_{m\leq x} \frac{g(m)}{m}  e\left(\frac{mba}{q/r}\right) \ll  \frac{ (\log x)^{2/3+o(1)}}{ \sqrt{q/r}}+e^{O(\sqrt{\log\log x})}.$$
Combining this bound with \eqref{eq:ContribCharCor1.3} when $\ell\mid q/r$ (and hence $\psi_k=\chi$ in this case), and inserting both in \eqref{Eq:ExpressionMultiplicativef} gives 
\begin{equation}\label{Eq:MainExpressionMultiplicative}
\sum_{n\leq x}  \frac{f(n)}{n} e(n\alpha)= \tau(\chi)\overline{\chi}(a)\sum_{r\mid q/\ell} \sum_{\substack{b\geq 1\\ (b, q/r)=1}} \frac{h(br)}{br}\frac{\overline{\chi}(b)\kappa_g(q/r\ell) }{\phi(q/r)}\sum_{m\leq x/br} \frac{g(m)\overline{\chi}(m)}{m}+E_6,
\end{equation}
where 
\begin{align}\label{Eq:BoundErrorE2}
E_6 &\ll \sum_{r\mid q} \sum_{\substack{b\geq 1\\ (b, q/r)=1}} \frac{|h(br)|}{br}\left(\frac{ (\log x)^{2/3+o(1)}}{ \sqrt{q/r}}+\log(br)+e^{O(\sqrt{\log\log x})}\right)\nonumber\\
&\ll\frac{ (\log x)^{2/3+o(1)}}{ \sqrt{q}}\sum_{r\mid q} \sum_{b\geq 1} \frac{|h(br)|}{b\sqrt{r}} +e^{O(\sqrt{\log\log x})},
\end{align}
by \eqref{Eq:BoundSumHConvolution2}. Note that $h(n)\neq 0$ only when $p| n\implies p^2 | n$ and in this case we have $|h(n)|\leq d(n)$. This implies
\begin{align}
 \sum_{r\mid q} \sum_{b\geq 1} \frac{|h(br)|}{b\sqrt{r}} &\leq \mathop{\sum_{r\mid q} \sum_{b\geq 1}}_{p|(br)\implies p^2| (br)} \frac{d(br)}{b\sqrt{r}} \notag \\
&=\prod_{p\nmid q} \bigg( 1+\sum_{i\geq 2} \frac{i+1}{p^i}\bigg)\cdot
\prod_{p^e\| q, e\geq 1} \bigg( 1+\sum_{\substack{ i\leq e, j\geq 0\\ i+j\geq 2}} \frac{i+j+1}{p^{j+i/2}}\bigg)
\ll (q/\phi(q))^3, \label{eq: b root r}
\end{align}
the main term coming from the case $i=2,j=0$ for each prime $p$ dividing $q$.  
 Inserting this bound in \eqref{Eq:BoundErrorE2} yields
$$
E_6 \ll\frac{ (\log x)^{2/3+o(1)}}{ \sqrt{q}} +e^{O(\sqrt{\log\log x})}.
$$
We now estimate the main term on the right hand side of \eqref{Eq:MainExpressionMultiplicative}, which equals
%\begin{equation}\label{Eq:MainTermSumFMultiplicative}
%\tau(\chi)\overline{\chi}(a)\sum_{r\mid q/\ell}   \frac{\kappa_g(q/r\ell)}{r\phi(q/r)} \sum_{\substack{b\geq 1\\ (b, q/r)=1}} \frac{\overline{\chi}(b)h(br)}{b}\sum_{m\leq x/br} \frac{g(m)\overline{\chi}(m)}{m}.
%\end{equation} 
\begin{equation}\label{Eq:MainTermSumFMultiplicative}
\frac{\tau(\chi)\overline{\chi}(a)}{\phi(q)} \sum_{r\mid q/\ell} \kappa_g(q/r\ell) \prod_{\substack{p\mid q\\ p\nmid (q/r)}}\left(1-\frac{1}{p}\right) \sum_{\substack{b\geq 1\\ (b, q/r)=1}} \frac{\overline{\chi}(b)h(br) }{  b} \sum_{m\leq x/br} \frac{g(m)\overline{\chi}(m)}{m},
\end{equation} 
since
$$ \phi(q/r)r= \phi(q)\prod_{\substack{p\mid q\\ p\nmid (q/r)}}\left(1-\frac{1}{p}\right)^{-1}.$$
We now write $m=m_1m_2$ and $b=b_1b_2$ where $(m_1, q)=(b_1, q)=1$ and $p\mid m_2b_2 \implies p\mid q$. Therefore, the inner double sum over $b$ and $m$ in \eqref{Eq:MainTermSumFMultiplicative} equals:
\begin{align*}
&\sum_{\substack{b_2\geq 1\\ (b_2, q/r)=1 \\ p|b_2\implies p|q}} \frac{\overline{\chi}(b_2)h(b_2r) }{b_2} \sum_{\substack{m_2 \geq 1\\ p|m_2 \implies p|q}} \frac{g(m_2)\overline{\chi}(m_2)}{m_2} \sum_{\substack{b_1\geq 1\\ (b_1, q)=1}} \frac{\overline{\chi}(b_1)h(b_1) }{b_1}\sum_{\substack{m_1\leq \frac{x}{b_1b_2m_2r}\\(m_1, q)=1}} \frac{g(m_1)\overline{\chi}(m_1)}{m_1}\\
&= \prod_{p|q}\left(1-\frac{f(p)\overline{\chi}(p)}{p}\right)^{-1}\sum_{\substack{n\leq x\\ (n, q)=1}} \frac{f(n)\overline{\chi}(n)}{n}\sum_{\substack{b_2\geq 1\\ (b_2, q/r)=1 \\ p|b_2\implies p|q}} \frac{\overline{\chi}(b_2)h(b_2r) }{b_2}  \\
& \quad \quad \quad + O\Bigg(\log(2r)\sum_{\substack{b_2\geq 1\\ (b_2, q/r)=1 \\ p|b_2\implies p|q}} \frac{d(b_2r)\log(2b_2)}{b_2} \sum_{\substack{m_2\geq 1\\ p|m_2 \implies p|q}} \frac{\log(2m_2)}{m_2} \Bigg),
\end{align*}
since $\sum_{p|m_2 \implies p|q} g(m_2)\overline{\chi}(m_2)/m_2 = \prod_{p|q}\left(1-f(p)\overline{\chi}(p)/p\right)^{-1}$ and
\begin{align*}
\sum_{\substack{b_1\geq 1\\ (b_1, q)=1}} \frac{\overline{\chi}(b_1)h(b_1) }{b_1}\sum_{\substack{m_1\leq x/(b_1b_2m_2r)\\ (m_1, q)=1}} \frac{g(m_1)\overline{\chi}(m_1)}{m_1}
&= \sum_{\substack{n\leq x/(b_2m_2 r)\\ (n, q)=1}} \frac{f(n)\overline{\chi}(n)}{n}\\
&= \sum_{\substack{n\leq x\\ (n, q)=1}} \frac{f(n)\overline{\chi}(n)}{n} +O(\log(b_2m_2r)),
\end{align*}
upon writing $n=b_1m_1$ and using that $f=g*h$. Using this estimate we see that \eqref{Eq:MainTermSumFMultiplicative} becomes 
\begin{equation}\label{Eq:MainTermSumFMultiplicative2}
\frac{\tau(\chi)\overline{\chi}(a)}{\phi(q)} F_f(q, \ell) \sum_{\substack{n\leq x\\ (n, q)=1}} \frac{f(n)\overline{\chi}(n)}{n}  +E_7
\end{equation}
where 
$$ F_f(q, \ell)= \prod_{p|q}\left(1-\frac{f(p)\overline{\chi}(p)}{p}\right)^{-1} \sum_{r\mid q/\ell} \kappa_g(q/r\ell) \prod_{\substack{p\mid q\\ p\nmid (q/r)}}\left(1-\frac{1}{p}\right) \sum_{\substack{b_2\geq 1\\ (b_2, q/r)=1 \\ p|b_2\implies p|q}} \frac{\overline{\chi}(b_2)h(b_2r) }{b_2} ,
$$
is an Euler product that we shall evaluate, and where the error term satisfies
\begin{align*}
E_7&\ll \frac{\sqrt{\ell}}{\phi(q)}\sum_{\substack{m_2\geq 1\\ p|m_2 \implies p|q}} \frac{\log(2m_2)}{m_2}\sum_{r\mid q/\ell} d(r)|\kappa_g(q/r\ell)| \log(2 r)  \sum_{\substack{b_2\geq 1\\ p|b_2\implies p|q}} \frac{d(b_2)\log(2b_2)}{b_2 }\nonumber \\
& \ll q^{-1/2+o(1)} \Bigg(\sum_{\substack{b\geq 1\\ p|b\implies p|q}} \frac{d(b)\log(2b)}{b}\Bigg)^2=q^{-1/2+o(1)},
\end{align*}
since $d(r)|\kappa_g(q/r\ell)| \log (2r)= q^{o(1)}$ for all $r|q$, 
and  
$$ \sum_{\substack{b\geq 1\\ p|b\implies p|q}} \frac{d(b)\log (2b)}{b} \ll \Big(\sum_{p|q} \frac{\log p}{p}\Big)\prod_{p|q}\left(1-\frac1p\right)^{-2} \ll (\log\log q)^3.$$
%$|h(b_2r_)|\leq \mathbb_{1}_{p|b_2\implies p^2|b_2} d(b_2)$  
%We first bound $E_3$ before evaluating the term $F_f(q, \ell)$.
%We are going to split the summands into up to three parts as follows:
To calculate $F_f(q, \ell)$ we write $q/\ell = q_1q_2$ where $p|q_1\implies p|\ell$ and $p|q_2\implies p\nmid\ell$. Note that for all $r| q/\ell$, we have $p\nmid q/r\implies p|q_2$. Moreover, since $\chi(n)=0$ if $(n, \ell)>1$, we can impose the condition $p|b_2 \implies p|q_2$ for all $b_2$ in the inner sum. Therefore, we get
\begin{align}\label{Eq:CalculEulerProduct}
F_f(q, \ell)&= \prod_{p|q_2}\left(1-\frac{f(p)\overline{\chi}(p)}{p}\right)^{-1}\sum_{r_1\mid q_1} h(r_1)\kappa_g(q_1/r_1) 
\nonumber \\
&\quad \quad \quad \times\sum_{r_2\mid q_2} \kappa_g(q_2/r_2) \prod_{\substack{p\mid q_2\\ p\nmid q_2/r_2}}\left(1-\frac{1}{p}\right) \sum_{\substack{b_2\geq 1\\ (b_2, q_2/r_2)=1 \\ p|b_2\implies p|q_2}} \frac{\overline{\chi}(b_2)h(b_2r_2) }{b_2}.
\end{align}
Now 
\begin{equation}\label{Eq:ProductPrimesq1}
 \sum_{r_1|q_1 } h(r_1 ) \kappa_g(q_1/r_1 )=  \sum_{r_1|q_1 } h( r_1 )  g(q_1/r_1 )=f(q_1).
\end{equation}
On the other hand, the inner sum over $r_2$ is multiplicative, so we might break it down into the prime powers dividing $q_2$. Hence if $p^e\| q_2$ then 
$p^a\| r_2$ for some $0\leq a\leq e$, and $p\nmid b_2$ unless $a=e$. Thus, we obtain
\begin{align} \label{Eq:ProductPrimesq2}
&\sum_{r_2\mid q_2} \kappa_g(q_2/r_2) \prod_{\substack{p\mid q_2\\ p\nmid q_2/r_2}}\left(1-\frac{1}{p}\right) \sum_{\substack{b_2\geq 1\\ (b_2, q_2/r_2)=1 \\ p|b_2\implies p|q_2}} \frac{\overline{\chi}(b_2)h(b_2r_2) }{b_2}\nonumber\\
&= \prod_{p^e\|q_2}\left(\sum_{a=0}^{e-1}h(p^a)  \kappa_g(p^{e-a}) +\left(1-\frac{1}{p}\right)
 \sum_{j=0}^{\infty}  \frac{\overline{\chi}(p^j)h(p^{e+j})  }{p^j}\right).
\end{align}
 Now
\begin{align*}
\sum_{a=0}^{e-1}h(p^a)  \kappa_g(p^{e-a})&= h* \kappa_g(p^e)-h(p^e)\\
&= f(p^e)-\chi(p)f(p^{e-1})-h(p^e)= (f(p)-\chi(p))f(p^{e-1}),%-\chi(p)f(p^{e-1}),
\end{align*}
 since $f= \chi*(h*\kappa_g)$. Moreover, we have
\begin{align*}
 \sum_{j=0}^{\infty}  \frac{\overline{\chi}(p^j)h(p^{e+j})}{p^j}&= h(p^e)+\sum_{j=1}^{\infty}  \frac{\overline{\chi}(p^j)f(p^{e+j})}{p^j}- f(p)\sum_{j=1}^{\infty}  \frac{\overline{\chi}(p^j)f(p^{e+j-1})}{p^j}\\
 & = h(p^e)- f(p^e)+\sum_{j=0}^{\infty}  \frac{\overline{\chi}(p^j)f(p^{e+j})}{p^j}-\frac{f(p)\overline{\chi}(p)}{p}\sum_{j=0}^{\infty}  \frac{\overline{\chi}(p^j)f(p^{e+j})}{p^j} \\
 & = -f(p)f(p^{e-1})+ \left(1-\frac{f(p)\overline{\chi}(p)}{p}\right)\sum_{j=0}^{\infty}  \frac{\overline{\chi}(p^j)f(p^{e+j})}{p^j}. 
 \end{align*}
 Thus, we deduce that
\begin{align*}&\sum_{a=0}^{e-1}h(p^a)  \kappa_g(p^{e-a}) +\left(1-\frac{1}{p}\right)
 \sum_{j=0}^{\infty}  \frac{\overline{\chi}(p^j)h(p^{e+j})  }{p^j}\\
 &= \frac{f(p)f(p^{e-1})}{p} - \chi(p)f(p^{e-1}) +\left(1-\frac{1}{p}\right) \left(1-\frac{f(p)\overline{\chi}(p)}{p}\right)\sum_{j=0}^{\infty}  \frac{\overline{\chi}(p^j)f(p^{e+j})}{p^j}\\
 &= \left(1-\frac{f(p)\overline{\chi}(p)}{p}\right) \left(-\chi(p)f(p^{e-1})+ \left(1-\frac{1}{p}\right)\sum_{j=0}^{\infty}  \frac{\overline{\chi}(p^j)f(p^{e+j})}{p^j}\right)\\
 & =\left(1-\frac{f(p)\overline{\chi}(p)}{p}\right) \left(f(p^e)-\chi(p)f(p^{e-1})+\sum_{j=1}^{\infty}  \frac{\overline{\chi}(p^j)f(p^{e+j})}{p^j}- \sum_{j=1}^{\infty}  \frac{\overline{\chi}(p^{j-1})f(p^{e+j-1})}{p^j}\right)\\
 &= \left(1-\frac{f(p)\overline{\chi}(p)}{p}\right) \left(\sum_{j=0}^{\infty} \frac{\overline{\chi}(p^j)\kappa_f(p^{e+j})}{p^j}\right).
 \end{align*}
Combining this identity with \eqref{Eq:CalculEulerProduct}, \eqref{Eq:ProductPrimesq1} and \eqref{Eq:ProductPrimesq2} implies that 
\begin{align*} 
F_f(q, \ell)&= f(q_1)\prod_{p^e\|q_2}\left(\sum_{j=0}^{\infty} \frac{\overline{\chi}(p^j)\kappa_f(p^{e+j})}{p^j}\right)= \prod_{p^e\|q/\ell}\left(\sum_{j=0}^{\infty} \frac{\overline{\chi}(p^j)\kappa_f(p^{e+j})}{p^j}\right)\\
& =\sum_{\substack{m\geq 1 \\ p|m\implies p|q}}  \frac{\overline{\chi}(m)\kappa_f(m\cdot q/\ell)}{m} .
\end{align*}
Therefore we deduce that 
\begin{align*} 
 \sum_{n\leq x} \frac{f(n)e(\alpha n) }n&=  \frac{\overline{\chi}(a)\tau(\chi)}{\phi(q)} 
  \sum_{\substack{m\geq 1 \\ p|m\implies p|q}}  \frac{\kappa_f(m\frac q\ell)\overline{\chi}(m)}{m} 
   \sum_{\substack{n\leq x\\ (n, q)=1}} \frac{f(n)\overline{\chi}(n)}{n} \\
   &\quad \quad \quad \quad   +O\left(\frac{(\log x)^{2/3+o(1)}}{ \sqrt{q}}+ e^{O(\sqrt{\log\log x})} \right).
\end{align*}
Now for each $m$ we truncate the sum over $n$ at $x/m$ producing an error term $\ll \log m$ and so, in total an error of
\[
\ll  \frac{\sqrt{\ell}\, 2^{\omega(q_2)}}{\phi(q)} 
  \sum_{\substack{m\geq 1 \\ p|m\implies p|q_2}}  \frac{\log m}{m} 
 \ll  \frac{\sqrt{\ell}\, 2^{\omega(q_2)}}{ \phi(\ell q_1) \phi(q_2)} \frac{q_2}{\phi(q_2)} \sum_{ p|q_2} \frac{\log p}{p-1}  \ll 1
  \]
since $|\kappa_f(m\frac q\ell)|\leq 2^{\omega(q_2)}$. The first result follows since the sum in the main term is now over $mn\le x$.

Now $|\tau(\chi)|=\sqrt{\ell}$ and $|\sum_{n\leq x,\ (n, q)=1} \frac{f(n)\overline{\chi}(n)}{n}|\lesssim (\phi(q)/q)\log x$.  Furthermore we have 
\[
\sum_{j=0}^{\infty} \frac{\overline{\chi}(p^j)\kappa_f(p^{e+j})}{p^j} =x
\bigg( 1-\frac 1p\bigg) \sum_{j=0}^{\infty} \frac{\overline{\chi}(p^j) f(p^{e+j})}{p^j}   -\chi(p) f(p^{e -1}) 
\]
as $\kappa_f(p^{e+j})=f(p^{e+j})-\chi(p)f(p^{e+j-1})$, and  so the absolute value  of our sum is
$\leq 1+(1-\frac 1p) \sum_{j\geq 0}  \frac{1 }{p^j}=2$.
Therefore the main term is, in absolute value,
\begin{equation}\label{Eq:OptimalMainTermLog}
 \lesssim   \frac{\log x}{\sqrt{q}} \cdot   \frac {2^{\omega(q/\ell)}}{\sqrt{q/\ell}}.
\end{equation}
Now $2/p^{e/2}\leq1$ unless  $e=1$ and $p\in \{2,3\}$.   Hence, the term on \eqref{Eq:OptimalMainTermLog} %in the worst case we get 
is $ \lesssim  \frac 4{\sqrt{6}} \frac{\log x}{\sqrt{q}}$. Moreover, we obtain $\sim \frac 4{\sqrt{6}} \frac{\log x}{\sqrt{q}}$ only when $q=6\ell$ with $(6,\ell)=1$, and for $p\in \{2,3\}$ 
$$ 
\bigg|\bigg( 1-\frac 1p\bigg) \sum_{j=0}^{\infty} \frac{\overline{\chi}(p^j) f(p^{1+j})}{p^j}   -\chi(p)\bigg|= 2, 
$$ 
in which case we must have $  f(p^{ j})=-\chi(p)^j   $ for all $j\geq 1$,
so $\kappa_f(p^j)=0$ for $j\geq 2$ with $\kappa_f(p)=-2\chi(p)$. The final result follows.

%%%%%%%%%%%%%%
\subsection{A Hal\'asz-type upper bound}

We will use Theorem \ref{Thm:PretentiousLSieve} to deduce a bound on our logarithmic sums:
 
\begin{corollary}\label{Cor:Smallq}
 Let $x$ be large with $T\in [ \frac 2{\log x}, 1]$ and  $f\in \F$. If  $|\alpha-a/q|\leq 1/(qx)$ with $(a, q)=1$ and $q\leq (\log x)^{10}$  then
\[ \sum_{n\leq x} \frac{f(n)e(n\alpha)}{n} \ll 
\bigg( \log x    \exp\ (-M(f,q,x, T))   +  \frac{1}{T} \bigg) (\log\log x)^{3+o(1)},
\]
where we define
$$ 
M(f,q,x, T):= \min_{\substack{ |t|\leq T \\ \chi \bmod q  }}  \sum_{\substack{p\leq x \\ p\nmid q}}\frac{1-\re(f(p)\overline{\chi}(p)p^{-it})}{p}.
$$ 
\end{corollary}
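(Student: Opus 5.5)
We follow the proof of Theorem \ref{Thm:Smallq}, with Theorem \ref{Thm:PretentiousLSieve} applied with $\mathcal X=\emptyset$ in place of the ordering of characters. First we replace $\alpha$ by $a/q$, which costs $O(1)$. The terms $n<q^3$ contribute $O(\log q)=O(\log\log x)$, and this is acceptable since $1/T\geq 1$. We then apply the identity \eqref{Eq:IdentityCharacters}, which writes $\sum_{q^3\le n\le x} \frac{f(n)}{n}e(an/q)$ as a sum over $r\mid q$ of $\frac{f(r)}{r\phi(q/r)}$ times sums over $\chi \bmod q/r$ of $\tau(\chi)\overline{\chi}(a)\sum_{q^3/r\le n\le x/r}f(n)\overline{\chi}(n)/n$.

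For each $r\mid q$, set $Q=q/r$ and use $|\tau(\chi)|\le\sqrt{Q}$. Cauchy--Schwarz bounds the $r$-th term by
\[
\frac{\sqrt{Q}}{r\sqrt{\phi(Q)}}\Big(\sum_{\chi \bmod Q}\Big|\sum_{q^3/r\le n\le x/r}\frac{f(n)\overline{\chi}(n)}{n}\Big|^2\Big)^{1/2}.
\]
Apply Theorem \ref{Thm:PretentiousLSieve} with $\mathcal X=\emptyset$, modulus $Q$ (when $Q>1$), and $x/r$, $y=q^3/r$ in place of $x,y$. The hypotheses hold: $Q^{1+\ep}\le q^3/r\le(\log x)^{100}$ since $r\le q$, and $Q\le(\log x)^{10}$. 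This bounds the $r$-th term by
\[
\frac{\sqrt{Q}}{r\sqrt{\phi(Q)}}\Big((\log x)e^{-M_{\emptyset,f}(x/r,T)}+1/T\Big)(\log\log x)^3.
\]
The case $Q=1$ (that is, $r=q$) is just the sum $\frac{f(q)}{q}\sum f(n)/n$, which is handled the same way with $\chi$ trivial, or directly by Theorem 1.6 of \cite{LaMa22}. Summing over $r$ with \eqref{Eq:BoundFirstCoefficientSumSmall} costs a factor $(q/\phi(q))^{3/2}\ll(\log\log\log x)^{3/2}$, which is absorbed into the $(\log\log x)^{o(1)}$.

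It remains to compare $M_{\emptyset,f}(x/r,T)=\min_{\chi \bmod Q,\,|t|\le T}\mathbb D(f,\chi n^{it};x/r)^2$ with $M(f,q,x,T)$; we expect this to be the only delicate point. There are three discrepancies, each of which should cost only a factor $e^{O(\log\log\log x)}=(\log\log x)^{O(1)}$ in $e^{-M}$.
\begin{itemize}
\item Replacing $x/r$ by $x$ changes the distance by $\sum_{x/r<p\le x}2/p\ll \log q/\log x=o(1)$.
\item A character mod $Q$ induces a character mod $q$, and the two distances differ only at primes $p\mid q$. This costs $\sum_{p\mid q}2/p\ll\log\log q\ll\log\log\log x$.
\item Restricting to $p\nmid q$ in the definition of $M(f,q,x,T)$ likewise changes the distance by at most $\sum_{p\mid q}1/p$.
\end{itemize}
Together these give $M_{\emptyset,f}(x/r,T)\ge M(f,q,x,T)-O(\log\log\log x)$. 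The factor $e^{O(\log\log\log x)}$ is at worst $(\log\log x)^{O(1)}$. To keep the exponent at $3+o(1)$, we use that $\sum_{p\mid q}1/p\le\log\log\omega(q)+O(1)$ and $\omega(q)\ll\log\log x$, so the loss is $e^{\log\log\log\log x+O(1)}=(\log\log x)^{o(1)}$. Combining these estimates gives the stated bound.
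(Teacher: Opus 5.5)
Your proposal is correct and follows the paper's proof essentially step for step: the identity \eqref{Eq:IdentityCharacters}, Cauchy--Schwarz with $|\tau(\chi)|\le\sqrt{q/r}$, the pretentious large sieve with $\mathcal X=\emptyset$ (the paper uses its sharper form \eqref{eq: PLS log2}), the bound \eqref{Eq:BoundFirstCoefficientSumSmall}, and a comparison of distances. In that last step you do not need the $\log\log\omega(q)$ refinement: the terms at primes $p\mid q$ are nonnegative, so discarding them gives $M_{\emptyset,f}(x/r,T)\ge M(f,q,x,T)-O(\log q/\log x)$ directly.
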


\begin{proof}
We start by bounding $\sum_{n\leq q^3} f(n)e(n\alpha)/n$ trivially, replacing $\alpha$ by $a/q$ (at the expense of an error term of size $O(1)$) and using  \eqref{Eq:IdentityCharacters}  to obtain
\begin{equation}
\label{Eq:IdExpSumSmallq}   \sum_{n\leq x} \frac{f(n)e(n\alpha)}{n}= \sum_{r\mid q}\frac{f(r)}{r}\frac{1}{\phi(q/r)}\sum_{\chi \bmod q/r}%\mathcal{G}
\tau(\chi) \overline{\chi}(a)\sum_{q^3/r\leq n\leq x/r} \frac{f(n) \overline{\chi}(n)}{n}+O(\log q).
\end{equation}
Proceeding similarly to 
\eqref{Eq:BadCharPretentious} gives 
\begin{equation}\label{Eq:BoundExpSumSmall2}
\sum_{n\leq x} \frac{f(n)e(n\alpha)}{n}\ll  \sum_{r\mid q} \frac{\sqrt{q/r}}{r\sqrt{\phi(q/r)}}\Bigg(\sum_{\chi \bmod q/r} \Bigg|\sum_{q^3/r\leq n\leq x/r} \frac{f(n) \overline{\chi}(n)}{n}\Bigg|^2\Bigg)^{1/2}+\log q.
\end{equation}
We now use \eqref{eq: PLS log2}
% Theorem \ref{Thm:PretentiousLSieve}  
with $\mathcal X = \emptyset$ to get
\begin{align*}
&\bigg( \sum_{\chi \bmod q/r} \Big|\sum_{q^3/r \leq n\leq x/r}\frac{f(n)\overline{\chi}(n)}{n}\Big|^2\bigg)^{1/2} \\
& \ll  (\log x)(\log\log x)\log(T\log x)^2   \exp\ \left(-\min_{\chi \bmod q/r} M(f\overline{\chi}; x/r, T)\right) \\
&    \quad \quad \quad + \frac{(\log\log x)\log(T\log x)}{T}.
\end{align*}
Note that 
\begin{align*}
\min_{\chi \bmod q/r} M(f\overline{\chi}; x/r, T)&\geq \min_{\chi \bmod q} M(f\overline{\chi}; x/r, T)\\
&= M(f, q, x, T)+ \sum_{p\mid q}\frac1p+ O\left(\frac{\log r}{\log x}\right).
\end{align*} Inserting these estimates in \eqref{Eq:BoundExpSumSmall2} and using \eqref{Eq:BoundFirstCoefficientSumSmall} imply that 
\begin{align*}\sum_{n\leq x} \frac{f(n)e(n\alpha)}{n}& \ll \left(\frac {q}{\phi(q)}\right)^{1/2} (\log x)(\log\log x)\log(T\log x)^2   \exp\ (-M(f,q,x, T)) \\
   & \quad \quad \quad  + \left(\frac {q}{\phi(q)}\right)^{3/2}\frac{(\log\log x)\log(T\log x)}{T},
\end{align*}
which is slightly better (but more complicated) than the given result, which is obtained by using that $T\log x\leq \log x$ and $q\leq (\log x)^{10}$ so that $\frac {q}{\phi(q)}\ll \log\log\log x$.
\end{proof}
%%%%%%%%%%%%%%%%%%%%%%%%%%%%%%%%%%%%%%%%%%%%%%%%%%%%%%%%%%%%%%%%%%%%%%%%%%%%%%%%%%%%%%%%%%
\subsection{Classifying the functions $f\in \F$ for which  \eqref{Eq:MVThree} is attained: Proof of Theorem \ref{Thm:Classify_Large}}
We first record the following bound for logarithmic mean values of  multiplicative functions, which was proved in \cite{GM23}.

 \begin{lemma}[Proposition 1.2 of \cite{GM23}]\label{Lem:LogSumDistance}
Let  $f:\mathbb{N}\to \mathbb{U}$ be a multiplicative function and $x\geq 2$ be a real number. Then we have 
$$ \sum_{n\leq x}\frac{f(n)}{n}\ll \log\log x+ (\log x)(1+ \mathbb{D}(f, 1, x)^2)\exp\left(-\lambda \mathbb{D}(f, 1, x)^2\right)$$    
where $\lambda=0.8221\ldots$ is given by $\int_0^1|e(\theta)-\lambda|d\theta=2-\lambda$.
\end{lemma}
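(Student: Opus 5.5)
The natural route is a Hall--Tenenbaum type argument, applied directly to the logarithmic mean $L(t):=\sum_{n\le t} f(n)/n$ rather than to $\sum_{n\le x} f(n)$.

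\textbf{Step 1 (integral equation).} Multiply by $\log n$ and use $f\log=f*\Lambda_f$ to get
\[
\sum_{n\le x}\frac{f(n)\log n}{n}=\sum_{m\le x}\frac{\Lambda_f(m)}{m}\,L(x/m).
\]
The left side equals $L(x)\log x-\int_1^x L(t)\,dt/t$ by partial summation. Prime powers $m=p^k$ with $k\ge2$ contribute $O(\int_1^x |L(t)|\,dt/(t\log t))$-type terms, which are harmless. Set $u(v):=|L(e^v)|$. This yields the integral inequality
\[
v\,u(v)\le \int_0^v u(w)\,dw+\Big|\sum_{p\le e^v}\frac{f(p)\log p}{p}L(e^v/p)\Big|+O\Big(\int_0^v \frac{u(w)}{1+w}\,dw\Big).
\]

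\textbf{Step 2 (subtracting a constant).} The key trick is that, for any $\lambda\in[0,1]$, one may replace $f(p)$ by $f(p)-\lambda$. This is allowed because the term $\lambda\sum_p \frac{\log p}{p}L(e^v/p)$ is, by the prime number theorem, $\lambda\int_0^v u$-like, up to the phase of $L$. That term is absorbed into the left-hand side after a differentiation/averaging step. One is then left with weights $|f(p)-\lambda|$ in place of $1$.

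Iterating the inequality (equivalently, comparing with the Dirichlet series at $s=1+1/\log x$ via a Rankin-type majorant) gives
\[
u(v)\ll \frac1v+v\exp\Big(-\sum_{p\le x}\frac{(1-\lambda)-|f(p)-\lambda|+\lambda(1-\operatorname{Re} f(p))}{p}\Big)\times(\text{polynomial loss}).
\]
The polynomial loss is to be tracked carefully; it should end up as the factor $1+\mathbb D(f,1,x)^2$.

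\textbf{Step 3 (pointwise optimization).} It now suffices to show that, for $|z|\le1$,
\[
1-\lambda-|z-\lambda|\ \ge\ \lambda\,\bigl(1-\operatorname{Re} z\bigr)-(\text{a term averaging to }0)\cdots
\]
More precisely, we need the exponent to be $\ge \lambda\,\mathbb D(f,1,x)^2-O(\log\log x\text{ error})$. The constant $\lambda$ defined by $\int_0^1|e(\theta)-\lambda|\,d\theta=2-\lambda$ is exactly the value at which the worst case is $f(p)$ equidistributed on the unit circle. In that case both sides are balanced and the inequality is sharp. The $\log\log x$ term in the statement absorbs the regime where $\mathbb D$ is too large for the main term to dominate; there we just use the trivial Mertens-type bound from small primes.

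\textbf{Main obstacle.} The hard step is Step 2 together with the sharp form of Step 3. Making the ``subtract $\lambda$'' manoeuvre rigorous needs Hall and Tenenbaum's rearrangement lemma, since $L(t)$ is complex and not monotone. One must also check that the resulting functional inequality has extremal decay rate exactly $\exp(-\lambda\mathbb D^2)$, with only the factor $(1+\mathbb D^2)$ lost. I would first try to adapt Hall and Tenenbaum's proof for $\sum_{n\le x} f(n)$ verbatim, with the kernel $dt/t$ in place of $dt$. The logarithmic weighting should only simplify the kernel, and the small-$n$ contribution supplies the additive $\log\log x$.
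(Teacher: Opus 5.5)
The paper gives no proof of this lemma; it imports it from \cite{GM23}. Judged on its own, your proposal has a genuine gap: the Hall--Tenenbaum majorant scheme cannot reach $\lambda=0.8221\ldots$.

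\textbf{Step 2 and Step 3 fail.} In Step~2 your sign is backwards. Writing $f(p)=\lambda+(f(p)-\lambda)$ adds $\lambda\int_1^xL(t)\,dt/t$ to the right-hand side; it is not absorbed into $L(x)\log x$. After taking absolute values the effective prime weights are $\lambda+|f(p)-\lambda|\ge1$, so nothing is saved.

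The favourable split is $f(p)=-\lambda+(f(p)+\lambda)$, which gives $(1-\lambda)\int_1^xL\,dt/t$. Then $|L|$ is majorized by the logarithmic mean of the nonnegative multiplicative function with $g(p)=|f(p)+\lambda|-\lambda$, i.e.\ by $\log x\,\exp\bigl(-\sum_{p\le x}(1+\lambda-|f(p)+\lambda|)/p\bigr)$. But
\[
\inf_{|z|=1,\,z\ne1}\frac{1+\lambda-|z+\lambda|}{1-\textup{Re}\,z}=\frac{\lambda}{1+\lambda}\le\frac12,
\]
approached as $z\to1$. So already for $f(p)=e^{i\phi}$ with $\phi$ small and fixed, this route yields only $(\log x)^{1-\frac{\lambda}{1+\lambda}(1-\cos\phi)+o(1)}$, which is weaker than the statement. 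This is the classical constant $\frac12$ of \cite[Lemma 4.3]{GS07}.

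The bound you display at the end of Step~2 contains a term $+\lambda(1-\textup{Re} f(p))$ with no source. Even granting it, its exponent vanishes when $f(p)=-1$, so it says nothing for $f=\mu$. Step~3 would need $1-\lambda\ge|z-\lambda|$, which on the unit circle holds only at $z=1$, and for an arbitrary $f$ nothing ``averages to $0$''. Finally, there is no complex Hall--Tenenbaum bound for $\sum_{n\le x}f(n)$ in terms of $\mathbb{D}(f,1;x)$ to adapt verbatim: $f(n)=n^{it}$ rules it out.

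\textbf{The missing idea is the $n^{it}$ obstruction.} Perron's formula for $\sum_{n\le x}f(n)/n$ on $\textup{Re}\,s=1/\log x$ carries the weight $1/|s|$. So a Hal\'asz-type argument for logarithmic means gives a bound of the shape $\log x\,(1+M)e^{-M}$ plus lower-order terms, with $M\approx\min_{|t|\le1}\{\mathbb{D}(f,n^{it};x)^2+\mathbb{D}(n^{it},1;x)^2\}$, since $\mathbb{D}(n^{it},1;x)^2=\log(1+|t|\log x)+O(1)$. The factor $1+\mathbb{D}^2$ is the usual Hal\'asz loss, not a polynomial loss from iterating an integral inequality.

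One then needs $\mathbb{D}(f,n^{it};x)^2+\mathbb{D}(n^{it},1;x)^2\ge\lambda\,\mathbb{D}(f,1;x)^2-O(1)$ uniformly for $|t|\le1$. Primes $p\le e^{1/|t|}$ have $p^{it}\approx1$ and contribute $(1-\lambda)(1-\textup{Re} f(p))/p+O(|t|\log p/p)$, which sums to at least $-O(1)$. For larger $p$, writing $w=p^{it}$, each prime contributes
\[
\frac1p\Bigl(2-\lambda-\textup{Re}\,w-\textup{Re}\bigl(f(p)(\overline{w}-\lambda)\bigr)\Bigr)\ \ge\ \frac1p\bigl(2-\lambda-\textup{Re}\,w-|w-\lambda|\bigr).
\]
Since $p^{it}$ equidistributes on the circle with respect to the weights $1/p$ on this range, the right side sums to $\bigl(2-\lambda-\int_0^1|e(\theta)-\lambda|\,d\theta\bigr)\log(|t|\log x)+O(1)=O(1)$. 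That is exactly the defining equation of $\lambda$.

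The pointwise extremizer is $f(p)=(p^{it}-\lambda)/|p^{it}-\lambda|$. It is tied to one specific twist $p^{it}$ rather than being ``equidistributed'' in the abstract. This is why an argument that discards phases termwise cannot see the constant.
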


% \begin{lemma}[Lemma 4.3 of \cite{GS07}]\label{Lem:LogSumDistance}
% Let $f\in \F$ and $x\geq 2$ be a real number. Then we have 
% $$ \sum_{n\leq x}\frac{f(n)}{n}\ll 1+ (\log x)\exp\left(-\frac12 \mathbb{D}(f, 1, x)^2\right).$$    
% \end{lemma}

We will also need the following lemma from \cite{Go12}, which is a generalization of Lemma 6.2 of \cite{GS07}.
\begin{lemma}
\label{lem:Goldmakher}
Let $f \in \F$ and $\alpha \in [0,1]$. Let $x$ be large and $M$ be a real number such that $
(\log x)^5\leq M\leq x$. Suppose that $
\left|\alpha - b/r\right| \leq 1/(rM)
$
where $b, r$ are coprime integers such that $1\leq b\leq r\leq M$. 
Set $N = \min\left\{x, \frac{1}{|r\alpha - b|}\right\}$. Then we have
$$
\sum_{n \leq x} \frac{f(n)}{n} \, e(n\alpha) =
\sum_{n \leq N}\frac{f(n)}{n} \, e\left(\frac{bn}{r}\right) +
O\left(\log \log x \right),
$$
where the implied constant in the error term is absolute.
\end{lemma}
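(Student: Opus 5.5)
The plan is to write $\alpha=b/r+\beta$ with $|\beta|\le 1/(rM)$, so that $N=\min\{x,1/(r|\beta|)\}$. I split $\sum_{n\le x}f(n)e(n\alpha)/n$ into three ranges: $n\le N$, the short transitional range $N<n\le N(\log x)^6$, and the long range $N(\log x)^6<n\le x$.

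\emph{Range $n\le N$.} Here $e(n\alpha)-e(bn/r)=e(bn/r)\big(e(n\beta)-1\big)\ll n|\beta|$, so replacing $\alpha$ by $b/r$ costs at most $\sum_{n\le N}|\beta|\le N|\beta|\le 1/r$. This gives the main term $\sum_{n\le N}\frac{f(n)}{n}e(bn/r)$ with error $O(1)$. If $N=x$ we are done, so assume from now on that $N=1/(r|\beta|)<x$.

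\emph{Transitional range.} I bound the terms $N<n\le N(\log x)^6$ trivially by $\sum 1/n\ll \log\log x$. This is exactly where the $\log\log x$ in the statement comes from. It also removes the problematic scales $t$ at which $b/r$ is still essentially the best rational approximation to $\alpha$ but $t|\beta|\gg 1$, where one would otherwise lose a factor $1+t|\beta|$ when passing from $b/r$ to $\alpha$.

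\emph{Long range.} For $N(\log x)^6<t\le x$, I will show that
\[
\sum_{n\le t}f(n)e(n\alpha)\ll \frac{t}{\log t}.
\]
Partial summation then bounds the contribution of $N(\log x)^6<n\le x$ by
\[
\ll 1+\int_{N(\log x)^6}^{x}\frac{dt}{t\log t}\ll \log\log x.
\]
To prove the bound at scale $t$, I take a Dirichlet approximation $|\alpha-a'/q'|\le 1/(q'Q)$ with $q'\le Q:=t/(\log t)^{3}$. Two cases:
\begin{itemize}
\item If $a'/q'=b/r$, then $1/(r t)\cdot(\log x)^6\le t|\beta|/t=|\beta|\le (\log t)^3/(rt)$, since $t\ge N(\log x)^6$ means $t|\beta|\ge(\log x)^6/r$. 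This is a contradiction.
\item If $a'/q'\neq b/r$, then
\[
\frac{1}{rq'}\le\Big|\frac br-\frac{a'}{q'}\Big|\le \frac{1}{rM}+\frac{(\log t)^3}{q't}.
\]
Since $t\ge M(\log x)^6$ and $r\le M$, we have $(\log t)^3/(q't)\le 1/(2rq')$, and hence $q'\ge M/2\ge(\log x)^5/2$.
\end{itemize}
So at every such scale $\alpha$ lies on a minor arc with denominator $q'\in[(\log x)^5/2,\ t/(\log t)^3]$. I then apply the Montgomery--Vaughan bound in its general form, valid for $|\alpha-a'/q'|\le 1/q'^2$:
\[
\ll \frac{t}{\log t}+\frac{t(\log q')^{3/2}}{\sqrt{q'}}+\sqrt{tq'}\,(\log t)^{3/2}.
\]
With $q'\ge(\log x)^5/2$ and $q'\le t/(\log t)^3$, this is $\ll t/\log t$ after adjusting the exponents in $Q$ if needed.

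\emph{Main obstacle.} The delicate point is choosing the exponent in the transitional cutoff and the height $Q$ consistently. The cutoff must be large enough that $q'=r$ is excluded and $q'\gg M$ is forced, but only a power of $\log x$ so that the trivial bound on the transitional range costs only $O(\log\log x)$. The hypothesis $M\ge(\log x)^5$ is used exactly to make $q'$ large enough to kill the $t/\sqrt{q'}$ term, so the power-of-log bookkeeping in the Montgomery--Vaughan error terms needs care. If the general Montgomery--Vaughan form loses too many logarithms, I would instead use the version \eqref{Eq:MVZero.2} with $|\alpha-a'/q'|\le 1/(q't)$, which requires taking $Q=t$ in the Dirichlet step and redoing the comparison above with $Q=t$.
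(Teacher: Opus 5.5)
Your argument is correct, and it takes a genuinely different route from the paper. The paper gives no argument of its own: it simply cites Lemma 4.1 of Goldmakher. You instead derive the lemma directly from Montgomery--Vaughan, Dirichlet's theorem and partial summation, in three steps:
\begin{itemize}
\item on $n\le N$, replacing $\alpha$ by $b/r$ costs $O(N|\beta|)=O(1)$;
\item on the transitional range $N<n\le N(\log x)^6$ you use the trivial bound $O(\log\log x)$;
\item on $t>N(\log x)^6$, a fresh Dirichlet approximation $a'/q'$ must differ from $b/r$ and satisfy $q'\ge M/2$, so Montgomery--Vaughan gives $\sum_{n\le t}f(n)e(n\alpha)\ll t/\log t$, and partial summation finishes.
\end{itemize}
The citation buys brevity. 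Your version is self-contained, and it makes visible that the $\log\log x$ comes only from the transitional range. It also shows that the hypothesis $M\ge(\log x)^5$ enters only through the lower bound $q'\ge M/2$.

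The bookkeeping point you flagged does need fixing. With $Q=t/(\log t)^3$ and the form of Montgomery--Vaughan you wrote, the term $\sqrt{tq'}(\log t)^{3/2}$ is only bounded by $t$, not $t/\log t$. Any of the following repairs it.
\begin{itemize}
\item Take $Q=t/(\log t)^5$. Your cutoff $(\log x)^6$ still rules out $q'=r$, and since $t\ge r(\log x)^6\ge 2r(\log t)^5$ it still forces $q'\ge M/2$.
\item Use Montgomery--Vaughan in its original form, with $2\le R\le q'\le t/R$. Here $R=\min(q',t/q')\ge(\log t)^3$, which gives the bound $t(\log R)^{3/2}/\sqrt{R}=o(t/\log t)$.
\item Use your fallback via \eqref{Eq:MVZero.2}, but keep $Q=t/(\log t)^3$ rather than $Q=t$. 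Since $q'\ge M/2\ge(\log t)^3$, you get $|\alpha-a'/q'|\le 1/(q'Q)\le 1/t$. Also $q'\le t/(\log t)^3$ lies in the range of \eqref{Eq:MVZero}. Taking $Q=t$ could instead produce $q'$ outside that range.
\end{itemize}
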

\begin{proof}
This is a consequence of Lemma 4.1 of \cite{Go12}.
\end{proof}

\subsection*{Proof of Theorem \ref{Thm:Classify_Large} for $f\in \F$}
% \begin{proof}[Proof of Theorem \ref{Thm:Classify_Large} for $f\in \F$]
Throughout let $c=1/B\in (0,1)$ so that $|\sum_{ n\leq x} \frac{f(n)}{n} e(n\alpha)|\geq c\log q$.

 Let $R= q^{c/3}$. We first approximate $\alpha$  by a rational $b/r$ with $(b, r)=1$ and $r\leq R$, such that $|\alpha-b/r|\leq 1/(rR)$. Note that $b/r\neq a/q$ since $R<q$. We will consider three cases depending on the size of $q$. The first two correspond to part (a) of the theorem, while the last one corresponds to part (b).

{\bf Part (a) } for  $(\log x)^A\leq q\leq  x^{1/3}$, where $A>0$ is a suitably large constant in terms of $c$. Using   Corollary \ref{cor: Cut sum} with $y=q^2$ and $M=3$ we obtain
\begin{align}\label{Eq:Case1Approx}
\sum_{n\leq x} \frac{f(n)}{n} e(n\alpha)&=  
 \sum_{n\leq q^2}  \frac{f(n)}{n} e(n\alpha) +  \sum_{m\leq 3} \frac{f(m)}{m}\sum_{q^2< p\leq x/m} \frac{f(p)}{p} e(mp\alpha) +
 O( \log\log x) \nonumber\\
&= \sum_{n\leq q^2}  \frac{f(n)}{n} e(n\alpha) +  O\left(\frac{\log q}{A}\right).
\end{align}
Since $\left|\sum_{ n\leq x} f(n) e(n\alpha)/n\right|\geq c \log q$ by our assumption,  we deduce that if $A$ is suitably large in terms of $c$, then
\begin{equation}
\label{Eq:LargeShortSum0}
 \left|\sum_{n \leq q^2}\frac{f(n)}{n} \, e(n\alpha)\right|\geq \frac{4c}{5} \log q. 
\end{equation}Note that $R=q^{c/3}>(2\log q)^5$, if $x$ is large enough. Hence by Lemma \ref{lem:Goldmakher} we obtain 
$$
\sum_{n \leq q^2} \frac{f(n)}{n} \, e(n\alpha) =
\sum_{n \leq N_1}\frac{f(n)}{n} \, e\left(\frac{bn}{r}\right) +
O\left(\log \log q \right),
$$
where $N_1 = \min\left\{q^2, \frac{1}{|r\alpha - b|}\right\}\geq R$. 
Inserting this estimate in \eqref{Eq:LargeShortSum0} we get  
\begin{equation}
\label{Eq:LargeShortSum1}
\frac{3c}{4} \log q\leq \left|\sum_{n \leq N_1}\frac{f(n)}{n} \, e\left(\frac{bn}{r}\right)\right|\leq \log N_1+O(1). 
\end{equation}
In particular, this implies $N_1\gg q^{3c/4}\geq R^2\geq r^2$. Hence,  by \eqref{Eq:MVLargeIntegers} we obtain
$$
\sum_{r^2\leq n \leq N_1}\frac{f(n)}{n} e\left(\frac{bn}{r}\right) \ll \frac{\log r}{\sqrt{r}}\log N_1+\log\log N_1\ll \frac{\log r}{\sqrt{r}}\log q+ \log\log q.
$$
Combining this bound with \eqref{Eq:LargeShortSum1} we deduce that 
\begin{align*}
\frac{3c}{4}\log q &\leq 2\log r + \bigg|\sum_{r^2\leq n \leq N_1}\frac{f(n)}{n} e\left(\frac{bn}{r}\right)\bigg| +O(1)\\
&\leq \frac{2c}{3}\log q + O\left(\frac{\log r}{\sqrt{r}}\log q+ \log\log q\right),
\end{align*}
since $r\leq R=q^{c/3}$. Thus we must have $r=O_c(1)$. 
We now use Corollary \ref{Cor:Smallq2}, which implies the existence of a primitive character $\chi\pmod \ell$ with $\ell\mid r$ such that 
$$ 
\sum_{n\leq N_1} \frac{f(n)}{n} e\left(\frac{bn}{r}\right)= \frac{\overline{\chi}(b)\kappa(r/\ell)  \tau(\chi)}{\phi(r)}\sum_{n\leq N_1} \frac{f(n) \overline{\chi} (n)}{n} +O\left((\log N_1)^{2/3+o(1)}\right).
$$
By \eqref{Eq:LargeShortSum1} and since $r=O_c(1)$ and $\log N_1\asymp_c \log q$ we deduce that 
$$ \sum_{n\leq N_1} \frac{f(n) \overline{\chi} (n)}{n} \gg_{c} \log q.$$
On the other hand,  by Lemma \ref{Lem:LogSumDistance} we obtain 
\begin{align*} \sum_{n\leq N_1} \frac{f(n) \overline{\chi} (n)}{n} 
&\ll \log\log N_1+ (\log N_1) \exp\left(-\frac45\, \mathbb{D}(f, \chi, N_1)^2\right)\\
&\ll_c  \log\log q+ (\log q) \exp\left(-\frac45\, \mathbb{D}(f, \chi, q)^2\right),
 \end{align*}
since $\log N_1 \asymp_c \log q.$
Combining these two bounds we get 
$$ \mathbb{D}(f, \chi, q) \ll_c 1, $$
as desired.

{\bf Part (a) } for $ x^{1/3}\leq q\leq x$. Since $R>(\log x)^5$ if $x$ is sufficiently large, we may use Lemma \ref{lem:Goldmakher} which gives 
$$
\sum_{n \leq x} \frac{f(n)}{n} \, e(n\alpha) =
\sum_{n \leq N_2}\frac{f(n)}{n} \, e\left(\frac{bn}{r}\right) +
O\left(\log \log x \right),
$$
where $N_2 = \min\left\{x, \frac{1}{|r\alpha - b|}\right\}$. By our assumption we deduce that 
$$\frac{3c}{4}\log q\leq  \Big|\sum_{n \leq N_2}\frac{f(n)}{n} e\left(\frac{bn}{r}\right)\Big| \leq \log N_2 +O(1).
$$
This implies that $N_2\geq R^2\geq r^2$. Therefore, by \eqref{Eq:MVLargeIntegers} we obtain
\begin{align*}
\frac{3c}{4}\log q &\leq 2\log r + \bigg|\sum_{r^2\leq n \leq N_2}\frac{f(n)}{n} e\left(\frac{bn}{r}\right)\bigg| +O(1)\\
&\leq \frac{2c}{3}\log q + O\left(\frac{\log r}{\sqrt{r}}\log x+ \log\log x\right).
\end{align*}
Since $\log q\geq (\log x)/3$ we deduce that $r=O_c(1)$.  Furthermore, by Corollary \ref{Cor:Smallq2} there exists a primitive character $\chi\pmod \ell$ with $\ell\mid r$ such that 
$$ 
\sum_{n\leq N_2} \frac{f(n)}{n} e\left(\frac{bn}{r}\right)= \frac{\overline{\chi}(b)\kappa(r/\ell)  \tau(\chi)}{\phi(r)}\sum_{n\leq N_2} \frac{f(n) \overline{\chi} (n)}{n} +O\left(\log x)^{2/3+o(1)}\right).
$$
Since $r=O_c(1)$ we get 
$$ \sum_{n\leq N_2} \frac{f(n) \overline{\chi} (n)}{n} \gg_{c} \log x.$$
On the other hand, by Lemma \ref{Lem:LogSumDistance} we obtain 
\begin{align*} \sum_{n\leq N_2} \frac{f(n) \overline{\chi} (n)}{n} 
&\ll \log \log N_2+ (\log N_2) \exp\left(-\frac45\, \mathbb{D}(f, \chi, N_2)^2\right)\\
&\ll_c  \log  \log x+ ( \log x) \exp\left(-\frac45\, \mathbb{D}(f, \chi, q)^2\right), 
\end{align*}
since $\log N_2 \asymp_c \log q.$
Thus, we deduce from these two bounds  that 
$$ \mathbb{D}(f, \chi, q) \ll_c 1.$$

{\bf Parts (a) and (b)} for $(\log x)^{2+\varepsilon}\leq q\leq (\log x)^A$. Let $K=K(c)\geq 3$ be a parameter to be chosen. By Corollary \ref{cor: Cut sum} we have 
\begin{align*}
\sum_{ n\leq x} \frac{f(n)}{n} e(n\alpha)&=   
 \sum_{n\leq q^2}  \frac{f(n)}{n} e(n\alpha) + \sum_{\substack{ m\leq K \\ q^2<p\leq x/m}} \frac{f(mp)}{mp} e(mp\alpha)\\
 & \quad \quad \quad +O\bigg(1 +   \bigg(    \frac {\log 2K }{K}   \bigg)^{1/2}\log\log x\bigg). 
\end{align*}
  Therefore, if $K=B/c^3$ for some suitably large constant $B$, then by our assumption we deduce that 
\begin{equation}
\label{Eq:LargeShortSum2}
 \bigg|\sum_{n\leq q^2}  \frac{f(n)}{n} e(n\alpha) + \sum_{\substack{ m\leq K \\ q^2<p\leq x/m}} \frac{f(mp)}{mp} e(mp\alpha)\bigg|\geq \frac{4}{5} c\log q.
 \end{equation}
  Let us first assume that 
\begin{equation}\label{Eq:AssumptionFirstSumLarge}
\bigg|\sum_{n\leq q^2}  \frac{f(n)}{n} e(n\alpha)\bigg|\geq \frac{c}{2}\log q.      
  \end{equation}
  In this case, we are in the same situation as \eqref{Eq:LargeShortSum0}, and a similar argument to the proof of part (a) gives the desired conclusion. Therefore, we might assume that 
  $$ \bigg|\sum_{n\leq q^2}  \frac{f(n)}{n} e(n\alpha)\bigg|\leq \frac{c}{2}\log q.$$
  In this case, \eqref{Eq:LargeShortSum2} implies that 
  $$ \bigg|\sum_{\substack{ m\leq K \\ q^2<p\leq x/m}} \frac{f(mp)}{mp} e(mp\alpha)\bigg|\geq \frac{c}{4} \log q\geq \frac{c}{2} \log\log x.
  $$
  On the other hand we observe that 
 \begin{align*}
\bigg|\sum_{\substack{m\leq K \\ q^2<p\leq x/m}} \frac{f(mp)}{mp} e(mp\alpha)\bigg|&
\leq \sum_{m\leq K} \frac{1}{m}\bigg|\sum_{q^2<p\leq x/m}\frac{f(p)}{p}e(mp\alpha)\bigg|\\
& \ll (\log K) \cdot \max_{m\leq K} \bigg|\sum_{q^2<p\leq x/m}\frac{f(p)}{p}e(mp\alpha)\bigg|,
\end{align*}
and 
$$ \sum_{q^2<p\leq x/m}\frac{f(p)}{p}e(mp\alpha)= \sum_{p\leq x}\frac{f(p)}{p}e(mp\alpha)+O(\log\log\log x). $$
Consequently, we obtain 
$$ \max_{m\leq K} \bigg|\sum_{p\leq x}\frac{f(p)}{p}e(mp\alpha)\bigg| \gg \frac{c}{\log K} \log\log x,$$
which completes the proof.  \hfill \qed
%\end{proof}

\begin{proof}[Proof of Theorem \ref{Thm:Classify_Large} for arbitrary multiplicative functions $f:\mathbb N\to \mathbb U$]
Let $g\in \mathcal F$ for which $g(p)=f(p)$ for all primes $p$ and write $f=g*h$ so that 
$h(p^j)=f(p^j)-f(p)f(p^{j-1})$ for all $j\geq 1$, which implies $h(p)=0$ and $|h(p^j)|\leq 2$ for all $j\geq 2$.
Then we have 
\begin{align}\label{Eq:SecondExpressionConvolution}
\sum_{n\leq x}  \frac{f(n)}{n} e(n\alpha) &= \sum_{n\leq x}  \frac{f(n)}{n} e\left(\frac{na}{q}\right) + O\left(\frac{1}{q}\right) \nonumber\\
& = \sum_{b\leq x} \frac{h(b)}{b} \sum_{m\leq x/b} \frac{g(m)}{m}  e\left(\frac{mba}{q}\right)+ O\left(\frac{1}{q}\right).
\end{align}
Let $B\geq 1$ be a parameter to be chosen. Writing $s=(b, q)$ and $k=b/
 s$, and using \eqref{Eq:MVLargeIntegers+} we obtain
\begin{align}\label{Eq:ConvolutionThm1.1}
 \sum_{B<b \leq x} \frac{h(b)}{b} \sum_{m\leq x/b} \frac{g(m)}{m} e\left(\frac{mba}{q}\right)  &= \sum_{s \mid q} 
 \sum_{\substack{B/s<k\leq x/s\\ (k, q/s)=1}}\frac{h(ks)}{ks} \sum_{m\leq x/(ks)} \frac{g(m)}{m} e\left(\frac{mka}{q/s}\right)\nonumber\\
 &\ll \sum_{s \mid q}\sum_{\substack{B/s<k\leq x/s\\ (k, q/s)=1}}\frac{|h(ks)|}{ks}  \left(\frac{\log x}{\sqrt{q/s}}\log q+ \log q\right).
 &   
\end{align}
By \eqref{eq: b root r} the contribution of the first sum is 
 $$ 
 \ll \frac{\log x}{\sqrt{q}}\log q\sum_{s\mid q} \sum_{k\geq 1} \frac{|h(ks)|}{k\sqrt{s}}\ll \frac{\log x}{\sqrt{q}}(\log q)^{1+o(1)}=o(1),
 $$
 since $q\geq (\log x)^{2+\ep}.$ 
 We now bound the second sum on the right hand side of \eqref{Eq:ConvolutionThm1.1} which is 
 $$ 
 \log q \sum_{s \mid q}\sum_{\substack{B/s<k\leq x/s\\ (k, q/s)=1}}\frac{|h(ks)|}{ks}= \log q\sum_{B<b\leq x}\frac{|h(b)|}{b}\ll B^{-1/3}\log q\sum_{b\geq 1}\frac{|h(b)|}{b^{2/3}}\ll B^{-1/3}\log q,
 $$
by \eqref{Eq:BoundSumHConvolution}. Combining these estimates with \eqref{Eq:SecondExpressionConvolution} and \eqref{Eq:ConvolutionThm1.1}  we get 
$$ 
\sum_{n\leq x}  \frac{f(n)}{n} e(n\alpha)= \sum_{b\leq B} \frac{h(b)}{b} \sum_{m\leq x/b} \frac{g(m)}{m}  e\left(\frac{mba}{q}\right)+ O\left(B^{-1/3}\log q\right)+o(1).
$$ 
We now let $B=\beta/c^3$ where $\beta$ is a suitably large absolute constant. 

\noindent Since $\left|\sum_{n\leq x} \frac{f(n)}{n} e(n\alpha)\right|\geq c\log q$ by our assumption, and $ \sum_{b\geq1} |h(b)|/b \ll 1$ by \eqref{Eq:BoundSumHConvolution}, there exists some integer $1\leq b\leq B$ such that 
\[
\sum_{m\leq x} \frac{g(m)}{m}  e\left(\frac{mba}{q}\right)=  \sum_{m\leq x/b} \frac{g(m)}{m}  e\left(\frac{mba}{q}\right)+ O_c\left(1\right) \gg c\log q,
\]
 where the implicit constant is absolute. Let $q_1=q/(b, q)\asymp_c q$,  $b_1=b/(q, b)\ll_c 1$ and put $\alpha_1= ba/q= b_1 a/q_1$.  Then by Theorem \ref{Thm:Classify_Large} for completely multiplicative functions applied to $g$ at least one of the following holds:

 \begin{itemize}
\item[(a)] There exist coprime integers $1\leq a_1\leq r_1\ll_c 1$ such that $|\alpha_1- a_1/r_1|\ll_c q_1^{-c/3}\ll_c q^{-c/3}$, and a primitive character $\chi \pmod \ell$ with $\ell | r_1$ such that $\mathbb{D}(g, \chi; q_1)\ll_c 1$. Now $\alpha= a/q+O(1/(qx))= \alpha_1/b+ O(1/(qx))$ and hence $|
\alpha- a_1/r|\ll_c q^{-c/3}$, where $r=br_1$. Writing $a_1/r=a_2/r_2$ with $(a_2, r_2)=1$ we get that $r_1| r_2$ and hence $\ell | r_2$.  Moreover, we have
$$\mathbb{D}(f, \chi; q)=\mathbb{D}(g, \chi; q_1)+ O_c\left(\frac{1}{\log q}\right)\ll_c 1.$$
Thus we have the same conclusion in this case. 
\item[(b)] $q_1$ is in the range $(\log x)^{2+\ep}\leq q_1 \leq (\log x)^{A_1}$ for some constant $A_1=A_1(c)$ and there exists an integer $h\ll 1/c^3$ such that 
$$\left|\sum_{p\leq x} \frac{g(p)}{p} e(hp\alpha_1)\right
    |\gg \frac{c}{\log (1/c)}\log\log x.$$
    Now $q_1\leq q\ll_c q_1$ and hence $(\log x)^{2+\ep} \leq q\leq (\log x)^{A}$ where $A=A_1+\ep$ (if $x$ is large enough). Finally we note that 
    $$\sum_{p\leq x} \frac{g(p)}{p} e(hp\alpha_1)=  \sum_{p\leq x} \frac{f(p)}{p} e(hbp\alpha)+ O_c\left(\frac{1}{q\log x}\right),
    $$
    since $\alpha_1= b\alpha +O_c(1/(qx))$. Hence we get the desired conclusion in part (b) with $h_1=bh \ll 1/c^6$. This completes the proof.
\end{itemize}

\end{proof}


\begin{thebibliography}{99}
 
 
 %\bibitem{Bach} G. Bachman, On exponential sums with multiplicative coefficients. II, 
%\emph{Acta Arith} 106, (2003), 41--57.

 
 \bibitem{BGS13} A. Balog, A. Granville and K. Soundararajan, Multiplicative functions in arithmetic progressions. \emph{Ann. Math. Qu\'ebec}, 37 (2013), no. 1, 3--30. 
 

%\bibitem{dlB99} R. de la Bret\`eche,  Sommes sans grand facteur premier, \emph{Acta
%Arith.}, {\bf 88}, no.~1, (1999),  1--14.

\bibitem {BGGK18}  J. Bober, L. Goldmakher, A. Granville, and D. Koukoulopoulos, 
The frequency and the structure of large character sums.
\emph{J. Eur. Math. Soc.} (JEMS) 20 (2018), no. 7, 1759--1818.

\bibitem{dlB98} R. de la Bret\`eche,  Sommes  d'exponentielles et entiers
sans grand facteur premier. \emph{Proc. London Math. Soc}, {\bf 77}, (1998),
39--78.

\bibitem{dlBG2} R. de la Bret\`eche and A. Granville,
Exponential sums with multiplicative coefficients and applications.
 \emph{Trans.  Amer. Math. Soc.} {\bf 375} (2022), 6875--6901.
 
 \bibitem{BrTe05} R. de la Bret\`eche and G. Tenenbaum, 
 Propri\'et\'es statistiques des entiers friables.
\emph{Ramanujan J.} 9 (2005), no. 1-2, 139--202.

\bibitem{BrTe07}R. de la Bret\`eche and G. Tenenbaum, 
Sommes d'exponentielles friables d'arguments rationnels.
\emph{Funct. Approx. Comment. Math.} 37 (2007), part 1, 31--38.

\bibitem{Bru51} N. G. de Bruijn, On the number of positive integers $\leq x$ and free of prime factors $>y$, 
\emph{Nederl. Akad. Wetensch. Proc. Ser. A} 54 (1951), 50--60.

 
% \bibitem{dlBT} R. de la Bret\`eche and G. Tenenbaum,
%Sommes d'exponentielles friables d'arguments rationnels,
 %\emph{Functiones et Approx.} {\bf 37} (2007), 31--38.
 
 \bibitem{CEP83}  E. R. Canfield, P. Erd\H{o}s and C. Pomerance,
 On a problem of Oppenheim concerning ``factorisatio numerorum''.
\emph{J. Number Theory} 17 (1983), no. 1, 1--28.


\bibitem{Dab96} H. Daboussi, 
Effective estimates of exponential sums over primes. \emph{Analytic number theory}, Vol. 1 (Allerton Park, IL, 1995), 231--244.
Progr. Math., 138
Birkhäuser Boston, Inc., Boston, MA, 1996.

\bibitem{D80} H. Davenport, {\em Multiplicative number theory}.
 Springer Verlag, New York, (1980).
 
 \bibitem{Dr16} S. Drappeau,
Sommes friables d'exponentielles et applications.
\emph{Canad. J. Math.} 67 (2015), no. 3, 597--638.
 
\bibitem{FoTe91} E. Fouvry and G. Tenenbaum, 
Entiers sans grand facteur premier en progressions arithm\'etiques.
\emph{Proc. London Math. Soc.} (3) 63 (1991), no. 3, 449--494.

\bibitem{Gr93} A. Granville, Integers, without large prime factors, in arithmetic progressions. I
\emph{Acta Math.} 170 (1993), no. 2, 255--273.
 
\bibitem{GS07} A. Granville and K. Soundararajan,  Large character sums: Pretentious
characters and the P{\' o}lya-Vinogradov theorem. \emph{J. Amer.
Math. Soc}, {\bf  20}, (2007),  357-384.

%\acom{
%\bibitem{GS18} A. Granville and K. Soundararajan,  Multiplicative number theory:
%The pretentious approach (preprint, 2018)}

\bibitem{GHS19} A. Granville, A. Harper and K. Soundararajan,   A new proof of Hal\'asz's Theorem, and some consequences.  \emph{Compos. Math.} {\bf 155} (2019), no. 1, 126--163. 

\bibitem{GM23} A. Granville and S. Mangerel, Three conjectures about character sums.  
\emph{Math. Zeit.} {\bf 305}, 49 (2023). 
 
%\bibitem{H68}	G. Hal\'asz, \"Uber die Mittelwerte multiplikativer zahlentheoretischer Funktionen, {\it  Acad. Math. Acad. Sci. 
%Hungar.} {\bf  19} (1968), 365--403. 

\bibitem{Go12} L. Goldmakher, Multiplicative mimicry and improvements to the P\'olya-Vinogradov inequality.
\emph{Algebra Number Theory} 6 (2012), no. 1, 123--163.

\bibitem{GoLa12} L. Goldmakher and Y. Lamzouri, Lower bounds on odd order character sums. 
\emph{Int. Math. Res. Not. IMRN} 2012, no. 21, 5006--5013.

\bibitem{GoLa14} L. Goldmakher and Y. Lamzouri, Large even order character sums.
\emph{Proc. Amer. Math. Soc.} 142 (2014), no. 8, 2609--2614.

\bibitem{HR74} H. Halberstam, and  H. E. Richert, Sieve methods
\emph{London Math. Soc. Monogr.}, No. 4
Academic Press, London-New York, 1974, xiv+364 pp.

\bibitem{Har12a} A. J. Harper, On a paper of K. Soundararajan on smooth numbers in arithmetic progressions.
\emph{J. Number Theory} 132 (2012), no. 1, 182--199.

\bibitem{Har12b} A. J. Harper, Bombieri--Vinogradov and Barban--Davenport--Halberstam type theorems for smooth numbers. 
\emph{arXiv e-print, 2012. http://arxiv.org/abs/1208.5992.}

\bibitem{Har16} A. J. Harper
Minor arcs, mean values, and restriction theory for exponential sums over smooth numbers.
\emph{Compos. Math.} 152 (2016), no. 6, 1121--1158.

\bibitem{HNR15} A. J. Harper,  A. Nikeghbali and M. Radziwi\l\l, 
A note on Helson's conjecture on moments of random multiplicative functions. \emph{Analytic number theory}, 
145--169. Springer, Cham, 2015.

\bibitem{Hi85} A. Hildebrand, 
Integers free of large prime divisors in short intervals.
\emph{Quart. J. Math. Oxford Ser. (2)} 36 (1985), no. 141, 57--69.

\bibitem{Hi86} A. Hildebrand, On the number of positive integers $\leq x$ and free of prime factors $> y$. \emph{J. Number theory}
22 (1986), no. 3, 289--307.


\bibitem{HiTe86} A. Hildebrand and G. Tenenbaum, 
On integers free of large prime factors.
\emph{Trans. Amer. Math. Soc.} 296 (1986), no. 1, 265--290.

\bibitem{Hu72} M. N. Huxley. On the difference between consecutive primes. \emph{Invent. Math.}, 15: 164--170, 1972.



\bibitem{LaMa22} Y. Lamzouri and A. P. Mangerel, Large odd order character sums and improvements to the P\'olya-Vinogradov inequality. \emph{Trans. Amer. Math. Soc.} 375 (2022), no. 6, 3759--3793.

\bibitem{Mai06} H. Maier,
Exponential sums with multiplicative coefficients over smooth integers.
\emph{Funct. Approx. Comment. Math.} 35 (2006), 209--218.

 
 \bibitem{MV77} H. L. Montgomery  and R. C. Vaughan,  Exponential
sums with multiplicative coefficients. \emph{Invent. Math}, {\bf 43}, (1977),  69-82. 

%\bibitem{Ra10} O. Ramar\'e, On Bombieri’s asymptotic sieve. \emph{J. Number Theory} 130 (2010), 1155--1189.

\bibitem{Shiu80}
P. Shiu, A Brun-Titchmarsh theorem for multiplicative functions. \emph{J. Reine Angew. Math.} 313 (1980), 161--170. 

\bibitem{So08} K. Soundararajan, 
The distribution of smooth numbers in arithmetic progressions. \emph{Anatomy of integers,} 115--128.
CRM Proc. Lecture Notes, 46
American Mathematical Society, Providence, RI, 2008.

\bibitem{Te93} G. Tenenbaum, 
Cribler les entiers sans grand facteur premier.
\emph{Philos. Trans. Roy. Soc. London Ser. A} 345 (1993), no. 1676, 377--384.
 
 

\end{thebibliography}
\end{document}